\DeclareMathAlphabet{\mathpzc}{OT1}{pzc}{m}{it}
\definecolor{cerisepink}{rgb}{0.93, 0.23, 0.51}
\definecolor{cerulean}{rgb}{0.0, 0.48, 0.65}
\numberwithin{equation}{subsection}
\newtheorem{theorem}[subsubsection]{Theorem}
\newtheorem{corollary}[subsubsection]{Corollary}
\newtheorem{lemma}[subsubsection]{Lemma}
\newtheorem{proposition}[subsubsection]{Proposition}
\newtheorem{definition}[subsubsection]{Definition}
\newtheorem{claim}[subsubsection]{Claim}
\theoremstyle{definition}
\newtheorem{construction}[subsubsection]{Construction}
\newtheorem{example}[subsubsection]{Example}
\newtheorem{remark}[subsubsection]{Remark}
\newcommand{\ra}{\rightarrow}
\def\CC{\mathbb{C}}
\def\LL{\mathbb{L}}
\def\NN{\mathbb{N}}
\def\QQ{\mathbb{Q}}
\def\TT{\mathbb{T}}
\def\ZZ{\mathbb{Z}}
\def\calA{\mathcal{A}}
\def\calC{\mathcal{C}}
\def\calD{\mathcal{D}}
\def\calE{\mathcal{E}}
\def\calF{\mathcal{F}}
\def\calG{\mathcal{G}}
\def\calH{\mathcal{H}}
\def\calI{\mathcal{I}}
\def\calJ{\mathcal{J}}
\def\calM{\mathcal{M}}
\def\calO{\mathcal{O}}
\def\scrA{\mathscr{A}}
\def\scrC{\mathscr{C}}
\def\scrD{\mathscr{D}}
\def\scrT{\mathscr{T}}
\def\scrU{\mathscr{U}}
\def\scrX{\mathscr{X}}
\def\scrY{\mathscr{Y}}
\def\rmh{\mathrm{h}}
\def\rmN{\mathrm{N}}
\def\rmH{\mathrm{H}}
\def\ul{\underline}
\def\wt{\widetilde}
\def\wh{\widehat}
\DeclareMathOperator{\Cone}{Cone}
\DeclareMathOperator{\Gal}{Gal}
\DeclareMathOperator{\Hom}{Hom}
\DeclareMathOperator{\id}{id}
\DeclareMathOperator{\rank}{rank}
\DeclareMathOperator{\Spec}{Spec}
\DeclareMathOperator*{\motimes}{\text{\raisebox{0.25ex}{\scalebox{0.8}{$\bigotimes$}}}}
\DeclareMathOperator*{\moplus}{\text{\raisebox{0.25ex}{\scalebox{0.8}{$\bigoplus$}}}}
\newcommand{\Q}{\mathbb{Q}}
\newcommand{\Z}{\mathbb{Z}}
\newcommand{\Sh}{\mathrm{Sh}}
\newcommand{\Sym}{\mathrm{Sym}}
\newcommand{\rra}{\longrightarrow}
\newcommand{\lmt}{\longmapsto}
\newcommand{\ol}{\overline}
\newcommand{\cH}{\mathrm{H}}
\newcommand{\frakp}{\mathfrak{p}}
\newcommand{\frakP}{{\mathfrak{P}}}
\newcommand{\Sing}{\textrm{Sing}}
\newcommand{\Fun}{\textrm{Fun}}
\newcommand{\et}{{\mathrm{\acute{e}t}}}
\newcommand{\pe}{{\mathrm{pro\acute{e}t}}}
\newcommand{\Spa}{\mathrm{Spa}}
\newcommand{\Bl}{\mathrm{Bl}}
\newcommand{\eh}{{\mathrm{\acute{e}h}}}
\newcommand{\Ainf}{{\mathrm{A_{inf}}}}
\newcommand{\Bdr}{{\mathrm{B_{dR}^+}}}
\newcommand{\an}{{\mathrm{an}}}
\newcommand{\colim}{{\mathrm{colim}}}
\newcommand{\cosk}{{\mathrm{cosk}}}
\newcommand{\RSe}{{\mathrm{Rig_{\Sigma_e}}}}
\newcommand{\Rei}{{\mathrm{Rig_{\Sigma_e, \Inf}}}}
\newcommand{\Coh}{{\mathrm{Coh}}}
\newcommand{\rig}{{\mathrm{rig}}}
\newcommand{\Rig}{{\mathrm{Rig}}}
\newcommand{\Ch}{{\mathrm{Ch}}}
\newcommand{\Tot}{{\mathrm{Tot}}}
\newcommand{\Afe}{{\mathrm{Alg}_{\mathrm{tfp},e}}}
\newcommand{\Rlim}{{\mathrm{R\varprojlim}}}
\newcommand{\gr}{{\mathrm{gr}}}
\newcommand{\Fil}{{\mathrm{Fil}}}
\newcommand{\dR}{{\wh{\mathrm{dR}}^\an}}
\newcommand{\op}{{\mathrm{op}}}
\newcommand{\Cech}{{\mathrm{\check{C}ech}}}
\newcommand{\ic}{{$\infty$-category }}
\newcommand{\Ainfe}{{\mathrm{A_{inf,e}}}}
\newcommand{\Bdre}{{\mathrm{B^+_{dR,e}}}}
\newcommand{\Map}{{\mathrm{Map}}}
\newcommand{\aff}{{\mathrm{aff}}}
\newcommand{\Inf}{{\mathrm{INF}}}
\newcommand{\VVec}{{\mathrm{Vec}}}
\newcommand{\SE}{{\mathrm{SE}}}
\newcommand{\red}{{\mathrm{red}}}
\newcommand{\BBdr}{{\mathbb{B}_{\mathrm{dR}}}}
\newcommand{\BBdrp}{{\mathbb{B}^+_{\mathrm{dR}}}}
\newcommand{\Bdrr}{{\mathrm{B}_{\mathrm{dR}}}}
\newcommand{\DF}{{\mathscr{DF}}}
\newcommand{\Sch}{{\mathrm{Sch}}}
\newcommand{\Zar}{{\mathrm{Zar}}}
\begin{document}
	\title{Crystalline cohomology of rigid analytic spaces}
	\date{}
	
	\author{Haoyang Guo}

	\begin{abstract}
		In this article, we introduce the infinitesimal cohomology for rigid analytic spaces that are not necessarily smooth, with coefficients in a $p$-adic field or in Fontaine's de Rham period ring $\Bdr$.
	\end{abstract}

	\maketitle
	\tableofcontents

	\section{Introduction}
	\subsection{Background}\label{subsec back}
	
	Let $X$ be a complex algebraic variety.
	Attached to the set of $\CC$-points of $X$, 
	there is a natural analytic structure which makes $X(\CC)$ a complex analytic space.
	This allows us to obtain a topological invariant of $X$ via singular cohomology $\rmH^i_{\Sing}(X(\CC),\CC)$, which is computed transcendentally.
	
	As the topological space $X(\CC)$ comes from an algebraic variety,
	it is natural to ask if we could compute this singular cohomology algebraically.
	When $X$ is a smooth algebraic variety over $\CC$, 
	it is a result of Grothendieck (\cite{Gr66}) that singular cohomology is isomorphic to algebraic de Rham cohomology.
	Namely, there exists a natural isomorphism
	\[
	\rmH^i_{\Sing}(X(\CC),\CC) \simeq \rmH^i(X, \Omega_{X/\CC}^\bullet),
	\]
	where $\Omega_{X/\CC}^i$ is the sheaf of the $i$-th algebraic K\"ahler differentials over the variety $X$, and $\Omega_{X/\CC}^\bullet$ is the algebraic de Rham complex.
	As a consequence, we get a purely algebraic way to compute singular cohomology group.
	
	However, if $X$ is non-smooth, 
	the cohomology of the usual algebraic de Rham complex may fail to compute singular cohomology of $X(\CC)$ (c.f. \cite[Example 4.6]{AK11}).
	To get the correct answer, in particular to get an algebraic cohomology theory which computes singular cohomology, 
	there are several methods modifying algebraic de Rham cohomology in the non-smooth setting:
	\begin{enumerate}[(1)]
		\item In \cite{Har75}, Hartshorne discovered that if $X$ admits a closed immersion into a smooth variety $Y$,
		then the formal completion $\wh{\Omega_{Y/\CC}^\bullet}$ of the de Rham complex $\Omega_{Y/\CC}^\bullet$ along $X\ra Y$ computes singular cohomology.
		Precisely, there exists the following isomorphism
		\[
		\rmH^i_\Sing(X(\CC),\CC) \simeq \rmH^i(X, \wh{\Omega_{Y/\CC}^\bullet}).
		\]
		The result was obtained independently by Deligne (unpublished), and by Herrera--Lieberman \cite{HL71}.
		
		In the general case when $X$ is not necessarily embeddable, 
		there exists a ringed \emph{infinitesimal site $(X/\CC_{\inf}, \calO_{X/\CC})$} (or the crystalline site in characteristic zero) introduced by Grothendieck \cite{Gr68}.
		It can be shown that its cohomology $\rmH^i(X/\CC_{\inf}, \calO_{X/\CC})$ coincides with $\rmH^i(X, \wh{\Omega_{Y/\CC}^\bullet})$ whenever $X\ra Y$ is a closed immersion into a smooth variety as above.
		In particular we obtain a conceptual cohomology theory that is independent of immersions.
		Moreover, the method allows us to compute cohomology with nontrivial coefficients, where we could replace $\calO_{X/\CC}$ by vector bundles with flat connections (or in other words \emph{crystals}).
		
		\item Extending the de Rham complex of smooth $\mathbb{C}$-algebras via simplicial resolutions, one obtains the \emph{(Hodge-completed) derived de Rham complex}, first invented by Illusie \cite{Ill71}.
		To any scheme $X$ over $\CC$, we can associate a filtered derived algebra $\wh{\mathrm{dR}}_{X/\CC}$ to it.
		It was shown by Illusie in loc.\ cit.\ that the cohomology of the derived de Rham complex $\wh{\mathrm{dR}}_{X/\CC}$ is isomorphic to the Hartshorne's cohomology, assuming $X$ is a local complete intersection.
		Later on, using Adams completion from the algebraic topology, 
		Bhatt \cite{Bha12} shows that the comparison is true for any finite type scheme in characteristic zero, without the l.c.i condition.
		In particular, for an arbitrary variety $X/\CC$, we get the isomorphism
		\[
		\rmH^i_\Sing(X(\CC),\CC) \simeq \rmH^i(X, \wh{\mathrm{dR}}_{X/\CC}).
		\]
		Here we mention that the first graded piece of $\wh{\mathrm{dR}_{X/\CC}}$ is the cotangent complex $\LL_{X/\CC}$ up to a shift, 
		which plays an important role in the deformation theory of schemes.
		Moreover, similarly to the universal property of the algebraic de Rham complex, it could be shown that the derived de Rham complex is the initial object among all filtered (derived) algebras $\calA$ over $X$ that is equipped with a homomorphism $\calO_X \ra \gr^0 \calA$ (\cite{Rak20}).
		
		\item  Another modification of the de Rham complex is called the \emph{Deligne--Du Bois complex}, introduced by Deligne and studied by Du Bois (\cite{DB81}).
		The Deligne--Du Bois complex is defined via the cohomological descent for resolution of singularties.
		More precisely, the Deligne--Du Bois complex for $X$ is defined as  the limit of the de Rham complex of $X_n$
		\[
		R\lim_{[n]\in \Delta} Rf_{n *}\Omega_{X_n/\CC}^\bullet,
		\]
		where  $f_\bullet:X_\bullet \ra X$ is a simplicial variety constructed using blowups at smooth nowhere dense centers, such that each $X_n$ is smooth over $\CC$.
		It could be shown that singular cohomology of $X$ is isomorphic to the cohomology of the Deligne--Du Bois complex.
		Moreover, Deligne--Du Bois complex admits a finite \emph{Hodge--Deligne filtration} where each graded piece is a bounded complex of coherent sheaves in the derived category.
		The induced filtration on cohomology is the Hodge filtration for the underlying mixed Hodge structure.
		Furthermore, Deligne--Du Bois complex together with its filtration also admits a site-theoretical interpretation via the h-topology, where the latter is introduced by Voevodsky in \cite{Voe96}.
		The theory of $h$-cohomology of $X$ is studied for example in \cite{HJ14} and \cite{Lee09}.
	\end{enumerate}
	
	The above provides three algebraic methods computing singular cohomology of a complex variety that is not necessarily smooth.
	It is then natural to ask if we have an analogous picture in the non-archimedean geometry.
	The goal of our article is to study the theory of cohomology for non-smooth rigid spaces in non-archimedean geometry, analogous to the three modifications for complex algebraic varieties as above.
	
	To start, let us fix some notations.
	Let $K$ be a $p$-adic extension of $\mathbb{Q}_p$; namely $K$ is a field that is complete with respect to a non-archimedean valuation extending that of $\mathbb{Q}_p$.
	In 1960s, Tate introduced the notion of the \emph{rigid analytic space} that forms a natural $p$-adic analogue of the complex analytic space.
	Here similarly to complex analytic spaces, examples of rigid analytic spaces include analyfications of algebraic varieties over $K$.

	As a $p$-adic field is totally disconnected, singular cohomology of a rigid space over $K$ is not an interesting object.
	However, we could still define a de Rham complex $\Omega_{X/K}^\bullet$ of $X$, where each $\Omega_{X/K}^i$ is the sheaf of K\"ahler differentials  that are continuous under $p$-adic topology.
	When $X$ is smooth and proper over $K$, it can be shown that the cohomology of its de Rham complex behaves very well: it is living within cohomological degrees $[0,2\dim(X)]$, such that each cohomology group is a finite dimensional $K$-vector space.
	In particular, when $X$ is the analytification of a smooth proper algebraic variety over $K$, the $p$-adic analytic de Rham cohomology of $X$ and the algebraic de Rham cohomology of the variety match up, so we get the correct Betti numbers.
	
	In the following, we consider general rigid spaces over $K$ that may not be smooth.
	\subsection{Main results}
	Let $X$ be a rigid space over $K$.
	We introduce the \emph{infinitesimal site $X/K_{\inf}$}, defined on the category of all pairs of rigid spaces $(U,T)$ for nil closed immersions $U\ra T$ over $K$, such that $U$ is an open subset in $X$. 
	Here a collection of maps $\{(U_i, T_i) \ra (U, T)\}$ is a covering in this site if $\{T_i\ra T\}$ is an open covering for the rigid space $T$.
	The infinitesimal site $X/K_{\inf}$ naturally admits an \emph{infinitesimal structure sheaf $\calO_{X/K}$}, sending a thickening $(U,T)$ to the ring of global sections $\calO_T(T)$. 
	Moreover, the infinitesimal structure sheaf admits a surjection onto $\calO_X$, and the kernel ideal $\calJ_{X/K}$ defines a natural filtration on $\calO_{X/K}$.
	The induced filtration on the cohomology of $\calO_{X/K}$ is called the \emph{infinitesimal filtration}.
	
	Now we can formulate our first main result.
	\begin{theorem}\label{main1}
		There is a $K$-linear cohomology theory 
		\[
		X\mapsto R\Gamma_{\inf}(X/K):=R\Gamma(X/K_{\inf},\calO_{X/K})
		\]
		together with the filtration defined by $R\Gamma(X/K_{\inf},\calJ^\ast_{X/K})$, for rigid spaces $X$ over $K$ and is taking values in the filtered complete derived category of $K$-vector spaces.
		It satisfies the following properties:
		\begin{enumerate}[label=\upshape{(\roman*)}]
			
			\item \emph{Explicit formula (\Cref{glo-coh}):} Assume $X\ra Y$ is a closed immersion into a smooth rigid space $Y$, and $J$ is the ideal sheaf defining $X$.
			Then $R\Gamma_{\inf}(X/K)$ is filtered isomorphic to the cohomology of the formal completion of the de Rham complex $\Omega_{Y/K}^\bullet$ along $X\ra Y$:
			\[
			R\Gamma_{\inf}(X/K) \rra  R\Gamma(X, \wh{\Omega_{Y/K}^\bullet }),
			\]
			where the $j$-th filtration on the right side is $R\Gamma(X, J^{j-\bullet}\wh{\Omega_{Y/K}^{\bullet} })$.
			
			In particular, when $X$ itself is smooth over $k$, the infinitesimal cohomology coincides with the de Rham cohomology with its Hodge filtration.
			\item \emph{Derived de Rham comparison (\Cref{inf-ddR}):} There exists a natural filtered morphism from cohomology of the analytic derived de Rham complex to $R\Gamma_{\inf}(X/K)$:
			\[
			R\Gamma(X, \dR_{X/K}) \rra R\Gamma_{\inf}(X/K).
			\]
			The map induces an isomorphism on their underlying complexes, and is a filtered isomorphism if $X$ is a local complete intersection.
			
			\item \emph{\'Eh comparison (\Cref{inf-eh}):} The cohomology $R\Gamma_{\inf}(X/K)$ admits a natural filtered morphism to \'eh de Rham cohomology introduced in \cite{Guo19}, inducing an isomorphism on their underlying complexes:
			\[
			R\Gamma_{\inf}(X/K) \rra R\Gamma(X_\eh,\Omega_\eh^\bullet).
			\]
			In particular, the underlying complex of $R\Gamma_{\inf}(X/K)$ satisfies the \'eh-hyperdescent for rigid spaces $X$ over $K$.
			
			\item \emph{Finiteness (\Cref{fin Bdre}):} When $X$ is proper of dimension $n$ over $K$, the underlying complex of $R\Gamma_{\inf}(X/K)$ is a perfect $K$-complex that lives in cohomological degree $[0,2\dim(X)]$.
			
			\item \emph{Base extension (\Cref{base ext}):} Assume $K_0$ is a subfield of $K$ and $X$ is a proper rigid space over $K_0$.
			Then the natural base extension map is a filtered isomorphism
			\[
			R\Gamma_{\inf}(X/K_0)\otimes_{K_0} K \rra R\Gamma_{\inf}(X_K/K).
			\]
			
			\item \emph{Comparison with singular cohomology (\Cref{ana-sing}):} Assume there exists  an abstract isomorphism of fields $K\ra \CC$, and $X$ is the analytification of a proper algebraic variety $\scrX$ over $K$.
			Then we have a filtered isomorphism
			\[
			\rmH^i_{\inf}(X/K)\otimes_K \CC \simeq \rmH^i_\Sing(\scrX(\CC),\CC),
			\] where the latter is filtered by the algebraic infinitesimal filtration (cf. \cite{Bha12} and \cite{Har75}).
		\end{enumerate}
	\end{theorem}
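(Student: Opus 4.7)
The proof rests on (i) as its cornerstone. Given a closed immersion $X\hookrightarrow Y$ into a smooth rigid space with ideal $J$, the $n$-th infinitesimal neighborhoods $Y_n$ form a cofinal system of thickenings of $X$ in the infinitesimal site, since any thickening $(U,T)$ admits --- locally and by smoothness of $Y$ --- a map to some $Y_n$. The infinitesimal Poincar\'e lemma, which is classical in characteristic zero because nilpotent thickenings carry canonical divided powers over $\QQ$, then shows that $R\Gamma_\inf(X/K)$ is computed by the inverse system $\{\Omega_{Y_n/K}^\bullet\}$, i.e.\ by $\wh{\Omega_{Y/K}^\bullet}$; the $J$-adic filtration matches the infinitesimal filtration $\calJ_{X/K}$ by inspection. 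The smooth case is recovered by taking $Y=X$.

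Part (ii) then follows by the universal property of $\dR_{X/K}$ as the initial filtered derived algebra under $\calO_X$, which functorially produces a filtered map to the locally defined $\wh{\Omega_{Y/K}^\bullet}$ and hence to $R\Gamma_\inf(X/K)$ after gluing. In the l.c.i.\ case a local smooth embedding gives $\LL_{X/K}\simeq[J/J^2\to\Omega_{Y/K}^1|_X]$ of Tor-amplitude $[-1,0]$, and a Koszul-resolution calculation identifies the graded pieces $\wedge^i\LL_{X/K}[-i]$ of $\dR_{X/K}$ with those of $\wh{\Omega_{Y/K}^\bullet}$ from (i), yielding the filtered isomorphism. For the underlying complex in the general non-l.c.i.\ case, I would adapt Bhatt's Adams-completion argument from \cite{Bha12} to the rigid setting, using that the analytic cotangent complex controls the relevant simplicial resolutions after suitable completion.

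The technical heart --- and the step I expect to be the hardest --- is (iii), the \'eh-descent. \'Etale descent for $R\Gamma_\inf(X/K)$ is essentially built into the definition of the site. The deep input is proper descent along resolutions of singularities, which for rigid spaces requires a Temkin-style desingularization theorem. My approach would be to factor the comparison through part (ii): the derived de Rham complex is manifestly functorial under smooth proper hypercoverings and admits descent via cotangent-complex machinery, and for smooth rigid spaces all three theories coincide with analytic de Rham cohomology, which satisfies \'eh descent by \cite{Guo19}. Combining \'etale descent with proper descent yields \'eh hyperdescent for the underlying complex of $R\Gamma_\inf$, and comparing with $\Omega_\eh^\bullet$ on the smooth strata produces the desired map.

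The remaining parts are then corollaries. Finiteness (iv) follows from (iii) via a smooth proper hypercovering $X_\bullet\to X$ (obtained from resolution of singularities), by which $R\Gamma_\inf(X/K)$ is the totalization of the perfect $K$-complexes $R\Gamma(X_n,\Omega^\bullet_{X_n/K})$, each concentrated in degrees $[0,2\dim X_n]\subset[0,2\dim X]$. Base extension (v) reduces to the smooth proper case through the same descent, where flat base change for coherent cohomology on a smooth proper rigid space suffices. For (vi), rigid GAGA identifies the analytic $R\Gamma_\inf$ of $X$ with the algebraic $R\Gamma_\inf$ of $\scrX$ over $K$; base extending along the abstract isomorphism $K\cong\CC$ and invoking the Hartshorne--Deligne--Bhatt theorem (\cite{Har75}, \cite{Bha12}) identifies the result with $\rmH^i_\Sing(\scrX(\CC),\CC)$. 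The filtration comparison is formal once the underlying complex identification is in place.
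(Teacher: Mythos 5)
Your overall architecture tracks the paper's — (i) via envelopes and a Poincar\'e lemma, (iii) via resolution of singularities, (iv)--(vi) as consequences — but several load-bearing steps are asserted rather than proved. The central gap is in (iii): you claim the derived de Rham complex ``admits descent via cotangent-complex machinery'' under smooth proper hypercoverings, but the cotangent complex satisfies only flat/\'etale descent, not proper descent; proper descent is exactly the hard content here and cannot be outsourced to (ii). The paper proves it as a blowup distinguished triangle (Theorem \ref{des blowup}), following Hartshorne: for a blowup $f:X'\ra X$ with smooth center $Y$, one embeds locally into a smooth $Z$, compares $Rf_*$ of coherent sheaves with their formal completions along $X$, $Y$, $Y'$ using Kiehl's formal function theorem (\Cref{formal function}), and observes that the resulting cones agree because the higher direct images are supported on the center. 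Without this input, the passage from \'etale descent to \'eh-hyperdescent does not go through. A second gap is in (iv): the totalization $R\lim_{\Delta^{\op}}$ of a cosimplicial diagram of perfect complexes concentrated in $[0,2\dim X]$ is not automatically perfect or concentrated in $[0,2\dim X]$ — the cosimplicial direction can push cohomology into arbitrarily high degrees. The paper instead runs a finite induction on dimension along the blowup triangles, which produces long exact sequences with finitely many terms and gives both finiteness and the degree bound.

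Two smaller points. In (i), cofinality of the $Y_n$ is not enough: a thickening $(U,T)$ admits a map to some $Y_n$ locally but not uniquely, so the envelope $D_X(Y)$ only \emph{covers} the final object of the topos rather than being final; one must take the \v{C}ech nerve $D(\bullet)$ of the envelope (the \v{C}ech--Alexander complex) and then prove a relative Poincar\'e lemma for the projections $D(p)\ra D$ (Lemmas \ref{lemA} and \ref{lemB}) to collapse it to $\wh{\Omega_{Y/K}^\bullet}$ with its filtration. In (ii), you invoke the universal property of $\dR_{X/K}$ as the initial filtered derived algebra; this is known for the algebraic derived de Rham complex (\cite{Rak20}) but has not been established for the analytic version — the paper explicitly flags this and instead builds the comparison indirectly through the \v{C}ech--Alexander complex over a site of smooth embeddings, using Bhatt's theorem that $\ul{\wh{\mathrm{dR}}}_{B/A}$ is the formal completion of $A$ for surjections $A\ra B$ to identify the underlying complexes.
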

	Here the \emph{underlying complex} of a filtered object is defined as the complex forgetting its filtration.

	\begin{remark}[Analytic derived de Rham complex]
		In Theorem \ref{main1}.(ii), the usual notion of the derived de Rham complex of Illusie is not suitable when we are working with rigid analytic spaces;
		instead we modify the construction so that it is continuous under the $p$-adic topology.
		Our strategy is to first apply the (derived) $p$-adic completion to the algebraic derived de Rham complex for the rings of definitions over $\mathcal{O}_K$, then consider the filtered completion of its generic fiber.
		This produces a filtered $\mathbb{E}_\infty$-algebra $\dR_{X/K}$ in the derived ($\infty$) category of sheaves of $K$-modules over $X$, 
		whose graded pieces are wedge products of the analytic cotangent complex $\LL_{X/K}^\an$ introduced by Gabber--Ramero in \cite[Section 7]{GR}.
		We refer readers to Subsection \ref{subsec aff ddR} for details.
		
		This notion has been considered in \cite{GL20}, where Shizhang Li and the author show that applying at affinoid perfectoid algebras, the analytic derived de Rham complex can recover the de Rham period sheaves $\BBdrp$ and $\calO\BBdrp$ over the pro-\'etale site, previously introduced in \cite{Bri08} and \cite{Sch13}.
		We also want to mention that a version of $\dR_{X/K}$ for derived analytic stacks $X$ has been considered independently by Jorge Ant\'onio in \cite{Ant20}.
	\end{remark}
	
	\begin{remark}[\'Eh topology]
		The \'eh cohomology in Theorem \ref{main1}.(iii) is the cohomology of the \'eh-sheafification of the de Rham complex over the \'eh site, where the latter is analogous to Voevodsky's h-topology for algebraic schemes in the method (3) of last subsection.
		It is designed as the minimal refinement of \'etale topology that is locally smooth (thanks to resolution of singularities of rigid spaces in for example \cite{BM08} and \cite{Tem12}), and is defined by adding  universal homeomorphisms and coverings associated with blowups.
		It can be shown that when $X$ is the analytification of a proper algebraic variety, \'eh de Rham cohomology in Theorem \ref{main1}.(iii) is filtered isomorphic to the cohomology of (analytification) of the Deligne--Du Bois complex (\cite[Corollary 5.2.2]{Guo19}).
		So we recover the algebraic Deligne--Du Bois complex in the $p$-adic analytic setting.
	\end{remark}
	
	\begin{remark}[\'Eh de Rham complex]
		Let $\pi:X_\eh \ra X_\rig$ be the natural map from the \'eh site to the rigid site introduced in \cite{Guo19}.
		The \'eh de Rham complex in Theorem \ref{main1}.(iii) is obtained by applying the derived global section to the complex 
		\[
		R\pi_*\Omega_\eh^\bullet.
		\]
		Here each $\Omega_{\eh}^i$ is the \'eh-sheafification of the continuous sheaf of differentials for rigid spaces, 
		and it is shown in \cite[Section 6]{Guo19} that each $R\pi_*\Omega_\eh^i$ is a complex of coherent $\calO_X$-modules that lives in cohomological degree $[0,\dim(X)]$, and vanishes for $i>\dim(X)$.
		As a consequence, analogous to the classical Hodge--de Rham filtration on de Rham cohomology of complex varieties, (the underlying complex of) the infinitesimal cohomology $R\Gamma_{\inf}(X/K)$ can be computed via cohomology of (finite amount of) coherent sheaves and Theorem \ref{main1}.(iii).
	\end{remark}
	
	\begin{remark}[Various filtrations]
		\label{rmk direct summand}
		Infinitesimal cohomology $R\Gamma_{\inf}(X/K)$ admits a natural filtered map to the usual de Rham cohomology of $X$, and the latter maps to \'eh de Rham cohomology.
		So summarizing the various filtrations in Theorem \ref{main1}, we get the following sequence of maps in filtered derived category:
		\[
		R\Gamma(X, \dR_{X/K}) \xrightarrow{(1)} R\Gamma_{\inf}(X/K) \xrightarrow{(2)} R\Gamma(X,\Omega_{X/K}^\bullet) \xrightarrow{(3)} R\Gamma(X_\eh ,\Omega_\eh^\bullet),
		\]
		enhancing the canonical maps on the underlying complexes.
		In particular, thanks to \Cref{main1}.(ii, iii), both the map (1) and the composition of (2) and (3) induce isomorphisms on the underlying complexes.
		As a consequence, (the underlying complex of) the infinitesimal cohomology forms a direct summand of the usual continuous de Rham cohomology.
	\end{remark}
	
	\begin{remark}[Crystals]
		Similarly to the schematic crystalline theory, we also have a theory of crystals over the infinitesimal site.
		Moreover, it could be shown that analogous statements in Theorem \ref{main1}.(i), (iii),  (iv) and (v) hold true for cohomology of crystals.
		It is expected that Theorem \ref{main1}.(ii) for derived de Rham complex also admits a generalization with coefficients, via an approximate extension of classical crystals to the simplicial world.
		We leave this question to a future investigation.
	\end{remark}

	Now let $K$ be a complete and algebraically closed $p$-adic field.
	Let $\Bdr$ be Fontaine's de Rham period ring of $K$ in $p$-adic Hodge theory (\cite{Fon94}), and let $\xi$ be a fixed generator of the kernel for the canonical surjection $\Bdr\ra K$ (see Section \ref{sec inf} for definition).
	Recall for a smooth rigid space $X$ over $K$, Bhatt--Morrow--Scholze introduced the \emph{crystalline cohomology of $X$ over $\Bdr$} (\cite[Section 13]{BMS}).
	It is defined locally via the inverse limit 
	\[
	\varprojlim_{e\in \NN} \wh{\Omega_{Y_e/\Sigma_e}^\bullet}.
	\]
	Here $\{Y_e\}$ is a compatible family of smooth adic spaces over $\Sigma_e:=\Spa(\Bdr/\xi^e)$ that admit a closed immersion from $X$, 
	and $\wh{\Omega_{Y_e/\Sigma_e}^\bullet}$ is the formal completion of the continuous de Rham complex $\Omega_{Y_e/\Sigma_e}^\bullet$ along $X\ra Y_e$.
	It is shown in loc.$~$cit.$~$that the cohomology is independent of choices of closed immersions.
	Moreover,  after inverting $\xi$, 
	there exists a natural isomorphism between the crystalline cohomology of $X$ in \cite{BMS}, 
	and pro-\'etale cohomology of the de Rham period sheaves $\BBdr$, for quasi-compact quasi-separated rigid spaces.
	
	As the construction is analogous to the computation of the crystalline cohomology of schemes (see Subsection \ref{subsec back}), 
	in loc.$~$cit.$~$Bhatt--Morrow--Scholze expect that there is a conceptual crystalline theory for rigid spaces, defined similarly to the infinitesimal cohomology in the schematic theory, whose cohomology is isomorphic to the crystalline cohomology in \cite{BMS} (see \cite[Remark 13.2]{BMS}).
	Our next goal is to answer this question.
	In fact, our project was initiated after the author read the question from \cite{BMS}.
	
	Let us consider the infinitesimal site $X/\Sigma_{\inf}$, which is defined on the category of nil closed immersions $(U,T)$ such that $U$ is open in $X$, $U\ra T$ is a nil closed immersion, and $T$ is a locally topological finite type adic space over $\Bdr/\xi^e$ for some $e\in \NN$.
	The covering structure of $X/\Sigma_{\inf}$ is defined by the open coverings of adic spaces as the one in $X/K_{\inf}$, 
	and we can equip $X/\Sigma_{\inf}$ with a natural infinitesimal structure sheaf $\calO_{X/\Sigma}$ and analogously an infinitesimal ideal sheaf $\mathcal{J}_{X/\Sigma}$.
	
	Now we can state the next main result.
	\begin{theorem}\label{main2}
		There is a $\Bdr$-linear cohomology theory 
		\[
		X\mapsto R\Gamma_{\inf}(X/\Sigma):=R\Gamma(X/\Sigma_{\inf}, \calO_{X/\Sigma}),
		\] together with the filtration defined by $R\Gamma(X/\Sigma_{\inf}, \calJ^\ast_{X/\Sigma})$, for rigid spaces $X$ over $K$, and is taking values in the filtered complete derived category over $\Bdr$ (where the latter is equipped with $\xi$-adic filtration).
		It satisfies the following properties:
		\begin{enumerate}[label=\upshape{(\roman*)}]
			\item \emph{Reduction to $K$ (\Cref{inf-lev coh}.(iii)):} There exists a natural filtered base change isomorphism 
			\[
			R\Gamma_{\inf}(X/\Sigma)\otimes^L_{\Bdr} K \rra R\Gamma_{\inf}(X/K),
			\]
			where $R\Gamma_{\inf}(X/K)$ is the infinitesimal cohomology with its infinitesimal filtration in Theorem \ref{main1}.

			\item \emph{Explicit formula (\Cref{inf-lev coh}.(ii)):} Assume $\{X\ra Y_e\}_e$ is a system of closed immersions from $X$ to smooth adic spaces $Y_e$ over $\Sigma_e$, such that they are compatible via isomorphisms $Y_{e+1}\times_{\Sigma_{e+1}} \Sigma_e\simeq Y_e$.
			Then there is a natural filtered isomorphism 
			\[
			R\Gamma_{\inf}(X/\Sigma) \rra R\Gamma(X, \varprojlim_{e\in \NN} \wh{\Omega_{Y_e/\Sigma_e}^\bullet}),
			\]
			where $\wh{\Omega_{Y_e/\Sigma_e}^\bullet}$ is the formal completion of the de Rham complex $\Omega_{Y_e/\Sigma_e}^\bullet$ along $X\ra Y_e$.
			
			\item\emph{Derived de Rham comparison (\Cref{inf-lev ddR}):} There exists a natural filtered morphism inducing an isomorphism on underlying complexes 
			\[
			R\Gamma_{\inf}(X/\Sigma) \rra  R\Gamma(X, \dR_{X/\Sigma}),
			\]
			where $\dR_{X/\Sigma}$ is defined as the derived inverse limit of the underlying complexes of $\dR_{X/\Sigma_e}$ over $e\in \NN$.
			
			\item\emph{\'Eh hyperdescent (\Cref{inf-lev des 2}):} The underlying complex of $R\Gamma_{\inf}(X/\Sigma)$ satisfies the \'eh-hyperdescent for rigid spaces $X$ over $K$.
			
			\item \emph{Pro-\'etale comparison (\Cref{inf-proet}):} There exists a natural $\Bdr$-linear map from the underlying complex of $R\Gamma_{\inf}(X/\Sigma)$ to the pro-\'etale cohomology 
			\[
			R\Gamma_{\inf}(X/\Sigma) \rra R\Gamma(X_\pe, \BBdrp).
			\]
			For quasi-compact quasi-separated rigid spaces $X$, the above induces an isomorphism after inverting $\xi$:
			\[
			R\Gamma_{\inf}(X/\Sigma)[\frac{1}{\xi}] \simeq R\Gamma(X_\pe, \BBdr).
			\]
			The map is $\Gal(K/K_0)$-equivariant when $X$ is isomorphic to $X_0\times_{K_0} K$ for a rigid space $X_0$ over $K_0$ and a subfield $K_0$ of $K$.
			
			\item \emph{Finiteness (\Cref{fin Bdr}) and Torsion-freeness (\Cref{torsionfree and HdR}):} When $X$ is proper of dimension $n$ over $K$, each cohomology group $\rmH^i_{\inf}(X/\Sigma)$ is a finite free $\Bdr$-module, and it vanishes for $i\notin [0,2\dim(X)]$.
		\end{enumerate}
	\end{theorem}
	\begin{remark}
		The comparison with the derived de Rham complex over $\Bdr$ in Theorem \ref{main2}.(iii) is compatible with the one in Theorem \ref{main1}.(ii) over $K$, under the natural base change map in Theorem \ref{main2}.(i).
	\end{remark}
	\begin{remark}[Torsion-freeness]
		\Cref{main2}.(vi) shows that each cohomology group $\mathrm{H}^i_{\inf}(X/\Sigma)$ is finite free over $\Bdr$ for $X$ proper over $K$.
		This could be surprising at the first sight, however in the special case when $X$ is defined over $K_0$ as in \Cref{main3}, the result follows easily from the base change formula as in \Cref{main3}.
	\end{remark}
	\begin{remark}[Degeneracy of Hodge--\'eh de Rham]
		As a byproduct of proving \Cref{main2}.(vi), we also show  (in \Cref{torsionfree and HdR}) that Hodge--\'eh de Rham spectral sequence for $X/K$ splits, where $X$ is a proper rigid space.
		This strengthens the result of \cite[Proposition 8.0.8]{Guo19} where we assumed $X$ to be defined over $K_0$.
	\end{remark}
	A consequence of the explicit computation in Theorem \ref{main2}.(ii) is the following.
	\begin{corollary}
		The infinitesimal cohomology $R\Gamma_{\inf}(X/\Sigma)$ is isomorphic to the crystalline cohomology of $X$ over $\Bdr$ in the sense of \cite[Section 13]{BMS}.
	\end{corollary}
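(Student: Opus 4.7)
The plan is to reduce the statement directly to Theorem \ref{main2}(ii), which is already tailored to look exactly like the Bhatt--Morrow--Scholze construction. Recall that in \cite[Section 13]{BMS}, crystalline cohomology over $\Bdr$ is defined locally on $X$: one chooses an open affinoid cover, on each piece chooses a compatible system of closed immersions $X\vert_{U}\hookrightarrow Y_e$ into smooth adic spaces $Y_e$ over $\Sigma_e$ with $Y_{e+1}\times_{\Sigma_{e+1}}\Sigma_e\cong Y_e$, forms the inverse limit $\varprojlim_e \wh{\Omega_{Y_e/\Sigma_e}^\bullet}$ of the formally completed de Rham complexes, and then shows independence of choice and glues the pieces together.

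The first step is to establish, locally on $X$, the existence of such a compatible system $\{Y_e\}$. Any affinoid rigid space over $K$ admits a closed immersion into a closed polydisk $Y_0\subset \mathbb{D}^N_K$; thickening the coordinates to elements of $\Bdr$ produces the tautological smooth lifts $Y_e\subset \mathbb{D}^N_{\Sigma_e}$, compatible in $e$. This is the local input used already in \cite[Section 13]{BMS}.

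The second step is to apply Theorem \ref{main2}(ii) on each such open affinoid $U\subset X$: it directly identifies
\[
R\Gamma_{\inf}(U/\Sigma)\;\xrightarrow{\sim}\; R\Gamma\bigl(U,\,\varprojlim_{e\in\NN} \wh{\Omega_{Y_e/\Sigma_e}^\bullet}\bigr),
\]
where the right-hand side is, by construction, the local model for BMS crystalline cohomology of $U$. In particular this comparison is natural in $U$, so as $U$ varies through an affinoid cover of $X$ both sides assemble into sheaves on the analytic site and the isomorphism is compatible with restriction maps.

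The final step is to globalize by descent. The left-hand side $U\mapsto R\Gamma_{\inf}(U/\Sigma)$ satisfies analytic (even \'eh) descent by Theorem \ref{main2}(iv). The right-hand side satisfies analytic descent because in \cite[Section 13]{BMS} the local constructions are shown to glue into a well-defined global object (after passing to the derived category); concretely one can realize this glueing via the Čech complex associated to a cover by affinoids admitting compatible systems of smooth thickenings. Combining naturality of the comparison with descent on each side yields the global isomorphism, which is the claim. The only real subtlety, and the step deserving most care, is verifying that the natural descent data on the two sides correspond under the isomorphism of Theorem \ref{main2}(ii); once that is checked on pairwise intersections (which again admit local compatible smooth embeddings), the corollary follows.
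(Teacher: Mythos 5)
Your proposal is correct and follows essentially the same route as the paper: the paper deduces the corollary directly from the explicit local computation of Theorem \ref{main2} (ii) (proved in the body as Theorem \ref{inf-lev, coh}), observing that for a compatible system of closed immersions $X\ra Y_e$ the inverse limit $\varprojlim_e \wh{\Omega_{Y_e/\Sigma_e}^\bullet}$ is by definition the local model of BMS crystalline cohomology, with globalization handled exactly as you describe. The only cosmetic difference is that the paper leaves the gluing/independence-of-embedding step to the treatment already given in \cite[Section 13]{BMS}, whereas you spell it out via descent.
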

	\begin{remark}
		A variant of the infinitesimal site for smooth rigid spaces has been considered by Zijian Yao in \cite[Section 5]{Yao19}, where the crystalline cohomology of \cite[Section 13]{BMS} is reconstructed conceptually, and is compared with pro-\'etale cohomology of the de Rham period sheaf.
		Using the \v{C}ech-Alexander complex, it can be shown that our $R\Gamma_{\inf}(X/\Bdr)$ for smooth rigid spaces coincides with the crystalline cohomology of \cite{Yao19}. 
	\end{remark}
	
	At last, we comment on the case when $X$ is defined over a discretely valued subfield.
	By the Primitive Comparison Theorem of Scholze \cite{Sch13b} and the \'eh-hyperdescent in Theorem \ref{main2}.(iv), 
	we can compare our infinitesimal cohomology with \'etale cohomology.
	\begin{corollary}\label{main3}
		Let $K_0$ be a discretely valued subfield of $K$ that has a perfect residue field, and let $X$ be a proper rigid space over $K_0$.
		\begin{enumerate}[label=\upshape{(\roman*)}]
			\item The cohomology $R\Gamma_{\inf}(X_K/\Sigma)$ can be defined over $K_0$.
			Namely, there exists a natural base extension formula for underlying complexes
			\footnote{The curious reader might ask how the filtrations relate to each other under this isomorphism. In fact, using the recent advance of condensed mathematics of Clausen--Scholze, one can extend the tensor product formula to a filtered enhancement, as in the sequel \cite[Cor. 8.2.5, Rmk.\ 8.2.6]{Guo21}.}
			\[
			R\Gamma_{\inf}(X_{K}/\Sigma) \simeq R\Gamma_{\inf}(X/K_0)\otimes_{K_0} \Bdr. 
			\]
			\item For every $n\in \NN$, there exists a natural $\Gal(K/K_0)$-equivariant isomorphism 
			\[
			\rmH^n_{\inf}(X_{K}/\Sigma)[\frac{1}{\xi}] \simeq  \rmH^n_\et(X_{K},\QQ_p)\otimes_{\QQ_p} \mathrm{B}_{\mathrm{dR}}.
			\]
		\end{enumerate}
	\end{corollary}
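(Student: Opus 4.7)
For part (i), the plan is to construct the natural base extension map by functoriality of the infinitesimal topos and then verify it is a quasi-isomorphism by derived Nakayama. The map
\[
R\Gamma_{\inf}(X/K_0)\otimes_{K_0} \Bdr \rra R\Gamma_{\inf}(X_K/\Sigma)
\]
will arise from the functor $(U,T) \mapsto (U_K,\, T\hat{\otimes}_{K_0} \Bdr/\xi^e)$ from objects of the $K_0$-infinitesimal site of $X$ into objects of the $\Bdr$-infinitesimal site of $X_K$, combined with the natural $\Bdr$-algebra structure on $\calO_{X_K/\Sigma}$. Both sides are $\xi$-adically complete: the right-hand side by the construction of $R\Gamma_{\inf}(\,\cdot\,/\Sigma)$ in Theorem \ref{main2}, and the left-hand side because $R\Gamma_{\inf}(X/K_0)$ is a perfect $K_0$-complex by Theorem \ref{main1}(iv). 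Reducing modulo $\xi$, the right-hand side becomes $R\Gamma_{\inf}(X_K/K)$ by Theorem \ref{main2}(i), while the left-hand side becomes $R\Gamma_{\inf}(X/K_0) \otimes_{K_0} K$; these coincide by the base extension isomorphism of Theorem \ref{main1}(v). Derived Nakayama for $\xi$-adically complete complexes with finite cohomology then upgrades this to a quasi-isomorphism.

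For part (ii), the plan is to compose the pro-\'etale comparison of Theorem \ref{main2}(v) with Scholze's primitive comparison theorem from \cite{Sch13}. Applying Theorem \ref{main2}(v) to $X_K = X\times_{K_0} K$ yields a $\Gal(K/K_0)$-equivariant isomorphism
\[
R\Gamma_{\inf}(X_K/\Sigma)[\tfrac{1}{\xi}] \cong R\Gamma(X_{K,\pe}, \BBdr).
\]
It then suffices to identify the right-hand side with $\rmH^n_\et(X_K,\QQ_p)\otimes_{\QQ_p}\Bdrr$. For proper smooth $X/K_0$ this is exactly the primitive comparison theorem. In the general proper case, I would choose a proper \'eh-hypercover $X_\bullet \to X$ with each $X_n$ smooth and proper over $K_0$ (possibly after a finite Galois extension, using resolution of singularities) and descend simultaneously: the infinitesimal side by the \'eh-hyperdescent in Theorem \ref{main2}(iv) (after inverting $\xi$, which is exact), and the $\QQ_p$-\'etale side by proper hyperdescent for torsion coefficients. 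Galois equivariance is inherited from each ingredient and from the equivariance in Theorem \ref{main2}(v).

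The main obstacle I foresee is the singular case of primitive comparison in part (ii): ensuring that the two descent spectral sequences match compatibly, and that the \'eh-hypercover with smooth proper terms can be produced over a sufficiently small field so as to preserve the $\Gal(K/K_0)$-action throughout. Given the available resolution of singularities in rigid geometry and the established hyperdescent properties cited above, this is expected to amount to careful bookkeeping with hypercovers rather than requiring any genuinely new input.
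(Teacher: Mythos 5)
Your proposal is correct, but for part (i) it takes a genuinely different route from the paper. The paper constructs the same functoriality map and then reduces the general proper case to the smooth proper case via \'eh-hyperdescent (Theorem \ref{main1} (iii) and Theorem \ref{main2} (iv)), where the claim becomes the classical de Rham base change formula $R\Gamma_{\mathrm{dR}}(X/K_0)\otimes_{K_0}K\cong R\Gamma_{\mathrm{dR}}(X_K/K)$ after reducing along $\Bdr\ra K$. You instead bypass the hyperdescent entirely: both sides are derived $\xi$-complete (the target by Theorem \ref{inf-lev, coh}, the source because $R\Gamma_{\inf}(X/K_0)$ is perfect over $K_0$ by Theorem \ref{main1} (iv)), the map reduces mod $\xi$ to the base extension isomorphism of Theorem \ref{main1} (v) via Theorem \ref{main2} (i), and derived Nakayama concludes. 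This is cleaner at the level of this corollary, the price being that you invoke Theorem \ref{main1} (v) for general proper $X$ (which in the paper is \Cref{base ext} and is itself proved by \'eh-hyperdescent), so the same input is used, only packaged one level earlier; you should also note the one compatibility you use implicitly, namely that the reduction mod $\xi$ of your comparison map is the base extension map of Theorem \ref{main1} (v), which holds because both arise from the same functoriality of infinitesimal sites. For part (ii) your argument is essentially the paper's: the paper cites the primitive comparison theorem together with the \'eh--pro\'etale comparison of \cite[Theorem 1.1.4]{Guo19}, whereas you compose Theorem \ref{main2} (v) directly with the primitive comparison; your fallback of descending along a smooth proper \'eh-hypercover is more than is needed, since the paper (in the proof of \Cref{torsion free and HdR}) already invokes the primitive comparison for general proper rigid spaces over $K$, but it is not wrong.
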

	\begin{proof}
		For (i), there exists a natural map from the right to the left, via a natural functor of infinitesimal sites defined by the base extension, and both sides are complete with respect to $\xi$-adic topology.
		When $X$ is smooth proper over $K_0$, the infinitesimal cohomology complex can be computed using de Rham cohomology by Theorem \ref{main1}.(i), so after a base change along $\Bdr\ra K$, part (i) can be reduced to the following known base change formula for the continuous de Rham cohomology
		\[
		R\Gamma_{\mathrm{dR}}(X/K_0)\otimes_{K_0} K \simeq R\Gamma_{\mathrm{dR}}(X_{K}/K).
		\]
		In general, part (i) follows from the \'eh-hyperdescent among rigid spaces over $K_0$ in Theorem \ref{main1}.(iii) and Theorem \ref{main2}.(iv).
		Part (ii) follows from the Primative Comparison Theorem (\cite[Thm.\ 3.17]{Sch13b}) and the \'eh-pro\'etale comparison in \cite[Theorem 1.1.4]{Guo19}.
	\end{proof}
	\begin{remark}[Hodge--Tate filtration]
		The natural filtration of $\mathrm{B}_{\mathrm{dR}}$ induces a $\Gal(K/K_0)$-equivariant filtration on \'etale cohomology $\rmH^n_\et(X_{K},\QQ_p)\otimes_{\QQ_p} \mathrm{B}_{\mathrm{dR}}$. 
		In fact, it is shown in \cite[Theorem 1.1.4]{Guo19} that this filtration is isomorphic (via Corollary \ref{main3}) to the product filtration for the filtration on $\mathrm{B}_{\mathrm{dR}}$ and the \'eh--Hodge filtration on the infinitesimal cohomology $R\Gamma_{\inf}(X/K_0)$, defined by extending the Hodge filtration of the de Rham cohomology of smooth rigid spaces via \'eh hyperdescent.
		In particular, the graded pieces of the filtration on \'etale cohomology can be understood via the Hodge--Tate decomposition and \'eh cohomology (\cite[Theorem 1.1.4]{Guo19}):
		\[
		\cH^n_\et(X_{K},\Q_p)\otimes_{\Q_p} K=\bigoplus_{i+j=n}\cH^i(X_\eh,\Omega_{ \eh, K_0}^j)\otimes_{K_0} K(-j).
		\]
	\end{remark}

	\subsection{Summary of sections}
	Now we give a summary of each section in this article.
	
	We start in Section \ref{sec inf} by introducing the (small) infinitesimal site $X/\Sigma_{e \inf}$ and its big site version $X/\Sigma_{e \Inf}$.
	Here the base space $\Sigma_e$ is defined as the adic space $\Spa(\Bdre)$, for the $p$-adic Huber ring $\Bdre:=\Bdr/\xi^e$ and $e\in \NN_{>0}$.
	The discussion in this section is analogous to the discussion of the crystalline site and its topos for a pair of schemes,
	and we verify various formal properties, including the relation with the rigid topos, comparison between the big and the small topoi, and the functoriality of the infinitesimal topoi.
	We also introduce the envelope for an immersion of rigid spaces $X\ra Y$, regarded as either a colimit of representable sheaves in the infinitesimal topos, 
	or a locally ringed space defined over the underlying topology of $X$.
	Here we mention that it is slightly different from the crystalline theory of a scheme over a $p$-nilpotent basis, that
	an envelope in the infinitesimal site is almost never representable (see discussions in Subsection \ref{subsec env}).
	
	In Section \ref{sec cry}, we study the coherent crystals over the infinitesimal site $X/\Sigma_{e \inf}$.
	As the base $\Bdre$ has nilpotent elements, a coherent crystal $\calF$ over $X/\Sigma_{e \inf}$ may not always be locally free.
	We show that $\calF$ is a crystal in vector bundles if and only if it is flat over $\Bdre$ (see Definition \ref{cry flat} and Theorem \ref{cry, vec-bun}) for details).
	Moreover we prove that the category of coherent crystals is equivalent to the category of coherent sheaves with integrable connections over an envelope (Theorem \ref{cry}).
	
	Section \ref{sec coh} is devoted to a sheafified version of Theorem \ref{main1}.(i) for general coherent crystals over $X/\Sigma_{e \inf}$ (cf. Theorem \ref{glo-coh}).
	Here we adapt the idea from \cite{BdJ} for the computation: first we relate the \v{C}ech--Alexander complex and the de Rham complex of the envelope via a bicomplex, and then we show that the associated total complex converges to both of them.
	In particular, we improve the loc.$~$cit.$~$to a filtered isomorphism for the infinitesimal filtration, via a finer and concrete study on graded pieces of the completed de Rham complex.
	We also obtain a base change formula for the cohomology sheaf over $\Bdre$ for different $e\in \NN$ (cf. Proposition \ref{coh, change of bases}).
	
	In Section \ref{sec ddR}, we develop the foundations of the analytic derived de Rham complex, for a map of locally topological finite type adic spaces over $\Sigma_e$.
	We first recall the basics of the analytic cotangent complex introduced by Gabber--Ramero in \cite{GR}, of which we make a slight generalization from $K$-affinoid algebras to $\Bdre$-affinoid algebras.
	Then we introduce the analytic derived de Rham complex for affinoid algebras and show various properties of it.
	We use the technique of hypersheaves and hyperdescent to extend the affinoid construction to the global setting,  which we explain in Subsection \ref{sub glo ddr}.
	At last, we compare the cohomology of the analytic derived de Rham complex to the infinitesimal cohomology in Subsection \ref{sub inf-ddr}, and thus prove Theorem \ref{main1}.(ii).
	Here we mention that we will use mildly the language of $\infty$-category in this section, which is mainly to incorporate the use of hypersheaves via a procedure of unfolding.
	
	After that, we prove the \'eh-hyperdescent for the infinitesimal cohomology in Section \ref{sec eh}.
	Here we first show the descent along a blowup square of rigid spaces over $K$ in Theorem \ref{des blowup}, following the strategy in \cite[Chapter II]{Har75}.
	The hyperdescent along an \'eh-hypercovering for the cohomology of a general crystals is then shown in Theorem \ref{inf-eh, Bdre}, 
	where we use the base change formula to reduce to the $K$-linear case in Theorem \ref{inf-eh}.
	This in particular implies Theorem \ref{main1}.(iii).
	The rest of the section is devoted to the finiteness of the infinitesimal cohomology, the comparison with the algebraic cohomology, and a base field extension result; 
	namely item (iv), (v) and (vi) in Theorem \ref{main1}.
	We want to mention that as an object $X'\in X_\eh$ is not necessarily an open subset of $X$, in order to make sense of \'eh descent, we need to use crystals over the big infinitesimal site $X/K_{\Inf}$ in this section.
	However, we will not lose anything, for crystals and their cohomology are independent of working over big or small sites, thanks to Proposition \ref{cry, big and small} and Corollary \ref{coh, big small}.

	At the end of the article in Section \ref{sec Bdr}, we consider the infinitesimal cohomology of a rigid space with coefficients in the de Rham period ring $\Bdr$.
	Though the topology on the ring $\Bdr$ is not $p$-adic, we could still define the infinitesimal site of $X$ over $\Bdr$, denoted as $X/\Sigma_{\inf}$, by taking the union of all $X/\Sigma_{e \inf}$.
	We show that the cohomology of a crystal over $X/\Sigma_{\inf}$ is in fact the limit of the cohomology of its restrictions onto $X/\Sigma_{e \inf}$ in Theorem \ref{inf-lev coh}.
	In this way, we could apply results of previous sections to study the cohomology of $X/\Sigma_{\inf}$, and thus prove the Theorem \ref{main2} in the last two subsections.
	
	As a convention, we will use the language of adic spaces throughout the article.
	We refer the reader to Huber's book \cite{Hu96} for basics of the theory.

	\subsection{Acknowledgements}
	The starting point of this project is a remark in \cite{BMS}, and I thank Bhatt--Morrow--Scholze for their beautiful theory of integral $p$-adic Hodge theory, and for proposing this question. 
	I also thank Berthelot--Ogus, from whose textbook \cite{BO78} I first learned the theory of the crystalline cohomology.
	The intellectual debt of this article that owes to \cite{BdJ} and \cite{Har75} is very obvious, and I thank Bhatt--de Jong and Hartshorne for teaching us various techniques adapted in the article.
	I am in particular extremely grateful to my advisor Bhargav Bhatt, for encouraging me to pursue these topics, and for continuous support and uncountable discussions regarding the project.
	Thanks also go to Shizhang Li for various helpful conversations during the preparation of this project.
	Finally, I express my gratitude to the anonymous referees for their very careful reading and for many comments and suggestions, which tremendously help improve the article.  
	
	The project is part of the author's thesis. 
	It is partially funded by a Graduate Student Summer Fellowship from the Department of Mathematics, University of Michigan, and by NSF grant DMS 1801689 through Bhargav Bhatt.
	During the final revision process, the author is supported by the University of Chicago.

	\section{Infinitesimal geometry over $\Bdre$}\label{sec inf}
	In this section, we introduce the basics around the infinitesimal geometry over the de Rham period ring $\Bdre:=\Bdr/\xi^e$ and over a $p$-adic extension of $\mathbb{Q}_p$.

	\subsection{de Rham period ring and infinitesimal sites}
	We first introduce the big and the small infinitesimal sites of a rigid space over them, and study two natural maps between their topoi.
	
	\noindent
	\textbf{de Rham period rings.}
	As a setup, we recall the basics of the de Rham period ring.
	A more detailed introduction of the de Rham period ring could be found in \cite{Fon94}.
	
	Let $K$ be a $p$-adic valuation extension of $\Q_p$ that is complete and algebraically closed.
	Denote by $\calO_K$  the ring of integers of $K$.
	Then we can define the $p$-adic ring $\Ainf(\calO_K)$ as
	\[
	\Ainf:=W(\varprojlim_{x\mapsto x^p}\calO_K).
	\]
	There exists a canonical continuous surjection $\theta:\Ainf\ra \calO_K$, where the kernel $\ker(\theta)$ is principal.
	Fix a compatible system of $p^n$-th root of unity $\{\zeta_{p^n}\}_n$ in $K$.
	Then the element $\xi:=\frac{[\epsilon]-1}{[\epsilon]^{\frac{1}{p}}-1}$ generates the ideal $\ker(\theta)$, where $[\epsilon]$ is the Teichm\"uller lift of the element $(\zeta_1,\zeta_p,\ldots)$ in $\Ainf$.
	
	The \emph{de Rham period ring} $\Bdr$ is defined as the $\xi$-adic completion of the ring $\Ainf[\frac{1}{p}]$.
	By abuse of the notation, we write $\theta:\Bdr\ra K$ as the canonical continuous surjection induced from $\Ainf\ra \calO_K$.
	Note that for each $n\in \NN$, we have
	\[
	\Bdr/\xi^e=\Ainf[\frac{1}{p}]/\xi^e,
	\]
	which is a $p$-adic Tate ring with a canonical ring of definition $\Ainf/\xi^e$ in it.
	So we can form a Huber pair $(\Bdr/\xi^e,(\Bdr/\xi^e)^\circ)$ over $(\Q_p,\Z_p)$ for $n\in \NN$.
	The adic space $\Sigma_e:=\Spa(\Bdr/\xi^e,(\Bdr/\xi^e)^\circ)$ is a nilpotent extension of $\Spa(K,\calO_K)$.
	
	In the rest of the article, we often use $\Ainfe$ and $\Bdre$ to denote quotient rings $\Ainf/\xi^e$ and $\Bdr/\xi^e$ respectively, in order to simplify the notations.
	
	\noindent
	\textbf{Infinitesimal topology.} We now introduce the infinitesimal site for rigid spaces.
	\begin{definition}
		Let $e$ be a positive integer.
		A \emph{rigid space over $\Bdre$} is defined as an adic space of topological finite presentation over $\Sigma_e$.
		Namely $X$ can be covered by affinoid open subspaces which are of the form 
		\[
		\Spa(\Bdre\langle t_1,\ldots,t_n\rangle/I),
		\]
		where $I$ is a (finitely generated) ideal in $\Bdre\langle t_1,\ldots,t_n\rangle.$
		
		The category of rigid spaces over $\Sigma_e$ is denoted by $\RSe$.
	\end{definition}
	Recall that for a map of rigid spaces $f:U\ra T$, it is called a \emph{nil closed immersion} if $f$ is a closed immersion (defined by the vanishing of a coherent ideal $\calI$ in $\calO_T$), such that $T$ admits an open covering $\{T_i,~i\}$ with $\calI|_{T_i}$ being nilpotent.
	The closed immersion is called \emph{nilpotent} if there is an integer $n\in \mathbb{N}$, such that $I^n=0$.
	Note that a nilpotent closed immersion is always a nil closed immersion.
	The converse is true locally or assuming the  quasi-compactness of the target space.

	\begin{definition}
		\label{def inf site}
		\begin{enumerate}[(a)]
			\item
			Let $X$ be a rigid space over $\Sigma_e$.
			The \emph{(small) infinitesimal site} $X/\Sigma_{e \inf}$ is the site defined as follows:
			\begin{itemize}
				\item The underlying category of $X/\Sigma_{e \inf}$ is the collection of pairs $(U,T)$, called the \emph{infinitesimal thickening}, where $T$ is a rigid space over $\Sigma_e$, $U$ is an open subspace of $X$ and a closed analytic subspace of $T$, such that $U\ra T$ is a nil closed immersion.
				
				Here morphisms between $(U_1,T_1)$ and $(U_2,T_2)$ are defined as maps of pairs over $\Sigma_e$ such that $U_1\ra U_2$ is an open immersion inside $X$.
				\item A collection of morphism $(U_i,T_i)\ra (U,T)$ in $X/\Sigma_{e \inf}$ is a covering if both $\{T_i\ra T,~i\}$ and $\{U_i\ra U,~i\}$ are open coverings for the rigid spaces $T$ and $U$ respectively.
			\end{itemize}	
			\item The \emph{big infinitesimal site} $\Rei$ over $\Sigma_e$ is defined on the category of all of the pairs $(U,T)$ for $U\ra T$ being a nil closed immersion of rigid spaces over $\Sigma_e$, with the same covering structure as above.
			\item The \emph{big infinitesimal site  $X/\Sigma_{e \Inf}$ of $X$} is defined as the localization $\Rei|_X$ of the big site $\Rei$ at $X$. 
			Namely it is defined on  the category of all of the tuples $\{(U,T),f:U\ra X\}$, where $(U,T)$ is an object in $\Rei$, and $f:U\ra X$ is a map of rigid spaces over $\Sigma_e$.
			The covering structure is induced from that of $\Rei$.
		\end{enumerate}
	\end{definition}
	By the definition of the infinitesimal site above, a sheaf $\calF$ over the infinitesimal site is equivalent to the data of $\calF_T$ and $\varphi_g$ as below:
	\begin{itemize}
		\item a sheaf $\calF_T$ over the rigid space $T$, for each infinitesimal thickening $(U,T)\in X/\Sigma_{e \inf}$;
		\item a map of sheaves over $T_1$:
		\[
		\varphi_g:g^{-1}\calF_{T_2} \rra \calF_{T_1},
		\] for a given morphism of infinitesimal thickenings $(i,g):(U_1,T_1)\ra (U_2,T_2)$ in $X/\Sigma_{e \inf}$;
	\end{itemize}
	together with the natural cocycle condition, namely for given morphisms $(U_1,T_1)\xrightarrow{(i,g)}(U_2,T_2)\xrightarrow{(j,h)}(U_3,T_3)$ of infinitesimal thickenings, we have equalities of maps
	\[
	\varphi_{h\circ g} = g^{-1}\varphi_h\circ \varphi_g.
	\]
	The same holds for a sheaf over the big infinitesimal site.
	We call the category of sheaves on $X/\Sigma_{e \inf}$ (or $X/\Sigma_{e \Inf}$) the \emph{infinitesimal topos}, and denote it by $\Sh(X/\Sigma_{e \inf})$ (or $\Sh(X/\Sigma_{e \Inf})$).
	
	For two sheaves $\calF$ and $\calG$ over the infinitesimal site, we sometimes use the notation $\calF(\calG)$ to denote the set of homomorphisms
	\[
	\Hom(\calG,\calF).
	\]
	In the case when $\calG$ is a representable sheaf $h_T$ for an infinitesimal thickening $(U,T)$, the above hom set is the set of sections
	\[
	\Hom(h_T,\calF)=\calF(U,T).
	\]
	
	There is a natural  \emph{structure sheaf $\calO_{X/\Sigma_e}$} over the big or small infinitesimal site, which is defined as 
	\[
	\calO_{X/\Sigma_{e}}(U,T):=\calO_T(T),~(U,T)\in X/\Sigma_{e \inf}.
	\]
	Here we note that by the equivalent description right below \Cref{def inf site}, the above formula is naturally a sheaf.
	On the other hand, one can define the \emph{analytic structure sheaf $\mathcal{O}_X$} on the infinitesimal sites, via the formula
	\[
	\mathcal{O}_X(U,T):= \mathcal{O}_U(U), ~(U,T)\in X/\Sigma_{e \inf}.
	\]
	By construction, there is a natural surjection of sheaves $\mathcal{O}_{X/\Sigma_e} \to \mathcal{O}_X$, and we define the \emph{infinitesimal ideal sheaf $\mathcal{J}_{X/\Sigma_e}$} to be the sheaf of kernel ideals for the surjection.
	\begin{remark}\label{base K0}
		It is clear from the above definition that the infinitesimal site can be defined for any pair of analytic adic spaces $X\ra Z$, not just $X\ra \Sigma_e$.
		In particular, when $Z=\Spa(K_0)$ is a discretely valued field, and $X$ is a rigid space over $K_0$, we get the analogous version of the infinitesimal site of $X$ over $K_0$.
		Moreover, there exists a natural map of sites $X_K/K_{\inf} \ra X/K_{0, {\inf}}$, defined by the base field extension.
	\end{remark}

	Here are some basic properties of the infinitesimal sites.
	\begin{lemma}\label{site finite limit}
		Let $X$ be a rigid space over $\Sigma_e$.
		Then we have
		\begin{enumerate}[(i)]
			\item The fiber product exists in the big and the small infinitesimal site of $X$ over $\Sigma_e$, and is compatible with the inclusion functor between the big and the small sites.
			\item  The equalizer exists in the big and the small infinitesimal site of $X$ over $\Sigma_e$, and is compatible with the inclusion functor as in (i).
			\item The nonempty finite product is ind-representable in the big and the small infinitesimal site of $X$ over $\Sigma_e$, and is compatible with the inclusion functor as in (i).
		\end{enumerate}
	\end{lemma}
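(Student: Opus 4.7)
The plan is to construct each of the three limits by first performing the analogous operation on the underlying rigid spaces over $\Sigma_e$, and then cutting down on the $T$-side so that the nil closed immersion on the $U$-side is preserved; the inclusion functor from the small to the big site will preserve every construction on the nose.

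For (i), given morphisms $(U_1,T_1),(U_2,T_2)\ra (U,T)$, I would take $(U_1\cap U_2,\, T_1\times_T T_2)$ using the fiber product of rigid spaces on the $T$-side. Since the $U_i$ are opens of $X$, one has $U_1\cap U_2 = U_1\times_U U_2$; because nil closed immersions are stable under base change, the resulting $U_1\cap U_2\hra T_1\times_T T_2$ is again a nil closed immersion, and the universal property reduces to that of the fiber product of rigid spaces.

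For (ii), two parallel morphisms $f,g:(U_1,T_1)\ra(U_2,T_2)$ induce the same map on the $U$-side (both being the fixed open inclusion $U_1\hra U_2$), so I would form the equalizer $T_1'$ of $f,g$ in rigid spaces over $\Sigma_e$. This exists as a closed analytic subspace of (an open neighborhood of $U_1$ in) $T_1$, where a possible shrinking of $T_1$ is needed to accommodate the fact that the diagonal of $T_2/\Sigma_e$ is only locally closed. The nil closed immersion $U_1\hra T_1$ factors through $T_1'$ by agreement of $f$ and $g$ on $U_1$, and the defining ideal of $U_1$ in $T_1'$ is a quotient of that in $T_1$, hence locally nilpotent; so $(U_1,T_1')$ is an object of the site and satisfies the equalizer property. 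The same construction works verbatim in the big site, and the compatibility with the big-to-small inclusion in (i) and (ii) is immediate.

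The case (iii) of finite products will be the main obstacle. The naive candidate $(U_1\cap U_2,\, T_1\times_{\Sigma_e} T_2)$ fails, because $U_1\cap U_2\ra T_1\times_{\Sigma_e} T_2$ factors through the diagonal of $X/\Sigma_e$ and so is generally only a locally closed immersion, and even where it is closed its defining ideal need not be globally nilpotent. My plan is to work locally: cover $U_1\cap U_2$ by affinoid opens, choose open neighborhoods $W\subseteq T_1\times_{\Sigma_e} T_2$ in which $U_1\cap U_2$ sits as a closed subspace with ideal $\calJ$, and form the infinitesimal neighborhoods $W_n$ cut out inside $W$ by $\calJ^n$. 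Each $(U_1\cap U_2, W_n)$ lies in the infinitesimal site, and any morphism $(Z,S)\ra h_{T_1}\times h_{T_2}$ corresponds to a map $S\ra T_1\times_{\Sigma_e}T_2$ pulling $\calJ$ into the locally nilpotent defining ideal of $Z\hra S$, hence factoring through some $W_n$ locally on $Z$. The product sheaf is therefore realised as the filtered colimit $\colim_n h_{(U_1\cap U_2, W_n)}$ of representables, after glueing across the cover; in the big site one replaces $U_1\cap U_2$ by $U_1\times_X U_2$. The hardest part is the bookkeeping needed to verify that this ind-representation is independent of the affinoid cover and of the choice of ambient open $W$, and that it is preserved by the big-to-small inclusion.
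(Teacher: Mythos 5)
Your constructions for (i) and (iii) are essentially the paper's: fiber products are formed componentwise on the $U$- and $T$-sides with the nil condition checked locally on affinoid charts, and the binary product is ind-represented by the system of infinitesimal neighborhoods of $U_1\times_X U_2$ inside $T_1\times_{\Sigma_e}T_2$ (the paper likewise factors this map as a locally closed immersion followed by a nil immersion and takes $\varinjlim_m Y_m$, which is exactly the envelope introduced later in Definition \ref{env}; your worry about independence of the affinoid cover and the ambient open $W$ is harmless, since the $m$-th infinitesimal neighborhood of a locally closed immersion is canonical).

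The one genuine gap is in (ii) for the big site. Your argument rests on the premise that two parallel morphisms $(U_1,T_1)\rightrightarrows(U_2,T_2)$ induce the same map $U_1\ra U_2$, so that only the $T$-side needs an equalizer. That is true in the small site, where both maps are the fixed open inclusion, but it fails in $X/\Sigma_{e\,\Inf}$: there $U_1\ra U_2$ are arbitrary $X$-morphisms of rigid spaces and the two arrows may differ on the $U$-side, so the construction does not ``work verbatim.'' You must also form the equalizer $V_3$ of $U_1\rightrightarrows U_2$ (as the pullback of $U_1\ra U_2\times_{\Sigma_e}U_2$ along the diagonal, which one checks is simultaneously the equalizer over $X$), and then verify that $V_3\ra T_3$ is again a nil closed immersion --- which follows because all four corners of the defining pullback square for $V_3$ are nil immersed into the corresponding square for $T_3$. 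This is precisely how the paper argues; with that repair your proof of (ii) goes through, and the small-site case is recovered since there $V_3=U_1$.
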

	\begin{proof}
		\begin{enumerate}[(i)]
			\item Let $(V_i,T_i)$ for $i=0,1,2$ be three objects in the big infinitesimal site $X/\Sigma_{e \Inf}$, with arrows $g_i:(V_i,T_i)\ra (V_0,T_0)$ for $i=1,2$.
			Namely each $V_i$ admits a map to $X$, and $V_i\ra T_i$ is a closed immersion that has a nil defining ideal.
			Then we can form the fiber products of rigid spaces $V_3:=V_1\times_{V_0} V_2$ and $T_3:=T_1\times_{T_0}T_2$ over $\Sigma_e$, together with a natural map $V_3\ra T_3$.
			Here the existence of fiber products is guaranteed in \cite[Prop.\ 1.2.2]{Hu96}.
			Any infinitesimal thickening $(V,T)$ that admits a compatible family of maps $(V,T)\ra (V_i,T_i)$ for $i=0,1,2$ would produce a commutative diagram
			\[
			\xymatrix{V\ar[r] \ar[d]& T\ar[d]\\
				V_3\ar[r] & T_3.}
			\]
			So it is left to show that $V_3\ra T_3$ is a nil closed immersion, which can be checked locally by choosing affinoid open subsets of $T_i, i=0,1,2$, as we shall explain.
			
			For $i=0,1,2$, we let $T_i$ be the affinoid adic space $\Spa(B_i)$ and let each closed subspace $V_i$ be defined by a nilpotent ideal $I_i\subset B_i$.
			We let $A_i$ for each $i=0,1,2$ be a compatible choice of subrings of definition of $B_i$, and let $J_i$ be the intersection $I_i\cap A_i$, which satisfies the equality that $J_i[1/p]=I_i$ by construction and in particular is nilpotent.
			Under the assumption, the fiber product $T_3$ is $\Spa( (A_1\otimes_{A_0} A_2)^\wedge_p[1/p])$, and the fiber product $V_3$ is $\Spa( (A_1/J_1\otimes_{A_0/J_0} A_2/J_2)^\wedge_p[1/p])$.
			Notice that as the kernel ideal $J$ of the surjection $A_1\otimes_{A_0} A_2 \to A_1/J_1\otimes_{A_0/J_0} A_2/J_2$ is generated by the image of the nilpotent ideals $J_0$, $J_1$, and $J_2$, the ideal $J$ in particular is nilpotent itself.
			As a consequence, since the kernel for the $p$-completed surjection $(A_1\otimes_{A_0} A_2)^\wedge_p\to (A_1/J_1\otimes_{A_0/J_0} A_2/J_2)^\wedge_p$ is generated by the image of $J$, we see the closed immersion $V_3\to T_3$ is indeed a nilpotent closed immersion.

			Finally we note that when $(V_1,T_i)$ comes from the small site for $i=0,1,2$ (namely the map $V_i\ra X$ is an open immersion for $i=0,1,2$), then the fiber product $V_3=V_1\times_{V_0} V_2$ is also open in $X$.
			In particular, the fiber product in this case is lying in the small site $X/\Sigma_{e \inf}$.
			
			\item 
			For the equalizer, consider the two arrows $\alpha,\beta:(V_1,T_1)\rightrightarrows (V_2,T_2)$ in $X/\Sigma_{e \Inf}$.
			Here both $V_1$ and $V_2$ admits a map to $X$, and $V_i\ra T_i$ are nil closed immersions.
			We can first form the equalizer $V_3$ of $V_1\rightrightarrows V_2$ and $T_3$ of $T_1\rightrightarrows T_2$ in the category of rigid spaces over $\Sigma_e$, by the pullback diagram
			\[
			\xymatrix{
				V_3\ar[r] \ar[d] &V_1\ar[d]\\
				V_2\ar[r] &V_2\times_{\Sigma_e} V_2,}~~~
			\xymatrix{T_3 \ar[r] \ar[d] &T_1\ar[d]\\
				T_2\ar[r] & T_2\times_{\Sigma_e} T_2,}
			\]
			where the bottom horizontal maps in both diagrams are diagonal embeddings.
			The left diagram admits a natural map to the right.
			Moreover, we notice that $V_3\ra T_3$ is a nil closed immersion, as all of other three terms in the diagram of $V_3$ are nil immersed into the diagram of $T_3$.
			Furthermore, as the map $V_1\ra V_2\times_{\Sigma_e} V_2$ factors through $V_2\times_X V_2\ra V_2\times_{\Sigma_e} V_2$, the pullback $V_3$ is also isomorphic to the equalizer of $V_1\rightrightarrows V_2$ in the category of rigid spaces over $X$.
			In this way, the object $(V_3,T_3)\in X/\Sigma_{e \Inf}$ obtained above forms the equalizer of $\alpha,\beta$ in the category.
			
			We at last note that the case when $\alpha,\beta$ comes from the small site is exactly when both of the arrows $V_1\rightrightarrows V_2$ are open immersions (hence they are the same), where the obtained base change $V_3\simeq V_2\underset{V_2\times_X V_2}{\times}V_1\simeq V_2\times_{V_2}V_1=V_1$ is also open in $X$.
			Thus the construction of the equalizer is compatible with the one in the small site.

			\item Let $(V_i,T_i)$ for $i=1,2$ be two objects in the big infinitesimal site $X/\Sigma_{e \Inf}$.
			Then we can form the fiber product $V_3:=V_1\times_X V_2$ over $X$, and the fiber product $T_1\times_{\Sigma_e} T_2$ over $\Sigma_e$ together with a natural map from $V_3$, such that any object $(V',T')\in X/\Sigma_{e \Inf}$ that admits a map to $(V_i,T_i)$ for $i=1,2$ will admit a unique map onto the pair of rigid spaces $(V_3, T_1\times_{\Sigma_e} T_2)$.
			
			Now the only problem is that the pair $(V_3, T_1\times_{\Sigma_e} T_2)$ is almost never a pair of infinitesimal thickening.
			However, notice that the map $V_3 \ra T_1\times_{\Sigma_e} T_2$ can be written as the composition
			\[
			V_3=V_1\times_X V_2\rra V_1\times_{\Sigma_e} V_2\rra  T_1\times_{\Sigma_e} T_2,
			\]
			where the first map is a locally closed immersion (a composition of a closed immersion and an open immersion) and the second map is a nil closed immersion.
			This allows us to form the direct limit
			\footnote{The direct limit is called \emph{envelope} for the locally closed immersion and will be formally introduced in \Cref{subsec env}, to which we refer the reader for detailed discussions.} $\varinjlim_m Y_m$ of all infinitesimal neighborhoods of $V_3$ into $T_1\times_{\Sigma_e} T_2$, where each $Y_m$ is the $m$-th infinitesimal neighborhood of $V_3$ inside of $T_1\times_{\Sigma_e} T_2$.
			In this way, the fiber product of $(V_1,T_1)$ and $(V_2,T_2)$ is ind-represented by the colimit of $(V_3, Y_m)$, for locally each map from an object $(V',T')$ onto the pair $(V_3,T_1\times_{\Sigma_e} T_2)$ factors through some $(V_3, Y_m)$ by \Cref{rep}.
			
			At last, we note that the construction is independent of big or small infinitesimal sites.
			Moreover, when $V_1$ and $V_2$ are open in $X$, from the construction above the rigid space $V_3$ is also open in $X$.
			Thus the nonempty finite product is compatible between the big and the small sites.

		\end{enumerate}
		
	\end{proof}
	\begin{remark}
		In fact, the ind-representable sheaf for the directed limit $\varinjlim_m Y_m$ is the \emph{envelope} of the immersion $V_3\ra T_1\times_{\Sigma_e} T_2$, which we will introduce in Definition \ref{env} soon.
	\end{remark}

	\noindent
	\textbf{Relation between big and small sites/topoi.}\label{site big small}
	Given a rigid space $X$ over $\Sigma_e$, there are two natural morphisms of topoi between the big infinitesimal topos $\Sh(X/\Sigma_{e \Inf})$ and the small infinitesimal topos $\Sh(X/\Sigma_{e \inf})$ of $X$.
	To see this, we first notice that by constructions, there exists a natural inclusion functor 
	\[
	X/\Sigma_{e \inf}\rra X/\Sigma_{e \Inf}.
	\]
	The inclusion functor is \emph{continuous} in the sense of \cite[Tag 00WV]{Sta}, and thus induces two functors between their topoi (\cite[Tag 00WU]{Sta})
	\begin{itemize}
		\item For a sheaf $\calF\in \Sh(X/\Sigma_{e \inf})$ over the small site, there exists a preimage functor $\mu^{-1}$ with $\mu^{-1}\calF$ being the sheaf associated with the presheaf
		\[
		X/\Sigma_{e \Inf}\ni (V,S)\lmt \varinjlim_{\substack{(V,S)\ra (U,T)\\ (U,T)\in X/\Sigma_{e \inf}}} \calF(U,T).
		\]
		By \Cref{site finite limit} the functor $\mu^{-1}$ commutes with nonempty finite limits.
		\item The direct image functor $\mu_*$, which is the right adjoint of $\mu^{-1}$ and is computed by the restriction.
		Namely for a sheaf $\calG\in \Sh(X/\Sigma_{e \Inf})$ over the big site, we have $\mu_*\calG(U,T)=\calG(U,T)$.
	\end{itemize}
	This pair of adjoint functors in fact forms a morphism of topoi
	\[
	\mu:\Sh(X/\Sigma_{e \Inf})\rra \Sh(X/\Sigma_{e \inf}).
	\]
	To see this, we claim the following:
	\begin{lemma}\label{site exactness}
		The left adjoint functor $\mu^{-1}:\Sh(X/\Sigma_{e \inf})\ra \Sh(X/\Sigma_{e \Inf})$ commutes with any nonempty finite limit.
	\end{lemma}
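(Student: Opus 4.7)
I would express $\mu^{-1}$ as the sheafification of the presheaf-level left Kan extension
\[
u_p\mathcal{F}(V,S) \;=\; \varinjlim_{((U,T),\, \alpha)\, \in\, I_{(V,S)}} \mathcal{F}(U,T),
\]
where $I_{(V,S)}$ is the comma category of pairs $(U,T) \in X/\Sigma_{e\inf}$ equipped with a morphism $\alpha\colon (V,S) \to (U,T)$ in the big site. Since sheafification is left exact, the problem reduces to showing that the presheaf functor $u_p$ commutes with nonempty finite limits; by the standard fact that filtered colimits of abelian groups commute with finite limits, it suffices to verify that each $I_{(V,S)}$ is (essentially) filtered.

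Next, I would check the three filteredness conditions using \Cref{site finite limit}. For non-emptiness, I would take $(U, T) := (X, X \sqcup_V S)$, the pushout of the nil closed immersion $V \hookrightarrow S$ along the structure map $V \to X$: affinoid-locally on $S$ this is the rigid-analytic spectrum of the ring-theoretic fiber product and yields a nil closed immersion $X \hookrightarrow X \sqcup_V S$ together with a canonical map from $(V,S)$. For the join of two objects $((U_i, T_i), \alpha_i)$ ($i = 1, 2$), I would invoke the ind-representability of finite products in the small site (\Cref{site finite limit}(iii)) to produce the ind-system $\{(U_1 \cap U_2, Y_m)\}_m$ of infinitesimal neighborhoods of $U_1 \cap U_2$ inside $T_1 \times_{\Sigma_e} T_2$, through some stage of which the combined map from $(V,S)$ factors locally on $S$.

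The main obstacle will be the coequalizer condition: for parallel arrows $f, g\colon ((U_1, T_1), \alpha_1) \rightrightarrows ((U_2, T_2), \alpha_2)$ with $u(f)\alpha_1 = u(g)\alpha_1$, there is no canonical way to construct $h\colon (U_2, T_2) \to (U_3, T_3)$ with $hf = hg$ in the small site, since rigid-analytic coequalizers of thickenings generally do not exist within $X/\Sigma_{e\inf}$. I would bypass this using the equalizer $(U_1^{\mathrm{eq}}, T_1^{\mathrm{eq}})$ of $f, g$ in the small site (available by \Cref{site finite limit}(ii)): since $\alpha_1$ factors through this equalizer by hypothesis, the resulting refinement $((U_1^{\mathrm{eq}}, T_1^{\mathrm{eq}}), \alpha_1^{\mathrm{eq}}) \to ((U_1, T_1), \alpha_1)$ has the property that its compositions with $f$ and $g$ agree. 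Upon sheafification, this pre-composition identification, together with the local-on-$S$ factorizations from the join step, should suffice to force $u_p(f)$ and $u_p(g)$ to induce the same map on local sections, yielding the desired commutation of $\mu^{-1}$ with nonempty finite limits.
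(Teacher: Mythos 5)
Your strategy differs from the paper's (which writes every sheaf as a colimit of representables, reduces nonempty finite limits to binary products and equalizers, and then quotes Lemma~\ref{site finite limit} to see that these are computed compatibly with the inclusion of the small site into the big one); reducing instead to cofilteredness of the comma categories $I_{(V,S)}$ is a legitimate alternative, but as written there are two genuine gaps. The first is the nonemptiness step: the pushout $X\sqcup_V S$ of $S\hookleftarrow V\to X$ is not known to exist in the category of rigid spaces over $\Sigma_e$. The structure map $V\to X$ of an object of the big site is an arbitrary morphism of rigid spaces, and for $X\to X\sqcup_V S$ to be a nil closed immersion one needs the pushforward of the defining ideal of $V$ in $S$ along $V\to X$ to be coherent, which fails for general (non-finite, non-affinoid) $V\to X$; topological finite presentation of the pushout is also unaddressed. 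What is actually true, and what the paper uses elsewhere (see the footnote in the proof of Proposition~\ref{cry, big and small}), is that $I_{(V,S)}$ is nonempty only \emph{locally on $S$}: one covers $S$ by affinoids $S_i$, maps $V_i$ into an affinoid $U\subset X$ admitting a closed immersion into a smooth $Y$, and lifts $S_i\to Y_m$ by formal smoothness. The global pushout is both unjustified and unnecessary.

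The second gap is structural. All three of your cofilteredness conditions hold only after passing to an affinoid cover of $(V,S)$: nonemptiness as above, and the common source because the product in the small site is merely ind-representable and the map from $(V,S)$ factors through a finite stage $(U_1\cap U_2, Y_m)$ only locally (the defining ideals are only locally nilpotent). Hence the presheaf-level Kan extension $u_p$ does \emph{not} commute with finite limits, and your opening reduction --- ``sheafification is left exact, so it suffices that $u_p$ is left exact'' --- does not apply. What actually has to be proved is that the comparison map $u_p(\lim_j \calF_j)\to \lim_j u_p(\calF_j)$ is a \emph{local} isomorphism, so that it becomes an isomorphism after sheafification; that local-to-global step is precisely what is hidden in your concluding ``should suffice'' and is where the real work lies. (By contrast, the ``main obstacle'' you identify for parallel arrows is not one: since the colimit is taken over $I_{(V,S)}^{\op}$, the relevant axiom asks for an arrow \emph{into} the source equalizing $f$ and $g$, which your equalizer from Lemma~\ref{site finite limit}(ii) supplies; no downstream coequalizer is needed.)
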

	\begin{proof}
		To see this, we first notice that as a left adjoint functor commutes with any small colimit, by \cite[\href{https://stacks.math.columbia.edu/tag/0GLW}{Tag 0GLW}]{Sta} it suffices to show this for a finite diagram of representable sheaves.
		Moreover, as a nonempty finite limit can be formed by finite many of nonempty finite products and equalizers (\cite[Tag 04AS]{Sta}), it suffices to show that $\mu^{-1}$ commutes with finite products and equalizers of representable sheaves, which is given by Lemma \ref{site finite limit}.
		So we are done.
	\end{proof}
	The above by definition means that the left adjoint functor $\mu^{-1}$ is exact once passing to the minimal enlargement of the infinitesimal sites by adding the final object (cf. \cite[\href{https://stacks.math.columbia.edu/tag/03A1}{Tag 03A1}]{Sta})
	\footnote{Precisely, as both infinitesimal sites do not admit the final object (equivalently the empty product), it does not make sense to talk about the right exactness of the functor $\mu^{-1}$. To remedy this, one can enlarge the sites by adding the final objects simultaneously, which will not change the corresponding topoi by loc.\ cit.. In particular, the induced functor of $\mu^{-1}$ on the enlarged sites preserves all finite limits. }, and hence we get a morphism of topoi (\cite[Tag 00X1]{Sta})
	\[
	\mu:\Sh(X/\Sigma_{e \Inf})\rra \Sh(X/\Sigma_{e \inf}).
	\]

	On the other hand, the inclusion functor  is \emph{cocontinuous} in the sense of \cite[Tag 00XJ]{Sta}.
	This is because if a collection of thickenings $\{(U_i,T_i)\}\subset X/\Sigma_{e \Inf}$ covers a given $(U,T)\in X/\Sigma_{e \inf}$ in the big site, then each $(U_i,T_i)$  is also an object  in the small site which together forms a covering of $(U,T)$.
	So by \cite[Tag 00XO]{Sta}, the inclusion functor induces another map of topoi 
	\[
	\iota:\Sh(X/\Sigma_{e \inf})\rra \Sh(X/\Sigma_{e \Inf}),
	\]
	consists of the following adjoint pairs of functors
	\begin{itemize}
		\item The functor $\iota^{-1}=\mu_*:\Sh(X/\Sigma_{e \Inf})\ra \Sh(X/\Sigma_{e \inf})$ is the restriction functor, which commutes with any finite limits.
		\item The functor $\iota_*:\Sh(X/\Sigma_{e \inf})\ra \Sh(X/\Sigma_{e \Inf})$, which is the right adjoint of the functor $\iota^{-1}$, sending a sheaf $\calF$ over the small site to the sheaf $\iota_*\calF$ with the equality
		\[
		X/\Sigma_{e \Inf}\ni(V,S)\lmt \varprojlim_{\substack{(V,S)\ra (U,T)\\ (U,T)\in X/\Sigma_{e \inf}}} \calF(U,T).
		\]
	\end{itemize}
	Here we notice that when the thickening $(V,S)$ is an object coming from the small site $X/\Sigma_{e \inf}$ (namely $V\ra X$ is an open immersion), from the description above we then have
	\[
	(\iota_*\calF)(V,S)=\calF(V,S).
	\]

	Furthermore, notice that given an arrow $(V_1, T_1)\ra (V_2,T_2)$ in the big infinitesimal site $X/\Sigma_{e \Inf}$, the associated morphism of rigid spaces $V_1\ra V_2$ is a $X$-morphism.
	This in particular implies that the inclusion functor $X/\Sigma_{e \inf}\ra X/\Sigma_{e \Inf}$ is fully faithful, as when $(V_1,T_1)$ and $(V_2,T_2)$ come from the small site, the only $X$-morphism between $V_1$ and $V_2$ is the open immersion.
	So by \cite[Tag 00XS, Tag 00XT]{Sta} and Lemma \ref{site finite limit}, we have \footnote{In the notation of \cite[Tag 00XR]{Sta}, the functor $\mu^{-1}$ is equal to the functor $\iota_!$.}
	\begin{itemize}
		\item The functor $\mu^{-1}$ commutes with fiber products and equalizers (so with all finite connected limits).
		\item The canonical natural transformations below are isomorphisms of functors:
		\[
		\id \rra \mu_*\circ \mu^{-1};~~~\iota^{-1}\circ \iota_*=\mu_*\circ \iota_*\rra \id.
		\]
	\end{itemize}

	\subsection{Envelopes}\label{subsec env}
	Analogous to the infinitesimal theory of complex varieties in \cite{Gr68} and the crystalline theory of schemes in positive characteristic in \cite{BO78}, we can define the envelope for a locally closed immersion $X\ra Y$ of rigid spaces.
	
	\begin{definition}\label{env}
		Let $Y$ be a rigid space over $\Sigma_e$, and $X$ be a locally closed analytic subspace in $Y$, defined by a coherent ideal $I$ in $\calO_U$ for $U$ an open subset inside of $Y$.
		We denote by $Y_n$ the $n$-th infinitesimal neighborhood of $X$ in $Y$, which form an object $(X,Y_n)$ in $X/\Sigma_{e \inf}$ and is defined by the ideal $I^{n+1}$.
		
		The \emph{envelope $D_X(Y)$ of $X$ in $Y$}, is an object  in the infinitesimal topos $\Sh(X/\Sigma_{e \inf})$, defined by the colimit of the direct system of representable sheaves $h_{Y_n}$ of $(X,Y_n)$ in $\Sh(X/\Sigma_{e \inf})$:
		\[
		D_X(Y):= \varinjlim_{n\in \NN} h_{Y_n}.
		\]
	\end{definition}
	Note that the definition also works for the big infinitesimal topos $\Sh(X/\Sigma_{e \Inf})$, and under the natural inclusion functor $X/\Sigma_{e \inf}\ra X/\Sigma_{e \Inf}$ the notions of the envelopes coincide.

	\begin{remark}\label{env space}
		In many situations, it is convenient to regard $D_X(Y)$ as an actual locally ringed space, 
		instead of a direct limit of representable sheaves in the infinitesimal topos.
		Here the associated ringed space structure of the envelope $D_X(Y)$ has the same topological space as the adic space $X$, 
		and the structure sheaf $\calD=\varprojlim_n \calO_{Y_n}$ is the inverse limit of structure sheaves of infinitesimal neighborhoods $Y_n$.
	\end{remark}
	
	\begin{remark}
		The existence of the colimit in the topos is guaranteed by \cite[Tag 00WI]{Sta}.
	\end{remark}
	
	\begin{remark}
		Here we want to mention that different from the crystalline theory of a scheme over $\ZZ_p/p^e$, the envelope is almost \emph{never} representable.
		In the mixed characteristic case, 
		the divided-power structure enforces the defining ideal for a divided power thickening to be nilpotent.
		However, in equal characteristic zero such a condition is lost and the envelope is not an infinitesimal thickening.
		This in particular appears when we consider the crystalline theory of a scheme over $\CC$.
	\end{remark}
	Though the envelope fails to be representable, we do have a description of an envelope that is similar to a representable sheaf:
	\begin{lemma}\label{rep}
		For a closed immersion $X\ra Y$ of rigid spaces over $\Sigma_e$, the envelope $D_X(Y)$ is isomorphic to the sheaf on $X/\Sigma_{e \inf}$ (and $X/\Sigma_{e \Inf}$), defined by
		\[
		(U,T)\lmt \Hom((U,T),(X,Y)),
		\]
		where $\Hom((U,T),(X,Y))$ is the set of commutative diagrams of $\Sigma_e$-rigid spaces
		\[
		\xymatrix {T\ar[r] & Y\\
			U\ar[r] \ar[u] &X \ar[u]},
		\]
		with $U\ra X$ being the structure morphism for the object $(U,T)$.
	\end{lemma}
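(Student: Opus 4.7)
The plan is to construct a natural map of sheaves from $D_X(Y)$ to the putative sheaf $F$ (where $F(U,T):=\Hom((U,T),(X,Y))$) and to verify that it is injective on sections and locally surjective, hence an isomorphism after sheafification. First I would check that $F$ is a sheaf on $X/\Sigma_{e\inf}$ (and $X/\Sigma_{e\Inf}$): a section of $F$ over $(U,T)$ is simply a $\Sigma_e$-morphism $T\to Y$ whose restriction to $U$ equals the composite $U\hookrightarrow X\hookrightarrow Y$, and this data glues along open covers of $T$ by the sheaf property of morphisms of rigid spaces applied to both $T\to Y$ and to the compatibility condition over $U$.

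Next, the natural map $D_X(Y)\to F$ is defined as follows: for each $n$, a morphism $(U,T)\to (X,Y_n)$ in the infinitesimal site is precisely a map $T\to Y_n$ whose restriction to $U$ is $U\hookrightarrow X\hookrightarrow Y_n$; composing with the closed immersion $Y_n\hookrightarrow Y$ yields a section of $F(U,T)$, compatibly in $n$. This gives a morphism of presheaves $\varinjlim_n h_{Y_n}\to F$, and hence a morphism of sheaves $D_X(Y)\to F$.

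For injectivity on sections, the transition maps $h_{Y_n}(U,T)\to h_{Y_{n+1}}(U,T)$ and each map $h_{Y_n}(U,T)\to F(U,T)$ are injective because $Y_n\hookrightarrow Y_{n+1}$ and $Y_n\hookrightarrow Y$ are monomorphisms in the category of rigid spaces. The core step is local surjectivity: given $f\in F(U,T)$, represented by a map $f\colon T\to Y$, I claim $T$ admits an open cover $\{T_i\}$ such that $f|_{T_i}$ factors through some $Y_{n_i}$. Since $U\to T$ is by definition a nil closed immersion, there exists an open cover $\{T_i\}$ of $T$ on which the defining ideal $J_i\subset \calO_{T_i}$ of $U\cap T_i$ satisfies $J_i^{n_i+1}=0$. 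The compatibility $f|_U=(U\hookrightarrow X\hookrightarrow Y)$ forces the pullback of the defining ideal $I\subset \calO_Y$ of $X$ to land inside $J_i$, so the pullback of $I^{n_i+1}$ vanishes on $T_i$; this produces the desired factorization $T_i\to Y_{n_i}$, uniquely so since $Y_{n_i}\hookrightarrow Y$ is a monomorphism. Combined with injectivity, this shows $D_X(Y)\to F$ is an isomorphism of sheaves.

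The main subtlety is keeping track of sheafification versus the colimit of representable presheaves, but because the transition maps are injective and the local factorization above is canonical (no choices to be glued), no additional descent argument is needed. The hypothesis that $X\hookrightarrow Y$ is a closed immersion enters precisely to guarantee that the defining ideal $I\subset \calO_Y$ is a global coherent ideal, which is what allows the nilpotence $J_i^{n_i+1}=0$ together with $f^{-1}I\cdot\calO_{T_i}\subset J_i$ to be translated into a factorization $T_i\to Y_{n_i}$.
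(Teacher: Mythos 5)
Your proof is correct and follows essentially the same route as the paper's: both arguments reduce to showing that, locally where the defining ideal $J$ of $U$ in $T$ is nilpotent, the compatibility $f|_U = (U\hra X\hra Y)$ forces $f^\sharp(I)\subseteq J$ and hence a factorization of $T\to Y$ through some $Y_n$, with uniqueness coming from $Y_n\hra Y$ being a monomorphism. The only cosmetic difference is that the paper passes directly to affinoid thickenings (where noetherianness makes $J$ globally nilpotent) rather than phrasing the argument as injectivity plus local surjectivity of a presheaf map.
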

	\begin{proof}
		We first notice that we have a natural map
		\[
		D_X(Y)((U,T))=\varinjlim_{n\in \NN} \Hom((U,T),(X,Y_n))\rra \Hom((U,T),(X,Y)),
		\]
		induced by closed immersions $Y_n\ra Y_{n+1}\ra Y$.
		So it suffices to check that for a pair of affinoid rigid spaces $(U,T)=(\Spa(R/J), \Spa(R))$ in the infinitesimal site, the above is an isomorphism.
		
		For the surjection, we notice that since $(U,T)$ is affinoid rigid space over $\Sigma_e$, the ring $R$ is noetherian and $J$ is nilpotent.
		In particular, there exists an $n\in \NN$, such that $J^{n+1}=0$.
		So the map $\Spa(R)\ra Y$ factors through a map $\Spa(R)\ra Y_n$.
		
		For the injection, assume there are two maps $\alpha,\beta:T\to Y_n$ of rigid spaces over $\Sigma_e$ whose compositions with $Y_n\ra Y$ are equal.
		Note that since $Y_n\ra Y$ is a closed immersion, by restricting to an affinoid open covering of $Y$ (thus $Y_n$), the compositions can be translated into the following maps of $\Bdre$-algebras
		\[
		A\ra A/I^{n+1}\to R.
		\]
		So the equality of the maps $A\to R$ implies that the maps $A/I^{n+1}\to R$ are equal, and hence implies the equality of $\alpha,\beta$.
	\end{proof}
	The following simple observations justify this name of the envelope:
	\begin{lemma}\label{eff-epi}
		Assume $Y$ is smooth over $\Sigma_e$.
		Then the envelope $D_X(Y)$ for a closed immersion of $X$ in $Y$ covers the final object in the infinitesimal topoi $\Sh(X/\Sigma_{e \inf})$ and $\Sh(X/\Sigma_{e \Inf})$.
		In other words, the map from $D_X(Y)$ onto the final object in the infinitesimal topoi is an epimorphism of sheaves.
	\end{lemma}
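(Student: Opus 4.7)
The plan is to interpret sections of $D_X(Y)$ via \Cref{rep} and then produce them locally using the formal smoothness of $Y/\Sigma_e$. Saying that $D_X(Y) \to *$ is an epimorphism in either infinitesimal topos means exactly that for every object $(U,T)$ of the site there exists a covering $\{(U_i,T_i)\to(U,T)\}$ such that each $(U_i,T_i)$ admits a morphism to $D_X(Y)$. By \Cref{rep}, such a morphism is the same as a commutative square whose top row is a $\Sigma_e$-morphism $T_i\to Y$ extending the given map $U_i\hookrightarrow X \hookrightarrow Y$. So the lemma reduces to the claim that after passing to a suitable open covering of $T$, the composite $U\to X\to Y$ lifts across the nil closed immersion $U\hookrightarrow T$.

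Next I would reduce to an affinoid étale-local picture. Since $Y$ is smooth over $\Sigma_e$, cover $Y$ by affinoid opens $\{Y_\alpha\}$, each of which admits an étale morphism $Y_\alpha\to \mathbb{B}^{n_\alpha}_{\Sigma_e}=\Spa(\Bdre\langle t_1,\ldots,t_{n_\alpha}\rangle)$. Given $(U,T)\in X/\Sigma_{e\inf}$, set $U_\alpha=U\times_Y Y_\alpha$ (an open subspace of $U$) and choose an open subspace $T_\alpha\subseteq T$ with $U_\alpha=U\times_T T_\alpha$; refining further, we may assume each $T_\alpha$ is affinoid, say $T_\alpha=\Spa(R_\alpha)$, so that $U_\alpha=\Spa(R_\alpha/J_\alpha)$ with $J_\alpha$ nilpotent. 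The family $\{(U_\alpha,T_\alpha)\to(U,T)\}$ is a covering in the infinitesimal site, so it suffices to produce a lift $T_\alpha\to Y_\alpha\hookrightarrow Y$ extending $U_\alpha\to Y_\alpha$.

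The main step is then a two-stage lifting argument. First, lift the composition $U_\alpha\to Y_\alpha\to \mathbb{B}^{n_\alpha}_{\Sigma_e}$ to a $\Sigma_e$-morphism $T_\alpha\to \mathbb{B}^{n_\alpha}_{\Sigma_e}$; concretely this amounts to choosing preimages in $R_\alpha$ of the images of $t_1,\ldots,t_{n_\alpha}$ in $R_\alpha/J_\alpha$, which is possible because $R_\alpha\to R_\alpha/J_\alpha$ is surjective and $J_\alpha$ is nilpotent. Second, use that $Y_\alpha\to \mathbb{B}^{n_\alpha}_{\Sigma_e}$ is étale, hence formally étale, to uniquely lift $T_\alpha\to \mathbb{B}^{n_\alpha}_{\Sigma_e}$ to a map $T_\alpha\to Y_\alpha$ compatibly with the already-given $U_\alpha\to Y_\alpha$.

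The main technical obstacle is invoking the formal smoothness/étaleness of smooth morphisms of adic spaces in the right form: namely, that for a nil closed immersion of affinoid rigid spaces over $\Sigma_e$ and a smooth target $Y$, maps from the closed subspace lift locally to the ambient space (and lift uniquely when the target is étale over a polydisk). This is standard in Huber's framework and is essentially the defining feature of smoothness by topologically finitely presented morphisms, but some care is needed to justify that the nilpotence of $J_\alpha$ (an affinoid-local phenomenon guaranteed by the nil condition on $U\hookrightarrow T$) is what powers the lifting against $\Bdre$-algebras, given that $\Bdre$ itself is not $p$-nilpotent.
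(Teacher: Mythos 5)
Your proposal is correct and follows essentially the same route as the paper: reduce to showing that every thickening $(U,T)$ locally admits a lift $T\to Y$ of $U\to X\to Y$ across the nil immersion, produce that lift from the smoothness of $Y/\Sigma_e$, and then factor it through the envelope (you via \Cref{rep}, the paper by factoring directly through some $Y_n$ using nilpotence of the defining ideal — which is exactly how \Cref{rep} is proved). The only difference is that you unwind the infinitesimal lifting property explicitly through the étale-over-polydisk local structure of smooth morphisms, where the paper simply invokes smoothness; this is a fair expansion of the same argument.
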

	\begin{proof}
		We denote by $1$  the final object in $\Sh(X/\Sigma_{e \inf})$ or $\Sh(X/\Sigma_{e \Inf})$.
		Then to show the surjection of the map of sheaves
		\[
		D_X(Y)\rra 1,
		\]
		it suffices to show that any object $(U,T)$ in the infinitesimal site locally admits a morphism to $D_X(Y)$; namely there is an open cover $(U_i,T_i)$ of $(U,T)$ such that each $(U_i,T_i)$ admits a map to $D_X(Y)$.
		
		For an affinoid thickening $(U,T)=(\Spa(R/I),\Spa(R))$ with an open immersion $U\ra X$, since $U\ra T$ is a nil closed immersion and $R$ is noetherian, there exists an integer $m$ such that $I^{m+1}=0$ in $R$.
		By assumption that $Y$ is smooth,
		locally there exists a morphism from $\Spa(R)$ to $Y$ that makes the following diagram commute (cf. \cite[Def.\ 1.6.5]{Hu96})
		\[
		\xymatrix{
			\Spa(R/I)\ar[r] \ar[d]& \Spa(R) \ar[d]\\
			X \ar[r]& Y.}
		\]
		By the nilpotence of the ideal $I$, the map $\Spa(R) \ra Y$ factors canonically through $Y_n$ for $n\geq m$.
		Thus the map $\Spa(R)\ra Y$ factors through the direct limit $D_X(Y)=\varinjlim_{n\in \NN} h_{Y_n}\ra Y$.
		
	\end{proof}
	
	The above allows us to give a very general formula to compute the cohomology over the infinitesimal site, using the \v{C}ech nerve for an envelope.
	\begin{proposition}\label{env, coh}
		Let $X\ra Y$ be a closed immersion into a smooth rigid space $Y$ over $\Sigma_e$.
		For $n\in \Delta$, we  denote $D(n)$ to be the simplicial space where each $D(n)$ is the envelope of $X$ in $Y(n):=Y^{\underset{\Sigma_e}{\times} n+1}$.
		Then there is a natural isomorphism of cohomology for a sheaf $\mathcal{F}$ over the small infinitesimal site
		\[
		R\Gamma(X/\Sigma_{e \inf}, \calF) \rra R\lim_{[n]\in \Delta} R\Gamma(D(n), \calF).
		\]
		Similarly it holds for the big infinitesimal site.
	\end{proposition}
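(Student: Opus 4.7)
The plan is to recognize this as a standard Čech-cohomology-from-an-epimorphism computation in the infinitesimal topos, where the cover is the envelope $D_X(Y)$. The proof will have three distinct pieces: (a) showing the Čech nerve is a hypercover, (b) identifying its terms with the $D(n)$, and (c) invoking the standard descent formula to compute $R\Gamma$.

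First I would observe that by \Cref{eff-epi}, the map $D_X(Y)\ra 1$ is an epimorphism onto the final object of either $\Sh(X/\Sigma_{e\inf})$ or $\Sh(X/\Sigma_{e\Inf})$. In any topos, the Čech nerve $\check{C}(D_X(Y)/1)_\bullet$ of such an epimorphism is $0$-coskeletal, so the canonical map $\check{C}(D_X(Y)/1)_{n+1}\ra (\cosk_n \check{C}(D_X(Y)/1))_{n+1}$ is an isomorphism, making the Čech nerve a hypercover of $1$.

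Next I would identify the $n$-th term of this Čech nerve with $D(n)$. Unwinding the definition of an envelope as a filtered colimit of representables $D_X(Y)=\varinjlim_m h_{Y_m}$, and using that finite limits commute with filtered colimits in a topos, the $(n+1)$-fold self-product in the topos is
\[
D_X(Y)^{\times n+1} \;=\; \varinjlim_{m_0,\ldots,m_n} h_{Y_{m_0}}\times \cdots \times h_{Y_{m_n}}.
\]
By \Cref{site finite limit}(iii) and its proof, each finite product $h_{Y_{m_0}}\times\cdots\times h_{Y_{m_n}}$ of representables in the infinitesimal topos is ind-represented by the system of infinitesimal neighborhoods of the diagonal $X=X\times_X\cdots\times_X X$ inside $Y_{m_0}\times_{\Sigma_e}\cdots\times_{\Sigma_e} Y_{m_n}$. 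Cofinality reduces this doubly-indexed colimit to the single colimit of infinitesimal neighborhoods of $X$ inside $Y(n)=Y^{\times_{\Sigma_e} n+1}$, which is by definition $D(n)$. In particular the Čech nerve of $D_X(Y)\ra 1$ coincides with the simplicial object $D(\bullet)$ in the infinitesimal topos.

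Finally, given that $D(\bullet)\ra 1$ is a hypercover, the standard cohomological-descent spectral sequence for hypercovers in a topos identifies
\[
R\Gamma(X/\Sigma_{e\inf},\calF)\;=\;R\Gamma(1,\calF)\;\simeq\;R\lim_{[n]\in\Delta^\op}R\Gamma(D(n),\calF),
\]
and the same argument works verbatim with $X/\Sigma_{e\inf}$ replaced by $X/\Sigma_{e\Inf}$. Here $R\Gamma(D(n),\calF)=R\Hom(D(n),\calF)$ in the topos, which via $D(n)=\varinjlim_m h_{Y(n)_m}$ computes as the derived limit $R\lim_m R\Gamma((X,Y(n)_m),\calF)$ of sections on the infinitesimal thickenings. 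The main obstacle I anticipate is not the formal descent step but rather the careful bookkeeping in the product identification: one must check that the ind-representation produced by iteratively applying \Cref{site finite limit}(iii) to an $(n+1)$-fold product really agrees, after passing to the colimit over $m$, with the single envelope of $X$ inside $Y(n)$ rather than with some a priori larger pro-system of neighborhoods of a locally closed (rather than closed) immersion.
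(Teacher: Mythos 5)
Your proof is correct and follows the same global strategy as the paper's: exhibit $D(\bullet)$ as the \v{C}ech nerve of the effective epimorphism $D_X(Y)\ra 1$ supplied by Lemma \ref{eff-epi}, conclude that it is a hypercovering of the final object, and apply cohomological descent. The one step where you genuinely diverge is the identification of the $(n+1)$-fold self-product $D_X(Y)^{\times (n+1)}$ with $D(n)$, which is the actual mathematical content here. The paper does this in one line via Lemma \ref{rep}: both $D_X(Y)$ and $D(n)$ are the functors of commutative squares $\Hom(-,(X,Y))$ and $\Hom(-,(X,Y(n)))$, and a square into $(X,Y^{\times (n+1)})$ is tautologically an $(n+1)$-tuple of squares into $(X,Y)$, so no colimit manipulation is needed at all. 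You instead expand $D_X(Y)=\varinjlim_m h_{Y_m}$, commute the product past the filtered colimit, apply Lemma \ref{site finite limit}(iii) termwise, and then invoke a cofinality argument---which is exactly the ``bookkeeping obstacle'' you flag at the end. That cofinality does hold: writing $Y=\Spa(P)$, $X=\Spa(P/I)$ locally and letting $L$ be the kernel of $P^{\wh\otimes (n+1)}\ra P/I$, the ideal cutting out the $k$-th neighborhood of $X$ in $Y_{m_0}\times_{\Sigma_e}\cdots\times_{\Sigma_e} Y_{m_n}$ is $L^{k+1}+\sum_i \bigl(1\otimes\cdots\otimes I^{m_i+1}\otimes\cdots\otimes 1\bigr)$, which is sandwiched between $L^{k+1}$ and $L^{\min(k,m_0,\ldots,m_n)+1}$ because each $1\otimes\cdots\otimes I\otimes\cdots\otimes 1$ is contained in $L$; hence the doubly-indexed ind-system is cofinal with the system of neighborhoods of $X$ in $Y(n)$ and your identification closes. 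So your argument is complete once this check is written out, but Lemma \ref{rep} is precisely the device the paper introduces so that the check is never needed, and it is worth using.
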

	Here we want to mention that for each $n\in \Delta$, the derived section functor $R\Gamma(D(n),\calF)$ is computed via the inverse limit
	\[
	R\varprojlim_{m\in \NN} R\Gamma((X,Y({n})_m), \calF),
	\]
	where each $Y(n)_m$ is the $m$-th infinitesimal neighborhood of $X$ in $Y(n)$.
	\begin{proof}
		We first notice that $D(n)$ is in fact the $(n+1)$-fold self-product of $D_X(Y)$ in the infinitesimal topos $\Sh(X/\Sigma_{e \inf})$ (or $\Sh(X/\Sigma_{e \Inf})$ respectively).
		This is because by Lemma \ref{rep}, we know
		\[
		D(n)=\Hom(-,(X,Y^{ n+1})),
		\]
		which is the same as the contravariant functor $\Hom(-,(X,Y))^{n+1}$ on the infinitesimal site.
		So the simplicial object $D(\bullet)$ is in fact the coskeleton $\cosk_0(D_X(Y))$ over the final object (in other words, the \v{C}ech nerve for the map of sheaves $D_X(Y)\ra 1$).
		In this way, since $D_X(Y)\ra 1$ is an effective epimorphism (Lemma \ref{eff-epi}), by the \cite[Tag 09VU]{Sta}, 
		$D(\bullet)\ra 1$ is a hypercovering, and we get a natural equivalence of derived functors
		\[
		R\Gamma(X/\Sigma_{e \inf},-) \simeq R\Gamma(D(\bullet),-)=R\lim_{[n]\in \Delta} R\Gamma(D(n),-).
		\]
	\end{proof}
	
	As an upshot, we see that the restriction functor from the big infinitesimal topos to the small one preserves the cohomology.
	\begin{corollary}\label{coh, big small}
		Let $\calF$ be an object in the derived category of sheaves over $X/\Sigma_{e \Inf}$.
		Then the restriction functor $\iota^{-1}=\mu_*:\Sh(X/\Sigma_{e \Inf}) \ra \Sh(X/\Sigma_{e \inf})$ (cf. Paragraph \ref{site big small}) induces the following isomorphism
		\[
		R\Gamma(X/\Sigma_{e \inf}, \mu_*\calF) \rra R\Gamma(X/\Sigma_{e \Inf}, \calF).
		\]
	\end{corollary}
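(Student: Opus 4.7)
The plan is to establish the sharper statement that $R^i\mu_*\calF = 0$ for $i \geq 1$; given this vanishing, the Leray-type composite-functor formula $R\Gamma(X/\Sigma_{e \inf},-) \circ R\mu_* \simeq R\Gamma(X/\Sigma_{e \Inf},-)$ immediately produces the claimed isomorphism. The two exactness facts that drive the argument are both extracted from Paragraph \ref{site big small}: first, the functor $\mu_* = \iota^{-1}$ is exact, being the inverse-image part of the morphism of topoi $\iota$; second, its left adjoint $\mu^{-1}$ is also exact, since $\mu$ is itself a morphism of topoi (cf. Lemma \ref{site exactness}), and in particular $\mu^{-1}$ sends the final object $1_{\inf}$ to the final object $1_{\Inf}$.

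Standard homological algebra now takes over. Exactness of $\mu^{-1}$ implies that its right adjoint $\mu_*$ preserves injectives; combined with the exactness of $\mu_*$ itself, this shows that for any injective resolution $\calF \to I^\bullet$ in $\Sh(X/\Sigma_{e \Inf})$, the complex $\mu_* I^\bullet$ is again an injective resolution, this time of $\mu_*\calF$ in $\Sh(X/\Sigma_{e \inf})$. The identification of global sections
\[
\Gamma(X/\Sigma_{e \inf}, \mu_*\calG) = \Hom(1_{\inf}, \mu_*\calG) \cong \Hom(\mu^{-1} 1_{\inf}, \calG) = \Gamma(X/\Sigma_{e \Inf}, \calG)
\]
is an immediate consequence of the adjunction together with the preservation of the final object. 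Applying this termwise to $I^\bullet$ and passing to cohomology then yields
\[
R\Gamma(X/\Sigma_{e \inf}, \mu_*\calF) = \Gamma(X/\Sigma_{e \inf}, \mu_* I^\bullet) = \Gamma(X/\Sigma_{e \Inf}, I^\bullet) = R\Gamma(X/\Sigma_{e \Inf}, \calF),
\]
which is the desired comparison.

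I do not expect Proposition \ref{env, coh} or the envelope formalism to be needed here: the corollary reduces to a short piece of abstract nonsense already packaged in the relationship between $\mu$ and $\iota$. The only mildly delicate point is checking that the isomorphism constructed above agrees with the map induced by functoriality of derived sections along the morphism of topoi $\mu$, but this is a routine unwinding of the adjunction on the injective resolution.
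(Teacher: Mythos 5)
Your argument is correct, but it is a genuinely different route from the one taken in the paper. The paper proves the corollary geometrically: it first assumes $X$ embeds into a smooth rigid space, invokes Proposition \ref{env, coh} to compute both sides via the \v{C}ech nerve $D(\bullet)$ of the envelope (a direct limit of objects that are representable in both sites, on which $\mu_*$ visibly changes nothing), and then reduces the general case to this one by an affinoid hypercovering. Your proof replaces all of this by formal properties of the adjoint triple $\mu^{-1}\dashv\mu_*=\iota^{-1}\dashv\iota_*$: exactness of $\mu_*$ (it has adjoints on both sides), preservation of injectives (its left adjoint is exact), and the identification of global sections via $\Hom(\mu^{-1}1_{\inf},-)\cong\Hom(1_{\inf},\mu_*(-))$. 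This is cleaner and avoids the envelope formalism entirely, which is a real gain.

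One caveat, and it is the only place where geometry enters your argument: the step $\mu^{-1}1_{\inf}=1_{\Inf}$ is not pure abstract nonsense. Lemma \ref{site exactness}, which you cite, only establishes that $\mu^{-1}$ commutes with \emph{nonempty} finite limits; the terminal object is the empty limit. Concretely, $\mu^{-1}1_{\inf}$ is the sheafification of $(V,S)\mapsto\varinjlim_{(V,S)\to(U,T)}\{*\}$, and for this to be the final object you need every thickening $(V,S)$ of the big site to admit, locally on $S$, at least one map to a thickening of the small site (and any two such to be connected by a zigzag, which Lemma \ref{site finite limit} (iii) supplies). The existence is exactly the local-embeddability argument recorded in the footnote to the proof of Proposition \ref{cry, big and small}: factor $V\to X$ through an affinoid $U$ closed-immersed in a smooth $Y$ and use smoothness to extend $V\to U$ to $S\to Y_m$. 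So you should cite that footnote (or reprove it) rather than Lemma \ref{site exactness} at this point. A second, minor remark: since $\calF$ is an arbitrary object of the derived category, replace "injective resolution" by "K-injective resolution"; the same adjunction shows $\mu_*$ preserves K-injectives, because $\Hom_K(A^\bullet,\mu_*I^\bullet)\cong\Hom_K(\mu^{-1}A^\bullet,I^\bullet)=0$ for $A^\bullet$ acyclic, using exactness of $\mu^{-1}$. With these two repairs your proof is complete.
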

	\begin{proof}
		We first assume $X$ admits a closed immersion into a smooth rigid space over $\Sigma_e$.
		The claim in this case then follows from Proposition \ref{env, coh}, as an envelope is a direct limit $\varinjlim_{m\in \NN} h_{Y(n)_m}$ of representable objects in the big and the small sites, and the restriction functor produces the natural equivalence
		\[
		R\Gamma(D(n), \calF) \rra R\Gamma(D(n), \mu_*\calF).
		\]
		In general, we may take a hypercovering by affinoid open spaces of $X$ first to reduce to the above special cases, since an affinoid rigid space $\Spa(A)$ is topologically of finite type and thus the ring $A$ admits a surjection from $\Bdre \langle T_1,\ldots,T_n \rangle$ for some $n\in \mathbb{N}$.
	\end{proof}

	\subsection{Infinitesimal and rigid topology}\label{sub inf-rig}
	In this subsection, we relate the infinitesimal topos and the rigid topos together.
	
	Let $X$ be a rigid space over $\Sigma_e$.
	Recall that there is a Grothendieck topology $X_\rig$ on the category of open subsets in $X$, called the \emph{rigid site} $X_\rig$.
	
	Consider the following two functors:
	\begin{align*}
		u_{X/\Sigma_e *}:\Sh(X/\Sigma_{e \inf})&\rra \Sh(X_\rig);\\
		\calF&\lmt (U\mapsto \Gamma(U/\Sigma_{e \inf},\calF|_{U/\Sigma_{e \inf}})).\\
		u_{X/\Sigma_e}^{-1}:\Sh(X_\rig)&\rra \Sh(X/\Sigma_{e \inf});\\
		\calE&\lmt ((U,T)\mapsto \calE(U)).
	\end{align*}

	For a given infinitesimal thickening $(U,T)$, since $(u_{X/\Sigma_e}^{-1}\calE)_T$ is equal to the sheaf $\calE|_U$ on $U_\rig\simeq T_\rig$, the functor $u_{X/\Sigma_e}^{-1}$ commutes with the finite inverse limit.
	Notice that the pair $(u_{X/\Sigma_e}^{-1},~u_{X/\Sigma_e *})$ is adjoint.
	Thus we get a morphism of topoi (\cite[Tag 00XA]{Sta})
	\[
	u_{X/\Sigma_e}: \Sh(X/\Sigma_{e \inf})\rra \Sh(X_\rig),
	\]
	which we follow \cite{BO78} and call the \emph{projection morphism}.
	
	The projection morphism $u_{X/\Sigma_e}$ admits a section.
	Consider the functor $X/\Sigma_{e \inf}\ra X_\rig$, sending $(U,T)$ onto the open subset $U$ of $X$.
	By the definition of $X/\Sigma_{e \inf}$, a covering of $(U,T)$ is mapped onto a covering of $U$.
	In particular, the map of sites is continuous in the sense of \cite{Sta}.
	So we get a morphism of sites
	\[
	i_{X/\Sigma_e}:X_\rig\rra X/\Sigma_{e \inf}.
	\]
	The morphisms induces a map of topoi, in a way that for $\calE\in \Sh(X_\rig)$,  
	\[
	i_{X/\Sigma_e *}\calE(U,T)=\calE(U),
	\]
	and for $\calF\in\Sh(X/\Sigma_{e \inf})$, we have
	\[
	i_{X/\Sigma_e}^{-1}\calF(U)=\varinjlim_{(U,U)\ra (V,T)} \calF(V,T)=\calF(U,U).
	\]
	From the description, we see the functor $i_{X/\Sigma_e}^{-1}$ is the restriction functor sending a sheaf $\calF$ over $X/\Sigma_{e \inf}$ to its restriction $\calF_X$ on the rigid space $X$.

	\begin{remark}
		By the construction of $i_{X/\Sigma_e}$ and $u_{X/\Sigma_e}$, on the rigid topos $\Sh(X_\rig)$ we have
		\[
		u_{X/\Sigma_e *}\circ i_{X/\Sigma_e *}=id,~~~i_{X/\Sigma_e}^{-1}\circ u_{X/\Sigma_e}^{-1}=id,
		\]
		which implies that those morphisms of topoi satisfy
		\[
		u_{X/\Sigma_e}\circ i_{X/\Sigma_e}=id.
		\]
		This justify the name of the projection morphism.
	\end{remark}
	
	\begin{remark}
		The construction here naturally generalizes to two morphisms between big infinitesimal site $X/\Sigma_{e \Inf}$ and the big rigid site $\Rig_{\Sigma_e}|_X$, for a given rigid space over $X$.
	\end{remark}
	
	\subsection{Functoriality}\label{subsec functoriality}
	In this subsection, we introduce natural maps of infinitesimal topoi associated with a map of rigid spaces, similarly to the construction in \cite[Tag 07IC, 07IK]{Sta}.

	Let $f:X\ra Y$ be a map of rigid spaces over $\Sigma_{e'}$, and assume the structure map $X\ra \Sigma_{e'}$ factors through $\Sigma_e$  for non-negative integers $e\leq e'$.
	By the construction of the big infinitesimal site, the map $f$ induces a natural functor between $X/\Sigma_{e \Inf}$ and $Y/\Sigma_{e' \Inf}$, satisfying
	\[
	X/\Sigma_{e \Inf}\ni((U,T),U\ra X)\lmt ((U,T),U\ra X\ra Y),
	\]
	where the map $U\ra X\ra Y$ is the composition of the map $f$ with the structure map of $(U,T)\in X/\Sigma_{e \Inf}$.
	Then it is easy to check that this functor is both continuous and cocontinuous, and commutes withe fiber products and equalizers (cf. Lemma \ref {site finite limit}).
	This in particular implies that the functor above induces a morphism of topoi (\cite[Tag 00XN, 00XR]{Sta})
	\[
	f_{\Inf}:\Sh(X/\Sigma_{e \Inf}) \rra \Sh(Y/\Sigma_{e' \Inf}),
	\]
	such that
	\begin{itemize}
		\item The inverse image functor $f_{\Inf}^{-1}$ commutes with arbitrary limits and colimits, such that for a sheaf $\calG$ over $Y/\Sigma_{e' \Inf}$, we have
		\[
		f_{\Inf}^{-1}\calG(U,T)=\calG(U,T),
		\]
		where the second $(U,T)$ is regarded as an object in $Y/\Sigma_{e' \Inf}$ by $U\ra X\ra Y$.
		\item The direct image functor $f_{\Inf *}$, which is the right adjoint to the functor $f_{\Inf}^{-1}$, sends a sheaf $\calF\in \Sh(X/\Sigma_{e \Inf})$ to the sheaf $f_{\Inf *}\calF$ such that the section is given by
		\[
		Y/\Sigma_{e' \Inf}\ni(V,S)\lmt \varprojlim_{\substack{(U,T)\ra (U,T)\\ (V,S)\in X/\Sigma_{e \Inf},\\
				V\ra U~compatible~with~f}} \calF(U,T).
		\]
	\end{itemize}
	
	Now we consider  the small topoi $\Sh(X/\Sigma_{e \inf})$ and $\Sh(Y/\Sigma_{e' \inf})$.
	Analogous to \cite[Tag 07IK]{Sta}, we use the map of big topoi to connect them.
	Consider the following diagram
	\[
	\xymatrix{
		\Sh(X/\Sigma_{e \Inf}) \ar[r]^{f_{\Inf}} & \Sh(Y/\Sigma_{e' \Inf}) \ar[d]^{\mu_Y} \\
		\Sh(X/\Sigma_{e \inf}) \ar[u]^{\iota_X} \ar@{.>}[r]_{f_{\inf}} &\Sh(X/\Sigma_{e \inf}).}
	\]
	Here we define the morphism of topoi $f_{\inf}:\Sh(X/\Sigma_{e \inf})\ra \Sh(Y/\Sigma_{e' \inf})$ to be the composition
	\[
	f_{\inf}=\mu_Y\circ f_\Inf \circ \iota_X.
	\]
	Then by the definition of those functors, we have
	\begin{itemize}
		\item For a sheaf $\calG\in \Sh(Y/\Sigma_{e' \inf})$, the inverse image $f_{\inf}^{-1}\calG$ is given by the ``restriction" of $\mu_Y^{-1}\calG$ to the category $X/\Sigma_{e \inf}$ via the map $f$, and is equal to the sheaf associated with the presheaf
		\[
		X/\Sigma_{e \inf}\ni (U,T) \lmt \varinjlim_{\substack{(U,T)\ra (V,S)\\ (V,S)\in Y/\Sigma_{e' \inf},\\
				U\ra V~compatible~with~f}} \calG(V,S).
		\]
		\item The direct image functor $f_{\inf *}$ sends a sheaf $\calF\in \Sh(X/\Sigma_{e \inf})$ to the sheaf 
		\[
		f_{\inf *}\calF(V,S)=\varprojlim_{\substack{(U,T)\ra (V,S)\\ (U,T)\in X/\Sigma_{e \Inf}\\ U\ra V~compatible~with~f}} \calF(U,T).
		\]
	\end{itemize}
	
	\begin{remark}

		In the special case when $\calG=h_S$ is the representable sheaf of $(V,S)\in Y/\Sigma_{e' \inf}$, its inverse image $f_{\inf }^{-1}h_S$ has a simpler formula by
		\[
		f_{\inf}^{-1}h_S(U,T)=\Hom_Y((U,T),(V,S)):=\{commutative~diagrams\xymatrix{X\ar[d]_f & U\ar[l] \ar[r]^{nil} \ar@{.>}[d]& T \ar@{.>}[d]\\
			Y & V \ar[l] \ar[r]^{nil} & S}\}.
		\]
		Here the notation ``nil" means the arrows below are nil closed immersions, and the map $T\ra S$ is a map of rigid spaces over $\Sigma_{e'}$.
	\end{remark}
	
	\begin{remark}

		The functoriality of infinitesimal topoi is compatible with the projection morphism to  the rigid topos and its section.
		Namely the following two diagrams are commutative
		\[
		\xymatrix{\Sh(X/\Sigma_{e \inf}) \ar[r]^{f_{\inf}} \ar[d]_{u_{X/\Sigma_e}} & \Sh(Y/\Sigma_{e' \inf}) \ar[d]^{u_{Y/\Sigma_{e'}}}\\
			\Sh(X_\rig)\ar[r]& \Sh(Y_\rig)},
		~~~\xymatrix{\Sh(X_\rig) \ar[r] \ar[d]_{i_{X/\Sigma_e}} & \Sh(Y_\rig)\ar[d]^{i_{Y/\Sigma_{e'}}}\\
			\Sh(X/\Sigma_{e \inf}) \ar[r]^{f_{\inf}}  & \Sh(Y/\Sigma_{e' \inf}).}
		\]
		Here the commutativity can be checked readily using explicit formulae above, and we will not expand but refer the reader to  \cite[\href{https://stacks.math.columbia.edu/tag/07KL}{Tag 07KL}]{Sta} for the analogue in the classical crystalline theory.
	\end{remark}
	
	We also want to mention that $f_{\inf}$ is naturally a map of ringed topoi under the infinitesimal structure sheaves, and we could define the \emph{pullback functor $f_{\inf}^*$} on the category of $\calO_{Y/\Sigma_e'}$-sheaves, similarly to the scheme theory.
	Here given a sheaf of $\calO_{Y/\Sigma_e'}$-modules $\calG$ and an object $(U,T)\in X/\Sigma_{e \inf}$, the restriction of the pullback $f_{\inf}^*\calG$ at the rigid space $T$ is equal to the colimit
	\[
	\varinjlim_{\substack{h:(U,T)\ra (V,S)\\ (V,S)\in Y/\Sigma_{e' \inf}}} h^*(\calG_S).
	\]
	
	\begin{remark}\label{change of bases}
		Here we remark that when $f:X \ra X$ is the identity map but $e$ is strictly smaller than $e'$, the transition morphism $f_{\inf}:\Sh(X/\Sigma_{e \inf}) \ra \Sh(X/\Sigma_{e' \inf})$ is induced from the map of sites $f_{\inf}:X/\Sigma_{e \inf} \ra X/\Sigma_{e' \inf}$, where the corresponding functor sends $(V,S)\in X/\Sigma_{e' \inf}$ onto the thickening $(V,S\times_{\Sigma_{e'}} \Sigma_e)$.
		
	\end{remark}
	
	\section{Crystals}\label{sec cry}
	In this section, we study the coherent crystal and its canonical connection.
	
	Before we start, we mention that though stated for rigid spaces over $\Bdre$, the results and proofs in this section hold true verbatimly for rigid spaces over an arbitrary $p$-adic extension $K$ of $\mathbb{Q}_p$; namely $K$ is a field that is complete with respect to a non-archimedean valuation extending that of $\mathbb{Q}_p$.
	
	\subsection{Crystals and their connections}\label{sub con}
	We first introduce the coherent crystal and a canonical connection associated with it.

	\textbf{Sheaf of differentials.}

	\begin{definition}
		The \emph{infinitesimal sheaf of differentials} $\Omega_{X/\Sigma_{e \inf}}^i$ is a sheaf of $\calO_{X/\Sigma_e}$-modules on $X/\Sigma_{e \inf}$ defined as 
		\[
		\Omega_{X/\Sigma_{e \inf}}^i(U,T):=\Omega_{T/\Sigma_e}^{i, cont}(T), \quad (U,T)\in X/\Sigma_{e \inf}
		\]
		locally given by the continuous differentials over $\Sigma_e$.
	\end{definition}
	Similarly we could define the infinitesimal sheaf of differentials $\Omega_{X/\Sigma_{e \Inf}}^i$ over the big site $X/\Sigma_{e \Inf}$.
	It can be checked easily that the restriction $\iota^{-1}\Omega_{X/\Sigma_{e \Inf}}^i$ at the small site is equal to $\Omega_{X/\Sigma_{e \inf}}^i$.
	
	Here we recall the definition of the sheaf of continuous differentials as follows.
	Let $T$ be a rigid space over $\Sigma_e$, and $T(m)$ be the $m+1$-th self product of $T$ over $\Sigma_e$, which is equipped with $m+1$ projection maps onto $T$ and the diagonal map from $T$.
	For each $m\in \NN$, we denote $T(n)_m$ as the $m$-th infinitesimal neighborhood of $T$ in $T(n)$.
	Then each infinitesimal thickening $(T,T(n)_m)$ is an object in $X/ \Sigma_{e \inf}$.
	
	Let $I_T$ be the coherent sheaf of ideals in $\calO_{T(1)}$, defined as the kernel of the map $\calO_{T(1)}\ra \calO_T$ given by the diagonal $T\ra T(1)_1$. 
	Then the sheaf of continuous differentials $\Omega_{T/\Sigma_e}^{1,cont}$ is the coherent sheaf $I_T/I_T^2$ over $T$.
	It can be checked that the sheaf of continuous differentials satisfy the universal property among continuous $\Bdre$-linear derivatives.
	Without mentioning, we will use $\Omega_{T/\Sigma_e}^{i}$ to denote the $i$-th continuous differentials to simplify the notation.
	
	\textbf{Crystals.}

	\begin{definition}\label{cry def}
		Let $\calF$ be a coherent sheaf over  $X/\Sigma_{e \inf}$ (resp. $X/\Sigma_{e \Inf}$).
		Namely $\calF$ is a sheaf on the infinitesimal site $X/\Sigma_{e \inf}$ (resp. $X/\Sigma_{e \Inf}$) such that $\calF_T$ is a coherent $\calO_T$-module for each infinitesimal thickening $(U,T)$ in $X/\Sigma_{e \inf}$ (resp. in $X/\Sigma_{e \Inf}$).
		We call $\calF$ a \emph{coherent crystal} if for each morphism of thickenings $(i,g):(U_1,T_1)\ra (U_2,T_2)$ in $X/\Sigma_{e \inf}$ (resp. in $X/\Sigma_{e \Inf}$), the natural map
		\[
		g^*\calF_{T_2}=\calO_{T_1}\otimes_{g^{-1}\calO_{T_2}} g^{-1}\calF_{T_2}\rra \calF_{T_1}
		\]
		is an isomorphism of $\calO_{T_1}$ modules.
	\end{definition}
	
	\begin{example}
		The easiest example of coherent crystal is the infinitesimal structure sheaf $\calO_{X/\Sigma_{e}}$, defined either over the small or big infinitesimal sites of $X$ over $\Sigma_e$.
	\end{example}
	\begin{remark}
		The infinitesimal sheaf of differentials is not a crystal in general, though it is a coherent sheaf over $\calO_{X/\Sigma_{e}}$.
	\end{remark}
	
	Here it is not hard to see that the pullback of a coherent crystal is a crystal.
	\begin{lemma}\label{cry pullback}
		Let $f:X\ra Y$ be a map of rigid spaces over $\Sigma_{e'}$, and assume the structure map $X\ra \Sigma_{e'}$ factors through $\Sigma_e$  for non-negative integers $e\leq e'$.
		Let $\calG$ be a coherent crystal over $Y/\Sigma_{e' \inf}$.
		We denote $f_{\inf}$ to be the functoriality map of infinitesimal topoi $f_{\inf}:\Sh(X/\Sigma_{e \inf}) \ra \Sh(Y/\Sigma_{e' \inf})$.
		\begin{enumerate}[(i)]
			\item Let $(g,h):(U, T)\ra (V,S)$ be a map of thickenings for $(U,T)\in X/\Sigma_{e \inf}$ and $(V,S)\in Y/\Sigma_{e' \inf}$ respectively such that $g:U\ra V$ is compatible with $f:X\ra Y$.
			Then the restriction of $f_{\inf}^*\calG$ at $T$ is naturally isomorphic to the pullback $h^*(\calG_S)$ of the coherent sheaf $\calG_S$ over $S$ along the map of rigid spaces $h:T \ra S$.
			\item The pullback $f_{\inf}^*\calG$ is a coherent crystal over $X/\Sigma_{e \inf}.$
			\item Both $(i)$ and $(ii)$ hold true for $f_{\Inf}:\Sh(X/\Sigma_{e \Inf}) \ra \Sh(Y/\Sigma_{e' \Inf})$ and a coherent crystal $\calG$ over big infinitesimal sites.
		\end{enumerate}
	\end{lemma}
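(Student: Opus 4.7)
The plan is to reduce everything to part (i) and to prove (i) by using the crystal property of $\calG$ to show that the colimit defining $f_{\inf}^*\calG$ at $T$ stabilizes to any single term.

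For part (i), I would fix the given arrow $(g,h):(U,T)\to (V,S)$ and show that for any other arrow $h':(U,T)\to (V',S')$ in $Y/\Sigma_{e' \inf}$ compatible with $f$, the associated term $h'^*(\calG_{S'})$ in the colimit is canonically isomorphic to $h^*(\calG_S)$. To do this, I would form the finite product $(V,S)\times (V',S')$ in $Y/\Sigma_{e' \inf}$; by Lemma \ref{site finite limit}(iii) this is ind-represented by the pairs $(V\times_Y V',R_m)$, where $R_m$ is the $m$-th infinitesimal neighborhood of $V\times_Y V'$ in $S\times_{\Sigma_{e'}}S'$. The pair $(h,h')$ yields a map $(U,T)\to (V\times_Y V',S\times_{\Sigma_{e'}}S')$, and by nilpotence of the defining ideal of $U$ in $T$ this factors through $(V\times_Y V',R_m)$ for some $m$. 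Writing $\pi_1,\pi_2$ for the projections from $R_m$ to $S$ and $S'$, the crystal property for $\calG$ yields canonical isomorphisms $\pi_1^*(\calG_S)\cong \calG_{R_m}\cong \pi_2^*(\calG_{S'})$, and pulling back further along $T\to R_m$ produces the desired canonical isomorphism $h^*(\calG_S)\cong h'^*(\calG_{S'})$. This identifies the colimit with $h^*(\calG_S)$.

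For part (ii), let $(i,g):(U_1,T_1)\to (U_2,T_2)$ be a morphism in $X/\Sigma_{e \inf}$. Working locally, I may assume that $(U_2,T_2)$ admits at least one arrow $(g_2,h_2):(U_2,T_2)\to (V,S)$ into $Y/\Sigma_{e' \inf}$ compatible with $f$ (for instance by choosing $U_2$ small enough that $f(U_2)$ lies in an affinoid open $V\subseteq Y$ and thickening appropriately). The composition $(g_2\circ i, h_2\circ g):(U_1,T_1)\to (V,S)$ is then an arrow for $(U_1,T_1)$. Applying part (i) to both thickenings gives $(f_{\inf}^*\calG)_{T_2}\cong h_2^*(\calG_S)$ and $(f_{\inf}^*\calG)_{T_1}\cong (h_2\circ g)^*(\calG_S)=g^*h_2^*(\calG_S)$, and the natural transition map is the tautological comparison $g^*h_2^*(\calG_S)\xrightarrow{\sim}g^*h_2^*(\calG_S)$, hence an isomorphism. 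Coherence of $(f_{\inf}^*\calG)_{T_1}$ follows from coherence of $\calG_S$ and the fact that the pullback along a map of affinoid rigid spaces preserves coherence.

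Part (iii) is formally identical: the category-theoretic properties used, namely the ind-representability of finite products and the crystal property, hold verbatim on the big infinitesimal site by Lemma \ref{site finite limit} and Definition \ref{cry def}, and the colimit formula for $f_{\Inf}^*$ has the same shape. The main obstacle I foresee is the careful bookkeeping in part (i) to ensure that the isomorphisms $h^*(\calG_S)\cong h'^*(\calG_{S'})$ are not only canonical but also functorial in the arrows of the index category, so that the stabilized colimit is the sheaf $h^*(\calG_S)$ and not merely abstractly isomorphic to it; this ultimately rests on the naturality built into the crystal condition on $\calG$ and the fact that the ind-representation in Lemma \ref{site finite limit}(iii) is functorial in the inputs.
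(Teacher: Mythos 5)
Your proposal is correct and follows essentially the same route as the paper: both identify the colimit defining $(f_{\inf}^*\calG)_T$ with a single term $h^*(\calG_S)$ by combining the crystal condition with the ind-representability of finite products from Lemma \ref{site finite limit}(iii) and the local nilpotence of $U\ra T$, then deduce (ii) by composing arrows and (iii) by observing the argument never uses that the structure maps are open immersions. The only cosmetic difference is that you compare two arbitrary arrows via the product of their two targets in one step, whereas the paper treats comparable targets and two maps to a common target separately; the content is the same.
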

	\begin{proof}
		Let $(U,T)$ be an object in the infinitesimal site $X/\Sigma_{e \inf}$.
		By the construction, we know the restriction of the pullback $f_{\inf}^*\calG$ at the rigid space $T$ is equal to the colimit
		\[
		\varinjlim_{\substack{h:(U,T)\ra (V,S)\\ (V,S)\in Y/\Sigma_{e' \inf}}} h^*(\calG_S),
		\]
		where $h:T\ra S$ is the map of rigid spaces over $\Sigma_e'$.
		On the one hand,  by the definition of the coherent crystal, for a commutative diagram of infinitesimal thickenings
		\[
		\xymatrix{&(V,S) \ar[rd]^{h''}&\\
			(U,T) \ar[rr]_{h'}  \ar[ru]^h&&(V',S')}
		\]
		that is compatible with $f:X\ra Y$, the pullback $(h'')^*(\calG_{S'})$ is equal to the coherent sheaf $\calG_S$ over $S$.
		On the other hand, as in Lemma \ref{site finite limit} the finite products are ind-representable in the small site $Y/\Sigma_{e' \inf}$.
		In particular, given two maps of thickenings $h_i:(U,T) \rightrightarrows (V,S)$ where $U\ra V$ is the compatible with $f$, both $h_i$ locally factor through a thickening $(V,S(1)_m)$ for an infinitesimal neighborhood $S(1)_m$ of $V$ in $S(1)=S\times_{\Sigma_{e'}} S$.
		As an upshot, the pullback $h^*\calG_S$ is independent of the map $h$.
		In this way, the restriction of $f_{\inf}^*\calG$ at $T$, which is equal to the colimit above, is naturally isomorphic to the coherent sheaf $h^*(\calG_S)$ over $T$ for any map of thickenings $h:(U,T)\ra (V,S)$,
		where $(U,T)\in X/\Sigma_{e \inf}$ and $(V,S)\in Y/\Sigma_{e' \inf}$.
		This finishes the proof of $(i)$.
		
		To check the crystal condition of $f_{\inf}^*\calG$, it suffices to note that given a map of objects $g:(U_1,T_1)\ra (U_2,T_2)$ in $X/\Sigma_{e \inf}$ and a compatible map of infinitesimal thickenings $h:(U_2,T_2)\ra (V,S)$ for $(V,S)\in Y/\Sigma_{e' \inf}$, we have
		\[
		(f_{\inf}^*\calG)_{T_1}\simeq g^*h^*(\calG_S)\simeq g^*(f_{\inf}^*\calG)_{T_2}.
		\]
		
		At last, notice that the proof is applicable no matter whether the structure maps $U\ra X$ and $V\ra Y$ are open immersions.
		So we are done.
	\end{proof}
	
	\begin{example}\label{cry, over big site}
		An example of a coherent crystal over the big site is the pullback of a crystal from the small site
		\[
		\mu^*\calF=\mu^{-1}\calF\underset{\mu^{-1}\calO_{X/\Sigma_{e \inf}}}{\motimes} \calO_{X/\Sigma_{e \Inf}},
		\] where $\mu:\Sh(X/\Sigma_{e \Inf})\ra \Sh(X/\Sigma_{e \inf})$ is the canonical map from the big topos to the small topos, as in Subsection \ref{site big small}.
		Here the proof is identical to that of Lemma \ref{cry pullback}. 
		
		In particular, the pullback $\mu^*\calF$ locally satisfies the same formula as in Lemma \ref{cry pullback}.(i).
		For a crystal $\calF$ over the small site $X/\Sigma_{e \inf}$ and a thickening $(U,T)\in X/\Sigma_{e \Inf}$ in the big site, the restriction of the infinitesimal sheaf $\mu^*\calF$ on $T$ is naturally isomorphic to the pullback $g^*(\calF_S)$.	 
		Here the map $g:T\ra S$ of rigid spaces  comes from an arbitrary commutative diagram of objects $(i,g):(U,T)\ra (V,S)$ in $X/\Sigma_{e \Inf}$ that is compatible with their structure maps $U\ra X$ and $V\ra X$, such that $V\ra X$ is an open immersion.
		
	\end{example}
	In fact, we have the following results about crystals over big and small infinitesimal sites.
	\begin{proposition}\label{cry, big and small}
		Let $X$ be a rigid space over $\Bdre$.
		There exists a natural equivalence as below
		\begin{align*}
			\{coherent~crystals~over~X/\Sigma_{e \inf} \} & \Longleftrightarrow \{coherent~crystals~over~X/\Sigma_{e \Inf} \};\\
			\calF & \longmapsto \mu^*\calF; \\
			\mu_*\calG &~ \reflectbox{$\lmt$} ~\calG.
		\end{align*}
	\end{proposition}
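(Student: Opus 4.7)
The plan is to verify the equivalence by checking that $\mu^*$ and $\mu_*$ are mutually quasi-inverse functors on coherent crystals. The first direction, that $\mu^*\calF$ is a coherent crystal on the big site, is already Example~\ref{cry, over big site}. So I would next observe that $\mu_* = \iota^{-1}$ (restriction) sends coherent crystals on the big site to coherent crystals on the small site, since the crystal compatibility imposed by morphisms in the small site is a special case of the compatibility imposed by morphisms in the big site. Thus both functors are well-defined, and it remains to check that the unit and counit of the $(\mu^*, \mu_*)$-adjunction are isomorphisms on coherent crystals.

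For the unit $\calF \to \mu_*\mu^*\calF$ on the small site, given $(V,S) \in X/\Sigma_{e\inf}$ I would apply Example~\ref{cry, over big site} to the identity morphism $(V,S) \to (V,S)$ in the big site (which comes from the small site since $V \to X$ is an open immersion). This yields $(\mu^*\calF)_S \cong \id^*(\calF_S) = \calF_S$ canonically, so $\mu_*\mu^*\calF \cong \calF$.

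The main content is the counit $\mu^*\mu_*\calG \to \calG$ for a coherent crystal $\calG$ on the big site. For $(U,T) \in X/\Sigma_{e\Inf}$, Example~\ref{cry, over big site} gives $(\mu^*\mu_*\calG)_T \cong g^*((\mu_*\calG)_S) = g^*(\calG_S)$ for any morphism $(i,g):(U,T) \to (V,S)$ in the big site with $V \to X$ an open immersion; by the crystal property of $\calG$ itself, $g^*(\calG_S) \cong \calG_T$. So the counit is an isomorphism provided such a morphism exists locally on $T$. Constructing it is the crux: cover $X$ by affinoid opens $X_\alpha = \Spa(A_\alpha)$ with closed immersions $X_\alpha \hra Y_\alpha$ into smooth rigid spaces over $\Sigma_e$ (e.g.\ $Y_\alpha = \Spa(\Bdre\langle t_1,\ldots,t_n\rangle)$ if $A_\alpha$ is a quotient of this algebra). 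Pulling back along $U \to X$ gives opens $U_\alpha \subset U$, and since $U \hra T$ is a homeomorphism these correspond to opens $T_\alpha \subset T$ with $(U_\alpha,T_\alpha) \to (U,T)$ a covering in the big site. After shrinking $T_\alpha$ to be affinoid, the composition $U_\alpha \to X_\alpha \hra Y_\alpha$ extends to a morphism $T_\alpha \to Y_\alpha$ by smoothness of $Y_\alpha$ relative to the nil closed immersion $U_\alpha \hra T_\alpha$; as $U_\alpha \hra T_\alpha$ is nil, this extension factors through some infinitesimal neighborhood $Y_{\alpha,n}$ of $X_\alpha$ in $Y_\alpha$. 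This yields the required morphism $(U_\alpha, T_\alpha) \to (X_\alpha, Y_{\alpha,n})$ in the big site with target in the small site.

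The main obstacle, as indicated, is precisely this local lifting step, which forces the use of local closed immersions of $X$ into smooth ambient spaces combined with smoothness lifting (an ingredient that is automatic here since every topologically finitely presented affinoid admits such an embedding). Once the local morphisms exist, the naturality statement in Example~\ref{cry, over big site} (that $h^*\calG_S$ is independent of the choice of $h$) ensures that the local isomorphisms $(\mu^*\mu_*\calG)_{T_\alpha} \cong \calG_{T_\alpha}$ agree on overlaps and glue to a global isomorphism $\mu^*\mu_*\calG \cong \calG$ on $T$, and then globally over $X/\Sigma_{e\Inf}$, completing the proof.
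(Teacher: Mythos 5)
Your proposal is correct and follows essentially the same route as the paper: both directions are checked via Example~\ref{cry, over big site}, with the unit handled by the identity morphism $(V,S)\to(V,S)$ and the counit reduced to the local existence of a morphism from a covering $(V_i,S_i)$ of a big-site thickening to a small-site thickening, produced exactly as in the paper by locally embedding an affinoid open $U\subset X$ into a smooth $Y$ and lifting along the nil immersion into some infinitesimal neighborhood $Y_m$. The only difference is presentational (you phrase the two compositions as unit and counit of the adjunction), so there is nothing to add.
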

	Here we recall from Paragraph \ref{site big small} that the functor $\mu_*$ is the restriction functor from $\Sh(X/\Sigma_{e \Inf})$ to $\Sh(X/\Sigma_{e \inf})$.
	\begin{proof}
		It suffices to show the compositions are equivalences.
		Given a coherent crystal $\calF$ over the small infinitesimal site and a thickening $(U,T)\in X/\Sigma_{e \inf}$, we have
		\begin{align*}
			(\mu_*\mu^*\calF)_T &= (\mu^*\calF)_T\\
			& =\calF_T,
		\end{align*}
		where the second equality follows from the Example \ref{cry, over big site} for the identity map $(U,T)\ra (U,T)$.
		
		Conversely, let $\calG$ be  a coherent crystal over the big infinitesimal site $X/\Sigma_{e \Inf}$.
		For any object $(V,S)\in X/\Sigma_{e \Inf}$, it can always be covered by open affinoid subsets $(V_i,S_i)$ such that each $(V_i,S_i)$ admits a map to a thickening $(U,T)\in X/\Sigma_{e \inf}$.
		\footnote{To see this, 
			we may assume the structure map $V_i\ra V\ra X$ maps into an open affinoid subset $U$ of $X$, 
			where $U$ admits a closed immersion into a smooth rigid space $Y$.
			Then since $V_i \ra S_i$ is a nilpotent thickening, 
			by the smoothness of $Y$, the map $V_i\ra U$ induces a map $S_i$ to some $Y_m$, where $Y_m$ is an infinitesimal neighborhood of $U$ in $Y$.
			Thus the claim follows as $(U,Y_m)$ is in the small site $X/\Sigma_{e \inf}$.}
		We denote $g:S_i \ra T$ to be the associated map of rigid spaces.
		Then by the crystal condition of $\calG$, we have $\calG_{S_i}=g^*(\calG_T)$.
		As an upshot, by the Example \ref{cry, over big site} again we get the following equalities
		\begin{align*}
			(\mu^*\mu_*\calG)_{S_i} & =g^*\left( (\mu_* \calG)_T \right)\\
			& = g^* (\calG_T) \\
			& =\calG_{S_i}.
		\end{align*}
		So we are done.
	\end{proof}

	\textbf{Connection.}
	Recall the definition of general connections for a coherent sheaf.
	\begin{definition}\label{cry conn}
		Let $\calF$ be a coherent sheaf over $X/\Sigma_{e \inf}$.
		A \emph{connection} of $\calF$ is an $\Bdre$-linear morphisms of sheaves
		\[
		\nabla:\calF\rra\calF\otimes_{\calO_{X/\Sigma_{e \inf}}} \Omega_{X/\Sigma_{e \inf}}^1,
		\]
		such that $\nabla$ sends $f\cdot x$ onto $f\nabla(x)+x\otimes df$, for $f$ and $x$ being local sections of $\calO_{X/\Sigma_{e \inf}}$ and $\calF$ respectively.
	\end{definition}
	Here we want to mention that similarly we can define the connection for coherent sheaves over the big infinitesimal site.
	
	Now let $\calF$ be a coherent crystal on $X/\Sigma_{e \inf}$, and let $(U,T)$ be an object in $X/\Sigma_{e \inf}$.
	Then by the definition of crystals, the two projection maps $p_0,p_1:T(1)_1\ra T$ induce an isomorphism of $\calO_{T(1)_1}$-modules:
	\[
	\varepsilon_T:p_0^*\calF_T\simeq \calF_{T(1)_1} \simeq p_1^*\calF_T.
	\]
	This induces a morphism of $\calO_T$-modules given by
	\[
	\xymatrix{\calF_T\ar[r] &\calO_{T(1)_1}\otimes_{\calO_T} \calF_T\ar[r]^{\varepsilon_T}& \calF_T\otimes_{\calO_T} \calO_{T(1)_1},\\
		x\ar@{|->}[r] & 1\otimes x \ar@{|->}[r]& \varepsilon_T(1\otimes x).}
	\]
	Here we identify the sheaf of $\calO_{T(1)_1}$-module $p_1^*\calF_T$ as $\calO_{T(1)_1}\otimes_{\calO_T} \calF_T$ (similarly for $p_0^*\calF_T= \calF_T\otimes_{\calO_T} \calO_{T(1)_1}$). % since $p_1$ is a nilpotent morphism and $\calF$ is a crystal.
	Besides, the pullback of the above sequence along the diagonal map $T\ra T(1)_1$ is the identity, so the image of $\varepsilon_T(1\otimes x)$ under this pullback map is exactly $x$.
	
	The map in fact defines a \emph{canonical connection} structure on the sheaf of the $\calO_T$-module $\calF_T$, by 
	\[
	\nabla_T:\xymatrix{\calF_T\ar[r] & \calF_T\otimes_{\calO_T} \Omega_{T/\Sigma_e}^1.\\
		x\ar@{|->}[r] & \varepsilon_T(1\otimes x)-x\otimes 1.}
	\]
	Here $\calF_T\otimes_{\calO_T} \Omega_{T/\Sigma_e}^1=\calF_T\otimes_{\calO_T} I_T/I_T^2$ can be identified as a subsheaf of $\calF_T\otimes_{\calO_T}\calO_{T(1)_1}$, since $\calO_{T(1)_1}$ decomposes into the direct sum $\calO_T\oplus \Omega_{T/\Sigma_e}^1$ as a left $\calO_T$-module.
	Note that the map satisfies the axiom for the connection, in the sense that for a section $f$ of $ \calO_T$ and $x$ of $ \calF_T$, we have
	\[
	\nabla_T(f\cdot x)=f\nabla_T(x)+x\otimes df,
	\]
	where $df=1\otimes f-f\otimes 1$ is in $\Omega_{T/\Sigma_e}^1$.
	
	We also notice that the above is functorial with respect to $(U,T)\in X/\Sigma_{e \inf}$, in the sense that for a morphism $(i,g):(U_1,T_1)\ra (U_2,T_2)$, we have the following commutative diagram
	\[
	\begin{tikzcd}
		g^*(\calF_{T_2}) \arrow[rr, "g^*\nabla_{T_2}"]\ar[d]&& g^*(\calF_{T_2}\otimes_{\calO_{T_2}} \Omega_{T_2/\Sigma_e}^1) \ar[d]\\
		\calF_{T_1}\arrow[rr, "\nabla_{T_1}"] && \calF_{T_1}\otimes_{\calO_{T_1}} \Omega_{T_1/\Sigma_e}^1.
	\end{tikzcd}
	\]
	In particular, the functoriality leads to the morphism of sheaves over infinitesimal site $X/\Sigma_{e \inf}$
	\[
	\nabla:\calF\rra \calF\otimes_{\calO_{X/\Sigma_{e }}} \Omega_{X/\Sigma_{e \inf}}^1.
	\]
	\begin{definition}
		Let $\calF$ be a coherent crystal over the infinitesimal site $X/\Sigma_{e \inf}$.
		The \emph{canonical connection of $\calF$} is defined as the morphism as above
		\[
		\nabla:\calF\rra \calF\otimes_{\calO_{X/\Sigma_{e }}} \Omega_{X/\Sigma_{e \inf}}^1.
		\]
	\end{definition}
	
	Finally, assume $X\to Y$ is a locally closed immersion of rigid spaces over $\Bdre$ such that $Y$ is smooth, and let $\mathcal{F}$ be a coherent crystal over $X/\Sigma_e$.
	By taking the evaluation of the canonical connection of $\calF$ at the envelope $D=\varinjlim_{n\in \NN} Y_n$, we get a natural $\Bdre$-linear continuous connection over $\mathcal{D}$:
	\[
	\nabla_D: \mathcal{F}_D \longrightarrow \mathcal{F}_D\otimes_{\mathcal{O}_Y} \Omega^1_{Y/\Sigma_e},
	\]
	satisfying the local formula that $\nabla_D(fx)=x\otimes df + f\nabla_D(x)$, where $f$ and $x$ are sections of $\mathcal{O}_D=\varprojlim_{n\in \NN} \mathcal{O}_{Y_m}$ and $\mathcal{F}_D$ respectively.
	More generally, give a coherent sheaf $\mathcal{M}$ over the envelope $D$, we define a \emph{connection} $\nabla_\mathcal{M}$ on $\mathcal{M}$ to be a $\Bdre$-linear continuous map
	\[
	\nabla_\mathcal{M}: \mathcal{M} \longrightarrow \mathcal{M}\otimes_{\mathcal{O}_Y} \Omega^1_{Y/\Sigma_e}
	\]
	that satisfies the same formula above.
	
	\textbf{de Rham complex of a crystal.}\label{cry dR cpx}
	Similar to the flat connection over schemes (\cite{Ber74}, Chapter II, Section 3.2), we can associate a natural de Rham complex to a coherent crystal over the infinitesimal site $X/\Sigma_{e \inf}$ or $X/\Sigma_{e \Inf}$, by the integrability of the canonical connection.

	Let $\calF$ be  a coherent sheaf over $\calO_{X/\Sigma_{e}}$ with a connection $\nabla$.
	For each $k\in \NN_+$ we can associate an $\calO_{X/\Sigma_{e}}$-linear morphism
	\[
	\nabla^k:\calF\otimes_{\calO_{X/\Sigma_{e }}}\Omega_{X/\Sigma_{e}}^k\rra \calF\otimes_{\calO_{X/\Sigma_{e}}}\Omega_{X/\Sigma_{e \inf}}^{k+1},
	\]
	locally given by
	\[
	x\otimes \omega\lmt \nabla(x)\wedge \omega+x\otimes d\omega.
	\]
	This produces a chain of maps
	\[
	(\calF\otimes_{\calO_{X/\Sigma_{e }}}\Omega_{X/\Sigma_{e \inf}}^\bullet,\nabla):=
	\begin{tikzcd}
		0\ar[r] &\calF\arrow[r, "\nabla"] & \calF\otimes_{\calO_{X/\Sigma_e}}\Omega_{X/\Sigma_{e \inf}}^1 \arrow[r,"\nabla^1"] & \cdots.
	\end{tikzcd}
	\]
	
	The connection is called \emph{integrable} if the composition $\nabla^1\circ \nabla$ is zero, under which assumption we call  the above complex the \emph{de Rham complex of $\calF$}.
	Analogously, for a given envelope $D$ of a locally closed immersion $X\to Y$ into a smooth rigid space $Y$, and a coherent module with connection $(\mathcal{M},\nabla_\mathcal{M})$ over $D$, we say $\nabla_\mathcal{M}$ is \emph{integrable} if the composition $\nabla_\mathcal{M}^1\circ \nabla_\mathcal{M}$ is zero.
	
	The following proposition justifies the name of the de Rham complex:
	\begin{proposition}\label{drcpx}
		Let $\calF$ be a coherent crystal over $X/\Sigma_{e \inf}$, and let $\nabla$ be its canonical connection defined in last subsection.
		Then for each $k\in \NN$, we have 
		\[
		\nabla^{k+1}\circ \nabla^k=0.\]
		In particular, the de Rham complex of $\calF$ is in fact a \emph{complex}.
	\end{proposition}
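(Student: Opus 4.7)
The plan is to reduce the full statement to the single case $k=0$, i.e. vanishing of the curvature $\nabla^1 \circ \nabla = 0$. Once that is shown, a direct computation using the Leibniz rule $\nabla^k(x \otimes \omega) = \nabla(x)\wedge \omega + x \otimes d\omega$ and $d \circ d = 0$ on $\Omega_{X/\Sigma_{e \inf}}^{\bullet}$ gives $\nabla^{k+1}\circ \nabla^k = 0$ for all $k \geq 0$.

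To prove $\nabla^1 \circ \nabla = 0$, I would work locally on an object $(U,T) \in X/\Sigma_{e \inf}$ and analyze the canonical connection $\nabla_T$ via the second self-product. Let $T(2) = T\times_{\Sigma_e} T \times_{\Sigma_e} T$, with diagonal $T \hookrightarrow T(2)$, and let $T(2)_m$ denote its $m$-th infinitesimal neighborhood. For $0 \leq i < j \leq 2$ denote by $p_i: T(2)_m \to T$ the three projections and by $p_{ij}: T(2)_m \to T(1)_m$ the three two-factor projections. The crystal property of $\calF$ applied to each of these maps produces canonical isomorphisms $\varepsilon_{ij}: p_i^*\calF_T \xrightarrow{\sim} p_j^*\calF_T$ on $T(2)_m$, each obtained by pulling back the first-order isomorphism $\varepsilon_T$ along $p_{ij}$. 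The uniqueness part of the crystal condition (applied to the thickening $(U,T) \hookrightarrow (U,T(2)_m)$) forces the cocycle identity
\[
\varepsilon_{02} = \varepsilon_{12} \circ \varepsilon_{01}
\]
as isomorphisms $p_0^*\calF_T \xrightarrow{\sim} p_2^*\calF_T$ on $T(2)_m$.

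I would then extract the curvature from this cocycle identity by carrying out the computation modulo the cube of the diagonal ideal. Writing $\varepsilon_T(1\otimes x) = 1\otimes x + \nabla(x) + \tau(x)$ on $T(1)_2$, where $\tau(x)$ lies in $\calF_T \otimes I_T^2/I_T^3$, and pulling back along each $p_{ij}$ using the natural identifications $p_{ij}^*\Omega_{T/\Sigma_e}^1$ inside $\Omega_{T(2)_2/\Sigma_e}^1$, the cocycle identity on $T(2)_2$ becomes, after collecting the tensor-quadratic terms in $\Omega_{T/\Sigma_e}^1 \otimes \Omega_{T/\Sigma_e}^1$, the equation $(\nabla^1\circ \nabla)(x) = 0$ inside $\calF_T \otimes \Omega_{T/\Sigma_e}^2$. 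This is the standard Grothendieck--Berthelot argument relating the cocycle for $\varepsilon$ to integrability.

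\textbf{Main obstacle.} The delicate point is the second-order bookkeeping: one must choose the right infinitesimal neighborhood (working on $T(2)_2$ rather than $T(2)_1$, since the curvature is a second-order invariant), and must carefully identify the three pullbacks $p_{ij}^*\Omega_{T/\Sigma_e}^1$ as a filtered subsheaf of $\Omega_{T(2)_2/\Sigma_e}^1$ so that the cocycle identity can be read off as an equality of tensors whose antisymmetrization gives the curvature. After handling this local verification, functoriality with respect to morphisms $(U_1,T_1) \to (U_2,T_2)$ (established in the paragraphs defining $\nabla$) immediately yields vanishing of $\nabla^1 \circ \nabla$ as a morphism of sheaves on $X/\Sigma_{e\inf}$, and the Leibniz argument above then completes the proof.
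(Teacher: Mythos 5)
Your proposal is correct and follows essentially the same route as the paper: the paper's proof simply defers to the standard crystalline-site argument (\cite[Tag 07J6]{Sta}), which is exactly your reduction to the curvature $\nabla^1\circ\nabla$ via the Leibniz rule together with the cocycle identity $\varepsilon_{02}=\varepsilon_{12}\circ\varepsilon_{01}$ on the second-order neighborhood of the triple self-product. Since $\Bdre$ contains $\QQ$ there are no divided-power subtleties, so the transcription to the rigid-analytic infinitesimal site is indeed verbatim, as you outline.
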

	\begin{proof}
		The proof is identical to that for a crystal over the crystalline site of a scheme, and we refer the reader to\cite[Tag 07J6]{Sta}.
	\end{proof}

	\subsection{Crystal in vector bundles}
	Given a coherent crystal $\calF$ over the infinitesimal site $X/\Sigma_{e \inf}$, we say $\calF$ is a \emph{crystal in vector bundles} if the restriction $\calF_T$ is locally free of finite rank over $\calO_T$, for every object $(U,T)\in X/\Sigma_{e \inf}$.
	In this subsection, we provide a simple criterion when a coherent crystal is a crystal in vector bundles.
	
	\begin{definition}\label{cry flat}
		A coherent crystal $\calF$ is \emph{flat over $\Bdre$} if for any thickening $(U,T)$ in the infinitesimal site with $T$ being flat over $\Bdre$,
		the restriction $\calF_T$ at $T$ is also flat over $\Bdre$.
	\end{definition}
	
	\begin{theorem}\label{cry, vec-bun}
		Let $\calF$ be a coherent crystal over $X/\Sigma_{e \inf}$ that is flat over $\Bdre$ in the sense of \Cref{cry flat}.
		Then $\calF$ is a crystal in vector bundles.
	\end{theorem}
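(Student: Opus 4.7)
The plan is to translate the statement, via the local equivalence between crystals and modules with integrable connection on the envelope (Theorem \ref{cry}), into a question about modules over a formally $\Bdre$-smooth base, and then to combine a characteristic-zero local freeness result modulo $\xi$ with a Nakayama lifting argument. Since local freeness is local on $T$ and the crystal condition is local on $X$, I would first reduce to the case where $X$ is affinoid and admits a closed immersion $X = \Spa(A/I) \hookrightarrow Y = \Spa A$ into a smooth affinoid $Y$ over $\Bdre$. Given any thickening $(U,T)=(\Spa(R/J), \Spa R)$, smoothness of $Y$ and nilpotence of $J$ provide a lift $T \to Y$ factoring through $Y_m := \Spa(A/I^{m+1})$ for some $m$, and the crystal condition then gives $\calF_T \cong \calF_{Y_m} \otimes_{A/I^{m+1}} R$. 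It therefore suffices to show that $\hat M := \varprojlim_n \calF_{Y_n}(Y_n)$ is locally free over $\hat A := \varprojlim_n A/I^{n+1}$.

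By Theorem \ref{cry}, $\hat M$ is a coherent $\hat A$-module equipped with an integrable $\Bdre$-linear connection. Since $A$ is smooth over $\Bdre$, the ring $\hat A$ is $\Bdre$-flat, and $\bar A := \hat A/\xi\hat A$ is the formal completion of the smooth $K$-algebra $A/\xi A$ along the closed subscheme $X/\xi$. The reduction $\bar M := \hat M/\xi\hat M$ inherits an induced flat $K$-linear connection, and I would then invoke the classical characteristic-zero fact that a coherent module with integrable connection over a smooth (formal) base in characteristic zero is locally free, to conclude that $\bar M$ is locally free over $\bar A$.

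To lift local freeness from $\bar M$ to $\hat M$, I would use the flatness hypothesis (packaged as $\Bdre$-flatness of $\hat M$, which is addressed below) together with Nakayama's lemma. Locally choose $m_1, \ldots, m_r \in \hat M$ whose images form a basis of $\bar M$ over $\bar A$; since $\xi$ is nilpotent in $\Bdre$, Nakayama shows they generate $\hat M$. Writing $N$ for the kernel of the resulting surjection $\hat A^r \twoheadrightarrow \hat M$ and tensoring with $\bar A$, the $\Bdre$-flatness of $\hat M$ (combined with $\Bdre$-flatness of $\hat A$) yields
\[
\mathrm{Tor}_1^{\hat A}(\hat M, \bar A) \;=\; \mathrm{Tor}_1^{\Bdre}(\hat M, K) \;=\; 0,
\]
so $N/\xi N = 0$, whence $N = 0$ by a second Nakayama. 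Hence $\hat M \cong \hat A^r$ locally, and pulling back along $\hat A \to R$ gives the desired local freeness of $\calF_T$.

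The main obstacle is the last input: translating the hypothesis that $\calF_T$ is $\Bdre$-flat for \emph{flat} thickenings $T$ into $\Bdre$-flatness of $\hat M$, since the thickenings $Y_n$ used to build $\hat M$ are typically not flat over $\Bdre$. When $X \hookrightarrow Y$ is locally a regular immersion, each $A/I^{n+1}$ is a free $\Bdre$-module and the hypothesis applies to $\calF_{Y_n}$ directly, so $\hat M$ is $\Bdre$-flat without further work. In general I would try to bridge the gap either by constructing, for each $Y_n$, a flat thickening surjecting onto $Y_n$ with nilpotent kernel and applying the hypothesis there, or by a direct $\mathrm{Tor}$-vanishing argument using the integrable connection on $\hat M$ together with the $\xi$-adic filtration whose graded pieces are already shown to be locally free.
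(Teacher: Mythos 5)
Your overall architecture --- first establish local freeness modulo $\xi$ in the characteristic-zero setting, then lift to $\Bdre$ by a Nakayama argument using $\Bdre$-flatness --- matches the paper's, and your $\mathrm{Tor}$-vanishing lift is a perfectly good substitute for the paper's inductive claim (which does the same thing by a snake-lemma induction on $e$). However, there are two genuine gaps, and the second is exactly where your route diverges from the paper's in a way that matters.

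First, the ``classical characteristic-zero fact'' you invoke does not literally apply. Your $\bar A$ is the $I$-adic completion of a smooth $K$-algebra along a closed subspace that is typically singular; the module $\bar M$ lives over the (singular) space $X/\xi$, not over a smooth variety, so the standard statement about coherent modules with flat connection on smooth varieties is not available off the shelf. The statement you need --- a finite module with integrable connection over $\varprojlim_n (A/\xi)/I^n$ is locally free --- is true, but in the paper's logical order it is a \emph{consequence} of this theorem (Corollary \ref{cry, vec-bun, K} together with Corollary \ref{cry, free}), so citing those would be circular, and proving it requires an actual argument: this is precisely the content of the paper's Step 1, which works with the two maps $T(1)_m\rightrightarrows T$ coming from the crystal structure, base-changes to each $K$-point $t$ to get $\calF_T\otimes t_m\cong h^*(\calF_T\otimes t)$, and then passes to the complete local ring (a power series ring, by smoothness of the ambient $Y$ at points of $X$) before descending by faithful flatness. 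You need to supply this parallel-transport/Taylor-expansion argument at each point; as written, the heart of the characteristic-zero case is asserted rather than proved.

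Second, the obstacle you flag at the end is real and your proposed bridges do not close it. The flatness hypothesis of \Cref{cry flat} only constrains $\calF_T$ for thickenings $T$ that are themselves flat over $\Bdre$, and the infinitesimal neighborhoods $Y_n$ are essentially never flat over $\Bdre$ (this already fails for regular immersions --- the quotient $A/I^{n+1}$ of a smooth $\Bdre$-algebra by a power of a regular ideal can acquire $\xi$-torsion --- so your ``easy case'' is not actually available). Consequently the $\Bdre$-flatness of the single global module $\hat M$ cannot be extracted from the hypothesis, and without it your $\mathrm{Tor}_1$ computation has no input. The paper sidesteps this entirely by never trying to prove flatness of an envelope module: it argues thickening by thickening, applying the hypothesis directly to each $T$ that happens to be flat over $\Bdre$ (where $\calO_T$ is flat and the Nakayama lift goes through verbatim), and then treats a general $T$ by a separate trick --- embedding $T$ into a further nil extension $T'$ in which a chosen $K$-point lifts to a $\Bdre$-point, and pulling back $\calF_{T'}$ along $T\to T'$. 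Your envelope-based reduction, by contrast, funnels everything into one module built from non-flat thickenings, which is exactly the configuration the hypothesis says nothing about; to rescue the approach you would need to either prove $\hat M$ is $\Bdre$-flat by some independent argument (e.g.\ using the connection and the already-established local freeness of $\bar M$, as you tentatively suggest) or abandon the global reduction in favor of the paper's pointwise one.
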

	\begin{proof}
		$~$
		\begin{itemize}
			\item[Step 1]
			We first consider the case when $e=1$.
			Namely let us assume $X$ is defined over $K$ and $\calF$ is a coherent crystal over $X/K_{\inf}$ or $X/K_{\Inf}$, where the flatness of $\calF$ over $\Bdre=K$ is automatic.
			
			For $m\in \NN$, let $T(1)_m$ be the $m$-th infinitesimal neighborhood of $T$ in $T\times_{K} T$.
			The projection map $\mathrm{pr}_0$ to the first factor induces a map from $T(1)_m$ to $T$, via
			\[
			h:\xymatrix{T(1)_m\ar[r] & T\times_{K} T \ar[r]^{~~~~~\mathrm{pr}_0} &T.}
			\]
			Moreover, as $T=T(1)_0$ admits a closed immersion into $T(1)_m$, we can form the following non-commutative diagram of thickenings
			\[
			\xymatrix{&T\ar@{^(->}[rd]&&\\
				T(1)_m\ar[rr]_{\id} \ar^h[ru]&& T(1)_m \ar[r]^h& T. } \tag{$\ast$}
			\]
			Here we notice that the composition $\xymatrix{T\ar@{^(->}[r]&T(1)_m\ar[r]^h&T}$ above is the identity.
			We denote the composition of the map  $h:T(1)_m\ra T$ and the closed immersion $T\ra T(1)_m$ by $g:T(1)_m\ra T(1)_m$.
			Then we get two maps of thickenings of $U$ as follows
			\[
			\xymatrix{ T(1)_m \ar@/^/[r]^{\id} \ar@/_/[r]_g & T(1)_m.}
			\]
			
			Then by the definition of the coherent crystal, pulling back along the above two arrows induces an isomorphism of coherent sheaves over $T(1)_m$
			\[
			g^*\calF_{T(1)_m}\rra \calF_{T(1)_m}.
			\]
			Moreover, by the assumption on $g$, we have
			\[
			g^*\calF_{T(1)_m}= h^*\calF_T. \tag{$**$}
			\]
			
			Now we base change the diagram $(\ast)$ above along the closed immersion $t:\Spa(K)\ra T$ of any $K$-point $t$ of $T$.
			By the construction of the map $h:T(1)_m\ra T$, we get a non-commutative diagram
			\[
			\xymatrix{&t=\Spa(K)\ar@{^(->}[rd]&&\\
				t_m\ar[rr]_{\id} \ar[ur]&&t_m\ar[r]& t,}
			\]
			where $t_m$ is the $m$-th infinitesimal neighborhood of $t$ in $T$, and the base changed map $h:t_m\ra t=\Spa(K)$ is the structure map of $t_m$ over $\Spa(K)$.
			Furthermore, after the base change, 
			the isomorphism $g^*\calF_{T(1)_m}\simeq \calF_{T(1)_m}$ in $(**)$ becomes the following isomorphism of torsion sheaves over $T$ that are supported at $t$
			\[
			\calF_T\otimes t_m\simeq h^*(\calF_T\otimes t).
			\]
			Notice that since the fiber $\calF_T\otimes t$ is flat and finitely generated over $K$, the base change $\calF_T\otimes t$ is in particular a finite free $K$-module.
			As a consequence, the pullback $h^*(\calF_T\otimes t)$, which by the equality above is equal to $\calF_T\otimes_{\mathcal{O}_T} t_m$, is finite free over $t_m$.

			At last, we take the inverse limit of $\calF_T\otimes_{\mathcal{O}_T} t_m$ with respect to $m$.
			By the finite generatedness of $\mathcal{F}_T$ over $\mathcal{O}_T$ and the fact that $T$ is locally noetherian, we know the completion $\varprojlim_{m\in \NN} \calF_T\otimes_{\mathcal{O}_T} t_m$ is equal to the tensor product $\mathcal{F}_{\mathcal{O}_T} \wh\calO_{T,t}$.
			So we see that $\calF_T\otimes_{\calO_T}\wh\calO_{T,t}$ is finite free over the formal completion $\wh\calO_{T,t}$ of the rigid space $T$ at the $K$-point $t$.
			Since $T$ is locally noetherian, the formal completion $\wh\calO_{T,t}$ is isomorphic to the completion $\wh\calO_{T,\ol t}$, where the latter is the formal completion of $T$ along its reduced $K$-valued point $\ol t$.
			In this way, by the faithful flatness of $\wh\calO_{T, t}$ over $\calO_{T, t}$, the stalk of the coherent sheaf $\calF_T$ at $t$ is flat and finitely generated over the local ring, thus projective.
			Hence by the density of $K$-points in $T$, we get the local freeness of $\calF_T$.

			\item[Step 2]
			For general $e\in \NN$, we make the following claim.
			\begin{claim}
				Let $A$ be a flat noetherian algebra over $\Bdre$, and let $M$ be a finite $A$-module that is flat over $\Bdre$.
				Suppose $M/\xi$ is free over $A/\xi$.
				Then $M$ is free over $A$.
			\end{claim}
			\begin{proof}[Proof of Claim]
				We prove the claim by induction on $e$.
				When $e=1$, there is nothing to prove.
				Suppose $e\geq 2$.
				We choose a map of $A$-modules $f:A^{\oplus r} \ra M$ whose reduction mod $\xi$ is an isomorphism.
				Then as $f$ is a map of flat $\Bdre$-modules, the short exact sequence $0\ra \mathrm{B}_{\mathrm{dR},e-1}^+ \ra \Bdre \ra K \ra 0$ induces the following commutative diagram
				\[
				\xymatrix{ 0 \ar[r]& A^{\oplus r} \otimes_{\Bdre} \mathrm{B}_{\mathrm{dR},e-1}^+ \ar[r] \ar[d] & A^{\oplus r}  \ar[r] \ar[d] & A/\xi^{\oplus r} \ar[r] \ar[d] & 0\\
					0 \ar[r] & M\otimes_{\Bdre} \mathrm{B}_{\mathrm{dR},e-1}^+ \ar[r] & M \ar[r] & M/\xi \ar[r] & 0.}
				\]
				Hence the map $f$ is an isomorphism of $A$-modules by induction.
			\end{proof}
			Recall from \Cref{eff-epi} that any infinitesimal thickening $(U,T)\in X/\Sigma_{e \inf}$ locally admits a map to an envelope $D_U(Y)$, where $Y$ is a smooth rigid space over $\Bdre$ and in particular flat over $\Bdre$.
			Notice that by construction, the structure sheaf of the envelope, which is the formal completion of $\mathcal{O}_Y$ along the closed immersion $U\to Y$, is flat over $\Bdre$ as well.
			As a consequence, thanks to the claim above, the sheaf $\mathcal{F}_{D_U(Y)}$ is a vector bundle over $D_U(Y)$, and hence its pullback along $T\to D_U(Y)$ is a vector bundle over $\mathcal{O}_T$.
		\end{itemize}
	\end{proof}
	
	\begin{corollary}\label{cry, vec-bun, K}
		Any coherent crystal over the infinitesimal site $X/K_{\inf}$ or $X/K_{\Inf}$ is a crystal in vector bundles.
	\end{corollary}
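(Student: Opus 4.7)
The plan is to reduce immediately to \Cref{cry, vec-bun} applied in the special case $e=1$. The key observation is that when $e=1$, the base ring $\Bdr/\xi = K$ is a field, so every module over $\Bdre$ is automatically flat. Consequently, the flatness hypothesis in \Cref{cry flat} is vacuously satisfied: for any thickening $(U,T)$ in $X/K_{\inf}$, the restriction $\calF_T$ is a coherent $\calO_T$-module, hence in particular a $K$-module, and every $K$-module is flat.

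Therefore \Cref{cry, vec-bun} directly yields that every coherent crystal $\calF$ on $X/K_{\inf}$ is a crystal in vector bundles.

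For the big site version, I would invoke \Cref{cry, big and small}, which gives an equivalence between coherent crystals on $X/K_{\inf}$ and coherent crystals on $X/K_{\Inf}$ via $\mu^*$ and $\mu_*$. Given a coherent crystal $\calG$ on $X/K_{\Inf}$, the restriction $\mu_*\calG$ is a coherent crystal on $X/K_{\inf}$, hence is locally free on every thickening in the small site by the previous paragraph. For an arbitrary thickening $(V,S)$ in the big site, one covers $V$ by affinoid opens $(V_i,S_i)$ each admitting a map to some thickening $(U,T)$ in the small site (as in the proof of \Cref{cry, big and small}), and then the crystal property of $\calG$ identifies $\calG_{S_i}$ with the pullback of $(\mu_*\calG)_T$ along the associated map of rigid spaces, hence is locally free. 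So $\calG$ is a crystal in vector bundles as well.

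There is essentially no obstacle here; the entire content of the corollary is already packaged in \Cref{cry, vec-bun}, together with the automatic flatness over a field and the small-to-big passage of \Cref{cry, big and small}.
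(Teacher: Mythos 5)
Your proposal is correct and matches the paper's intent: the corollary is stated as an immediate consequence of \Cref{cry, vec-bun}, and Step 1 of that theorem's proof is exactly the $e=1$ case, where the paper itself notes that flatness over $\Bdre=K$ is automatic. The only cosmetic difference is that the paper's Step 1 already runs the argument for thickenings in either the small or the big site, so your detour through \Cref{cry, big and small} for the big-site case is unnecessary, though perfectly valid.
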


	\subsection{Integrable connections over envelope}
	As in the crystalline theory of schemes, there exists an equivalence between the category of coherent crystals over $X/\Sigma_{e \inf}$ and the category of coherent sheaves with integrable connections over the envelope.
	
	Before we state the result, we recall from Remark \ref{env space} that given an envelope $D=\varinjlim_{n\in \NN} Y_n$ of a locally closed immersion $X\ra Y$, we can regard the envelope as a locally ringed space over the adic space $X$.
	The structure sheaf $\calD$ of the envelope is defined as the inverse limit $\varprojlim_{n\in \NN} \calO_{Y_n}$ over $D$.

	\begin{theorem}\label{cry}
		Let $X\ra Y$ be a closed immersion of rigid spaces with $Y$ being smooth over $\Sigma_e$, and let $D=D_X(Y)$ be the envelope of $X$ in $Y$.
		Then we have a natural equivalence of categories:
		\begin{align*}
			\{\text{coherent~crystals~over~}\calO_{X/\Sigma_{e}}\}&\rra \{(M,\nabla)|~M\in\Coh(\calD),~\nabla~\text{integrable~connection}\}\\
			\calF&\lmt (\calF_{D},\nabla_{D}).
		\end{align*}
		Here crystals are over either the big infinitesimal site or small infinitesimal site.
	\end{theorem}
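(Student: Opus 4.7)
My plan is to construct functors in both directions and verify that they are mutually quasi-inverse. By \Cref{cry, big and small}, the statements for the big and small infinitesimal sites are equivalent, so I would focus on the small site $X/\Sigma_{e \inf}$.

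The functor from crystals to pairs $(M,\nabla)$ is essentially built in Subsection \ref{sub con}. Given a coherent crystal $\calF$, the crystal condition ensures that the restrictions $\calF_{Y_n}$ to the infinitesimal neighborhoods $(X, Y_n)$ are compatible along the closed immersions $Y_n \hra Y_{n+1}$, and they assemble into a coherent $\calD$-module $\calF_D := \varprojlim_n \calF_{Y_n}$ on the envelope. The canonical connections on the $\calF_{Y_n}$ are compatible under pullback and descend to an integrable connection $\nabla_D$ on $\calF_D$; integrability is the content of \Cref{drcpx}.

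For the inverse direction, given $(M,\nabla)$ on $D$ with integrable connection, I would produce a coherent sheaf $\calF_T$ for each thickening $(U,T) \in X/\Sigma_{e \inf}$ as follows. Using the smoothness of $Y$ over $\Sigma_e$ together with the nilpotence of the defining ideal of $U \hra T$, the composition $U \ra X \ra Y$ admits Zariski-locally on $T$ a lift $g : T \ra Y$, and such a lift automatically factors through some $Y_n$, hence through $D$. I would set $\calF_T := g^{*}M$ locally using such a lift. For any two local lifts $g_1, g_2 : T \ra Y$ agreeing on $U$, after choosing local \'etale coordinates $y_1, \dots, y_d$ on $Y$, the differences $g_2^{*}(y_i) - g_1^{*}(y_i)$ vanish on $U$ and are therefore nilpotent in $\calO_T$, so the Taylor-type formula
\[
\theta_{g_1, g_2} : g_1^{*}M \xrightarrow{\sim} g_2^{*}M, \qquad m \otimes 1 \longmapsto \sum_{\alpha \in \NN^d} \frac{1}{\alpha!}\, \nabla_{\alpha}(m) \otimes \prod_i \bigl(g_2^{*}(y_i) - g_1^{*}(y_i)\bigr)^{\alpha_i},
\]
is a well-defined finite sum of $\calO_T$-linear isomorphisms, and these isomorphisms allow the locally constructed pullbacks to be glued into a globally defined coherent sheaf $\calF_T$ on $T$.

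The main obstacle, and principal technical verification, is to prove that the $\theta_{g_1,g_2}$ satisfy the cocycle condition $\theta_{g_2,g_3} \circ \theta_{g_1,g_2} = \theta_{g_1,g_3}$ on triple overlaps of lifts, and that they are compatible under morphisms of thickenings $(U_1, T_1) \ra (U_2, T_2)$; both reduce to identities amongst iterated applications of $\nabla$ that rest on the integrability $\nabla^1 \circ \nabla = 0$, or equivalently on the fact that in characteristic zero an integrable connection extends uniquely to a full stratification. This portion I would carry out following the analogous argument in the crystalline setting \cite[Chapter 2]{BO78} and \cite[Tag 07J7]{Sta}, adapted to the analytic setting using the noetherianness of affinoid algebras. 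Once $(U,T) \mapsto \calF_T$ is shown to be well-defined and functorial, the crystal axiom holds by construction, and evaluating the composition of the two functors on the universal thickenings $(X, Y_n)$ shows that they are mutually inverse.
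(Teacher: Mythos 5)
Your proposal is correct and follows essentially the same route as the paper: the forward functor via the canonical connection and \Cref{drcpx}, and the inverse via local lifts $T \ra Y$ factoring through the envelope, with the Taylor-formula transition isomorphisms glued using a cocycle condition that rests on the commutativity of the $\nabla_i$ (itself a consequence of integrability) and the binomial identity $\sum_{n}\frac{u^n}{n!}\frac{v^{N-n}}{(N-n)!}=\frac{(u+v)^N}{N!}$.
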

	\begin{corollary}\label{cry, free}
		Let $X\ra Y$ be a closed immersion of rigid spaces with $Y$ being smooth over $\Sigma_e$, and let $D=D_X(Y)$ be the envelope of $X$ in $Y$.
		Then the equivalence above induces the equivalence of the following three categories:
		\begin{itemize}
			\item $\{\text{coherent~crystals~that~are~flat~over~}\Bdre\}$.
			\item $\{\text{crystals~in~vector~bundles}\}$.
			\item $\{(M,\nabla)|~M\in\VVec(\calD),~\nabla\text{~an~integrable~connection}\}$.
		\end{itemize}
	\end{corollary}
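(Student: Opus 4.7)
The corollary asserts two separate bijections: first between flat coherent crystals and crystals in vector bundles, and second between crystals in vector bundles and pairs $(M,\nabla)$ with $M\in\VVec(\calD)$. My plan is to handle these in turn, using Theorem \ref{cry, vec-bun} for the first and the already-established equivalence of Theorem \ref{cry} for the second.

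For the first bijection, the implication from crystals in vector bundles to flat crystals is immediate: if $\calF_T$ is locally free over $\calO_T$ for every $(U,T)$, and $T$ is flat over $\Bdre$, then $\calF_T$ is locally free hence flat over $\calO_T$, so flat over $\Bdre$ by composition. The converse is precisely the content of Theorem \ref{cry, vec-bun}. Thus the first two categories in the list agree as subcategories of coherent crystals.

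For the second bijection, I would restrict the equivalence of Theorem \ref{cry} to the subcategory of crystals in vector bundles and identify its image. In one direction, given a crystal in vector bundles $\calF$, each $\calF_{Y_n}$ is locally free of some constant rank $r$ (constancy follows from the crystal condition along the closed immersions $Y_n\hookrightarrow Y_{n+1}$), and compatible local trivializations lift through these thickenings, so $\calF_D=\varprojlim_n\calF_{Y_n}$ is locally free of rank $r$ on the ringed space $\calD$. In the other direction, suppose $(M,\nabla)$ has $M\in\VVec(\calD)$ and let $\calF$ be the associated crystal from Theorem \ref{cry}. Given any thickening $(U,T)\in X/\Sigma_{e\inf}$, I would use smoothness of $Y$ (exactly as in the proof of Lemma \ref{eff-epi}) to obtain, locally on $T$, a morphism of thickenings $g\colon (U,T)\to (X,Y_n)$ for some $n$; the crystal condition then gives $\calF_T\cong g^*\calF_{Y_n}$, and $\calF_{Y_n}=M\otimes_{\calD}\calO_{Y_n}$ is locally free on $Y_n$ because $M$ is locally free on $\calD$, so $\calF_T$ is locally free on $T$.

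The main obstacle will be making the passage between vector bundles on $\calD$ (viewed as the limit ringed space $(X,\varprojlim_n\calO_{Y_n})$) and compatible systems of vector bundles on the $Y_n$ precise; this amounts to verifying that a coherent sheaf on $\calD$ is locally free if and only if each of its reductions to $Y_n$ is locally free of constant rank, a standard but slightly delicate fact in formal geometry that follows from lifting of local frames through the nilpotent thickenings $Y_n\subset Y_{n+1}$.
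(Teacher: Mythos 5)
Your proposal is correct and follows the route the paper intends: the corollary is stated without proof precisely because the first bijection is Theorem \ref{cry, vec-bun} together with the easy converse (a locally free sheaf on a $\Bdre$-flat $T$ is $\Bdre$-flat), and the second is the restriction of the equivalence of Theorem \ref{cry}, with the identification of vector bundles on $\calD$ with compatible systems of vector bundles on the $Y_n$ handled by Nakayama-type lifting of frames through the nilpotent thickenings, exactly as you outline.
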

	
	Before the proof, we first give a description of the sheaf of differentials over the envelope.
	Below for an affinoid rigid space, we slightly abuse of notations for its sheaf of differentials and its global sections.
	\begin{lemma}\label{diff}
		Let $X=\Spa(A)\ra Y=\Spa(P)$ be a closed immersion of affinoid rigid spaces over $\Bdre$, with $P$ a smooth affinoid algebra over $\Bdre$.
		
		Then we have the following canonical isomorphism
		\[
		\Omega_D^1:=\Omega_{X/\Sigma_{e \inf}}^1(D)\simeq \Omega_{P/\Sigma_e}^1\otimes_P \calD,
		\]
		which is induced from the map $P\ra \calD$.
		
		Moreover, the result is true for $\Omega_{X/\Sigma_{e \Inf}}^1$ over the big infinitesimal site.
	\end{lemma}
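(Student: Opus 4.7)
The plan is to unravel $\Omega^1_{X/\Sigma_{e\inf}}(D)$ via the presentation $D = \varinjlim_n h_{Y_n}$ from Definition~2.2.1, reduce to a concrete calculation at each finite level, and then pass to the inverse limit using a Leibniz-type estimate. First, since $D$ is a filtered colimit of representable sheaves for the thickenings $(X, Y_n)$ with $Y_n = \Spa(P/I^{n+1})$, where $I \subset P$ is the ideal defining $X \hookrightarrow Y$, the functor $\Hom(-, \Omega^1_{X/\Sigma_{e\inf}})$ turns the colimit into a limit, so
\[
\Omega^1_{X/\Sigma_{e\inf}}(D) \;=\; \varprojlim_n \Omega^{1,cont}_{Y_n/\Sigma_e}(Y_n) \;=\; \varprojlim_n \Omega^{1,cont}_{(P/I^{n+1})/\Sigma_e}.
\]

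Next I invoke the conormal exact sequence for the surjection $P \twoheadrightarrow P/I^{n+1}$ of $\Bdre$-affinoid algebras,
\[
I^{n+1}/I^{2(n+1)} \xrightarrow{\,d\,} \Omega^{1,cont}_{P/\Sigma_e} \otimes_P P/I^{n+1} \longrightarrow \Omega^{1,cont}_{(P/I^{n+1})/\Sigma_e} \to 0,
\]
and write $K_n$ for the image of $d$ inside $\Omega^{1,cont}_{P/\Sigma_e}/I^{n+1}\Omega^{1,cont}_{P/\Sigma_e}$. The crucial observation is that the inverse system $\{K_n\}$ is asymptotically zero: by Leibniz, any $f \in I^{n+1}$ written as a sum of products $f_{j,1}\cdots f_{j,n+1}$ with $f_{j,i} \in I$ has $df \in I^n \Omega^{1,cont}_{P/\Sigma_e}$, so $K_n \subseteq I^n\Omega^{1,cont}_{P/\Sigma_e}/I^{n+1}\Omega^{1,cont}_{P/\Sigma_e}$. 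Consequently the transition map $K_m \to K_n$ factors through $I^m\Omega^{1,cont}_{P/\Sigma_e}/I^{n+1}\Omega^{1,cont}_{P/\Sigma_e}$, which vanishes for $m \geq n+1$; hence $\varprojlim_n K_n = 0 = \varprojlim^1_n K_n$.

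Finally, the smoothness of $P$ over $\Bdre$ ensures that $\Omega^{1,cont}_{P/\Sigma_e}$ is a finitely generated projective $P$-module, giving
\[
\varprojlim_n \bigl(\Omega^{1,cont}_{P/\Sigma_e} \otimes_P P/I^{n+1}\bigr) \;\cong\; \Omega^{1,cont}_{P/\Sigma_e} \otimes_P \calD,
\]
and passing to the limit in the conormal sequence yields the desired isomorphism. The same reasoning applies verbatim over $X/\Sigma_{e\Inf}$ since the envelope presentation and the definition of the differential sheaf are identical under the inclusion $X/\Sigma_{e\inf} \hookrightarrow X/\Sigma_{e\Inf}$. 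I expect the main point requiring care to be the validity of the continuous conormal sequence for $\Bdre$-affinoids; this is standard but should be cross-referenced with Gabber--Ramero's formalism, which will be revisited in Section~\ref{sec ddR}.
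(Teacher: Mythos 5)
Your proof is correct and follows essentially the same route as the paper's: both hinge on the Leibniz-rule estimate that $d$ carries $I^{n+1}$ into $I^{n}\Omega^{1,cont}_{P/\Sigma_e}$, so the kernel of $\Omega^{1,cont}_{P/\Sigma_e}\otimes_P P/I^{n+1}\to\Omega^{1,cont}_{(P/I^{n+1})/\Sigma_e}$ forms a pro-zero system, and the $I$-adic completion of the finite projective module $\Omega^{1,cont}_{P/\Sigma_e}$ is its base change to $\calD$. The only cosmetic difference is that you package injectivity and surjectivity together via the $\varprojlim$--$\varprojlim^1$ sequence for the conormal presentation, whereas the paper argues with explicit \'etale coordinates $t_i$ and treats the two directions separately.
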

	\begin{proof}
		Recall that $\Omega_{D}^1$ is defined as 
		\[
		\Omega_{X/\Sigma_{e \inf}}^1(\varinjlim_{m\in \NN} Y_m),
		\]
		which is equal to the inverse limit of the continuous differentials
		\[
		\Gamma(X,\varprojlim_{m\in \NN} \Omega_{Y_m/\Sigma_e}^1).
		\]
		
		Denote $t_i$ for $i=1,\ldots, r$ to be the \'etale coordinates of $P$.
		This is guaranteed locally by the Jacobian criterion of smoothness, as in \cite[1.6.9]{Hu96}.
		Let $J=(f_1,\ldots, f_s)$ be the kernel of the surjection $P\ra A$, with $f_i$ being its generators.
		Then we have
		\[
		\calO(Y_m)=P/J^{m+1},~~\Omega_{Y_m/\Sigma_e}^1=(\moplus_i \calO(Y_m)dt_i/\sum_{f\in J^{m+1}} \calO(Y_m)df)\tilde.
		\]
		%In particular
		So we get 
		\[
		\calD=\varprojlim_{m\in \NN} P/J^{m+1},~~\Omega_D^1=\varprojlim_{m\in \NN} (\moplus_i \calO(Y_m)dt_i/\sum_{f\in J^{m+1}} \calO(Y_m)df),
		\]
		
		Now consider the natural map $\Omega_{P/\Sigma_e}^1\otimes_P \calD\ra \Omega_D^1$, sending the generator $dt_i$ of $\Omega_{P/\Sigma_e}^1$ onto the $dt_i$ in the limit.
		\begin{itemize}
			\item[Injectivity]
			We first consider the injectivity.
			By writing each $f\in J^{m+1}$ as a finite sum of $af_{j_1}\cdots f_{j_{m+1}}$ for $1\leq j_l\leq r$, each such $df$ is contained in $\sum_j J^m\calO(Y_m)df_j$.
			In particular, the submodule $\sum_{f\in J^{m+1}} \calO(Y_m)df$ of $\moplus \calO(Y_m)dt_i$ is contained the submodule $\sum_j J^m\calO(Y_m)df_j$.
			So it suffices to show the injectivity of
			\[
			\Pi: \moplus_i \calD dt_i\rra \varprojlim_{m\in \NN} (\moplus_i \calO(Y_m)dt_i/\sum_j J^m\calO(Y_m)df_j).
			\]
			
			However, the kernel of each $\moplus_i \calD dt_i\ra \moplus_i \calO(Y_m)dt_i/\sum_j J^m\calO(Y_m)df_j$ equals to $\sum J^{m+1}\calD dt_i+\sum_j J^mdf_j$, which is contained in $\moplus_i J^m  \calD dt_i$.
			In particular, any element $\sum_i g_i dt_i$ in the $\ker(\Pi)$ is contained in the ideal
			\[
			\bigcap_m \moplus_i J^m \calD dt_i.
			\]
			But note that $\calD$ is defined as the $J$-adic completion of $P$, which implies the above ideal is zero.
			So we get the injectivity.
			
			\item[Surjectivity]
			We can write $\Omega_{P/\Sigma_e}^1\otimes_P \calD$ as the limit $\moplus_i \calD dt_i=\varprojlim_{m\in \NN} (\moplus_i \calO(Y_m)dt_i)$.
			Then for each $m$ the map $\moplus_i\calO(Y_m)dt_i\ra \moplus_i \calO(Y_m)dt_i/\sum_{f\in J^{m+1}} \calO(Y_m)df$ is surjective.
			For each $m$, he kernel of the map is $M_m:=\sum_{f\in J^{m+1}} \calO(Y_m)df$, whose image in $M_{m-1}$ is zero.
			Thus we get the surjectivity, by the pro-acyclicity of the kernel.
		\end{itemize}

	\end{proof}
	
	The local freeness of the differential sheaf over the envelope allows us to give a more explicit description of the connection associated with a crystal.
	We assume $X=\Spa(A)\ra \Spa(P)=Y$ be a closed immersion of affinoid rigid spaces over $\Sigma_e$, such that $Y$ is smooth over $\Sigma_e$ with a local coordinates $\{t_1,\ldots,t_r\}$.
	Let $M$ be a coherent sheaf over $\calD$ together with a connection $\nabla$ over $\Bdre$.
	By Lemma \ref{diff} above, the restriction of the infinitesimal differential over $D=D_X(Y)$ is free over $\calD=\calO_{X/\Sigma_{e}}(D)=\varprojlim\calO(Y_m)$ with a basis $dt_i$ for $i=1,\ldots,r$.
	So for any section $x\in M$, we have
	\[
	\nabla(x)=\sum_i\nabla_i(x)\otimes dt_i,
	\]
	where $\nabla_i:M\ra M$ is an $\Bdre$-linear derivation map.
	
	Now we assume $(M,\nabla)$ is integrable.
	We compose $\nabla$ with $\nabla^1$, and get
	\begin{align*}
		\nabla^1(\nabla(x))&=\sum_i \nabla^1(\nabla_i(x)\otimes dt_i)\\
		&=\sum_j\sum_i \nabla_j(\nabla_i(x))\otimes dt_j\wedge dt_i+\sum_i \nabla_i(x)\otimes d(dt_i)\\
		&=\sum_j\sum_i \nabla_j(\nabla_i(x))\otimes dt_j\wedge dt_i.
	\end{align*}
	By the local freeness of $\Omega_D^1$, the element $dt_j\wedge dt_i$ for $j<i$ forms a basis of $\Omega_D^2$.
	So we can rewrite the above as 
	\[
	\nabla^1(\nabla(x))=\sum_{j<i} (\nabla_j(\nabla_i(x))-\nabla_i(\nabla_j(x)))\otimes dt_j\wedge dt_i.
	\]
	By the integrability condition of $\nabla$, the above vanishes for any $x\in \calF_D$.
	So we obtain the following equalities 
	\[
	\nabla_i\circ \nabla_j=\nabla_j\circ \nabla_i.
	\]
	Here we note that the commutativity allows us to write the composition of a finite amount of $\nabla_i$ as
	\[
	\prod_{E=(e_i)} \nabla_i^{e_i},
	\]
	where $E=(e_i)$ is a tuple of non negative integers parametrized by $i$.

	Now we are ready for the proof of Theorem \ref{cry}.
	\begin{proof}[Proof of Theorem \ref{cry}]
		For a crystal $\calF$ over $\calO_{X/\Sigma_{e}}$, we can equip it with its canonical connection $\calF$, which is integrable by Proposition \ref{drcpx}.
		So by taking the associated coherent sheaf of $\calF$ over $D$, we get a coherent sheaf $\calF_D$ together with an integrable connection $\nabla_D$.
		
		Conversely, let $M$ be a coherent sheaf over $\calD$ with an integrable connection $\nabla$.
		By the smoothness of $Y$ over $\Sigma_e$, any object in $X/\Sigma_{e \inf}$ can be covered by an open affinoid covering where each piece admits a map to $(X,Y)$.
		We assume $(U,T)$ is an affinoid thickening fitting in the commutative diagram
		\[
		\xymatrix{ U\ar[r]\ar[d] & T\ar[d]^g\\
			X\ar[r]& Y}
		\]
		Since $T$ is an nilpotent extension of $U$, the map $g:T\ra Y$ factors through the envelope $D=\varinjlim_{m\in \NN} Y_m$ of $X$ in $Y$.
		We denote this map by $f:T\ra D$.
		Then we get a coherent sheaf $f^* M=M\otimes_\calD \calO_T$ over $T$.
		
		Now we make the following claim:
		\begin{claim}\label{cry, formula}
			The pullback $f^*M$ over $T$ is independent of the choice of $f:T\ra D$.
			
			More precisely, let $f_1,f_2,f_3:T\ra D$ be any three maps induced produced as above.
			Then there exists natural isomorphisms of coherent sheaves $h_{ij}:f_i^*M\ra f_j^*M$ over $\calO_T$ such that
			\[
			h_{23}\circ h_{12}=h_{13}.
			\]
		\end{claim}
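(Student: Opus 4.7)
This is a Taylor-isomorphism construction of the kind familiar from the crystalline theory of schemes, adapted here to the rigid-analytic infinitesimal setting. The essential ingredients are already in place: $\Bdre$ is a $\QQ$-algebra so factorials are invertible; the partial derivations $\nabla_i$ commute pairwise by the integrability computation carried out immediately before the claim; and for an affinoid thickening $(U,T)$ the ideal $\calI \subset \calO_T$ cutting out $U$ is nilpotent.

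First I would observe that each $f_k : T \to D$ is constructed to lift the common map $U \to X \hookrightarrow D$, so the restrictions $f_k|_U$ all coincide and consequently
\[
u_i := f_2^{\ast}t_i - f_1^{\ast}t_i, \quad v_i := f_3^{\ast}t_i - f_2^{\ast}t_i
\]
both lie in $\calI$. Since some power of $\calI$ vanishes on $T$, any sum over multi-indices $E = (e_i)$ whose terms contain $\prod_i u_i^{e_i}$ is automatically finite. I would then define
\[
h_{12}(x \otimes 1) := \sum_{E} \frac{1}{\prod_i e_i!} \Bigl(\prod_i \nabla_i^{e_i}\Bigr)(x) \otimes \prod_i u_i^{e_i},
\]
extended by $\calO_T$-linearity, and analogously $h_{13}$, $h_{23}$ using the appropriate differences. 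The composition $\prod_i \nabla_i^{e_i}$ is unambiguous by commutativity.

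Three things then need to be verified. (a) The map $h_{12}$ descends to $f_1^{\ast}M = M \otimes_{\calD} \calO_T$, i.e. respects the relation $md \otimes 1 = m \otimes f_1^{\ast}d$ for $d \in \calD$; this is a direct Leibniz computation, using that $\nabla$ is a genuine connection and that $f_2^{\ast}d$ is the Taylor expansion of $f_1^{\ast}d$ in the variables $u_i$ (which forces exactly the required compatibility). (b) The map $h_{12}$ is $\calO_T$-linear and an isomorphism, with inverse given by the same formula with $-u_i$ replacing $u_i$; one checks $h_{21}\circ h_{12} = \id$ via the binomial identity $\sum_{a+b=e}(-1)^a\binom{e}{a}/e! = \delta_{e,0}$ applied multi-indexwise. (c) The construction is independent of the chosen \'etale coordinates $\{t_i\}$, which I would handle by a change-of-variables computation using the same commuting-derivation bookkeeping (or by repackaging via the canonical isomorphism on $M_{D(1)}$).

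Finally, for the cocycle relation $h_{23}\circ h_{12} = h_{13}$, I would use the identity $u_i + v_i = f_3^{\ast}t_i - f_1^{\ast}t_i$ together with the binomial expansion
\[
\frac{(u_i + v_i)^{e_i}}{e_i!} = \sum_{a_i + b_i = e_i} \frac{u_i^{a_i}}{a_i!}\cdot\frac{v_i^{b_i}}{b_i!},
\]
substitute into the defining formula for $h_{13}$, and reorganize the resulting double sum using commutativity of the $\nabla_i$'s to recover the composition term-by-term. This last combinatorial bookkeeping is the most labor-intensive step and is the place where I expect the main care to be needed; the argument itself is standard in the crystalline setting and carries over verbatim, since the only arithmetic input is the invertibility of factorials in $\Bdre$.
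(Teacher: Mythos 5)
Your construction is exactly the one in the paper: the Taylor-series isomorphism $h_{jk}(x\otimes 1)=\sum_{E}\bigl(\prod_i\nabla_i^{e_i}\bigr)(x)\otimes\prod_i\frac{(f_j^{*}t_i-f_k^{*}t_i)^{e_i}}{e_i!}$, with finiteness coming from nilpotence of the differences on the thickening $T$ and the cocycle relation from the multinomial identity $\sum_{a+b=N}\frac{u^a}{a!}\frac{v^b}{b!}=\frac{(u+v)^N}{N!}$. The additional verifications you flag in (a)--(c) are glossed over in the paper's own (very terse) proof, so your write-up is, if anything, more careful than the original.
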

		We first grant the claim.
		For each thickening $(U,T)$, we pick an arbitrary covering $(U_i,T_i)$ of $(U,T)$, where $(U_i,T_i)$ admits a map to $(X,Y)$.
		Then we get the collection of coherent sheaves $f^*_iM$ over each $T_i$.
		The claim allows us to produce a transition isomorphism for each restriction of $f_i^*M$ on $T_i\cap T_j$, and they satisfy the cocycle condition when restricted at $T_i\cap T_j\cap T_k$.
		Hence by gluing them together, we get a coherent sheaf $\calF_T$ over $(U,T)$.
		This produces a sheaf $\calF$ over the infinitesimal site.
		Moreover, the coherent sheaf $\calF$ is in fact a coherent crystal, namely the pullback $g^*\calF_{T_2}\simeq \calF_{T_1}$ for any map  $(i,g):(U_1,T_1)\ra (U_2,T_2)$ in $X/\Sigma_{e \inf}$.
		This comes from the independence in the claim again, by taking a composition with a map $T_2\ra D$.
		
		Finally, we check that the functors are quasi-inverse to each other.
		We start with a coherent crystal $\mathcal{F}$ over the infinitesimal site, and let $(M=\mathcal{F}_D,\nabla_M)$ be the associated integrable connection over the envelope $D$ defined in the first paragraph of the proof.
		Then the crystal $\mathcal{F}'$ induced by $(M,\nabla_M)$ is locally defined by assigning the module $M\otimes_\mathcal{D} \mathcal{O}_T$ to a thickening $(U,T)$, for any morphism $(U,T)\to (X,D)$ in the infinitesimal site (which locally exists as the envelope covers the final object).
		This coincides with the value of coherent crystal $\mathcal{F}$ at $(U,T)$, thanks to the equality $M=\mathcal{F}_D$.
		Conversely, we let $(M,\nabla_M)$ be an integrable connection over the envelope $D$ and let $\mathcal{F}$ be the associated coherent crystal over the infinitesimal site defined in the second paragraph of the proof.
		The integrable connection over $D$ that is induced by the crystal $\mathcal{F}$ is then defined on the module $M'=\mathcal{F}_D$, which by assumption is also equal to the module $M$.
		Moreover, the induced connection on $M'$ also coincides with the input $\nabla_M$, thanks to the concrete calculation below \Cref{cry conn}.
		So we are done.
		\begin{proof}[Proof of Claim]
			We at last deal with Claim.
			Let $\varphi_j:\calD\ra \calO_T$ be maps of structure sheaves induced from $f_j:T\ra D$.
			We define $h_{jk}$ to be the $\calO_T$-linear map given by
			\[
			x\otimes 1\lmt \sum_{E=(e_i)} (\prod_i \nabla_i^{e_i})(x)\otimes \frac{(\varphi_j(t_i)-\varphi_k(t_i))^{e_i}}{e_i!}.
			\]
			Since $T$ is a nilpotent extension of $U$, for each $t\in \calD$, the difference $\varphi_j(t)-\varphi_k(t)$ is nilpotent in $\calO_T$.
			In particular, the above sum is only finite.
			At last, by the general equality
			\[
			\sum_{n=0}^N \frac{u^n}{n!}\cdot \frac{v^{N-n}}{(N-n)!}=\frac{(u+v)^N}{N!},
			\]
			we have $h_{23}\circ h_{12}=h_{13}$.
		\end{proof}
	\end{proof}

	\section{Cohomology over $\Bdre$}\label{sec coh}
	In this section, we compute the cohomology of crystals over $X/\Sigma_{e \inf}$ using the de Rham complex over the envelope.
	Our strategy is to construct a double complex computing the \v{C}ech-Alexander complex and the de Rham complex in two separate directions, as in \cite{BdJ}.
	
	\begin{remark}
		Before we start, we mention that though our focus is rigid spaces over $\Bdre$, the discussion in this section works alphabetically for cohomology of crystals over $X/K_{0, \inf}$, where $K_0$ is an arbitrary $p$-adic complete non-archimedean field and $X$ is a rigid space over $K_0$.
	\end{remark}

	%This forms a nilpotent 
	%The nilpotent ideal associated with the surjection $\calO_{T''}\ra \calO_T$ 
	\subsection{Cohomology of crystals over affinoid spaces}
	We first compute the cohomology of crystals over $X/\Sigma_{e \inf}$, for $X$ being an affinoid rigid space over $\Sigma_e$.
	
	Let $X=\Spa(A)$ be an affinoid rigid space over $\Sigma_e$, together with a closed immersion $X\ra Y=\Spa(P)$ for a smooth affinoid rigid space $Y$ over $\Bdre$.
	Denote by $D$  the envelope of $X$ in $Y$ (Definition \ref{env}), by $\calD$  its structure sheaf $\varprojlim_m \calO_{Y_m}$ (where $Y_m$ is the $m$-th infinitesimal neighborhood of $X$ in $Y$), and by $J$  the kernel ideal for $\calD\ra \calO_X$.
	By construction, the kernel ideal $J$ is equal to the evaluation of the infinitesimal ideal sheaf $\mathcal{J}_{X/\Sigma_e}$ at the envelope $D$.
	We write $\Omega_{D}^i$ as the group of differentials $\Omega_{X/\Sigma_{e\inf}}^i(D)$, which is equal to the inverse limit of continuous differentials
	\[
	\varprojlim_m \Omega_{Y_m/\Sigma_e}^i.
	\]
	By Lemma \ref{diff}, $\Omega_{D}^i$ is isomorphic to the tensor product $\Omega_{Y/\Sigma_e}^i\otimes_{\calO_{Y}} \calD$, and is in particular locally free over $\calD$.

	We then take the section of the infinitesimal de Rham complex $(\calF\otimes_{\calO_{X/\Sigma_{e}}}\Omega_{X/\Sigma_{e \inf}}^\bullet,\nabla)$ at $D$, and get a chain complex of $\Bdre$-modules
	\[
	(M\otimes \Omega_D^\bullet,\nabla_D):= \begin{tikzcd}
		0\ar[r] & M \arrow[r, "\nabla"] & M\otimes_{\calD} \Omega_D^1 \arrow[r, "\nabla^1"] & \cdots,
	\end{tikzcd}
	\]
	where $M$ is the evaluation  $\calF(D)$ of $\calF$ at the envelope $D$.
	The complex is naturally filtered by the infinitesimal filtration, whose $i$-th filtration is the subcomplex
	\[
	0 \rra J^iM \rra J^{i-1}M\otimes_\calD \Omega_D^1 \rra J^{i-2}M\otimes_\calD \Omega_D^2 \rra \cdots.
	\]
	
	We also recall that by taking the powers of the infinitesimal ideal sheaf $\mathcal{J}_{X/\Sigma_e}$, we get a natural filtration on the structure sheaf $\mathcal{O}_{X/\Sigma_e}$ (cf. discussion before \Cref{base K0}) and hence on $\mathcal{F}$, by defining the $i$-th filtration as $\mathcal{J}_{X/\Sigma_e}^i\mathcal{F}$.
	This in particular induces a filtration on the cohomology complex $R\Gamma(X/\Sigma_{e \inf},\calF)$, and we call the latter the \emph{infinitesimal filtration} on the infinitesimal cohomology.
	Our main theorem in this subsection is the following:
	\begin{theorem}\label{aff-coh}
		Let $X$, $Y$, $\calF$ and $M$ be as above.
		Then we have a natural filtered isomorphism in the filtered derived category of abelian groups:
		\[
		R\Gamma(X/\Sigma_{e \inf},\calF)\rra (M\otimes \Omega_D^\bullet,\nabla_D),
		\]
		where the left side is filtered by the infinitesimal filtration.
	\end{theorem}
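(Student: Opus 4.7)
The plan is to adapt the strategy of Bhatt--de Jong \cite{BdJ} to the rigid analytic setting, enhanced with a careful analysis of the infinitesimal filtration. I would construct a first-quadrant bicomplex $K^{\bullet,\bullet}$ with
\[
K^{p,q} := M(p) \otimes_{\calD(p)} \Omega_{D(p)}^q,
\]
where $Y(p)$ is the $(p+1)$-fold fiber product of $Y$ over $\Sigma_e$, $D(p)$ is the envelope of $X$ in $Y(p)$ (so $D(0) = D$), $\calD(p) := \calO_{X/\Sigma_{e}}(D(p))$, $M(p) := \calF(D(p))$, and $\Omega_{D(p)}^q$ denotes the evaluation of the infinitesimal differentials at $D(p)$. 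By Theorem \ref{cry} and the crystal property, $M(p) \cong M \otimes_{\calD} \calD(p)$ via any of the face maps $D(p) \to D$. The bicomplex carries a horizontal Čech--Alexander differential (from the cosimplicial structure of $D(\bullet)$) and a vertical de Rham differential twisted by the canonical connection; integrability (Proposition \ref{drcpx}) guarantees the two differentials commute.

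I would then show that the totalization of $K^{\bullet,\bullet}$ computes $R\Gamma(X/\Sigma_{e \inf}, \calF)$ by collapsing vertically in each column. By Proposition \ref{env, coh}, the infinitesimal cohomology is the totalization of the cosimplicial module $[p] \mapsto M(p)$, so it suffices to prove a Poincaré lemma asserting that each column $M(p) \otimes_{\calD(p)} \Omega_{D(p)}^\bullet$ is quasi-isomorphic to $M(p)$ placed in degree zero. After choosing étale coordinates on $Y(p)$ over $Y$, the envelope $D(p)$ becomes the completion of a polynomial algebra over $\calD$ in these extra coordinates, and its de Rham complex is a Koszul-type resolution that splits explicitly.

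Next I would show that the same totalization also computes $(M \otimes \Omega_D^\bullet, \nabla_D)$ by collapsing horizontally in each row. For fixed $q$, the row $K^{\bullet,q}$ is the cosimplicial $\Bdre$-module $[p] \mapsto M \otimes_{\calD} \Omega_{D(p)}^q$, and by Lemma \ref{diff} together with the Čech description arising from the smooth cover $Y \to \Sigma_e$, this is an acyclic resolution of $M \otimes_{\calD} \Omega_{D}^q$. Combining the two collapses yields the desired natural quasi-isomorphism
\[
R\Gamma(X/\Sigma_{e \inf}, \calF) \longrightarrow (M \otimes \Omega_D^\bullet, \nabla_D).
\]

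The main new work, and the main obstacle, is upgrading this comparison to a filtered isomorphism. I would equip $K^{\bullet,\bullet}$ with the filtration $F^i K^{p,q} := J(p)^{\max(i-q,\,0)} \cdot M(p) \otimes_{\calD(p)} \Omega_{D(p)}^q$, where $J(p) \subset \calD(p)$ is the kernel of $\calD(p) \to \calO_X$, and verify that both collapse maps are filtered. On the associated graded level, the two inputs become a filtered Poincaré lemma for the completed polynomial extension $D(p)$ over $D$, which reduces to a direct Koszul calculation using the local freeness of $\Omega_D^1$ from Lemma \ref{diff} and the conormal sequence; and a filtered Čech acyclicity statement for powers of $J(p)$, which follows from the flatness coming from the smooth cover. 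Matching these with the infinitesimal filtration on $R\Gamma(X/\Sigma_{e \inf}, \calF)$ on one side and with $J^{i-\bullet} M \otimes_{\calD} \Omega_D^\bullet$ on the other completes the argument; I expect the bookkeeping on graded pieces (rather than any single step) to be the genuinely delicate part.
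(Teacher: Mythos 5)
Your bicomplex is exactly the one the paper uses, and the filtration you place on it is the right one, but the two collapse statements you rely on are both false as stated: they have the roles of the rows and the columns interchanged. A column $M(p)\otimes_{\calD(p)}\Omega_{D(p)}^\bullet$ is the \emph{absolute} de Rham complex of the envelope $D(p)$ over $\Sigma_e$, not the relative one over $D$, and it is not quasi-isomorphic to $M(p)$ placed in degree zero: already for $p=0$, $e=1$, $\calF=\calO_{X/\Sigma_e}$ and $X=Y$ the closed unit disk, the column is $K\langle T\rangle\to K\langle T\rangle\,dT$, whose $H^0$ is $K$ (not $K\langle T\rangle$) and whose $H^1$ is nonzero because antiderivatives of convergent power series need not converge $p$-adically. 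The Koszul-type splitting you invoke applies only to the relative complex $\Omega_{D(p)/D}^\bullet$, which resolves $\calD$ (Lemma \ref{Poincare}); what this actually buys, fed through the Euler sequence of Lemma \ref{eul}, is that every column is quasi-isomorphic to the \emph{first} column $M\otimes_\calD\Omega_D^\bullet$ via the degeneracy $D(p)\to D$ (Lemma \ref{lemB}). That is the statement that makes the column-first spectral sequence degenerate onto the de Rham complex of $D$ --- not onto the \v{C}ech--Alexander complex, as you claim.

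Symmetrically, the rows are not resolutions of $M\otimes_\calD\Omega_D^q$. For $q=0$ the row is the \v{C}ech--Alexander complex itself, whose cohomology is $R\Gamma(X/\Sigma_{e \inf},\calF)$ and is not concentrated in degree zero; if it were, your two claims together would force $R\Gamma(X/\Sigma_{e \inf},\calF)\cong M[0]$, contradicting the disk example. For $q>0$ the correct statement is that the row is cosimplicial homotopic to \emph{zero} (its $H^0$ vanishes as well), which follows from the contractibility of $\Omega^1$ of the \v{C}ech nerve of $\Bdre\to P$ (Lemma \ref{cosim}, after \cite[Example 2.16]{BdJ}); it is this vanishing that makes the row-first spectral sequence degenerate onto the \v{C}ech--Alexander complex and hence, by Proposition \ref{cech}, onto infinitesimal cohomology. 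So the two degenerations you need are: rows for $q>0$ contractible, whence the total complex computes $R\Gamma(X/\Sigma_{e \inf},\calF)$; and all columns quasi-isomorphic to the first, whence the total complex computes $(M\otimes\Omega_D^\bullet,\nabla_D)$. Your filtration and the graded-piece bookkeeping in your final paragraph (the filtered relative Poincar\'e lemma and the interplay of $J(p)$ with $\Delta(p)$) are indeed the delicate ingredients for the filtered upgrade, exactly as in Lemmas \ref{Poincare} and \ref{intsec of ideals}, but they must be attached to these corrected statements for the argument to go through.
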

	\begin{remark}\label{coh, big small, cry}
		Note that by the Corollary \ref{coh, big small}, the above is also isomorphic to the cohomology of the crystal $\calG$ over the big infinitesimal site, when $\calF=\mu_*\calG$ is the restriction of $\calG$ defined over the big site $X/\Sigma_{e \Inf}$.
	\end{remark}
	The rest of this subsection will be devoted to the proof of the theorem.\\
	
	Let us first fix some notations for this section. 
	Denote by $D(n)$  the envelope of $X$ in the $(n+1)$-fold self product of $Y$ over $\Sigma_e$.
	When $n=0$, we write $D(0)$ as $D$.
	The simplicial object $D(\bullet)$ forms a hypercovering of the final object in $\Sh(X/\Sigma_{e \inf})$, as in Proposition \ref{env, coh}.
	
	We fix a coherent crystal $\calF$ on $X/\Sigma_{e \inf}$. 
	Denote $M(n)$ to be the group of sections $\calF(D(n))$ of $\calF$ at $D(n)$, $\calD(n)$ to be $\calO_{X/\Sigma_{e }}(D(n))$,
	$J(n)$ to be the kernel for $\calD(n)\ra \calO_X$, and $\Omega_{D(n)}^i$ to be $\Omega_{X/\Sigma_{e\inf}}^i(D(n))$.
	When $n=0$, we use $M$ and $\Omega_D^i$ to abbreviate $M(0)$ and $\Omega_{D(0)}^i$.
	Here we recall that $\Omega_{D(n)}^i=\Omega_{X/\Sigma_{e \inf}}^i(D(n))$ is isomorphic to the tensor product $\Omega_{Y(n)/\Sigma_e}^i\otimes_{\calO_{Y(n)}} \calD(n)$ (Lemma \ref{diff}), and is in particular locally free over $\calD(n)$. 
	
	\textbf{\v Cech-Alexander complex.}
	First we introduce the \v Cech-Alexander complex of a coherent $\calO_{X/\Sigma_{e}}$ sheaf $\calF$ (not necessarily to be a crystal).
	
	We define $M(\bullet)$ to be the filtered cosimplicial cochain complex
	\[
	M(\bullet):=(\calF(D(0))\rra \calF(D(1))\rra \cdots),
	\]
	where the coboundary map is given by the alternating sum of degeneracy maps, and the filtration is the infinitesimal filtration whose $i$-th filtration at $D(n)$ is $J(n)^i\cdot \calF(D(n))$. 
	It is called the \emph{\v Cech-Alexander complex of $\calF$}.
	
	\begin{proposition}\label{cech}
		Let $\calF$ be a coherent infinitesimal sheaf of  $\calO_{X/\Sigma_e}$-modules as above.
		Then we have a functorial filtered isomorphism
		\[
		R\Gamma(X/\Sigma_{e \inf}, \calF)\simeq M(\bullet),
		\]
		in the filtered derived category of abelian groups.
	\end{proposition}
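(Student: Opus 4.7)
The plan is to bootstrap Proposition \ref{env, coh}: since $D(\bullet)\to \ast$ is a hypercover in the infinitesimal topos, we already have
\[
R\Gamma(X/\Sigma_{e\inf},\calF) \simeq R\lim_{[n]\in\Delta^{\op}} R\Gamma(D(n),\calF).
\]
The proposition then reduces to showing that each $R\Gamma(D(n),\calF)$ is concentrated in degree zero and equals $M(n)=\calF(D(n))$, and that the analogous identifications respect the infinitesimal filtration. Totalizing the resulting cosimplicial object then reproduces the \v{C}ech-Alexander complex $M(\bullet)$ on the nose.

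To establish the degree zero statement, I would compute $R\Gamma(D(n),\calF)$ using the presentation $D(n)=\varinjlim_m h_{Y(n)_m}$ in the topos, which gives
\[
R\Gamma(D(n),\calF) \simeq R\varprojlim_m R\Gamma((X,Y(n)_m),\calF).
\]
Each $Y(n)_m$ is an affinoid (being an infinitesimal neighborhood inside the affinoid $Y(n)=Y^{\times_{\Sigma_e}(n+1)}$) and $\calF_{Y(n)_m}$ is by assumption a coherent $\calO_{Y(n)_m}$-module, so Kiehl-Tate acyclicity kills the positive coherent cohomology. For the $R\varprojlim$ I would invoke Mittag-Leffler: the transition maps $\calF(X,Y(n)_{m+1})\to \calF(X,Y(n)_m)$ are surjective since they arise from restricting global sections of a coherent sheaf along a closed immersion of affinoids, so $R^1\varprojlim$ vanishes and we obtain
\[
R\Gamma(D(n),\calF) = \varprojlim_m \calF(X,Y(n)_m) = \calF(D(n)) = M(n).
\]

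For the filtered enhancement, I would run the same argument level by level on the subsheaves $\calJ^i\calF$, where $\calJ=\calJ_{X/\Sigma_e}$ is the kernel ideal of $\calO_{X/\Sigma_e}\to\calO_X$. Each $\calJ^i\calF$ is again coherent with evaluation $J(n)^i M(n)$ at $D(n)$, the Mittag-Leffler argument still applies, and the compatibility of the inclusions $\calJ^{i+1}\calF\hookrightarrow\calJ^i\calF$ with the hypercovering computation produces the claimed filtration. The main technical point to pin down will be the surjectivity of those transition maps in the purely coherent (non-crystal) setting: one must verify that the structural restriction maps $g^{-1}\calF_{Y(n)_{m+1}}\to\calF_{Y(n)_m}$ attached to the closed immersion $Y(n)_m\hookrightarrow Y(n)_{m+1}$ yield surjections on global sections. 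This should reduce to Tate acyclicity and the finite generation of the coherent modules involved, but it is the only step I anticipate requiring real care; everything else is a formal assembly of the hypercover formula of Proposition \ref{env, coh} with coherent cohomology vanishing on affinoid envelopes.
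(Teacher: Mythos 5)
Your proposal follows essentially the same route as the paper's proof: Proposition \ref{env, coh} reduces the computation to the cosimplicial object $R\Gamma(D(\bullet),\calF)=R\varprojlim_{m}R\Gamma(Y(\bullet)_m,\calF)$, Tate acyclicity of coherent sheaves on the affinoids $Y(n)_m$ kills the higher cohomology, the derived inverse limit is then identified with $\calF(D(n))=M(n)$, and the filtered statement is obtained by running the same argument on the subsheaves $\calJ^i\calF$. The only divergence is at the $R\varprojlim$ step, which you correctly flagged as the delicate point: the paper invokes the Mittag--Leffler condition for the system $\Gamma(Y(n)_m,\calF)$ (citing coherence and noetherianness) rather than surjectivity of the transition maps, and indeed for a coherent infinitesimal sheaf that is \emph{not} a crystal the structural maps $g^*\calF_{Y(n)_{m+1}}\to\calF_{Y(n)_m}$ are arbitrary morphisms of coherent modules, so the surjectivity you assert does not follow merely from restricting sections of a coherent sheaf along a closed immersion of affinoids; this is the one point that still needs to be pinned down, and it is no worse than what the paper itself leaves implicit.
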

	\begin{proof}
		We first notice that by the filtered enhancement 
		of Proposition \ref{env, coh}, 
		\footnote{More precisely, the filtered complex $R\Gamma(X/\Sigma_{e \inf}, \calF)$ is defined as the derived limit of the filtered modules $\bigl( (\mathcal{J}_{X/\Sigma_e}^i\mathcal{F})(U,T) \bigr)_i$, ranging over all the infinitesimal thickenings $(U,T)\in X/\Sigma_{e \inf}$.
			In particular, for each $[n]\in \Delta^\op$, by the limit presentation, there is a functorial map of filtered objects from $R\Gamma(X/\Sigma_{e \inf}, \calF)$ to $M(n)$. The functoriality of the construction (with respect to the simplicial diagram $\Delta^\op$) in particular induces a filtered map $R\Gamma(X/\Sigma_{e \inf}, \calF)\to M(\bullet)$. To check it is an isomorphism, it suffices to do so on the $i$-th filtration component for each $i\in \mathbb{Z}$, which is implied by op.\ cit..}
		we get an isomorphism of filtered complexes
		\[
		R\Gamma(X/\Sigma_{e \inf},\mathcal{F})=R\Gamma(D(\bullet),\mathcal{F})= R\lim_{[n]\in \Delta} R\Gamma(D(n), \mathcal{F}).
		\]
		
		Denote by $Y(n)_m$  the $m$-th infinitesimal neighborhood of $X$ in $Y(n)$.
		Since $X$ is the common closed analytic subspace of every $Y(n)_m$, $Y(\bullet)_m$ forms a simplicial object in $X/\Sigma_{e \inf}$ with $D(\bullet)=\varinjlim_{m\in \NN} h_{Y(\bullet)_m}$.
		This leads to the equality
		\[
		R\Gamma(D(\bullet),\mathcal{F})=R\varprojlim_{m\in \NN}R\Gamma(Y(\bullet)_m,\mathcal{F}).
		\]
		Notice that  for each $n$, the rigid space $Y(n)_m$ is affinoid, and the covering of a given infinitesimal thickening $(X,Y(n)_m)$ is defined by analytic covering of the rigid space $Y(n)_m$.
		As a consequence, by the vanishing of the analytic cohomology for coherent sheaves over affinoid rigid spaces in the positive degrees, we know
		\[
		R\Gamma(Y(\bullet)_m,\calF)=\Gamma(Y(\bullet)_m,\calF).
		\]
		Furthermore, by the coherence of $\calF$ and the noetherianity of $\calO(Y(n)_m)$, for each $n\in \NN$ the inverse system $\Gamma(Y(n)_m,\calF)$ satisfies the Mittag--Leffler condition.
		In this way, we get 
		\begin{align*}
			R\varprojlim_{m\in \NN}R\Gamma(Y(\bullet)_m,\calF)&=R\varprojlim_{m\in \NN} \Gamma(Y(\bullet)_m,\calF)\\
			&=\varprojlim_{m\in \NN} \Gamma(Y(\bullet)_m,\calF)\\
			&=\Gamma(\varinjlim_{m\in \NN} Y(\bullet)_m,\calF)\\
			&=M(\bullet).
		\end{align*}
		
	\end{proof}
	
	\textbf{\v Cech-Alexander and the de Rham.}
	We then connect the \v Cech-Alexander complex with the de Rham complex together.
	
	Consider the section of the de Rham complex $(\calF\otimes_{\calO_{X/\Sigma_{e}}}\Omega_{X/\Sigma_{e \inf}}^\bullet,\nabla)$ at the simplicial space $(D(n))_{[n]\in \Delta^\op}$:
	\[
	\Delta^\op \ni [n] \longmapsto (M(n)\otimes_{\calD(n)}\Omega_{D(n)}^\bullet, \nabla).
	\]
	This produces a double complex $M^{n,m}=M(n)\otimes_{\calD(n)}\Omega_{D(n)}^m$ in the first quadrant, with the horizontal coboundary map given by the alternating sum of degeneracy maps for simplicial space $D(\bullet)$, and the vertical coboundary map being the de Rham differential $\nabla^m$.
	Note that the first column $M^{0,\bullet}$ of this double complex is the de Rham complex $M\otimes_{\calD}\Omega_{D}^\bullet$, while the first row $M^{\bullet,0}$ is the \v Cech-Alexander complex $M(\bullet)$.
	So this provides a natural framework for those two types of complexes that we care about.
	
	Moreover, the double complex is naturally filtered via the infinitesimal filtration $\calO_{X/\Sigma_e} \supset \calJ_{X/\Sigma_e} \supset \calJ_{X/\Sigma_e}^2 \cdots$.
	This is a descending filtration on the double complex, compatible with the cosimplicial structure, such that the $i$-th filtration on the $n$-th column is the differential complex
	\[
	J(n)^i \rra J(n)^{i-1}\Omega_{D(n)}^1 \rra \cdots \rra J(n)^0\Omega_{D(n)}^i \rra \Omega_{D(n)}^{i+1} \rra \cdots,
	\]
	as a subcomplex of $\Omega_{D(n)}^\bullet$.
	Here we recall $J(n)$ is the kernel ideal of the surjection $\calD(n)\ra \calO_X$, defined as $\calJ_{X/\Sigma_e}(D(n))$.
	Note that when $X=Y$ are smooth over $\Bdre$, the filtration on $\Omega_{D}^\bullet=\Omega_{X/\Bdre}^\bullet$ is the usual Hodge filtration.
	
	Furthermore, there are two canonical $E_1$ spectral sequences associated with the double complex $M^{n,m}$ (\cite[Tag 0130]{Sta}), with the formations given by
	\begin{align*}
		'E_1^{p,q}&=\cH^q(M(p)\otimes_{\calD(p)}\Omega_{D(p)}^\bullet);\\
		''E_1^{p,q}&=\cH^q(M(\bullet)\otimes_{\calD(\bullet)}\Omega_{D(\bullet)}^p).
	\end{align*}
	Both of those two spectral sequences converge to the hypercohomology of the total complex (\cite[Tag 0132]{Sta}).
	The same applies when we replace the double complex by its $i$-th infinitesimal filtration.
	
	Now we make the following two Lemmas about degeneracy of those two spectral sequences:
	\begin{lemma}\label{lemA}
		For each $p>0$, the filtered cochain complex associated with the cosimplicial complex with its infinitesimal filtration
		\[
		M(\bullet)\otimes_{\calD(\bullet)}\Omega_{D(\bullet)}^p
		\] 
		is filtered acyclic.
	\end{lemma}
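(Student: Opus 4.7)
The plan is to prove filtered acyclicity by reducing to the local affinoid setting and constructing an explicit cosimplicial contracting homotopy that respects the infinitesimal filtration, adapting the strategy of \cite{BdJ}. Working locally on $X$, one may assume $X = \Spa(A) \hookrightarrow Y = \Spa(P)$ with $Y$ affinoid smooth over $\Sigma_e$ equipped with \'etale coordinates $t_1, \ldots, t_d$. Writing $t_i^{(k)}$ for the coordinate on the $(k+1)$-st factor of $Y(n) = Y^{n+1}$ and setting $\tau_{i,k} := t_i^{(k)} - t_i^{(0)}$ for $k \geq 1$, Lemma \ref{diff} gives
\[
\calD(n) \cong \calD[[\tau_{i,k}]]_{1 \leq i \leq d,\, 1 \leq k \leq n}, \qquad J(n) = (J, \tau_{i,k}),
\]
\[
\Omega^p_{D(n)} \cong \bigoplus_{a+b=p} (\calD(n) \otimes_\calD \Omega^a_D) \otimes_{\calD(n)} \Lambda^b_{\calD(n)}\langle d\tau_{i,k}\rangle.
\]
Since $\calF$ is a crystal, $M(n) \cong M \otimes_\calD \calD(n)$ via the zeroth projection, so the cosimplicial cochain complex in question reduces to $M \otimes_\calD \Omega^p_{D(\bullet)}$ with the cosimplicial structure acting only on the $\tau$-indices.

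The heart of the proof is to exhibit an explicit $\calD$-linear contracting homotopy $h^n \colon \Omega^p_{D(n+1)} \to \Omega^p_{D(n)}$ satisfying $h \circ d + d \circ h = \id$ (where $d$ is the alternating-sum cosimplicial differential). Following \cite{BdJ}, one can combine the substitution operator $\tau_{i,n+1} \mapsto 0$, $d\tau_{i,n+1} \mapsto 0$ (coming from the projection $Y^{n+2} \to Y^{n+1}$ forgetting the last factor, which plays the role of an extra co-degeneracy) with an interior-multiplication operator $\sum_i \iota_{\partial/\partial \tau_{i,n+1}}$ acting on the $d\tau$-part of $\Omega^p_{D(n+1)}$. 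The hypothesis $p > 0$ is essential: for summands with $b \geq 1$ the interior multiplication supplies the contraction, while for the $b = 0$, $a = p > 0$ summand one uses the Poincar\'e lemma for the formal polydisk $\calD[[\tau]]$ together with the nontrivial $\Omega^a_D$-part of the decomposition.

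Finally, to upgrade to the filtered statement, one checks that $h^n$ respects the shifted filtration $J(n)^{i - \bullet}\Omega^\bullet_{D(n)}$: substitution sends $J(n+1)^i$ strictly into $J(n)^i$ (as $\tau_{i,n+1} \in J(n+1)$), and interior multiplication with $\partial/\partial \tau_{i,n+1}$ drops the exterior degree by one and the $\tau$-degree by at most one. The main obstacle I anticipate is the combinatorial verification of the cosimplicial identities for $h^n$ directly from the coordinate description, together with the passage to associated graded pieces---where the computation reduces to Koszul contractibility of the exterior algebra on the $d\tau_{i,k}$ tensored with the power-series algebra on the $\tau_{i,k}$, a standard Poincar\'e lemma for the formal polydisk.
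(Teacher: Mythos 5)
Your overall plan—pass to the coordinates $\calD(n)\cong\calD[[\tau_{i,k}]]$ and contract the \v{C}ech direction explicitly—is in the spirit of \cite{BdJ}, but the homotopy you propose does not exist in the form you describe, and this is precisely the step you leave unverified. Test it already for $p=1$ with a single coordinate $t$ on the $b=0$ component $\omega=g(t,\tau_1)\,dt\in\Omega^1_{D(1)}$: the alternating sum of cofaces gives
\[
\delta(\omega)=g(t+\tau_1,\tau_2-\tau_1)(dt+d\tau_1)-g(t,\tau_2)\,dt+g(t,\tau_1)\,dt,
\]
which contains no $d\tau_2$, so $\iota_{\partial/\partial\tau_{2}}$ annihilates it, while the substitution $\tau_2\mapsto 0$ leaves the terms $g(t+\tau_1,-\tau_1)(dt+d\tau_1)-g(t,0)\,dt$ in addition to $\omega$; these are not cancelled by $\delta h^0(\omega)$ for any $h^0$ built from the same two operators (take $g=\tau_1$ to see the failure). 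Moreover the appeal to the ``Poincar\'e lemma for the formal polydisk'' on the $b=0$ summand conflates two different differentials: the Poincar\'e lemma contracts the de Rham differential, whereas the differential of the complex in Lemma \ref{lemA} is the cosimplicial one (the Poincar\'e lemma is used in this paper, but in Lemma \ref{Poincare}, for the \emph{other} direction of the bicomplex, i.e.\ in the proof of Lemma \ref{lemB}). A correct contraction must track all $n+1$ positions of the generators $dt_i^{(k)}$, not only the last block of variables.

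The paper never writes a homotopy on $\Omega^p_{D(\bullet)}$ at all. By Lemma \ref{diff} the filtered cosimplicial module $\Omega^1_{D(\bullet)}$, together with its infinitesimal filtration steps, is obtained from the cosimplicial module $\Omega^1_{P^{\otimes\bullet+1}/\Sigma_e}$ by base change along $P^{\otimes\bullet+1}\rra\calD(\bullet)$; the latter is cosimplicial homotopic to zero by \cite[Example 2.16]{BdJ} when $P$ is a ring of convergent power series (and for general smooth $P$ by \'etale localization), and the null-homotopy is then transported through the levelwise functors $-\otimes_{P^{\otimes\bullet+1}}\calD(\bullet)$, $\wedge^p$ and $M(\bullet)\otimes_{\calD(\bullet)}-$, since levelwise functors preserve cosimplicial homotopies; this is where the hypothesis $p>0$ enters, as one starts from $\Omega^1$ rather than $\Omega^0$. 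If you wish to keep your route you should import the explicit homotopy of loc.\ cit.\ rather than build one from the last coordinate block. Note also that your identification of $M(\bullet)$ with $M\otimes_\calD\calD(\bullet)$ ``with the cosimplicial structure acting only on the $\tau$-indices'' is inaccurate—the cofaces not fixing the zeroth factor act through the stratification of $M$—which is a further reason to apply $M(\bullet)\otimes_{\calD(\bullet)}-$ only after the null-homotopy of $\Omega^p_{D(\bullet)}$ is already in hand.
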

	\begin{lemma}\label{lemB}
		Any degeneracy map $D(p)\ra D$ induces an filtered isomorphism of the following two de Rham complexes
		\[
		M\otimes_{\calD}\Omega_D^\bullet\rra M(p)\otimes_{\calD(p)}\Omega_{D(p)}^\bullet,
		\]
		that is functorial with respect to the crystal $\calF$.
		Here $M=\mathcal{F}_D$ is the evaluation of the crystal at the envelope $D$.
	\end{lemma}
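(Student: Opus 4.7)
The plan is to reduce to an explicit local model where the map $D(p) \to D$ corresponds to a formal polydisc over $D$, and then invoke a filtered Poincar\'e lemma. The statement is local on $Y$, so I shrink and assume $Y = \Spa P$ admits \'etale coordinates $t_1, \ldots, t_N$ over $\Sigma_e$, with the chosen degeneracy $\sigma : D(p) \to D$ induced by projecting $Y^{\times_{\Sigma_e}(p+1)}$ onto the factor $j=0$. Introducing difference coordinates $\tau_{i,j} := t_{i,j} - t_{i,0}$ for $j \ge 1$, the defining ideal of $X$ in $Y(p)$ is $(J_P, \tau_{i,j})$, and so the envelope ring $\calD(p)$ is identified with the formal power series ring $\calD[[\tau_{i,j}]]$. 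Under this identification,
\[
\Omega^\bullet_{D(p)} \;\cong\; \sigma^*\Omega^\bullet_D \otimes_{\calD(p)} \Lambda^\bullet_{\calD(p)}(d\tau_{i,j}),
\]
with total differential $d_D + d_\tau$, and the infinitesimal ideal is $J(p) = (J, \tau)\calD(p)$.

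Since $\calF$ is a coherent crystal, the crystal property applied to $D(p) \to D$ (or equivalently Theorem \ref{cry}) gives $M(p) \cong M \otimes_\calD \calD(p)$, compatibly with the canonical connection. Substituting,
\[
M(p) \otimes_{\calD(p)} \Omega^\bullet_{D(p)} \;\cong\; (M \otimes_\calD \Omega^\bullet_D) \otimes_\calD K^\bullet_\tau,
\]
where $K^\bullet_\tau := \calD[[\tau]] \otimes \Lambda^\bullet(d\tau_{i,j})$ with differential $d_\tau$ is the formal Koszul--de Rham complex. By the classical formal Poincar\'e lemma (which converges because $\Bdre$ contains $\QQ$), $K^\bullet_\tau$ is a resolution of $\calD$ concentrated in degree zero, yielding the unfiltered quasi-isomorphism. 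Functoriality in $\calF$ is automatic, since all identifications are induced by the crystal condition.

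For the filtered statement, I would equip $K^\bullet_\tau$ with the $\tau$-adic Hodge-type filtration $F^i K^a_\tau = (\tau)^{\max(i-a,0)} \calD(p) \otimes \Lambda^a(d\tau_{i,j})$ and check that each $F^i K^\bullet_\tau$ is acyclic for $i \ge 1$, which is again a direct computation using the primitive $\tau^n d\tau = d(\tau^{n+1}/(n+1))$ together with a Koszul induction on the number of $\tau$-variables. Because the infinitesimal ideal decomposes as $J(p)^i = \sum_{a+b=i} J^a (\tau)^b$, the convolution of the infinitesimal filtration on $M \otimes \Omega_D^\bullet$ with the filtration on $K^\bullet_\tau$ should recover the infinitesimal filtration on $M(p) \otimes_{\calD(p)} \Omega^\bullet_{D(p)}$, and the unfiltered comparison then promotes to a filtered quasi-isomorphism. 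The main obstacle I anticipate is matching these two filtrations cleanly through the completion defining the envelope: one must control the interaction between the $J$-adic filtration on $\calD$ and the $(\tau)$-adic filtration on $\calD[[\tau]]$, verify a Mittag--Leffler condition to interchange the inverse limit with passage to the filtration, and confirm that the filtered Poincar\'e computation survives on each $(J,\tau)$-adic graded piece. Once this bookkeeping is in place, the rest of the argument is formal.
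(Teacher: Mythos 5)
Your proposal follows essentially the same route as the paper: identify $\calD(p)\cong\calD[[\tau_{i,j}]]$ via the \'etale coordinates (the paper's Lemma \ref{env, D(p)}), split $\Omega^\bullet_{D(p)}$ as $\sigma^*\Omega^\bullet_D$ tensored with a formal Koszul--de Rham complex (the paper packages this as the Euler sequence of Lemma \ref{eul} plus a finite filtration and spectral sequence, which is the same decomposition), apply the relative Poincar\'e lemma for the unfiltered statement, and then match the infinitesimal filtration with the convolution $J(p)^{i}=\sum_{a+b=i}J^a(\tau)^b$ and a graded Poincar\'e lemma. The ``bookkeeping'' you defer at the end is precisely the content of the paper's Lemma \ref{intsec of ideals} (proved by passing from $\calD[\delta_{i,j}]$ to $\calD[[\delta_{i,j}]]$ by flatness) together with the graded part of Lemma \ref{Poincare}, so your outline is correct and complete in its ideas, just not carried out in that last step.
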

	
	We first assume those two lemmas above.
	By Lemma \ref{lemA}, the spectral sequence $''E_1^{p,q}$ is filtered degenerated in its first page and is convergent to the cohomology of the \v Cech-Alexander complex $M(\bullet)$ with its infinitesimal filtration.
	
	On the other hand, Lemma \ref{lemB} implies that the horizontal coboundary map of $'E_1^{p,q}$ is given by
	\[
	\begin{tikzcd}
		\cH^q(M\otimes_{\calD}\Omega_{D}^\bullet) \arrow[r,"0"] & \cH^q(M(1)\otimes_{\calD(1)}\Omega_{D(1)}^\bullet) \arrow[r, "1"] & \cH^q(M(2)\otimes_{\calD(2)}\Omega_{D(2)}^\bullet) \arrow[r,"0"] &\cdots.
	\end{tikzcd}
	\]
	From this, the second page of $'E_1^{p,q}$ vanishes everywhere except for the column $'E_2^{0,\bullet}$, which is exactly the infinitesimal filtered de Rham complex $M\otimes_{\calD}\Omega_{D}^\bullet$.
	
	In this way, since both of those two spectral sequences are convergent to the total complex in the filtered derived category, we get the filtered isomorphism between the de Rham complex $M\otimes_{\calD}\Omega_{D}^\bullet$ and the \v Cech-Alexander complex $M(\bullet)$.
	So by Proposition \ref{cech}, we get Theorem \ref{aff-coh}.
	Here the functoriality follows from that of Lemma \ref{lemB} and Proposition \ref{cech}.
	
	\textbf{Proof of Lemma \ref{lemA}.}
	To complete the proof of Theorem \ref{aff-coh}, we first prove Lemma \ref{lemA} in this paragraph.
	
	We first give a proof for the special case when $\calF$ is the structure sheaf and $p=1$.
	\begin{lemma}\label{cosim}
		The cosimplicial complex 
		\[
		\Omega_{D}^1\rra \Omega_{D(1)}^1\rra \Omega_{D(2)}^1\rra \cdots \tag{$\ast$}
		\]
		is locally (with respect to the topology of $X$; cf. \Cref{env space}) cosimplicial homotopic to zero, as filtered cosimplicial abelian groups.
	\end{lemma}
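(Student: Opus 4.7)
The plan is to work locally on $X$ and construct an explicit contracting cosimplicial homotopy, exploiting the formal-power-series structure of the envelope together with a combinatorial contraction on the index set of a natural decomposition.

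Locally we may assume $X = \Spa(A) \hookrightarrow Y = \Spa(P)$ is a closed immersion with $Y$ smooth affinoid over $\Sigma_e$ admitting \'etale coordinates $t_1, \ldots, t_d$. Setting $u_j^{(i)} := t_j^{(i)} - t_j^{(0)}$ in the $(n+1)$-fold self-product $P(n)$, the envelope ring takes the explicit topological form
\[
\calD(n) \cong \calD[[u_j^{(i)} : 1 \leq i \leq n,\; 1 \leq j \leq d]],
\]
and by Lemma \ref{diff}, $\Omega^1_{D(n)}$ is the free $\calD(n)$-module on the basis $\{dt_j^{(i)} : 0 \leq i \leq n, 1 \leq j \leq d\}$, decomposing as $\bigoplus_{i = 0}^n \calD(n) \cdot dt^{(i)}_{\bullet}$ indexed by $i \in [n] = \{0, 1, \ldots, n\}$. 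Direct computation shows that under the cofaces $\partial^k$, the index $i$ transforms by the standard order-preserving inclusion $[n-1] \hookrightarrow [n]$, while the coefficient map $\calD(n-1) \to \calD(n)$ is the substitution $u^{(i)} \mapsto u^{(i')}$ for $k \geq 1$, or the same substitution combined with the Taylor translation $t^{(0)} \mapsto t^{(0)} + u^{(1)}$ for $k = 0$.

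At the level of the indexing set alone, a direct calculation exhibits a contracting chain homotopy $h_{\mathrm{idx}}^n$ on the associated cochain complex, given by $e_i^{(n+1)} \mapsto e_{i-1}^{(n)}$ for $i \geq 1$ and $e_0^{(n+1)} \mapsto 0$; the identity $d\, h_{\mathrm{idx}} + h_{\mathrm{idx}}\, d = \id$ is a short telescope computation. The contracting homotopy $h^n : \Omega^1_{D(n+1)} \to \Omega^1_{D(n)}$ is then obtained by lifting $h_{\mathrm{idx}}$ through the coefficient ring maps: we set $h^n(f \cdot dt_j^{(i)}) = \phi_i(f) \cdot dt_j^{(i-1)}$ for $i \geq 1$ and $0$ for $i = 0$, where $\phi_i : \calD(n+1) \to \calD(n)$ is the ring map defined by a suitable substitution in the $u$-variables chosen so that the Taylor-translation term in $\partial^0$ cancels correctly against the rest of the alternating sum. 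Because each $\phi_i$ comes from a morphism of rigid spaces preserving the diagonal $X$, it sends $J(n+1)$ into $J(n)$, and the homotopy automatically respects the infinitesimal filtration. The main obstacle will be arranging the substitutions $\phi_i$ and bookkeeping the signs so that the Taylor translation in $\partial^0$ cancels cleanly; once this is in place, the verification of $d h + h d = \id$ reduces to routine manipulations in formal power series.
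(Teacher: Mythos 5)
Your setup is fine: the identification $\calD(n)\cong\calD[[u_j^{(i)}]]$, the decomposition $\Omega^1_{D(n)}=\bigoplus_{i=0}^n\calD(n)\,dt^{(i)}_\bullet$, and the telescoping identity $dh_{\mathrm{idx}}+h_{\mathrm{idx}}d=\id$ at the index level all check out. But the proof has a genuine gap exactly where you flag ``the main obstacle'': the coefficient maps $\phi_i$ are never defined and the identity $dh+hd=\id$ is never verified. This is not a routine detail to be deferred --- it is the entire content of the lemma. The index-level cancellation pairs the $\delta^k$-term of $d$ at one level against the $\delta^{k'}$-term at the adjacent level, and for the lift these paired terms carry \emph{different} coefficient homomorphisms $\calD(n)\to\calD(n+1)$; in particular every pairing involving $\partial^0$ picks up the Taylor translation $t^{(0)}\mapsto t^{(0)}+u^{(1)}$, so the terms do not literally cancel unless the $\phi_i$ are chosen to intertwine with \emph{all} cofaces simultaneously. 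Whether such $\phi_i$ exist, and whether they send $J(n+1)$ into $J(n)$ (which is what your filtered claim rests on), is precisely what remains to be proved. A second, independent issue: even granting the construction, $dh+hd=\id$ is a contraction of the associated cochain complex, not a cosimplicial homotopy to zero in the sense the paper defines ($h:U\to\Hom([1],V)$). The distinction matters downstream, since Lemma \ref{lemA} applies the non-additive functor $\wedge^p$ level-wise, which preserves cosimplicial homotopies but not cochain contractions. For cosimplicial abelian groups this can be repaired by Dold--Kan (compatibly with the filtration), but you would need to say so.

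The paper's proof avoids the Taylor-translation problem entirely by never constructing a homotopy on the envelope. It observes, via Lemma \ref{diff}, that $(\ast)$ is the base change of the single cosimplicial module $\Omega^1_{P^{\otimes\bullet+1}/\Sigma_e}$ along the cosimplicial ring map $P^{\otimes\bullet+1}\to\calD(\bullet)$, and that each step of the infinitesimal filtration is the fiber of a map between two such base changes (along $\calD(\bullet)$ and $\calD(\bullet)/J(\bullet)^{i-1}$). Since level-wise additive functors transport cosimplicial homotopies, everything reduces to the cosimplicial contractibility of $\Omega^1_{P^{\otimes\bullet+1}/\Sigma_e}$ itself, where the cofaces are plain insertions with no re-expansion of power series; that case is the standard polynomial computation of \cite{BdJ}, Example 2.16, extended to convergent power series and then \'etale-locally to general smooth $P$. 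If you want to salvage your direct approach, the most economical fix is to build your homotopy on $\Omega^1_{P^{\otimes\bullet+1}/\Sigma_e}$ first and only then tensor up to $\calD(\bullet)$ --- at which point you have rediscovered the paper's argument.
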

	Before the proof of this Lemma, we first recall that a \emph{cosimplicial homotopic equivalence} of two maps $f,g:U\ra V$ is defined as a cosimplicial morphism
	\[
	h:U\ra \Hom([1],V),
	\]
	such that
	\[
	h\circ s_0=f,~ h\circ s_1=g,
	\]
	where $s_i:[0]\ra [1]$ are two co-face maps.
	
	A cosimplicial object $U$ is called \emph{cosimplicial homotopic to zero} if its identity map is cosimplicial homotopic to the zero map.
	Here we note that any additive functor $F$ that sends cosimplicial objects to cosimplicial objects will preserve the cosimplicial homotopic equivalence.
	
	We refer the reader to \cite[Tag 019U]{Sta}  for the discussion about cosimplicial homotopic equivalence.
	
	\begin{proof}
		We first recall that since $D(n)$ is the envelope of $X=\Spa(A)$ in the $n+1$-folded self product of $Y$ over $\Sigma_e$, by Lemma \ref{diff} above, we have
		\[
		\Omega_{D(n)}^1=\Omega_{P^{\otimes n+1}/\Sigma_e}^1\otimes_{P^{\otimes n+1}} \calD(n).
		\]
		Besides, any cosimplicial boundaries map $P^{n+1}\ra P^{l+1}$ induces a map $ \Omega_{D(n)}^1\ra \Omega_{D(l)}^1$.
		So the cosimplicial complex $(\ast)$ is the tensor product of the cosimplicial complex $\Omega_{P^{\otimes \bullet+1}/\Sigma_e}^1$ along the cosimplicial ring homomorphism
		\[
		P^{\otimes \bullet+1}\rra \calD(\bullet).
		\]
		Moreover, the $i$-th filtration of the cosimplicial complex $(\ast)$ is 
		\[
		J^{i-1} \Omega_D^1 \rra J(1)^{i-1} \Omega_{D(1)}^1 \rra  J(2)^{i-1}  \Omega_{D(2)}^1 \rra \cdots,
		\]
		which is isomorphic to the fiber of a map between cosimplicial tensor products
		\[
		\left (\Omega_{P^{\otimes \bullet+1}/\Sigma_e}^1\right) \otimes_{P^{\otimes \bullet+1}} \calD(\bullet) \rra \left (\Omega_{P^{\otimes \bullet+1}/\Sigma_e}^1 \right) \otimes_{P^{\otimes \bullet+1}} \left (\calD(\bullet)/J(\bullet)^{i-1}\right).
		\]
		Thus to show the filtered acyclicity, it suffices to show that the cosimplicial module $\Omega_{P^{\otimes \bullet+1}/\Sigma_e}^1$ is homotopic equivalent to zero.
		Here we notice that when $P=\Bdre\langle x_i\rangle$, each $P^{\otimes n+1}$ is a ring of convergent power series over $\Bdre$, and the proof is totally identical to the case for polynomial rings, which is done in \cite{BdJ}, Example 2.16.
		In general, when $P$ is smooth over $\Bdre$, it locally admits an \'etale morphism to an $\Bdre\langle x_i\rangle$.
		So the exactness is true locally, hence globally by a \v Cech-complex arguement associated with a covering.
		
	\end{proof}
	\begin{proof}[End of the proof for Lemma \ref{lemA}]
		Consider the filtered complex $(\ast)$ as below:
		\[
		\Omega_{D}^1\rra \Omega_{D(1)}^1\rra \Omega_{D(2)}^1\rra \cdots \tag{$\ast$}.
		\]
		As the statement is local, by shrieking to an open subsets of $X$ and $Y$ if necessary, 
		we could assume the complex $(*)$ is filtered cosimplicial homotopic to zero as in Lemma \ref{cosim}.
		Then we apply the $p$-th wedge product functor, and the tensor product functor $M(\bullet)\otimes_{\calD(\bullet)}-$ successively to the cosimplicial complex $(\ast)$, then the resulted cosimplicial complex is exactly the one in Lemma \ref{lemA}.
		But note that since any additive cosimplicial functor preserves the cosimplicial homotopic equivalence, the resulted complex is also filtered homotopic to zero.
		So we are done.
	\end{proof}

	\textbf{Proof of Lemma \ref{lemB}.}
	In this paragraph, we prove Lemma \ref{lemB}.

	We first provide  the following simpler description of the envelope $\calD(p)$:
	\begin{lemma}\label{env, D(p)}
		Assume the $\Bdre$-algebra $P$ admits an \'etale map from a ring of convergent power series $\Bdre\langle x_1,\ldots, x_r\rangle$.
		Then the map of global sections of structure sheaves $\calD\ra \calD(p)$ associated with the degeneracy map $D(p)\ra D$ induces an isomorphism
		\[
		\calD(p)\simeq\calD[[\delta_{i,j}, 1\leq i\leq p, 1\leq j\leq r]],
		\]
		where the right side is a ring of formal power series over the topological ring $\calD$.
		
	\end{lemma}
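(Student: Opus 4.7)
The plan is to identify $\calD(p)=\varprojlim_m P^{\hat\otimes p+1}/J(p)^{m+1}$ in two stages: first complete $P^{\hat\otimes p+1}$ along the diagonal ideal to produce a formal power series ring over $P$, and then complete in the $J$-direction to replace $P$ by $\calD$. The key input is that the étale coordinates $x_1,\dots,x_r$ on $P$ provide a free basis for the conormal sheaf of the diagonal, so the $\delta_{i,j}:=1\otimes\cdots\otimes x_j\otimes\cdots\otimes 1 - x_j\otimes 1\otimes\cdots\otimes 1$ (with $x_j$ in the $i$-th slot, $1\le i\le p$) generate a regular ideal.

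Let me denote $R=\Bdre\langle x_1,\dots,x_r\rangle$ and $I_\Delta\subset P^{\hat\otimes p+1}$ the kernel of the multiplication map $P^{\hat\otimes p+1}\to P$, so that $J(p)=I_\Delta+J\cdot P^{\hat\otimes p+1}$. The first step is to show $I_\Delta=(\delta_{i,j})$ and that the $I_\Delta$-adic completion of $P^{\hat\otimes p+1}$ equals $P[[\delta_{i,j}]]$. For $P=R$, this is the tautological identification $R^{\hat\otimes p+1}=R\langle \delta_{i,j}\rangle$ obtained by the change of variables $x_j^{(i)}=x_j^{(0)}+\delta_{i,j}$; the $(\delta)$-adic completion is then clearly $R[[\delta_{i,j}]]$. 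For general étale $P/R$, both the diagonal ideal and its infinitesimal neighborhoods are compatible with étale base change along $R^{\hat\otimes p+1}\to P^{\hat\otimes p+1}$ (since étale morphisms are preserved by completed tensor product and the diagonal of $P$ is the pullback of the diagonal of $R$); thus $I_\Delta=(\delta_{i,j})\cdot P^{\hat\otimes p+1}$ and the completion is obtained from $R[[\delta]]$ by the same base change, giving $P[[\delta_{i,j}]]$.

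For the second stage, note that the $J(p)$-adic topology on $P^{\hat\otimes p+1}$ coincides with the $(I_\Delta,J)$-adic topology, so completing first along $I_\Delta$ and then further along $J$ computes $\calD(p)$. Since $P[[\delta_{i,j}]]$ is already $(\delta)$-adically complete, we only need to further complete along $J$, which affects only the coefficient ring; this replaces $P$ by $\calD$ and produces $\calD[[\delta_{i,j}]]$. A clean way to package this is via the $(\delta)$-adic filtration: on both $P^{\hat\otimes p+1}/J(p)^{m+1}$ and $\calD[[\delta_{i,j}]]/(J,\delta)^{m+1}$, the $k$-th graded piece ($k\le m$) is a free $(P/J^{m+1-k})$-module on monomials $\delta^I$ with $|I|=k$; the natural map from $\calD[[\delta_{i,j}]]$ induces an isomorphism on graded pieces, and hence an isomorphism of completions.

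The main point requiring care will be the first stage, namely the étale base change argument identifying the diagonal ideal and its associated graded: one must be careful that the completed tensor product behaves well under the étale map $R\to P$, and that the conormal bundle of the diagonal is indeed free on the classes of the $\delta_{i,j}$. Once this formal-smoothness input is granted, the rest is a direct filtration calculation.
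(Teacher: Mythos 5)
Your proposal is correct in outline and rests on the same basic strategy as the paper --- compute the $J(p)$-adic completion of $P^{\wh\otimes p+1}$ in two stages, one along the diagonal direction and one along $J$ --- but the two implementations differ in ways worth recording. The paper completes in the opposite order (first along $J\otimes 1\otimes\cdots\otimes 1$, then along the diagonal ideal), and its key technical input is an abstract commutation statement for noetherian rings, namely $\varprojlim_m\bigl(\varprojlim_n R/I^n\bigr)/\ol J^{\,m}\cong\varprojlim_m R/(I,J)^m$, proved via cofinality of $\{(I,J)^m\}$ and $\{(I^m,J^m)\}$ together with a completeness argument; the identification of the diagonal completion with $P[[\delta_{i,j}]]$ in the étale case is then cited from \cite{BMS}. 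You instead handle the commutation by a direct graded-piece computation for the $(\delta)$-adic filtration (close in spirit to the paper's Lemma \ref{intsec of ideals}) and supply your own étale base change argument for the power-series identification. Both routes work; the paper's claim is reusable and filtration-free, while yours is more self-contained. Do note that your second stage still needs $\calD[[\delta_{i,j}]]$ to be $(J,\delta)$-adically complete in order to upgrade the isomorphisms $P^{\wh\otimes p+1}/J(p)^{m+1}\cong\calD[[\delta_{i,j}]]/(J,\delta)^{m+1}$ to an isomorphism of limits with $\calD[[\delta_{i,j}]]$ itself; this is precisely what the paper's claim establishes, so either invoke it or observe that a noetherian ring complete for two ideals is complete for their sum.

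One assertion in your first stage is too strong as written: for an étale map $R\to P$ that is not an isomorphism (e.g.\ a finite étale cover of degree $>1$), the diagonal ideal $I_\Delta\subset P^{\wh\otimes p+1}$ is \emph{not} equal to $(\delta_{i,j})\cdot P^{\wh\otimes p+1}$. Indeed $P^{\wh\otimes p+1}/(\delta_{i,j})\cong P^{\wh\otimes_R\, p+1}$ decomposes as $P\times Q$ with $Q$ nonzero, so $I_\Delta=(\delta_{i,j})+(e)$ for a nontrivial idempotent $e$. This does not endanger the lemma, because $e=e^m\in J(p)^m$ for every $m$, so the $J(p)$-adic completion factors through the diagonal component, on which $I_\Delta$ and $(\delta_{i,j})$ do agree; but the argument must be phrased in terms of completions (or of the connected component containing the diagonal) rather than as an equality of ideals, and the freeness of the graded pieces in your second stage should be checked on that component. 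You correctly flag this as the stage requiring care, so this is a matter of making the fix explicit rather than a missing idea.
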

	The notation is explained as follows.
	The projection map $Y(p)\ra Y$ of the $p+1$-th self product onto the first copy induces the zero-th degeneracy map $D(p)\ra D$.
	Then we can rewrite $P^{\otimes p+1}$ as $P\langle \delta_{i,j},~1\leq i\leq p,~1\leq j\leq r\rangle$, where $\delta_{i,j}$ is defined as $x_j\otimes 1\otimes\cdots \otimes 1-1\otimes \cdots \otimes x_j\otimes  \cdots \otimes1$, with $x_j$ being in the $i$-th copy of $P$ in the second term.
	\begin{proof}
		We first consider the case when $P$ is equal to the convergent power series ring.
		
		Denote by $J$  the kernel of the surjection $P\ra A$, and let $I$ be the kernel of the map $P^{\otimes p+1}\ra P$.
		By construction, the ring of sections $\calD(p)=\calO(D(p))$ is equal to the inverse limit
		\[
		\varprojlim_{m\in \NN} P^{\otimes p+1}/(J\otimes1\otimes \cdots \otimes1, I)^m,
		\]
		while $\calD=\calO(D)$ is $\varprojlim_{m\in \NN} P/J^m$.
		So to prove the lemma, it suffices to notice that the above inverse limit is the same as the inverse limit 
		\[
		\calD(p)=\varprojlim_{n\in \NN}(\varprojlim_{m\in \NN} P^{\otimes p+1}/(J\otimes 1\otimes \cdots\otimes 1)^m)/\ol I^n,
		\]
		where $\ol I$ is the image of $I$ along the map $P^{\otimes p+1}\ra \varprojlim_{m\in \NN} P^{\otimes p+1}/(J\otimes \cdots\otimes 1)^m$.
		
		In fact, we have the following more general result:
		\begin{claim}
			Let $R$ be a noetherian ring, and $I, ~J$ be two ideals in $R$.
			Then we have a canonical isomorphism
			\[
			\varprojlim_{m\in \NN}(\varprojlim_{n\in \NN} R/I^n)/\ol J^m\rra 	\varprojlim_{m\in \NN} R/(I,J)^m,
			\]
			where $\ol J$ is the ideal generated by the image of $J$ along the map $R\ra \varprojlim_{m\in \NN} R/I^n$.
		\end{claim}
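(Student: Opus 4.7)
The plan is to reduce the claim to a cofinality argument comparing two filtrations by ideals together with a standard commutation between $I$-adic completion and quotients by finitely generated ideals. First I would observe that the ideals $\{(I,J)^m\}$ and $\{I^m + J^m\}$ are cofinal: evidently $I^m + J^m \subseteq (I+J)^m$, while any product of $2m$ generators of $I+J$ contains (by pigeonhole) at least $m$ factors from $I$ or at least $m$ from $J$, yielding $(I+J)^{2m} \subseteq I^m + J^m$. Thus the right-hand side of the claim is identified with $\varprojlim_m R/(I^m + J^m)$.

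Next I would analyze the left-hand side. Writing $\wh R_I := \varprojlim_n R/I^n$, since $R$ is noetherian the completion $\wh R_I$ is noetherian and $I$-adically complete, and the ideal $\ol J^m = J^m \wh R_I$ is finitely generated. The standard fact that $I$-adic completion is exact on finitely generated $R$-modules, applied to $0 \to J^m \to R \to R/J^m \to 0$, together with flatness of $\wh R_I$ over $R$ on finitely generated modules, gives
\[
\wh R_I / \ol J^m \;\cong\; \varprojlim_n (R/J^m)/I^n(R/J^m) \;=\; \varprojlim_n R/(I^n + J^m).
\]

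Finally I would combine the two steps:
\[
\varprojlim_m \wh R_I / \ol J^m \;=\; \varprojlim_m \varprojlim_n R/(I^n + J^m) \;=\; \varprojlim_{(m,n)\in \NN\times\NN} R/(I^n + J^m),
\]
using that limits commute with limits. Since the diagonal $\{(k,k)\}$ is cofinal in $\NN\times\NN$ with the product order, this coincides with $\varprojlim_k R/(I^k + J^k)$, which by the first step equals $\varprojlim_k R/(I,J)^k$. Tracking the construction shows that the composition of these isomorphisms agrees with the natural map displayed in the claim, which is induced by the obvious factorization $R \to R/I^n \to R/(I^n + J^m)$ through $\wh R_I$. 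The only nontrivial ingredient is the commutation of completion with finite-type quotients, which is where noetherianity of $R$ is essential.
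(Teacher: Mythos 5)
Your proof is correct, and it reorganizes the argument differently from the paper's. Both proofs share the same two computational inputs: the cofinality of $\{(I,J)^m\}$ and $\{(I^m,J^m)\}$ (your pigeonhole argument for $(I+J)^{2m}\subseteq I^m+J^m$ is the same inclusion the paper writes down), and the noetherian fact that $I$-adic completion commutes with finite quotients, i.e.\ $\wh R_I\otimes_R R/J^k$ is the $I$-adic completion of $R/J^k$. Where you diverge is in how the outer limit is handled. The paper's pivot is a completeness criterion (\cite[Tag 0DYC]{Sta}): it shows the left-hand side $A=\varprojlim_m \wh R_I/\ol J^{\,m}$ is $(I,J)$-adically complete by checking that $\wh R_I/\ol J$ is $I$-adically complete, then identifies the quotients $A/(I^l,J^l)\cong R/(I^l,J^l)$ and writes $A$ as the limit of these. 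You instead compute each individual layer $\wh R_I/\ol J^{\,m}\cong\varprojlim_n R/(I^n+J^m)$, collapse the iterated limit to a limit over $\NN\times\NN$, and invoke cofinality of the diagonal. Your route is more elementary in that it avoids the adic-completeness criterion entirely and replaces it with pure limit bookkeeping; the paper's route yields the slightly stronger intermediate statement that the left-hand side is itself $(I,J)$-adically complete, which is conceptually what is "really" being asserted. One small point of care in your write-up: the identification of your composite isomorphism with the canonical map of the claim deserves the explicit observation that $R\to R/(I,J)^m$ factors through $\wh R_I/\ol J^{\,m}$ because $I^m$ and $J^m$ both land in $(I,J)^m$; you assert this compatibility but it is worth spelling out since the statement is about a specific map, not just an abstract isomorphism.
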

		\begin{proof}[Proof of Claim]
			First notice that the sequence of ideals $\{(I,J)^m\}$ and $\{(I^m,J^m)\}$ are cofinal to each other, since 
			\[
			(I^{2m},J^{2m})\subset (I,J)^{2m}=(I^iJ^{2m-i},~0\leq i\leq 2m)\subset (I^m,J^m).
			\]
			So the right side $\varprojlim_{m\in \NN} R/(I,J)^m$ can be replaced by $\varprojlim_{m\in \NN} R/(I^m,J^m)$.

			Then we  notice that the $R$-algebra $A:=\varprojlim_{m\in \NN}(\varprojlim_{n\in \NN} R/I^n)/\ol J^m$ is $(I,J)$-adic complete over $R$:
			To show this, by the \cite[Tag 0DYC]{Sta}, it suffices to show that the ring $(\varprojlim_{n\in \NN} R/I^n)/J$ is $I$-adic complete.
			We then note that $(\varprojlim_{n\in \NN} R/I^n)/J=\wh R\otimes_R R/J$, where $\wh R$ is the $I$-adic completion of $R$.
			Since $R/J$ is a finitely generated module over $R$, the tensor product $\wh R\otimes_R  R/J$ is the same as $I$-adic completion of $R/J$.
			Thus the $R$-algebra  $A$ is $(I,J)$-adic complete.
			In particular, we have
			\[
			A=\varprojlim_{l\in \NN} A/(I^l,J^l).
			\]
			
			At last, the quotient ring $A/(I^m,J^m)$ is given as
			\begin{align*}
				A/(I^l,J^l)&=(\varprojlim_{m\in \NN}(\varprojlim_{n\in \NN} R/I^n)/\ol J^m)/(I^l,J^l)\\
				&=(\varprojlim_{n\in \NN} R/I^n)/(\ol I^l,\ol J^l)\\
				&=R/(I^l,J^l).
			\end{align*}
			So we get 
			\begin{align*}
				\varprojlim_{m\in \NN}(\varprojlim_{n\in \NN} R/I^n)/\ol J^m&=:A\\
				&\simeq\varprojlim_{l\in \NN} A/(I^l,J^l)\\
				&=\varprojlim_{l\in \NN} R/(I^l,J^l).
			\end{align*}
		\end{proof}
		
		At last, let us assume $P$ is a smooth affinoid algebra that admits an \'etale map to the ring of convergent power series.
		By the claim above and the noetherianity of the envelope (\cite[Lem.\ 13.4.(ii)]{BMS}), the lemma is reduced to showing that the formal completion $\calD(p)$ for $P^{\otimes p+1} \ra P$ is isomorphic to $P[[\delta_{i,j}]]$, which is proved in \cite[Lem.\ 13.12.(ii)]{BMS}.
	\end{proof}
	Our next observation is about the Euler sequence for the degeneracy map $D(p)\ra D$.
	Denote by  $\Omega_{D(p)/D}^1$  the module of continuous differentials of $\calD(p)$ over $\calD$ under the $(\Delta(p))$-adic topology, where $\Delta(p)$ is the kernel ideal for the diagonal map $\calD(p) \ra \calD$.
	Then we have
	\begin{lemma}\label{eul}
		The Euler sequence for the projection map $Y(p)\ra Y$ over $\Sigma_e$ induces a natural exact sequence of free $\calD(p)$-module:
		\[
		0\rra \Omega_D^1\otimes_\calD \calD(p)\rra \Omega_{D(p)}^1\rra \Omega_{D(p)/D}^1\rra 0,
		\]
		where the map $\Omega_D^1\otimes_\calD\calD(p)\ra \Omega_{D(p)}^1$ sends $dx_i\otimes 1$ to $dx_i$.
		
	\end{lemma}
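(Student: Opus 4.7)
The plan is to deduce this from the Euler sequence for the smooth map $Y(p)\to Y$ of adic spaces, and then to base change all three terms along $P^{\otimes p+1}\to \calD(p)$. Since the question is local, by shrinking $Y$ if necessary I may assume the smooth affinoid $P$ admits an étale map from a convergent power series ring $\Bdre\langle x_1,\ldots,x_r\rangle$, so that Lemma~\ref{env, D(p)} applies and we have the étale coordinates $x_1,\ldots,x_r$ on $Y$ together with the associated coordinates $\delta_{i,j}$ on $Y(p)$ relative to $Y$.

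First I would record the Euler sequence over the smooth affinoids themselves: the composition $Y(p)\to Y\to\Sigma_e$ of smooth morphisms yields the short exact sequence
\[
0\rra \Omega_{Y/\Sigma_e}^1\otimes_P P^{\otimes p+1}\rra \Omega_{Y(p)/\Sigma_e}^1\rra \Omega_{Y(p)/Y}^1\rra 0
\]
of $P^{\otimes p+1}$-modules, each of which is finite free (with respective bases $dx_j$, $\{dx_{j,k}\}$, and $d\delta_{i,j}$) because $Y$, $Y(p)$, and $Y(p)\to Y$ all carry étale coordinates. Because the sequence consists of free modules, it is split, so it remains short exact after any base change.

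Next I would tensor the sequence up along the ring homomorphism $P^{\otimes p+1}\to \calD(p)$ produced by the envelope. The middle term becomes $\Omega_{D(p)}^1$ by Lemma~\ref{diff}. The left term becomes
\[
\Omega_{Y/\Sigma_e}^1\otimes_P \calD(p)\cong \bigl(\Omega_{Y/\Sigma_e}^1\otimes_P\calD\bigr)\otimes_\calD \calD(p)\cong \Omega_D^1\otimes_\calD\calD(p),
\]
again using Lemma~\ref{diff}, and the map to $\Omega_{D(p)}^1$ visibly sends $dx_i\otimes 1\mapsto dx_i$.

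The remaining step, which I expect to be the main technical point, is to identify the right term $\Omega_{Y(p)/Y}^1\otimes_{P^{\otimes p+1}}\calD(p)$ with the module $\Omega_{D(p)/D}^1$ of continuous differentials of $\calD(p)$ over $\calD$ with respect to the $\Delta(p)$-adic topology. By Lemma~\ref{env, D(p)}, $\calD(p)\cong \calD[[\delta_{i,j}]]$ as a topological $\calD$-algebra, with $\Delta(p)$ the ideal generated by the $\delta_{i,j}$. From the standard computation of continuous Kähler differentials of a formal power series ring over a noetherian base (compare the proof of Lemma~\ref{diff}), $\Omega_{D(p)/D}^1$ is the finite free $\calD(p)$-module with basis $\{d\delta_{i,j}\}$, and the identification with $\Omega_{Y(p)/Y}^1\otimes_{P^{\otimes p+1}}\calD(p)$ is then tautological from the matching of bases $d\delta_{i,j}$. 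This yields the desired short exact sequence of free $\calD(p)$-modules, and the freeness of each term has just been verified along the way.
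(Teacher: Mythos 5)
Your proof is correct and follows essentially the same route as the paper: both reduce to étale coordinates on $Y$, invoke Lemma \ref{diff} to identify $\Omega_{D(p)}^1$ and $\Omega_D^1\otimes_\calD\calD(p)$, and use the presentation $\calD(p)\cong\calD[[\delta_{i,j}]]$ from Lemma \ref{env, D(p)} to recognize $\Omega_{D(p)/D}^1$ as the free module on the $d\delta_{i,j}$. The only difference is one of packaging: the paper produces the sequence as an inverse limit of the Euler sequences for the neighborhoods $Y(p)_m\ra Y_m$ and then verifies injectivity and the cokernel by hand in coordinates, whereas you get exactness for free by base-changing the split Euler sequence of the smooth affinoids along $P^{\otimes p+1}\ra\calD(p)$ --- a slightly cleaner route through the same computation.
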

	\begin{proof}
		We consider the inverse limit of the Euler sequences of differentials for the triple $Y(p)_m\ra Y_m\ra \Sigma_e$, with $m\in \NN$ (\Cref{int-rat} and \Cref{rat cot 3}).
		Then by Lemma \ref{diff}, we see the inverse limit $\varprojlim_{m\in \NN} (\Omega_{Y_m/\Sigma_e}^1\otimes_{\calO(Y_m)} \calO(Y(p)_m))$ is isomorphic to the  $\calD(p)$-module $\Omega_D^1\otimes_\calD \calD(p)$.
		Similarly the inverse limit $\varprojlim_{m\in \NN} \Omega_{Y(p)_m/Y_m}^1$  is isomorphic to $\Omega_{D(p)/D}^1$.
		In particular, we get the following sequence of $\calD(p)$-modules
		\[
		0 \rra \Omega_D^1\otimes_\calD \calD(p)\rra \Omega_{D(p)}^1\rra \Omega_{D(p)/D}^1 \rra 0.
		\]

		To show the sequence is an exact sequence, we may assume $P$ admits an \'etale map from the ring of convergent power series $\Bdre\langle x_1, \ldots, x_r\rangle$.
		We apply Lemma \ref{diff} to the immersion $X\ra Y$ and $X\ra Y(p)=Y\times\cdots \times Y$ separately.
		Then we get an description of differentials as follows
		\[
		\Omega_{D}^1=\moplus_{j=1}^r \calD dx_j,~~~~~\Omega_{D(p)}^1=(\moplus_{j=1}^r \calD(p) dx_j )\oplus( \moplus_{\substack{1\leq i\leq p\\ 1\leq j\leq r}} \calD(p) d\delta_{i,j}),
		\]
		Here the projection map $D(p)\ra D$ induced from $Y(p)\ra Y$ produces the natural monomorphism
		\[
		\Omega_D^1\rra \Omega_{D(p)}^1,
		\]
		sending the generator $dx_j$ onto $dx_j$ in $\Omega_{D(p)}^1$.
		This gives the injectivity from $\Omega_D^1\otimes_\calD\calD(p)$ into $\Omega_{D(p)}^1$.
		
		Moreover, by the explicit formula in Lemma \ref{env, D(p)} for ring of convergent power series,  the $(\delta_{i,j})$-adic continuous differential of $\calD(p)$ over $\calD$ is the free $\calD(p)$-module generated by $d\delta_{i,j}$, for $1\leq i\leq p$ and $1\leq j\leq r$.
		This is exactly the cokernel of the injection above and is the free $\calD(p)$-module generated by $d\delta_{i,j}$.
		Thus we get the short exact sequence as expected.
		
	\end{proof}
	
	We can construct the relative de Rham complex of $D(p)$ over $D$, by taking wedge products of $\Omega_{D(p)/D}^1$ and considering the relative differential operator.
	Then we have the following filtered version of the Poincar\'e Lemma for infinitesimal differentials:
	\begin{lemma}[Poincar\'e Lemma]\label{Poincare}
		There exists  a natural quasi-isomorphism to the relative de Rham complex 
		\[
		\calD \rra \Omega_{D(p)/D}^\bullet.
		\]
		Moreover, for each $m\in \NN$, the natural induced map below is a quasi-isomorphism
		\[
		\calD\ra \Omega_{D(p)/D}^\bullet/\Delta(p)^{m+1-\bullet}.
		\]
	\end{lemma}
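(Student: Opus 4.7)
The plan is to reduce to an explicit computation with a formal power series ring and then apply the classical characteristic-zero Poincaré lemma. Since both assertions concern quasi-isomorphisms of complexes of sheaves on $X$, the statements are local, so after shrinking we may assume $Y$ admits an étale coordinate system $x_1,\ldots,x_r$ over $\Sigma_e$. By Lemma \ref{env, D(p)}, the zeroth projection induces an isomorphism $\calD(p)\cong \calD[[\delta_{i,j}]]$ of topological $\calD$-algebras with $1\le i\le p$ and $1\le j\le r$, under which $\Delta(p)$ corresponds to the maximal ideal $(\delta_{i,j})$. Combining this with Lemma \ref{eul}, which identifies $\Omega_{D(p)/D}^1$ as the free $\calD(p)$-module on the $d\delta_{i,j}$, we see that $\Omega_{D(p)/D}^\bullet$ is precisely the formal de Rham complex of $\calD[[\delta_{i,j}]]$ over $\calD$, and the natural map $\calD\to\Omega_{D(p)/D}^\bullet$ is the inclusion of the $\delta$-constant forms.

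Assign weight $1$ to each $\delta_{i,j}$ and to each $d\delta_{i,j}$. Since there are only finitely many variables, the weight-$n$ piece $\Omega_{D(p)/D}^{\bullet,(n)}$ is a finite-rank complex of $\calD$-modules; the differential preserves weight, and $\Omega_{D(p)/D}^\bullet$ decomposes as the direct product $\prod_{n\ge 0}\Omega_{D(p)/D}^{\bullet,(n)}$, with the weight-$0$ piece equal to $\calD$ concentrated in degree $0$. As $\calD$ is a $\QQ$-algebra (because $\Bdre\supset \QQ_p$), for $n\ge 1$ I will use the Cartan homotopy $h_n=\tfrac{1}{n}\iota_E$ on $\Omega_{D(p)/D}^{\bullet,(n)}$, where $E=\sum_{i,j}\delta_{i,j}\,\partial_{\delta_{i,j}}$ is the Euler vector field. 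The identity $dh_n+h_nd=\mathrm{id}$ on weight $n$ follows from $L_E=d\iota_E+\iota_E d$ together with the fact that $L_E$ acts as multiplication by $n$ on weight-$n$ forms. Assembling these homotopies weight-by-weight produces a contracting homotopy for $\Omega_{D(p)/D}^\bullet/\calD$, giving the first quasi-isomorphism.

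For the filtered statement, I will verify that $\Delta(p)^{m+1-k}\Omega_{D(p)/D}^k$ decomposes as the product of weight pieces $\prod_{n\ge m+1}\Omega_{D(p)/D}^{k,(n)}$: a coefficient lying in $(\delta)^{m+1-k}$ combined with $k$ differentials contributes total weight at least $m+1$, while conversely any weight-$n$ form in $\Omega^k$ with $n\ge m+1$ has coefficient of $\delta$-order $n-k\ge m+1-k$. Hence $\Omega_{D(p)/D}^\bullet/\Delta(p)^{m+1-\bullet}$ is the finite direct sum $\bigoplus_{n=0}^{m}\Omega_{D(p)/D}^{\bullet,(n)}$. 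By the previous paragraph each weight-$n$ summand with $n\ge 1$ is acyclic and the weight-$0$ summand is $\calD$ in degree $0$, settling the second claim.

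The main technical input is the explicit description of $\calD(p)$ from Lemma \ref{env, D(p)}: without identifying $\calD(p)$ as a formal power series ring, the complex $\Omega_{D(p)/D}^\bullet$ is too opaque for a direct contracting-homotopy argument. Once that is granted, the proof reduces to the formal Poincaré lemma in characteristic zero; naturality of the unit map $\calD\to\Omega_{D(p)/D}^\bullet$ and of its filtered truncation is immediate from the construction, since the homotopy $h_n$ is only an auxiliary device used to verify acyclicity and does not enter the map itself.
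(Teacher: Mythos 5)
Your proposal is correct and follows essentially the same route as the paper: both reduce via Lemma \ref{env, D(p)} to the formal power series ring $\calD[[\delta_{i,j}]]$, invoke the characteristic-zero formal Poincar\'e lemma, and deduce the filtered statement from the fact that the differential preserves the total $\delta$-degree, so that the quotient by $\Delta(p)^{m+1-\bullet}$ kills exactly the weight-$\geq m+1$ pieces. The only (cosmetic) difference is the acyclicity device: the paper factors the augmented complex as a completed tensor product of one-variable complexes $0\ra\calD\ra\calD[[x]]\ra\calD[[x]]dx\ra 0$, whereas you use the Euler-field Cartan homotopy $\tfrac{1}{n}\iota_E$ on each weight piece — both are standard and equivalent here.
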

	\begin{proof}
		We first assume $Y$ is a unit disc, and by Lemma \ref{env, D(p)} the ring $\calD(p)$ is equal to the ring of formal power series over $\calD$ with coordinates $\delta_{i,j}$.
		For the first argument, it suffices to show that the augmented complex 
		\[
		0\ra \calD\ra \calD(p)\ra \Omega_{D(p)/D}^1\ra \Omega_{D(p)/D}^2\ra \cdots \ra \Omega_{D(p)/D}^N\ra 0 \tag{$\ast$}
		\]
		is homotopic to $0$, where $N=pr$.
		Using the coordinate interpretation, the complex $(\ast)$ is an $N$-th completed tensor product of the complex 
		\[
		0\ra \calD\ra \calD[[x]]\ra \calD[[x]]dx\ra 0
		\]
		over $\calD$, where the map $\calD[[x]]\ra \calD[[x]]dx$ is the $\calD$-linear relative differential.
		But since $\calD$ contains $\Q$, the relative differential is surjective with kernel being $\calD$, which proves the first statement  in this case.
		Moreover, notice that by writing down the differentials $\Omega_{D(p)/D}^i$ in terms of coordinates $\delta_{i,j}$ by Lemma \ref{env, D(p)}, the differential in the complex $(\ast)$ preserves the degree.
		In this way, since the quotient $\Omega_{D(p)/D}^\bullet/\Delta(p)^{m+1-\bullet}$ kills exactly elements of degrees higher than $m$, we get the statement about the quotient complex in this case.
		
		In general, as the statement is \'etale local with respect to the smooth rigid space $Y=\Spa(P)$, we may assume $Y$ admits an \'etale morphism to an unit disc.
		Then the claim follows from a term-wise base change formula in the complex $(\ast)$, thanks to Lemma \ref{diff}.
	\end{proof}

	Here is another observation which we will need in order to compute the cohomology of infinitesimal filtration.
	\begin{lemma}\label{intsec of ideals}
		Let $D(p)\ra D$ be the degeneracy map of envelopes as before, and let $J(p)$, $J$ and $\Delta(p)$ be the kernel ideals for surjections $\calO_{D(p)}\ra \calO_X$, $\calO_D \ra \calO_X$ and $\calO_{D(p)} \ra \calO_D$ respectively.
		Then for $j\leq m $ in $ \NN$, the natural map below is an isomorphism of $\calO_{X}$-modules
		\begin{align*}
			J^{m-j}/J^{m-j+1}\cdot \Delta(p)^{j}/\Delta(p)^{j+1} &\longrightarrow\\
			\left(J^m, J^{m-1}\Delta(p) , \cdots,  J^{m-j}\Delta(p)^{j}, J(p)^{m+1} \right) &/ \left(J^m , \cdots, J^{m-j+1}\Delta(p)^{j-1}, J(p)^{m+1} \right).
		\end{align*}
		
	\end{lemma}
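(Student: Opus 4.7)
My plan is a direct coordinate computation, reducing to the case where $P$ admits \'etale coordinates over a ring of convergent power series; in that case Lemma~\ref{env, D(p)} provides the explicit description $\calD(p) \cong \calD[[\delta_{i,j}]]$, and the statement becomes an exercise in bookkeeping with bifiltrations.

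\emph{Step 1 (localization and power-series coordinates).} Since the statement is local on $X$ and the asserted isomorphism is natural in $P$, I localize on $Y = \Spa(P)$ to assume $P$ is \'etale over $\Bdre\langle x_1,\ldots,x_r\rangle$. By Lemma~\ref{env, D(p)} the envelope ring then takes the form $\calD(p) = \calD[[\delta_{i,j}]]$, so every element of $\calD(p)$ has a unique expansion $\sum_\alpha c_\alpha \delta^\alpha$ with $c_\alpha \in \calD$. In these coordinates a short check gives
\[
J^a \Delta(p)^k = \Big\{\sum_\alpha c_\alpha \delta^\alpha : c_\alpha \in J^a \text{ for } |\alpha| \geq k,\ c_\alpha = 0 \text{ for } |\alpha| < k\Big\},
\]
and the binomial expansion $J(p)^{m+1} = (J,\Delta(p))^{m+1} = \sum_{a+k=m+1} J^a\Delta(p)^k$ yields
\[
J(p)^{m+1} = \Big\{\sum_\alpha c_\alpha \delta^\alpha : c_\alpha \in J^{\max(0,\,m+1-|\alpha|)}\Big\}.
\]

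\emph{Step 2 (termwise comparison).} Using $J^a + J^b = J^{\min(a,b)}$, I compute the numerator $\sum_{s=0}^{j} J^{m-s}\Delta(p)^s + J(p)^{m+1}$ and the denominator $\sum_{s=0}^{j-1} J^{m-s}\Delta(p)^s + J(p)^{m+1}$ as constraints on each $c_\alpha$ separately. Carrying the comparison out multi-index by multi-index, the constraints in the numerator and in the denominator coincide for every $|\alpha|\neq j$, whereas at $|\alpha| = j$ the numerator requires only $c_\alpha \in J^{m-j}$ while the denominator forces $c_\alpha \in J^{m-j+1}$. Hence the quotient is canonically
\[
\bigoplus_{|\alpha| = j} \bigl(J^{m-j}/J^{m-j+1}\bigr)\cdot \delta^\alpha.
\]

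\emph{Step 3 (identification with the LHS and descent).} Since $\Delta(p)^j/\Delta(p)^{j+1}$ is a free $\calD$-module with basis the degree-$j$ monomials $\delta^\alpha$ (again by Lemma~\ref{env, D(p)}), the displayed direct sum is exactly the image of the multiplication map from $J^{m-j}/J^{m-j+1}\otimes_{\calO_X}\Delta(p)^j/\Delta(p)^{j+1}$, and this multiplication map is the ``natural map'' of the lemma; moreover it is visibly an isomorphism. Naturality of all constructions in $P$ patches the local identifications into the global statement. The only real friction I expect is the careful handling of the ranges of $|\alpha|$: one must confirm that the $J(p)^{m+1}$ summand does not shift the cokernel away from $\delta$-degree $j$, which amounts to checking that the bounds $J^{m+1-|\alpha|}$ coming from $J(p)^{m+1}$ and $J^{m-j+1}$ coming from $\sum_{s\leq j-1} J^{m-s}\Delta(p)^s$ combine as expected in each $|\alpha|$-range. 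This is exactly what the termwise computation in Step~2 confirms.
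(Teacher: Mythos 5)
Your proof is correct and follows essentially the same route as the paper's: localize so that Lemma \ref{env, D(p)} gives $\calD(p)\cong\calD[[\delta_{i,j}]]$, and then compare the numerator and denominator ideals monomial by monomial in the $\delta$'s, finding a discrepancy only in $\delta$-degree $j$. The only difference is that the paper first passes to the polynomial subring $\calD[\delta_{i,j}]$ and transfers the result along the flat map $\calD[\delta_{i,j}]\ra\calD[[\delta_{i,j}]]$, whereas you compute directly in the power series ring; your coefficient-wise descriptions of $J^a\Delta(p)^k$ and $J(p)^{m+1}$ there are valid because $\calD$ is noetherian, so the powers of $J$ are finitely generated and a power series with all coefficients in $J^a$ does lie in $J^a\calD(p)$.
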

	\begin{proof}
		As the statement is local with respect to $Y$, let us first assume $Y=\Spa(P)$ admits an \'etale map to a ring of convergent power series.
		By Lemma \ref{env, D(p)}, $\calD(p)$ is the formal power series ring $\calD[[\delta_{i,j}]]$, and the ideal $\Delta(p)$ is generated by variables $(\delta_{i,j})$.
		Notice that as the map $\calD[\delta_{i,j}]\ra \calD[[\delta_{i,j}]]$ is flat and the quotient ideals in the statement can be defined over $\calD[\delta_{i,j}]$, it suffices to show the analogous statement for the polynomial ring $\calD[\delta_{i,j}]$.
		
		Then as the ring $\calD[\delta_{i,j}]$ is a free module over $\calD$ with a basis given by monomials of $\delta_{i,j}$, we could express elements $x$ in $\calD[\delta_{i,j}]\cap \left(J^m, J^{m-1}\Delta(p) , \cdots,  J^{m-j}\Delta(p)^{j}, J(p)^{m+1} \right)$ using the coordinates as below
		\begin{align*}
			x&=a_{r_{l_0}}+ \sum_{|r_{l_1}|=1} a_{r_{l_1}}\cdot \delta^{r_{l_1}} + \sum_{|r_{l_2}|=2} a_{r_{l_2}}\cdot \delta^{r_{l_2}} + \cdots, \\
			&a_{r_{l_n}}\in J^{m-n},~for~0\leq n\leq j;\\
			&a_{r_{l_n}}\in J^{m+1-n},~for~j<n\leq m+1;\\
			&a_{r_{l_n}}\in \calD,~for~j>m+1.
		\end{align*}
		Here $\delta^{r_{l_n}}$ are monomials in $\delta_{i,j}$ with multi-indexes.
		Similarly we could do this for elements in $\calD[\delta_{i,j}]\cap \left(J^m , \cdots, J^{m-j+1}\Delta(p)^{j-1}, J(p)^{m+1} \right)$, where in the obtained formula we replace $j$ above by $j-1$.
		Compare those expressions, we see the statement in the lemma holds for $\calD[\delta_{i,j}]$.
		So by extending this along the flat map $\calD[\delta_{i,j}]\ra \calD[[\delta_{i,j}]]$, we get the result for $\calD(p)\simeq \calD[[\delta_{i,j}]]$.

	\end{proof}

	Now we are ready to prove the Lemma \ref{lemB}.
	\begin{proof}[Proof for Lemma \ref{lemB}]
		$~$
		\begin{itemize}
			\item[Step 1]
			We first deal with the underlying complexes and forget the infinitesimal filtration.
			Our goal is to show that the natural map of complexes below is a quasi-isomorphism
			\[
			M\otimes_{\calD}\Omega_D^\bullet\rra M(p)\otimes_{\calD(p)}\Omega_{D(p)}^\bullet.
			\]
			
			%We first notice that in our situation, we have a natural Gauss-Manin connection for the triple $D(p)\ra D \ra \Sigma_e$.
			The de Rham complex $\Omega_D^\bullet$ is equipped with its Hodge filtration, defined by $F^i\Omega_D^\bullet=\sigma^{\geq i}\Omega_{D}^\bullet$.
			By the  Euler sequence in Lemma \ref{eul}, the Hodge filtration of $\Omega^\bullet_D$ induces a natural descending filtration on the relative de Rham complex $\Omega_{D(p)}^\bullet$, 
			whose graded piece is $gr^i\Omega_{D(p)}^\bullet=\Omega_D^i\otimes_\calD \Omega_{D(p)/D}^\bullet.$
			
			Now consider the de Rham complex $(M\otimes_\calD \Omega_D^\bullet,\nabla_D)$ and $(M(p)\otimes_{\calD(p)} \Omega_{D(p)}^\bullet,\nabla_{D(p)})$ of the crystal $\calF$ at $D$ and $D(p)$.
			The projection $D(p)\ra D$ induces a map of complexes
			\[
			M\otimes\Omega_D^\bullet\rra M(p)\otimes_{\calD(p)}\Omega_{D(p)}^\bullet.
			\]
			By the crystal condition, the base change of $M$ along the map $D(p)\ra D$ is exactly $M(p)$.
			%Moreover, for a fix $i$, the map $M\otimes\Omega_D^i\ra M(p)\otimes_{\calD(p)}\Omega_{D(p)}^i$ is the tensor product of the identity map $id_M$ with the canonical differentials $\Omega_D^i\ra \Omega_{D(p)}^i$.
			Moreover, by the compatibility of the de Rham complexes, the filtration on $\Omega_{D(p)}^\bullet$ induces a filtration on $M(p)\otimes_{\calD(p)} \Omega_{D(p)}^\bullet=M\otimes_\calD \Omega_{D(p)}^\bullet$, given by 
			\[
			F^i(M(p)\otimes_{\calD(p)} \Omega_{D(p)}^\bullet)=M\otimes_{\calD} F^i\Omega_{D(p)}^\bullet.
			\]
			Each $F^i(M(p)\otimes_{\calD(p)} \Omega_{D(p)}^\bullet)$ is a subcomplex of $M\otimes_{\calD} \Omega_{D(p)}^\bullet$, since $\nabla^i$ sends elements in $M$ into $M\otimes \Omega_D^1\subset M(p)\otimes \Omega_{D(p)}^1$.
			Moreover, the $i$-th graded factor of this filtration is 
			\[
			M\otimes_\calD \Omega_D^i\otimes_\calD \Omega_{D(p)/D}^\bullet,
			\]
			which by Lemma \ref{Poincare} is isomorphic to the $M\otimes_\calD \Omega_D^i$ via the degeneracy map.
			In this way, the projection $D(p)\ra D$ induces a map of filtered complexes
			\[
			M\otimes_\calD \Omega_D^\bullet\rra M(p)\otimes_{\calD(p)}\Omega_{D(p)}^\bullet, \tag{$\ast$}
			\]
			which on each graded factor is an isomorphism.
			Hence the map $(\ast)$ itself is an isomorphism, by the spectral sequence associated with a finite filtration as in \cite[Tag 012K]{Sta}.

			\item[Step 2]
			We then show that the above quasi-isomorphism is filtered under the infinitesimal filtration.
			More precisely, we claim the graded piece of the following map in Step 1 is a filtered quasi-isomorphism under their infinitesimal filtrations:
			\[
			M\otimes_\calD \Omega_D^i \rra M\otimes_\calD \Omega_D^i\otimes_\calD \Omega_{D(p)/D}^\bullet.
			\]
			
			Consider the $(m+i)$-th graded piece for $m\in \NN$.
			On the one hand, the $(m+i)$-th graded piece for the infinitesimal filtration on $M\otimes_\calD \Omega_{D}^\bullet$ induces a subquotient $J^m\cdot M\otimes \Omega_D^i/J^{m+1}$ of the left hand side of the above.
			On the other hand, the $(m+i)$-th graded piece for infinitesimal filtration on $M(p)\otimes_{\calD(p)} \Omega_{D(p)}^\bullet$ induces the following subquotient of the right hand side:
			\[
			J(p)^{m-\bullet}\cdot M\otimes_\calD \Omega_D^i\otimes_\calD \Omega_{D(p)/D}^\bullet/J(p)^{m+1-\bullet}.
			\]
			So we get the map of graded pieces as below
			\[
			J^m\cdot M\otimes \Omega_D^i/J^{m+1} \rra J(p)^{m-\bullet}\cdot M\otimes_\calD \Omega_D^i\otimes_\calD \Omega_{D(p)/D}^\bullet/J(p)^{m+1-\bullet}. \tag{$\ast\ast$}
			\]
			Here we note that as the ideal $J$ maps into $J(p)$, the right hand side is an $\calO_D/J=\calO_X$-linear complex.
			
			To show $(\ast\ast)$ is a quasi-isomorphism, we need to subdivide the right hand side in a finer way.
			We introduce a finite increasing filtration on the right hand side of $(\ast\ast)$, whose $j$-th filtration is 
			\begin{align*}
				& \left(J^{m-\bullet}, J^{m-1-\bullet}\Delta(p) , \cdots J^{m-j}\Delta(p)^{j-\bullet}, J(p)^{m+1 -\bullet} \right)\cdot M\otimes_\calD \Omega_\calD^i \otimes_\calD \Omega_{D(p)/D}^\bullet /J(p)^{m+1-\bullet} \\
				&= complex~ \scalebox{1.7}{(} \left(J^{m}, \cdots, J^{m-j}\Delta(p)^{j}, J(p)^{m+1} \right)\cdot M\otimes \Omega_\calD^i \otimes \calO_{D(p)} /J(p)^{m+1} \\
				& \rra  \left(J^{m-1},  \cdots, J^{m-j}\Delta(p)^{j-1}, J(p)^{m } \right)\cdot M\otimes \Omega_\calD^i \otimes \Omega_{D(p)/D}^1 /J(p)^{m}\\
				& \rra \cdots \\
				& \rra  \left(J^{m-j}, J(p)^{m+1 -j} \right)\cdot M\otimes \Omega_\calD^i \otimes \Omega_{D(p)/D}^j /J(p)^{m+1-j} \scalebox{1.7}{)} .
			\end{align*}
			
			The graded piece of this filtration is the $\calO_X$-linear complex
			\begin{align*}
				\left(J^{m-\bullet}, \cdots, J^{m-j}\Delta(p)^{j-\bullet}, J(p)^{m+1-\bullet} \right)\cdot M\otimes \Omega_\calD^i \otimes \Omega_{D(p)/D}^\bullet /\left(J^{m -\bullet},  \cdots , J^{m-j+1}\Delta(p)^{j-1-\bullet}, J(p)^{m+1-\bullet} \right) 
			\end{align*}
			We apply the Lemma \ref{intsec of ideals} to this complexes, then the graded piece above can be rewritten as 
			\begin{align*}
				&J^{m-j}M\otimes {\Omega_D^i}/J^{m-j+1}\otimes_\calD  \Delta(p)^{j-\bullet}\Omega_{D(p)/D}^{\bullet}/\Delta(p)^{j-\bullet+1} \\
				&= complex~ \scalebox{1.7}{(} J^{m-j}M\otimes {\Omega_D^i}/J^{m-j+1}\otimes_\calD  \Delta(p)^{j}/\Delta(p)^{j+1} \\
				& \rra  J^{m-j}M\otimes {\Omega_D^i}/J^{m-j+1}\otimes_\calD  \Delta(p)^{j-1}\Omega_{D(p)/D}^{1}/\Delta(p)^{j} \\
				&\rra \cdots \\
				& \rra J^{m-j}M\otimes {\Omega_D^i}/J^{m-j+1}\otimes_\calD  \Omega_{D(p)/D}^{j}/\Delta(p) \scalebox{1.7}{)} \\
				& \simeq \left( J^{m-j}M\otimes {\Omega_D^i}/J^{m-j+1}\right) \otimes_\calD  \left(\Delta(p)^{j-\bullet}\Omega_{D(p)/D}^{\bullet}/\Delta(p)^{j+1-\bullet} \right).
			\end{align*}
			
			At last, by the graded version of relative Poincar\'e Lemma in Lemma \ref{Poincare}, we have
			\[
			\Delta(p)^{j-\bullet}\Omega_{D(p)/D}^{\bullet}/\Delta(p)^{j+1-\bullet} \simeq \begin{cases}
				0,~j\geq 1;\\
				\calD,~j=0.
			\end{cases}
			\]
			In this way, the graded pieces of the right hand side of $(\ast\ast)$ are zero, except for its zero-th graded piece which is naturally isomorphic to $J^mM\otimes \Omega_D^i/J^{m+1}$.
			Hence $(\ast\ast)$ is an isomorphism, and we finish the proof.
		\end{itemize}

	\end{proof}
	
	\begin{remark}
		Here we mention that the same study of the infinitesimal filtration works with minor changes for schemes.
		In particular, the schematic analogue of the proof in \cite[Theorem 2.12]{BdJ} can be improved into a filtered version, 
		and we thus obtain the expected filtered isomorphism in the crystalline theory, which is proved in different methods in \cite[Theorem 7.23]{BO78}.
	\end{remark}

	\subsection{Global result}
	Now we generalize the computation of cohomology to the global situation, without assuming $X$ is affinoid.
	
	We recall that the infinitesimal ideal sheaf $\mathcal{J}_{X/\Sigma_e}:=\ker(\mathcal{O}_{X/\Sigma_e}\to \mathcal{O}_X)$ naturally defines a filtration on a coherent crystal $\mathcal{F}$, so that the $j$-th filtration is $\mathcal{J}_{X/\Sigma_e}^j\mathcal{F}$.
	Moreover, one can naturally extend the filtration to each individual $\mathcal{F}\otimes_{\mathcal{O}_{X/\Sigma_e}}\Omega^{i}_{X/\Sigma_e}$ the entire de Rham complex of $\mathcal{F}$, so that 
	\begin{align*}
		\Fil^j \left( \mathcal{F}\otimes_{\mathcal{O}_{X/\Sigma_e}}\Omega^{i}_{X/\Sigma_e} \right) = \begin{cases}
			\mathcal{F}\otimes_{\mathcal{O}_{X/\Sigma_e}}\Omega^{i}_{X/\Sigma_e},~\text{if~}j< i;\\
			\mathcal{J}_{X/\Sigma_e}^{j-i}\mathcal{F}\otimes_{\mathcal{O}_{X/\Sigma_e}}\Omega^{i}_{X/\Sigma_e}~\text{if}~j\geq i,
		\end{cases} 
	\end{align*}
	and the $j$-th filtration of the de Rham complex of $\mathcal{F}$ is
	\[
	\mathcal{J}_{X/\Sigma_e}^j\mathcal{F} \to \mathcal{J}_{X/\Sigma_e}^{j-1}\mathcal{F}\otimes_{\mathcal{O}_{X/\Sigma_e}} \Omega^1_{X/\Sigma_e} \to \cdots \to \mathcal{F}\otimes_{\mathcal{O}_{X/\Sigma_e}}\Omega^j_{X/\Sigma_e} \to \mathcal{F}\otimes_{\mathcal{O}_{X/\Sigma_e}}\Omega^{j+1}_{X/\Sigma_e}\to \cdots.
	\]
	By taking the derived direct image, we in particular get a filtration on  $Ru_{X/\Sigma_e *}(\calF\otimes\Omega_{X/\Sigma_{e \inf}}^\bullet)$.
	
	Our first result in this subsection shows that the above direct image vanishes in higher cohomological degrees.
	\begin{proposition}\label{van}
		Let $X$ be a rigid space over $\Sigma_e$, and let $\calF$ be a coherent crystal over $X/\Sigma_{e \inf}$.
		Then for each $i>0$ and $j\in \NN$, we have
		\[
		Ru_{X/\Sigma_e *}(\calJ_{X/\Sigma_e}^j\calF\otimes_{\calO_{X/\Sigma_e}} \Omega_{X/\Sigma_{e \inf}}^i)=0.
		\]
		In particular, after applying the derived direct image $Ru_{X/\Sigma_e *}$, the truncation map of the de Rham complex induces a filtered quasi-isomorphism:
		\[
		Ru_{X/\Sigma_e *}(\calF\otimes\Omega_{X/\Sigma_{e \inf}}^\bullet)\rra Ru_{X/\Sigma_e *} \calF.
		\]
	\end{proposition}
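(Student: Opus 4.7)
The plan is to reduce the vanishing to a local \v{C}ech--Alexander computation and apply the filtered acyclicity already proved in \Cref{lemA}.

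Since $Ru_{X/\Sigma_e *}$ is local on $X_{\rig}$, it suffices to show that
\[
R\Gamma(U/\Sigma_{e \inf}, \calJ_{X/\Sigma_e}^{j}\calF \otimes \Omega^{i}_{X/\Sigma_{e\inf}}) = 0
\]
for $U$ in a base of $X_{\rig}$. I take as a base those affinoid opens $U \subset X$ which admit a closed immersion into a smooth affinoid $Y$ over $\Sigma_e$ (e.g.\ a polydisc); these form a base. On such a $U$, the sheaf $\calG := \calJ^{j}\calF \otimes_{\calO_{X/\Sigma_e}} \Omega^{i}_{X/\Sigma_{e\inf}}$ is a coherent infinitesimal $\calO_{X/\Sigma_e}$-module (no longer a crystal, but this is not needed), so \Cref{cech} applies and computes $R\Gamma(U/\Sigma_{e\inf},\calG)$ as the totalization of the cosimplicial abelian group $\calG(D(\bullet))$ on the simplicial envelope.

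Evaluating $\calG$ on $D(p)$ and using the Mittag--Leffler / coherence argument from the proof of \Cref{cech} to commute the $j$-th power of the ideal and the tensor with $\Omega^{i}$ through the inverse limit over infinitesimal neighborhoods, one identifies
\[
\calG(D(p)) \;=\; J(p)^{j}\,M(p) \otimes_{\calD(p)} \Omega^{i}_{D(p)},
\]
which is precisely the $j$-th step of the infinitesimal filtration on the cosimplicial complex $M(\bullet) \otimes_{\calD(\bullet)} \Omega^{i}_{D(\bullet)}$. By \Cref{lemA}, for each $i > 0$ this cosimplicial complex is filtered acyclic with respect to the infinitesimal filtration, so in particular its $j$-th filtration step has acyclic totalization. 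This gives the vanishing on $U$, proving the first assertion.

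For the ``in particular'' statement, consider the projection $\calF \otimes \Omega^{\bullet} \twoheadrightarrow \calF$ onto the degree-zero piece. Under the infinitesimal filtrations on both sides (with $F^{j}(\calF \otimes \Omega^{\bullet})$ in degree $k$ given by $\calJ^{j-k}\calF \otimes \Omega^{k}$), this is a map of filtered complexes whose fiber is the subcomplex $\calF \otimes \Omega^{\geq 1}$, filtered by complexes built from terms $\calJ^{k}\calF \otimes \Omega^{i}$ with $i \geq 1$. By the first part, $Ru_{X/\Sigma_e *}$ kills each such term, hence each filtration step of the fiber, yielding the required filtered quasi-isomorphism. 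The one nontrivial step in the whole argument is the envelope-level identification $\calG(D(p)) = J(p)^{j}M(p) \otimes_{\calD(p)} \Omega^{i}_{D(p)}$, i.e.\ that taking powers of $\calJ$ and tensoring with $\Omega^{i}$ commutes with the completion defining the envelope; this is essentially the same Artin--Rees / noetherianness reasoning already implicit in \Cref{cech}.
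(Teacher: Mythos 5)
Your proof is correct and follows essentially the same route as the paper's: localize to an affinoid admitting a closed immersion into a smooth space, compute $R\Gamma(U/\Sigma_{e\inf},\calJ^j\calF\otimes\Omega^i)$ via the \v{C}ech--Alexander complex of \Cref{cech}, identify the resulting cosimplicial object with the $j$-th filtration step of $M(\bullet)\otimes\Omega^i_{D(\bullet)}$, and invoke the filtered acyclicity of \Cref{lemA} for $i>0$. The extra care you take with the envelope-level identification and the deduction of the filtered quasi-isomorphism is detail the paper leaves implicit, but the argument is the same.
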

	\begin{proof}
		Recall from Subsection \ref{sub inf-rig} that $\Gamma(U,u_{X/\Sigma_{e \inf} *}\calG)$ is defined as the $0$-th cohomology $\Gamma(U/\Sigma_{e \inf}, \calG)$,
		and similarly for its filtered analogue.
		So to show the vanishing of $Ru_{X/\Sigma_e *}(\calJ_{X/\Sigma_e}^j\calF\otimes\Omega_{X/\Sigma_{e \inf}}^i)$, it suffices to do this locally and assume $X$ is affinoid, together with a closed immersion into a smooth rigid space $Y$ over $\Sigma_{e \inf}$.
		We then notice that by Proposition \ref{cech}, $Ru_{X/\Sigma_e *}(\calJ_{X/\Sigma_e}^j\calF\otimes\Omega_{X/\Sigma_{e \inf}}^i)$ is computed by the following cosimplicial complex:
		\[
		J^j\calF\otimes\Omega_{X/\Sigma_{e \inf}}^i(D)\rra J(1)^j\calF\otimes\Omega_{X/\Sigma_{e \inf}}^i (D(1))\rra J(2)^j\calF\otimes\Omega_{X/\Sigma_{e \inf}}^i(D(2))\rra \cdots,
		\]
		which by Lemma \ref{lemA} is homotopic to zero when $i>0$.
		So we get the vanishing of $Ru_{X/\Sigma_e *}(\calJ_{X/\Sigma_e}^j\calF\otimes\Omega_{X/\Sigma_{e \inf}}^i)$ for each $i>0$ and $j\in \mathbb{N}$; in other words, the filtered sheaf of complexes $Ru_{X/\Sigma_e *}(\calF\otimes\Omega_{X/\Sigma_{e \inf}}^i)$ vanishes for $i>0$.
		By induction, the latter in particular shows that the truncation map
		\[
		Ru_{X/\Sigma_e *}(\calF\otimes\Omega_{X/\Sigma_{e \inf}}^{\leq i})\longrightarrow Ru_{X/\Sigma_e *} \calF
		\]
		is a filtered quasi-isomorphism for each $i>0$.
		As a consequence, since the filtered complex  $Ru_{X/\Sigma_e *}(\calF\otimes\Omega_{X/\Sigma_{e \inf}}^\bullet)$ is the derived limit of $	Ru_{X/\Sigma_e *}(\calF\otimes\Omega_{X/\Sigma_{e \inf}}^{\leq i})$, we get the isomorphism as claimed in the second half of the statement.
	\end{proof}
	
	Now we can generalize Theorem \ref{aff-coh} to the global case, without assuming the affinoid condition:
	\begin{theorem}\label{glo-coh}
		Let $X\ra Y$ be a closed immersion of $X$ into a smooth rigid space $Y$ over $\Sigma_e$.
		Let $\calF$ be a coherent crystal over $\calO_{X/\Sigma_e}$, and let $\calF_D=\varprojlim_{m\in \NN} \calF_{Y_m}$ be the restriction of $\calF$ at the envelope $D=D_X(Y)=\varinjlim_{m\in \NN} Y_m$,  with its de Rham complex $\calF_D\otimes \Omega_D^\bullet$.
		Then there exists a natural isomorphism in the filtered derived category of sheaves of abelian groups over $X$
		\[
		Ru_{X/\Sigma_e *}\calF \rra \calF_D\otimes \Omega_D^\bullet.
		\]
		
	\end{theorem}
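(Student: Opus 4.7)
The plan is to sheafify Theorem \ref{aff-coh}. First, I would construct the comparison map in the filtered derived category of sheaves on $X_\rig$, then verify it is an isomorphism by restricting to affinoid opens $U \subset X$ over which the envelope simplifies.

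To build the map, I would use the framework from the proof of Proposition \ref{van}. By that proposition, the truncation map yields a filtered quasi-isomorphism $Ru_{X/\Sigma_e *}\calF \xrightarrow{\sim} Ru_{X/\Sigma_e *}(\calF\otimes_{\calO_{X/\Sigma_e}}\Omega_{X/\Sigma_{e\inf}}^\bullet)$. On the other side, regarding the envelope $D = \varinjlim_m Y_m$ as a locally ringed space on the topological space of $X$ (Remark \ref{env, space}) the complex $\calF_D\otimes_\calD\Omega_D^\bullet := \varprojlim_m (\calF_{Y_m}\otimes\Omega^\bullet_{Y_m/\Sigma_e})$ is a complex of sheaves on $X_\rig$, filtered by the infinitesimal filtration $J^{j-\bullet}\calF_D\otimes\Omega_D^\bullet$. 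The evaluation at the pairs $(U, Y_m|_U)\in X/\Sigma_{e\inf}$ and their self-products, for $U\subset X$ open, produces the natural filtered map
\[
Ru_{X/\Sigma_e *}(\calF\otimes\Omega_{X/\Sigma_{e\inf}}^\bullet) \rra \calF_D\otimes_\calD\Omega_D^\bullet
\]
of sheaves on $X_\rig$, compatible with all restrictions along open immersions of $X$.

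Next, I would check this map is a filtered quasi-isomorphism locally. Choose an affinoid open $U\subset X$ and a corresponding affinoid open $V\subset Y$ with $U\hookrightarrow V$ a closed immersion (locally available since $X\hookrightarrow Y$ is a closed immersion and $Y$ is smooth). A direct check using Definition \ref{env} shows that $D_U(V)$ is canonically isomorphic to the restriction $D|_U$ as locally ringed spaces over $U$, and that the infinitesimal filtration is preserved. Theorem \ref{aff-coh} applied to the pair $(U,V)$ and the crystal $\calF|_U$ then gives a filtered isomorphism $R\Gamma(U/\Sigma_{e\inf},\calF|_U) \simeq \Gamma(U,\calF_{D}|_U\otimes\Omega_{D}^\bullet|_U)$. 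To identify the right-hand side with $R\Gamma(U,\calF_D\otimes\Omega_D^\bullet)$, I need the vanishing of higher cohomology $H^i(U,J^{j-\bullet}\calF_D\otimes\Omega_D^\bullet)$ for $i>0$: this follows by the same Mittag--Leffler argument used in the proof of Proposition \ref{cech}, since each $\calF_{Y_m}\otimes\Omega^\bullet_{Y_m/\Sigma_e}$ is coherent on the noetherian affinoid $U$, and the filtration is term-wise coherent.

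The main obstacle will be the bookkeeping in the filtered derived category: verifying that the locally constructed isomorphisms (from Theorem \ref{aff-coh}) glue to a globally defined filtered quasi-isomorphism, and that the comparison between the inverse limit $\calD = \varprojlim_m\calO_{Y_m}$ and the cosimplicial \v Cech-Alexander description is strictly compatible with the infinitesimal filtration. Granting Lemma \ref{diff} (the structure of $\Omega_D^i$ as the base change of $\Omega_{Y/\Sigma_e}^i$ along $\calO_Y\to\calD$), both sides are seen to be computed by genuine (not derived) inverse limits over $m$ in each degree, and the filtered gluing reduces to the already-proved affinoid statement. The conclusion of Theorem \ref{glo-coh} then follows.
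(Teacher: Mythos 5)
Your proposal is correct and follows essentially the same route as the paper: both reduce to Theorem \ref{aff-coh} on affinoid opens after constructing the global comparison map by (i) replacing $Ru_{X/\Sigma_e *}\calF$ with the derived image of the infinitesimal de Rham complex via Proposition \ref{van}, and (ii) evaluating at the tower of infinitesimal neighborhoods $Y_m$ (the paper phrases this as the natural transformation $R\Gamma(X/\Sigma_{e\inf},-)\to R\Gamma(D,-)=R\varprojlim_m R\Gamma(Y_m,-)$ induced by $D\to 1$, then sheafifies over opens $U\subset X$), with the same Mittag--Leffler argument identifying the derived inverse limit with $\calF_D\otimes\Omega_D^\bullet$. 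The only cosmetic difference is that the paper does not need the self-products at this stage, since the \v{C}ech--Alexander machinery is already packaged inside Theorem \ref{aff-coh} and Proposition \ref{van}.
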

	Before the proof, we want to mention that the strategy is to produce a natural map between those two complexes of sheaves of abelian groups, where the isomorphism will follow from the affinoid computation.
	\begin{proof}

		By Proposition \ref{van}, the truncation map of the de Rham complex $\calF\otimes_{\calO_{X/\Sigma_e}} \Omega_{X/\Sigma_{e \inf}}^\bullet\ra \calF[0]$ produces a canonical filtered isomorphism in the derived category of $\calO_X$-modules
		\[
		Ru_{X/\Sigma_e *}(\calF\otimes \Omega_{X/\Sigma_{e \inf}}^\bullet)\ra Ru_{X/\Sigma_e *}\calF.
		\]
		
		On the other hand, we recall that the envelope $D=D_X(Y)$ is defined as the direct limit $\varinjlim_{m\in \NN} h_{Y_m}$ of representable sheaves, where $Y_m$ is the $m$-th infinitesimal neighborhood of $X$ into $Y$.
		In the infinitesimal topos $\Sh(X/\Sigma_{e \inf})$, the map from the envelope $D$ to the final object $1$ induces a map of derived functors
		\[
		R\Gamma(X/\Sigma_{e \inf},-)\rra R\Gamma(D,-)=R\varprojlim_m R\Gamma(Y_m,-).
		\]
		Similarly for its filtered analogue.
		
		We apply the natural transformation to the filtered de Rham complex $\calF\otimes\Omega_{X/\Sigma_{e \inf}}^\bullet$, and get
		\begin{align*}
			R\Gamma(X/\Sigma_{e \inf}, \calF\otimes\Omega_{X/\Sigma_{e \inf}}^\bullet) & \ra R\varprojlim_m R\Gamma(Y_m,  \calF_{Y_m}\otimes_{\calO_{Y_m}} \Omega_{Y_m/\Sigma_e}^\bullet)\\
			& = R\Gamma(X, R\varprojlim_{m\in \NN}\calF_{Y_m}\otimes_{\calO_{Y_m}} \Omega_{Y_m/\Sigma_e}^\bullet)\\
			& = R\Gamma(X, \calF_D\otimes_{\calO_D} \Omega_D^\bullet),
		\end{align*}
		where the last equality follows from the observation that the inverse system $\{ \calF_{Y_m}\otimes_{\calO_{Y_m}} \Omega_{Y_m/\Sigma_e}^\bullet \}_m$ admits a finite filtration, where each graded piece $\{ \calF_{Y_m}\otimes_{\calO_{Y_m}} \Omega_{Y_m/\Sigma_e}^i \}_m$ is a pro-coherent system satisfying the sheaf vertion Mittag-Leffler condition (\cite[Lemma 7.20]{BO78}).
		Similarly for the subcomplex $\calJ_{X/\Sigma_e}^{m-\bullet}\calF\otimes\Omega_{X/\Sigma_{e \inf}}^{\bullet}$.
		Notice that the map is functorialial with respect to all locally closed immersions $(X,Y)$ into smooth rigid spaces.
		In particular, by varying $X$ among all open subsets $U$ of $X$ and considering the above map for locally closed immersions $(U,Y)$, we could enhance the above into the sheaf version filtered morphism
		\[
		Ru_{X/\Sigma_e *} (\calF\otimes\Omega_{X/\Sigma_{e \inf}}^\bullet) \rra \calF_D\otimes_\calD \Omega_D^\bullet.
		\]
		Thus by composing with (the inverse of) the filtered isomorphism at the beginning, we get a natural map in the filtered derived category of sheaves of abelian groups over $X$:
		\[
		Ru_{X/\Sigma_e *}\calF \rra \calF_D\otimes_\calD \Omega_D^\bullet.
		\]
		
		At last, to show the filtered isomorphism, we note that the evaluation functor defined in the second paragraph of the proof is compatible with restriction onto open affinoid subspaces of $Y$ and $X$.
		In particular, as the map is analytic local with respect to $X$, the compatibility allows us to reduce to the case when both $X$ and $Y$ are affinoid.
		In the latter case, we know by Theorem \ref{aff-coh} that the map is a filtered isomorphism, which finishes the proof.
	\end{proof}
	
	As a consequence, we get a change of bases formula quite easily.
	\begin{proposition}\label{coh, change of bases}
		Let $X$ be a rigid space over $\Sigma_e$, and $e'\geq e$ be an integer.
		Let $\calF'$ be a crystal in vector bundles over $X/\Sigma_{e' \inf}$, and $\calF$ be the pullback of $\calF'$ along the map of infinitesimal topoi $\Sh(X/\Sigma_{e \inf}) \ra \Sh(X/\Sigma_{e' \inf})$.
		Then there exists a natural isomorphism of complexes of sheaves of $\mathrm{B}_{\mathrm{dR},e'}^+$-modules as below
		\[
		(Ru_{X/\Sigma_{e'} *} \calF')\otimes_{\mathrm{B}_{\mathrm{dR},e'}^+}^L \Bdre \rra Ru_{X/\Sigma_e *} \calF.
		\]
		
	\end{proposition}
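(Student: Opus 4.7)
The question is local on $X$, so I would first shrink $X$ to an affinoid admitting a closed immersion $X\hookrightarrow Y'$ into a smooth rigid space $Y'$ over $\Sigma_{e'}$, and set $Y:=Y'\times_{\Sigma_{e'}}\Sigma_e$, smooth over $\Sigma_e$ and still a closed embedding target for $X$ (since $\xi^e$ vanishes on $X$). Write $D'=D_X(Y')$, $D=D_X(Y)$ for the envelopes and $\calD'$, $\calD$ for their structure sheaves. By Theorem \ref{glo-coh} applied on both sides, the problem reduces to producing a natural isomorphism of complexes of sheaves on $X$
\[
(\calF'_{D'}\otimes_{\calD'}\Omega_{D'}^\bullet)\otimes^L_{\mathrm{B}_{\mathrm{dR},e'}^+}\Bdre \;\;\cong\;\; \calF_D\otimes_{\calD}\Omega_D^\bullet,
\]
compatible with the canonical base change map coming from the functoriality of Remark \ref{change of bases}.

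The next step would be to identify the envelopes modulo $\xi^e$. Smoothness of $Y'$ makes $\calO_{Y'}$ flat over $\mathrm{B}_{\mathrm{dR},e'}^+$, and the $\calI'$-adic completion $\calD'$ of $\calO_{Y'}$ (where $\calI'$ is the defining ideal of $X$ in $Y'$) stays flat by noetherian completion. Since $\xi^e\in\calI'$, standard commutative algebra of completions identifies $\calD'/\xi^e\calD'$ with the $(\calI'/\xi^e)$-adic completion of $\calO_Y=\calO_{Y'}/\xi^e$, i.e.\ with $\calD$. Combined with Lemma \ref{diff} on both sides and the base change of differentials along $Y'\to Y$, this gives $\Omega_{D'}^i\otimes_{\calD'}\calD\cong\Omega_D^i$ for every $i$.

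By hypothesis $\calF'$ is a crystal in vector bundles, so Theorem \ref{cry, vec-bun} (equivalently Corollary \ref{cry, free}) forces $\calF'_{D'}$ to be locally free over $\calD'$. In particular, each $\calF'_{D'}\otimes_{\calD'}\Omega_{D'}^i$ is locally free over $\calD'$ and hence $\mathrm{B}_{\mathrm{dR},e'}^+$-flat, so the derived tensor $\otimes^L_{\mathrm{B}_{\mathrm{dR},e'}^+}\Bdre$ collapses to the underived $\otimes_{\calD'}\calD$ term by term. The crystal condition applied to each thickening $(X,Y_m)\in X/\Sigma_{e\inf}$ (regarded also in $X/\Sigma_{e'\inf}$) gives $\calF_{Y_m}=\calF'_{Y_m}=\calF'_{D'}\otimes_{\calD'}\calO_{Y_m}$; passing to the inverse limit in $m$, which is exact on the pro-coherent system by finite generation of $\calF'_{D'}$, yields $\calF_D\cong\calF'_{D'}\otimes_{\calD'}\calD$. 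Putting everything together produces the desired isomorphism, which is natural in $\calF'$ and in restrictions to open subsets and therefore glues to the global statement.

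The main obstacle I anticipate is the bookkeeping behind the identification $\calD'/\xi^e\calD'\cong\calD$ and the compatibility $\Omega_{D'}^i\otimes_{\calD'}\calD\cong\Omega_D^i$: the flatness of $\calD'$ over $\mathrm{B}_{\mathrm{dR},e'}^+$ and the coherence control over affinoid completions are what make both identifications (and hence the Tor-vanishing used for the derived tensor) work, and this is the one place where the argument is not purely formal.
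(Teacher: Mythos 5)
Your proposal is correct and follows essentially the same route as the paper: reduce to an affinoid $X$ with a closed immersion into a smooth $Y'$ over $\Sigma_{e'}$, apply Theorem \ref{glo-coh} on both sides, use smoothness of $Y'$ plus flatness of the formal completion to see that $\calF'_{D'}\otimes\Omega_{D'}^\bullet$ is a bounded complex of flat $\mathrm{B}_{\mathrm{dR},e'}^+$-modules (so the derived tensor is underived), and identify $D_X(Y)$ with the reduction of $D_X(Y')$ mod $\xi^e$. The only cosmetic difference is that the paper first builds the global comparison map by adjunction and then globalizes the affinoid isomorphism via derived sheafification over the basis $X_\aff$, whereas you construct the isomorphism locally and appeal to naturality to glue; the content is the same.
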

	\begin{proof}
		We first notice that the natural morphism of infinitesimal sites $X/\Sigma_{e \inf} \ra X/\Sigma_{e' \inf}$ induces a canonical map in the derived category of sheaves over $X$
		\[
		Ru_{X/\Sigma_{e'} *} \calF' \rra Ru_{X/\Sigma_e *} \calF.
		\]
		Moreover, as the target is $\Bdre$-linear, by the adjunction for the forgetful functor (from $\Bdre$-modules to $\mathrm{B}_{\mathrm{dR},e'}^+$-modules) we get a natural map of complexes
		\[
		(Ru_{X/\Sigma_{e'} *} \calF')\otimes_{\mathrm{B}_{\mathrm{dR},e'}^+}^L \Bdre \rra Ru_{X/\Sigma_e *} \calF.
		\]
		So it suffices to show this adjunction map is an isomorphism.
		
		As the statement is analytic local with respect to $X$, by shrinking $X$ if necessarily, we can assume that 
		there exists a  closed immersion $X\ra Y'$ of $X$ into a smooth rigid space over $\Sigma_{e'}$.
		By Theorem \ref{glo-coh}, we have the following natural isomorphisms
		\begin{align*}
			Ru_{X/\Sigma_{e'} *} \calF' & \rra \calF'_{D'} \otimes \Omega_{D'}^\bullet;\\
			Ru_{X/\Sigma_{e} *} \calF & \rra \calF_D \otimes \Omega_D^\bullet,
		\end{align*}
		where $D'$ is the envelope of $X$ in $Y'$, and $D$ is the envelope of $X$ in $Y=Y'\times_{\Sigma_{e'}} \Sigma_e$ where the latter is smooth over $\Sigma_e$.
		Notice that $\calO_{Y'}$ is flat over $\Sigma_{e'}$, and the structure sheaves $\calO_{D'}$ is flat over $\calO_{Y'}$ (for it is defined as the formal completion of $\calO_{Y'}$ along $X\ra Y'$).
		In this way, by the assumption that $\calF'$ is a crystal in vector bundles, the complex $\calF'_{D'} \otimes \Omega_{D'}^\bullet$ is a $\mathrm{B}_{\mathrm{dR},e'}^+$-linear bounded complex of sheaves of flat $\mathrm{B}_{\mathrm{dR},e'}^+$-modules.
		Thus we get the isomorphisms
		\begin{align*}
			(Ru_{X/\Sigma_{e'} *} \calF') \otimes_{\mathrm{B}_{\mathrm{dR},e'}^+}^L \Bdre & \simeq (\calF'_{D'} \otimes \Omega_{D'}^\bullet) \otimes_{\mathrm{B}_{\mathrm{dR},e'}^+}^L \Bdre\\
			& \simeq (\calF'_{D'} \otimes \Omega_{D'}^\bullet)/\xi^{e},
		\end{align*}
		which is then isomorphic to the complex $\calF_D \otimes \Omega_D^\bullet$ as the envelope $D=D_X(Y)$ is equal to the pullback of $D'=D_X(Y')$ along the surjection $\mathrm{B}_{\mathrm{dR},e'}^+ \ra \Bdre=\mathrm{B}_{\mathrm{dR},e'}^+/\xi^e$.
	\end{proof}

	\section{Derived de Rham cohomology over $\Bdre$}\label{sec ddR}
	In this section, we introduce the derived de Rham cohomology of a rigid space over $\Bdre$, and prove the comparison between the derived de Rham cohomology and the infinitesimal cohomology.
	
	Before we start, we want to mention that we will use mildly the language of \ic throughout this section.
	The main reason is to globalize the affinoid constructions and get a good theory of ``sheaf of  derived objects", using the $\infty$-categorical cohomological descent.
	
	\begin{remark}
		The construction of the analytic derived de Rham complex in this section can be applied to more general class of analytic Huber rings, which includes for example rigid spaces over an arbitrary $p$-adic non-archimedean field, and perfectoid spaces.
		
		The results of this section hold true for rigid spaces over a general $p$-adic fields.
		Moreover, in an upcoming work \cite{GL20} by Shizhang Li and the author, we show that the analytic derived de Rham complex of perfectoid rings is isomorphic to the de Rham period sheaves in \cite{Sch13}.
	\end{remark}

	\noindent\textbf{Derived $\infty$-category and filtered $\infty$-category.}\label{par, infty}
	We first setup the convention of derived \ic and its filtered version in this section.
	
	Let $\scrA$ be a Grothendieck abelian category (\cite[Tag 079A]{Sta}).
	We can associate to $\scrA$ a natural \ic  $\scrD(\scrA)$, called the \emph{derived \ic of $\scrA$} (\cite{Lu17}, 1.3.5). 
	This is the $\infty$-categorical enhancement of the classical derived $\infty$-category, and the homotopy category $\rmh\Ch(\scrA)$ of $\scrD(\scrA)$ is the usual derived category $D(\scrA)$.
	Here we want to mention that the derived \ic $\scrD(\scrA)$ is a stable presentable $\infty$-category.
	In the special case when $\scrA$ is the category of modules over an ring $R$, we use $\scrD(R)$ to denote $\scrD(\scrA)$, which is equipped with a symmetric monoidal structure by the derived tensor product of complexes.
	As a convention in this section, we will call $\scrD(R)$ the derived category.
	
	For a presentable \ic $\scrC$, we recall the \emph{filtered \ic in $\scrC$} is defined as the \ic
	\[
	\DF(\scrC):=\Fun(\NN^\op, \scrC).
	\]
	Moreover, $\DF(\scrC)$ admits a full sub-\ic $\wh \DF(\scrC)$, called \emph{filtered complete \ic in $\scrC$}, consisting of objects $C_\bullet$ such that $\lim C_\bullet\simeq 0$.
	The natural inclusion functor $\wh \DF(\scrC) \ra \DF(\scrC)$ admits a left adjoint, called the \emph{filtered completion}.
	When $\scrC=\scrD(R)$ is the derived \ic of $R$-modules, we use $\DF(R)$ and $\wh \DF(R)$ to denote $\DF(\scrC)$ and $\wh \DF(R)$ respectively.
	Here we note that by to their homotopy categories (and induced functors), we recover the ordinary filtered derived category.

	\noindent\textbf{Hypersheaves}\label{par, sheaf}
	We then give a quick review about sheaves of complexes.
	
	Let $X$ be a site, and let $\scrC$ be a presentable $\infty$-category.
	The \ic of presheaves in $\scrC$, denoted as $\mathrm{PSh}(X,\scrC)$, 
	is defined to be the \ic $\Fun(X^\op,\scrC)$ of contravariant functors from $X$ to $\scrC$.
	The \ic $\mathrm{PSh}(X,\scrC)$ admits a full sub-\ic $\Sh(X,\scrC)$ of \emph{(infinity) sheaves in $\scrC$},
	consisting of functors $\calF:X^\op \ra \scrC$ that send coproducts to products and satisfy the descent along \v{C}ech nerves: 
	for any covering $U'\ra U$ in $X$, 
	the natural morphism to the limit below is required to be a weak equivalence
	\[
	\calF(U) \rra \lim_{[n]\in \Delta} \calF(U'_n), \tag{$\ast$}
	\]
	where $U'_\bullet \ra U$ is the \v{C}ech nerve associated with the covering $U'\ra U$.
	Here we note that this is the $\infty$-categorical analogue of the classical sheaf condition in ordinary categories.
	
	There is a stronger descent condition which requires $(\ast)$ above to hold
	with respect to all \emph{hypercovers} $U'_\bullet \ra U$ in the site $X$.
	Sheaves satisfying such stronger condition are called \emph{hypersheaves}.
	For example, given any bounded below complex $C$ of ordinary sheaves on a site $X$,
	the assignment $U \mapsto \mathrm{R}\Gamma(U, C)$
	gives rise to a hypersheaf.
	The collection of hypersheaves in $\scrC$ forms a full 
	sub-\ic $\Sh^{{\mathrm{hyp}}}(X,\scrC)$ inside $\Sh(X,\scrC)$.

	Let $\scrC=\scrD(R)$ be the derived $\infty$-category of $R$-modules.
	Then the $\infty$-category $\Sh^{\mathrm{hyp}}(X,\scrC)$ of hypersheaves over $X$ is in fact equivalent to the derived $\infty$-category $\scrD(X,R)$ of classical sheaves of $R$-modules over $X$, by \cite[Corollary 2.1.2.3]{Lu18}.
	As an upshot, the underlying homotopy category of $\Sh^{\mathrm{hyp}}(X,\scrC)$ is the classical derived category of sheaves of $R$-modules over $X$.
	In particular, given a hypersheaf $\calF$ of $R$-modules over $X$, we can always represent it by an actual complex of sheaves of $R$-modules.

	\subsection{Topological algebras over $\Ainfe$}
	As a preparation, we first setup basics around the topologically finite type algebras over $\Ainfe:=\Ainf/\xi^e$, and the construction of the analytic cotangent complex, generalizing the discussion for $e=1$ in \cite{GR} Section 7.
	
	In this subsection only, we make the convention that $M^\wedge$ is the classical $p$-adic completion of $M$, where $M$ is a $\Z_p$-module.
	
	\begin{definition}
		Let $R$ be an $\Ainfe$-algebra.
		\begin{enumerate}[(i)]
			\item We call $R$ is \emph{topologically finite type over $\Ainfe$} if there exists a surjection of $\Ainfe$-algebras $\Ainfe\langle T_1,\ldots, T_m\rangle\ra R$ for some $m\in \NN$.
			\item We call $R$ is \emph{topologically of finite presentation over $\Ainfe$} if $R$ admits a surjection from $\Ainfe\langle T_1,\ldots, T_m\rangle \ra R$ with kernel being a finitely generated ideal.
		\end{enumerate}
		We denote $\Afe$ the category of $p$-adically complete $p$-torsionfree algebras $R$ over $\Ainfe$ that are of topologically finite presentation, where the morphisms are defined as maps of $\Ainfe$-algebras.
	\end{definition}
	Similarly, we can extend these notions to the relative situation, replacing $\Ainfe$ by any $\Ainfe$-algebra.

	Here we list some basic properties about modules over a given $R\in \Afe$.
	\begin{lemma}[cf. \cite{GR}, 7.1.1]\label{mod over cafe}
		Let $R$ be an algebra in $\Afe$. 
		Then we have
		\begin{enumerate}[(i)]
			\item Every finitely generated $p$-torsionfree $R$-module is finitely presented.
			\item The ring $R$ is coherent.
			\item Let $N$ be a finitely generated $R$-module, $N'\subset N$ a submodule.
			Then there exists an integer $c\geq 0$, such that
			\[
			p^kN\cap N'\subset p^{k-c} N'
			\]
			for every $k\geq c$.
			In particular, the subspace topology on $N'$ induced from the $p$-adic topology on $N$ agrees with the $p$-adic topology of $N'$.
			\item Every finitely generated $R$-module $M$ is $p$-adically complete and separated;
			namely every such $M$ is isomorphic to its $p$-adic completion $M^\wedge$.
			\item Every submodule of a finite type free $R$-module $F$ is closed for the $p$-adic topology of $F$.
		\end{enumerate}		
	\end{lemma}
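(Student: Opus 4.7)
My plan is to adapt the arguments of \cite[7.1.1]{GR}, which establish the lemma for $e = 1$ (where $\Ainfe = \Ainf/\xi \cong \calO_K$). The GR proof relies on two structural features of $R$: (a) the generic fiber $R[1/p]$ is noetherian, and (b) $R$ is $p$-torsion free and $p$-adically complete, so $R$ embeds in $R[1/p]$. Both features persist for arbitrary $e$: $R[1/p]$ is topologically finitely presented over $\Bdre = \Ainfe[1/p]$, which is an artinian local ring (since $\Bdr$ is a complete discrete valuation ring with uniformizer $\xi$), so $R[1/p]$ is noetherian; and $p$-torsion freeness is part of the definition of $\Afe$. Accordingly the overall strategy of GR should transfer with only minor modifications.

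The crux is statement (iii), the Artin-Rees comparison of topologies on $N' \subseteq N$ finitely generated. By (a), $N'[1/p]$ is finitely generated in $N[1/p]$, and a careful tracking of $p$-power denominators converts this generic information into the required integral bound. Statements (i), (ii), (iv), (v) then follow from (iii) by standard arguments: (v) (closedness of finitely generated submodules of $R^n$) follows from (iii) combined with $p$-adic completeness of $R^n$, which forces the subspace and intrinsic $p$-adic topologies on a finitely generated submodule to agree and hence the submodule to be closed; (iv) follows from (v) by writing $M = R^n/K$ and using closedness of $K$ to identify $M$ with $\varprojlim_k M/p^k M$; (i) follows by applying (iii) to the kernel of a surjection $R^n \twoheadrightarrow M$ together with noetherianness of $R[1/p]$ to conclude the kernel is finitely generated; and (ii) is immediate from (i), since finitely generated ideals of the $p$-torsion free ring $R$ are themselves $p$-torsion free and hence finitely presented by (i).

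The main obstacle is adapting (iii) from the $e = 1$ setting, in which GR exploits the valuation ring structure of $\calO_K$ to compare topologies. For $e > 1$ the ring $\Ainfe$ is no longer a valuation ring because $\xi$ is nilpotent, so this direct valuation argument is unavailable. I would sidestep this by inducting on $e$ along the filtration $R \supseteq \xi R \supseteq \cdots \supseteq \xi^{e-1} R \supseteq 0$, whose successive quotients $\xi^i R/\xi^{i+1} R$ are naturally $R/\xi R$-modules and so fall under the $e = 1$ theory; the inductive step then patches together Artin-Rees constants across the graded pieces, which is combinatorially straightforward but requires care to secure a single uniform bound. The delicate point is that the intermediate rings $R/\xi^i R$ need not be $p$-torsion free, so the induction must be formulated at the level of $R$-modules—where only noetherianness of $R[1/p]$ and $p$-torsion freeness of $R$ are invoked—rather than at the level of the quotient rings themselves.
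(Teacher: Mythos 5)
Your proposal inverts the logical order of the paper's proof, and the inversion creates a genuine circularity. The paper establishes (i) \emph{first}, by induction on $e$: for $M$ finitely generated and $p$-torsion free, the image $\ol M$ of $M$ in $(M/\xi M)[\frac{1}{p}]$ is a finitely generated $p$-torsion-free $R/\xi$-module, hence finitely presented by the inductive hypothesis (the base case $e=1$ being the Bosch--L\"utkebohmert theorem), and the kernel of $M\to \ol M$ is then finitely generated, $p$-torsion free, and killed by $\xi^{e-1}$, so one concludes by induction again. Only afterwards is (iii) deduced: the saturation $M:=\ker\bigl(N\to (N/N')[\frac{1}{p}]\bigr)$ has finitely generated, $p$-torsion-free --- hence by (i) finitely presented --- image, so $M$ is finitely generated, and since $M/N'$ is $p$-power torsion one obtains a \emph{uniform} $c$ with $p^cM\subseteq N'$, from which the Artin--Rees inclusion is formal. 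Your plan to prove (iii) first by ``tracking $p$-power denominators'' from generators of $N'[\frac{1}{p}]$ does not produce this uniform $c$: writing $x\in p^kN\cap N'$ as an $R[\frac{1}{p}]$-combination of chosen generators gives no control over the integrality of the coefficients, and the only available mechanism for a uniform bound is precisely the finite generation of the saturation, i.e.\ statement (i). (Your idea of inducting on $e$ along the $\xi$-filtration is the right one, but it must be applied to (i), not to (iii).)

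The downstream derivations inherit the problem. To get (i) from (iii) you would take $K=\ker(R^n\twoheadrightarrow M)$ and a finitely generated $K_0\subseteq K$ with $K_0[\frac{1}{p}]=K[\frac{1}{p}]$ (using noetherianness of $R[\frac{1}{p}]$); Artin--Rees then gives $p^cK\subseteq K_0$, but this sandwich does not imply $K$ is finitely generated: $K=\ker\bigl(R^n\xrightarrow{p^c} R^n/K_0\bigr)$, and concluding finite generation of such a kernel requires coherence of $R$ (statement (ii)) or closedness/completeness of $K_0$ (statements (iv)--(v)), all of which sit downstream of (i) in any working proof. Likewise your deduction of (v) from (iii) plus completeness of $R^n$ needs the submodule itself to be $p$-adically \emph{complete}, not merely to carry the subspace topology; that completeness is exactly the content of (iv), which in the paper is proved after (iii) using the finite generation of kernels supplied by (i). The repair is to follow the paper's order: prove (i) by the $\xi$-filtration induction, then (ii) immediately, then (iii) via the finitely generated saturation, then (iv), then (v).
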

	\begin{proof}
		\begin{enumerate}[(i)]
			
			\item This is proved in the proof of \cite[13.4.\ (iii.b)]{BMS}; for completeness, we record it here.
			We do this by induction, and note that for $n=1$ the case is given in  \cite{BoLu} 1.2.
			
			Let $\ol M$ be the image of $M$ in $M/\xi[\frac{1}{p}]$, and let $N$ be the kernel of $M\ra \ol M$.
			The image $\ol M$ is finitely generated $p$-torsionfree $R/\xi$-module, which by induction is a finitely presented $R/\xi$-module.
			Note that this also implies the $R/\xi$-module $\ol M$ is a finitely presented $R$-module.
			So by \cite[Tag 0519]{Sta}, $N$ is finitely generated over $R$, and to show the finite presented-ness of $M$ it suffices to show the finite presentedness of $N$.
			But note that for $x\in N$, there exists some $k\in \NN$ such that $p^kx\in \xi M$.
			This implies that $p^k\xi^{n-1}x=0$ in $M$ as the element is contained in $\xi^nM=0$, and by the $p$-torsionfreeness of $M$ we have $\xi^{n-1} x=0$.
			So $N$ is a finitely generated $p$-torsionfree $R/\xi^{n-1}$-module, and by induction we get the result.
			
			\item By definition, a ring $R$ is coherent if every finitely generated ideal of $R$ is finitely presented.
			So by the $p$-torsionfreeness of $R$ and (i), we get the result.
			
			\item Let $M$ be the kernel of the map $N\ra N/N'[\frac{1}{p}]$.
			Namely we have the following short exact sequence
			\[
			0 \rra M \rra N \rra N/N'[\frac{1}{p}].
			\]
			Then since the image of $N$ in $N/N'[\frac{1}{p}]$ is finitely generated and $p$-torsionfree, by (i) we know the image is finitely presented, and thus $M$ is finitely generated (\cite[Tag 0519]{Sta}).
			Note that the quotient $M/N'$ is $p^\infty$-torsion, so by the finitely generatedness there exists some $c\in \NN$ such that $p^cM\subset N'$.
			Besides, for $x\in N$ such that $p^kx\in M$, the image of $x$ in $N/N'[\frac{1}{p}]$ is also zero.
			So the definition of $M$ implies that $x\in M$ and $p^kx\in p^kM$.
			In this way, for $k\geq c$, we have
			\begin{align*}
				p^k N\cap N'&\subset p^k N\cap M\\
				&\subset p^kM\\
				&\subset p^{k-c}N'.
			\end{align*}
			
			\item
			We can fit $M$ into the following short exact sequence of $R$-modules,
			\[
			0\rra N\rra R^{\oplus n}\rra M\rra 0,
			\] 
			We apply the $p$-adic completion to the sequence.
			Then note that since the subspace topology on $N$ is the isomorphic to the $p$-adic topology by (iii), while the quotient topology on $M$ is the same as the $p$-adic topology, by \cite{Mat86} Theorem 8.1, we get an exact sequence of $p$-adically complete $R$-modules with continuous maps
			\[
			0\rra N^\wedge\rra R^{\oplus n}\rra M^\wedge\rra 0.
			\]
			Compare with the above two exact sequences, we see the natural map $N\ra N^\wedge$ is injective while $M\ra M^\wedge$ is surjective.
			
			We then assume the $R$-module $M$ is finitely presented. 
			By the \cite[Tag 0519]{Sta}, we know $N$ is finitely generated.
			In this way, since the surjection of $M\ra M^\wedge$ is true for any finitely generated $R$-module, we see $N\ra N^\wedge$ is an isomorphism.
			In particular, we get $M\simeq M^\wedge$.
			This finishes the (iv) for $M$ being finitely presented over $R$.
			
			In general, let $M$ be any finitely generated module over $R$.
			Take $\ol M$ to be the image of $M$ in $M[\frac{1}{p}]$.
			Then since $\ol M$ is finitely generated and $p$-torsionfree, we know $\ol M$ is finitely presented and hence the kernel $N=\ker(M\ra \ol M)$ is finitely generated by loc. cit.
			Notice that by definition $N$ is $p^\infty$-torsion.
			So there exists some $m\in \NN$ such that $p^mN=0$.
			Now by the $p$-torsionfreeness of $\ol M$, the base change of the exact sequence  $0\ra N\ra M\ra \ol M\ra 0$ along $R\ra R/p^s$ is exact.		
			Moreover, since the inverse system $\{N\otimes_R R/p^s\}_s$ is essentially constant, the inverse limit of the short exact sequence of inverse system is exact, and we get
			\[
			0\rra N^\wedge=N\rra M^\wedge \rra \ol M^\wedge\rra 0,
			\]
			which by the isomorphism $\ol M\simeq \ol M^\wedge$ we get the result
			\[
			M\simeq M^\wedge.
			\]
			So we are done.
			
			\item Let $N$ be a submodule of a finite free $R$-module $F$, and let $M:=F/N$ be the quotient.
			By (iv), since $M$ is finitely generated, we have the canonical isomorphism $M\simeq M^\wedge$.
			As in the proof of (iv), the $p$-adic completion induces the following short exact sequence 
			\[
			0\rra N^\wedge\rra F\rra M^\wedge\simeq M\rra 0.
			\]%of the short exact sequence of $N\ra F\ra M$ is exact.
			Hence we get the isomorphism $N\simeq N^\wedge$.
			In particular, since $N$ is complete and its $p$-adic topology is isomorphic to its subspace topology, we get the closedness of $N$ in $F$ by standard topological argument.

			%$N''$ be the quotient $N/N'$, and let $M$ be the kernel
		\end{enumerate}
	\end{proof}
	\begin{corollary}\label{mod over cafe 2}
		Let $R$ be a topologically finite type algebra over $\Ainfe$.
		\begin{enumerate}[(i)]
			\item The ring $R$ is $p$-adically complete and separated.
			\item The ring $R$ is topologically finitely presented over $\Ainfe$ if it is $p$-torsionfree.
			\item Assume $R$ is in $\Afe$, and $I$ is an ideal of $R$.
			Then $I$ is finitely presented over $R$ if $R/I$ is $p$-torsionfree.
		\end{enumerate}
	\end{corollary}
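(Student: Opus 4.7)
The plan is to deduce all three parts uniformly from \Cref{mod over cafe} applied to the Tate algebra $A := \Ainfe\langle T_1,\ldots,T_m\rangle$ coming from the defining surjection $A \twoheadrightarrow R$. My first step will be the basic observation that $A$ itself lies in $\Afe$: it is $p$-adically complete by construction, $p$-torsion free because $(p,\xi)$ is a regular sequence in $\Ainf$ so that $\Ainfe$ (and hence the Tate algebra over it) is $p$-torsion free, and it is trivially topologically finitely presented over $\Ainfe$ with zero defining ideal. Thus every statement in \Cref{mod over cafe} is available for $A$, even though we cannot yet invoke them for $R$.

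For (i), write $R = A/I$ for some ideal $I \subset A$. Applying \Cref{mod over cafe}(v) to the rank-one free $A$-module $A$ itself shows that $I$ is closed in $A$ for the $p$-adic topology. Since $A$ is $p$-adically complete and separated, the quotient by a closed ideal inherits both properties, so $R$ is $p$-adically complete and separated.

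For (ii), assuming further that $R$ is $p$-torsion free, I will view $R$ as a cyclic $A$-module (generated by $1$) that is $p$-torsion free, so by \Cref{mod over cafe}(i) it is finitely presented as an $A$-module. From the short exact sequence $0 \to I \to A \to R \to 0$, together with the finite generation of $A$ over itself, the finite presentation of $R$ forces $I$ to be finitely generated; this is precisely the topological finite presentation of $R$. Part (iii) is a parallel argument carried out directly over $R$, which is now assumed to lie in $\Afe$: the cyclic $R$-module $R/I$ is $p$-torsion free, hence finitely presented over $R$ by \Cref{mod over cafe}(i), so $I$ is finitely generated; the coherence of $R$ from \Cref{mod over cafe}(ii) then upgrades the finitely generated ideal $I$ to a finitely presented one.

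The only genuinely delicate point is (i): one has to invoke the closedness statement of \Cref{mod over cafe}(v) applied to $A$ rather than to $R$, since $R$ is not yet known to be $p$-torsion free or coherent and so does not belong to $\Afe$ a priori. Once this reduction to $A \in \Afe$ is in place, all three claims fall out immediately from the previous lemma.
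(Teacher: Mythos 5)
Your proof is correct and follows essentially the same route as the paper's: parts (ii) and (iii) are the paper's argument verbatim, namely applying \Cref{mod over cafe}(i) to the cyclic $p$-torsion-free modules $R$ over $\Ainfe\langle T_i\rangle$ and $R/I$ over $R$ (for the final upgrade of $I$ from finitely generated to finitely presented, the paper reapplies (i) to the $p$-torsion-free submodule $I\subset R$ where you instead invoke coherence from \Cref{mod over cafe}(ii) — both work). The only divergence is in (i), where the paper applies \Cref{mod over cafe}(iv) directly to $R$ viewed as a finitely generated module over the Tate algebra, while you route through \Cref{mod over cafe}(v) and the completeness of a quotient by a closed ideal; this is an equivalent detour, since (iv) is itself deduced from the same closedness considerations.
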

	\begin{proof}
		\begin{enumerate}[(i)]
			\item Note that $R$ is the quotient of $\Ainfe\langle T_1,\ldots, T_m\rangle$ for some $m$, with the latter being in $\Afe$.
			In particular, $R$ is a finitely generated $\Ainfe\langle T_1,\ldots, T_m\rangle$-module.
			So the result follows from Lemma \ref{mod over cafe}.(iv).
			\item By (i), we know $R$ is $p$-adically complete and $p$-torsionfree.
			So it suffice to check that for a surjection $\Ainfe\langle T_1,\ldots, T_m\rangle \ra R$, the  kernel is finitely generated.
			This then follows from Lemma \ref{mod over cafe}.(i), since $R$ is a finitely generated $\Ainfe\langle T_i\rangle$ module that is $p$-torsionfree.
			\item By Lemma \ref{mod over cafe}.(i) applied at the $p$-torsionfree $R$-module $R/I$, we know $I$ is a finitely generated ideal in $R$.
			So thanks to \ref{mod over cafe}.(ii), we know $I$ is in fact finitely presented.
		\end{enumerate}
	\end{proof}

	\begin{lemma}\label{p-comp}
		Let $R$ be in $\Afe$, and $F$ be a flat $R$-module
		\begin{enumerate}[(i)]
			\item The functor $M\mapsto (M\otimes_R F)^\wedge$ is exact on the category of finitely presented $R$-modules.
			\item Given a  finitely presented $R$-module $M$, the following canonical map is an isomorphism
			\[
			M\otimes_R F^\wedge\rra (M\otimes_R F)^\wedge.
			\]
			\item The $R$-module $F^\wedge$ is flat over $R$, and is $p$-torsionfree.

		\end{enumerate}
	\end{lemma}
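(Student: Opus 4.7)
The plan is to prove the three statements in order, with (i) containing the essential technical work and (ii), (iii) following by formal manipulations. For (i), given a short exact sequence $0 \to M' \to M \to M'' \to 0$ of finitely presented $R$-modules, flatness of $F$ yields a short exact sequence $0 \to M' \otimes_R F \to M \otimes_R F \to M'' \otimes_R F \to 0$. The nontrivial task is to show that applying the $p$-adic completion preserves exactness.

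To do this, I would first transfer the Artin--Rees inclusion from Lemma \ref{mod over cafe} (iii), namely $p^k M \cap M' \subset p^{k-c} M'$ for $k \geq c$, to an analogous statement after tensoring with $F$. The key observation is that flatness of $F$ makes tensoring commute with finite intersections of submodules of a common ambient module: writing $A \cap B$ as the kernel of the subtraction map $A \oplus B \to M$, flatness of $F$ together with the injectivity of $A \otimes F, B \otimes F \hookrightarrow M \otimes F$ yields $(A \cap B) \otimes F = (A \otimes F) \cap (B \otimes F)$ inside $M \otimes F$. Applying this with $A = M'$ and $B = p^k M$, and using that $p^k M \otimes F = p^k(M \otimes F)$, gives $p^k(M \otimes F) \cap (M' \otimes F) \subset p^{k-c}(M' \otimes F)$. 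Hence the subspace filtration on $M' \otimes F$ induced from $M \otimes F$ is cofinal with its $p$-adic filtration, and a standard Mittag--Leffler argument applied to the short exact sequences mod $p^n$ produces the desired exact sequence after completion, proving (i).

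For (ii), I would choose a finite presentation $R^a \to R^b \to M \to 0$ and set $N = \ker(R^b \to M)$; by coherence of $R$ (Lemma \ref{mod over cafe} (ii)), $N$ is finitely presented, so (i) applies to the short exact sequence $0 \to N \to R^b \to M \to 0$ to give the exact sequence $0 \to (N \otimes F)^\wedge \to (F^\wedge)^b \to (M \otimes F)^\wedge \to 0$, where I use that finite direct sums commute with $p$-adic completion. Separately, tensoring the presentation with $F^\wedge$ gives the right exact sequence $(F^\wedge)^a \to (F^\wedge)^b \to M \otimes_R F^\wedge \to 0$. Comparing the two right exact presentations—both describing cokernels on $(F^\wedge)^b$ of the same map coming from $R^a \to R^b$—gives the isomorphism $M \otimes_R F^\wedge \cong (M \otimes_R F)^\wedge$.

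For (iii), flatness of $F^\wedge$ over the coherent ring $R$ can be checked on injections of finitely presented modules (equivalently, on $R/I$ for $I$ finitely generated, which is finitely presented by coherence). By (ii), such an injection $M' \otimes F^\wedge \hookrightarrow M \otimes F^\wedge$ is identified with $(M' \otimes F)^\wedge \hookrightarrow (M \otimes F)^\wedge$, which is the injectivity part of (i). The $p$-torsion freeness of $F^\wedge$ then follows at once: since $R$ is $p$-torsion free, multiplication by $p$ on $R$ is injective, and flatness of $F^\wedge$ pulls this back to injectivity of $p$ on $F^\wedge$. The main obstacle in the argument is the Artin--Rees transfer step in (i); everything else is formal once the topologies on $M' \otimes F$ have been shown to be compatible.
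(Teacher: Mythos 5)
Your proposal is correct and follows essentially the same route as the paper: part (i) rests on transferring the Artin--Rees inclusion of Lemma \ref{mod over cafe} (iii) through the flat tensor $-\otimes_R F$ (using that flatness makes tensoring commute with the intersection $p^kM\cap M'$) to identify the subspace and $p$-adic topologies on $M'\otimes F$, and (ii), (iii) then follow by comparing presentations and by the coherence of $R$, exactly as in the paper. Your version is in fact slightly more careful in two places the paper leaves implicit — introducing the kernel $N$ of the presentation so that (i) genuinely applies, and deducing the $p$-torsion-freeness of $F^\wedge$ from flatness — but these are elaborations, not a different argument.
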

	\begin{proof}
		\begin{enumerate}[(i)]
			\item Let $0\ra M'\ra M\ra M''\ra 0$ be a short exact sequence of finitely presented $R$-modules.
			By assumption, the tensor product with $F$ over $R$ is exact, so it suffices to show that the $p$-adic completion is flat on $0\ra M'\otimes F\ra M\otimes F\ra M''\otimes F\ra 0$.
			By Lemma \ref{mod over cafe}.(iii), there exists an integer $c\geq 0$ such that $p^kM\cap M'\subset p^{k-c}M'$. 
			Applying this inclusion with the tensor product functor $-\otimes_R F$, and notice that the flatness of $F$ implies $(p^kM\cap M')\otimes F=(p^kM\otimes F)\cap (M'\otimes F)$, we see the $p$-adic topology on $M'\otimes F$ is isomorphic to the subspace topology induced from $M\otimes F$.
			In particular, by \cite{Mat86} Theorem 8.1, we get the exactness
			\[
			0\rra (M'\otimes F)^\wedge\rra (M\otimes F)^\wedge \rra (M''\otimes F)^\wedge\rra 0.
			\]
			
			\item Assume $M$ has the following presentation 
			\[
			R^{\oplus n}\rra R^{\oplus m}\rra M\rra 0.
			\]
			The tensor product of this with $F$ gives 
			\[
			F^{\oplus n}\rra F^{\oplus m}\rra M\otimes F\rra 0.
			\]
			We then take the $p$-adic completion.
			By (i), we get an exact sequence,
			\[
			(F^\wedge)^{\oplus n}\rra 	(F^\wedge)^{\oplus m}\rra (M\otimes F)^\wedge\rra 0. \tag{1}
			\]
			On the other hand, we replace $F$ by $F^\wedge$ in the second exact sequence above, and get
			\[
			(F^\wedge)^{\oplus n}\rra 	(F^\wedge)^{\oplus m}\rra M\otimes F^\wedge\rra 0. \tag{2}
			\]
			The canonical map from (2) to (1) are identities on $F^{\wedge, \oplus n}$ and $F^{\wedge,\oplus m}$.
			Thus we get the isomorphism.
			
			\item It suffices to show that for any injective map of finitely presented modules $M'\ra M$, the tensor product with $F^\wedge$ is still injective.
			This then follows from (ii) and (i).

			%the $p$-adic topology on 
		\end{enumerate}
	\end{proof}

	\begin{corollary}\label{rel ft}
		Let $f:A\ra B$ be a map of algebras in $\Afe$.
		Then the kernel of any surjective $A$-homomorphism $\rho:A\langle T_i\rangle \ra  B$  is finitely generated over $A$.
		In particular, $B$ is a topologically finitely presented $A$-algebra.
	\end{corollary}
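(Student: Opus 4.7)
The proof will be a direct deduction from Corollary \ref{mod over cafe 2}(iii) applied to the ring $R = A\langle T_1,\ldots, T_n\rangle$. The plan proceeds in two steps.

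First, I would verify that $A\langle T_1,\ldots, T_n\rangle$ itself lies in $\Afe$. It is $p$-adically complete and separated by Corollary \ref{mod over cafe 2}(i), since it is topologically of finite type over $\Ainfe$ (choose a surjection $\Ainfe\langle S_1,\ldots, S_m\rangle \to A$ with finitely generated kernel $J$, and note that $A\langle T_i\rangle \cong \Ainfe\langle S_j, T_i\rangle / J \cdot \Ainfe\langle S_j, T_i\rangle$, which is topologically finitely presented over $\Ainfe$). It is $p$-torsion free because if $p \cdot f = 0$ for a convergent power series $f = \sum a_\alpha T^\alpha$ with coefficients in $A$, then $p a_\alpha = 0$ for all $\alpha$, forcing $a_\alpha = 0$ by $p$-torsion freeness of $A$. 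Hence $A\langle T_i\rangle$ satisfies all the defining conditions of $\Afe$.

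Second, I would apply Corollary \ref{mod over cafe 2}(iii) to $R = A\langle T_1,\ldots, T_n\rangle \in \Afe$ and the ideal $I = \ker(\rho)$. The required hypothesis is that $R/I = B$ is $p$-torsion free, which holds by assumption since $B \in \Afe$. The conclusion then gives that $\ker(\rho)$ is finitely presented as an $R$-module; in particular it is finitely generated as an ideal of $A\langle T_i\rangle$. Combined with the surjection $\rho$, this exhibits $B$ as a topologically finitely presented $A$-algebra.

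There is essentially no obstacle here beyond bookkeeping: the heart of the argument is already encoded in Lemma \ref{mod over cafe}(i), from which Corollary \ref{mod over cafe 2}(iii) was extracted. The only mild point of care is confirming that the Tate algebra construction preserves membership in $\Afe$, which is routine once one unpacks the definition.
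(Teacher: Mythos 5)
Your proof is correct and follows essentially the same route as the paper's: reduce to Corollary \ref{mod over cafe 2}(iii) applied to $R=A\langle T_i\rangle$ after checking that this Tate algebra lies in $\Afe$. The only (harmless) divergence is that you verify $p$-torsion freeness of $A\langle T_i\rangle$ by a direct coefficient-wise argument, whereas the paper deduces it from Lemma \ref{p-comp}(iii) via the flatness of $A[T_i]$ over $A$ and its $p$-adic completion.
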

	\begin{proof}
		By assumption, we can write $A$ as $\Ainfe\langle U_j\rangle /I$ for some finitely presented ideal $I$.
		This allows us to write the surjection $\rho$ as $\Ainfe\langle U_j, T_i\rangle/I \ra B$, and it suffices to show that the ideal $J:=\ker(\Ainfe\langle U_j, T_i\rangle/I \ra B)$ is finitely presented.
		Notice that the quotient ring $B$ by assumption is $p$-torsionfree.
		So by Corollary \ref{mod over cafe 2}.(iii), the finite presentedness of the ideal $J$ follows if we can show that the ring $\Ainfe\langle U_j, T_i\rangle/I$ is in $\Afe$.
		The latter by Corollary \ref{mod over cafe 2}.(ii) is equivalent to showing that the ring $\Ainfe\langle U_j, T_i\rangle/I$ is topologically finite type over $\Ainfe$ and is $p$-torsionfree.
		
		Finally, to check the latter conditions, we notice that that the ring $\Ainfe\langle U_j, T_i\rangle/I$ is by construction topologically finite type over $\Ainfe$.
		On the other hand, since $\Ainfe\langle U_j\rangle /I=A\langle T_i\rangle$ is the $p$-adic completion of the flat $A$-module $A[T_i]$, by Lemma \ref{p-comp}.(iii) we know it is $p$-torsionfree.
	\end{proof}
	
	\subsection{Analytic cotangent complex: affinoid case}
	We then introduce the analytic cotangent complex, for algebras in $\Afe$ and affinoid rigid spaces over $\Bdre$  in this subsection.

	\noindent\textbf{Derived $p$-adic completion}\label{derived com}
	We recall the basics of derived $p$-adic completion.
	
	Let $R$ be a $\Z_p$-algebra.
	For a complex $C=C^\bullet$ of $R$-modules, recall that the \emph{derived $p$-adic completion} of $C$ is defined as 
	\[
	\Rlim_{m\in \NN}( C\otimes^L_R\mathrm{cofib}(\xymatrix@C-0.3cm{R \ar[r]^{p^m}&R})), 
	\]
	as an object in the derived category $\scrD(R)$ of $R$-modules.
	Here the object $\mathrm{cofib}(\xymatrix@C-0.3cm{R\ar[r]^{p^m} &R})$ is the cone of the map $p^m:R\ra R$.
	An object $C\in \scrD(R)$ is called \emph{derived $p$-complete} if $C$ is isomorphic to its derived $p$-adic completion.
	The subcategory $\scrD_{p}(R)$ of derived $p$-complete objects is a full subcategory (\cite[Tag 091U]{Sta}) of $\scrD(R)$, and the derived $p$-adic completion forms a left adjoint functor to the inclusion functor $\scrD_{p}(R)\ra \scrD(R)$ (\cite[Tag 091V]{Sta}).
	
	There exists a natural isomorphism of complexes of $R$-modules
	\[
	R\otimes^L_{\Z_p} \mathrm{cofib}(\xymatrix@C-0.3cm{\Z_p \ar[r]^{p^m}&\Z_p})\simeq \mathrm{cofib}(\xymatrix@C-0.3cm{R \ar[r]^{p^m}&R}).
	\]
	From this, the derived functor $C\mapsto C\otimes_R^L \mathrm{cofib}(\xymatrix@C-0.3cm{R \ar[r]^{p^m}&R})$ in $\scrD(R)$ can be rewritten as 
	\begin{align*}
		C\lmt& C\otimes_R^L R\otimes_{\Z_p}^L \mathrm{cofib}(\xymatrix@C-0.3cm{\Z_p \ar[r]^{p^m}&\Z_p})\\
		\simeq& C\otimes^L_{\Z_p} \mathrm{cofib}(\xymatrix@C-0.3cm{\Z_p \ar[r]^{p^m}&\Z_p}).
	\end{align*}
	Here we note that since $\Z_p$ is $p$-torsionfree, the complex $\mathrm{cofib}(\xymatrix@C-0.3cm{\Z_p \ar[r]^{p^m}&\Z_p})$ is isomorphic to the $\Z_p$-module $\Z_p/p^m[0]$ living at the degree $0$.
	In the case when $C$ is a $p$-torsionfree $R$-module, by the flatness of $C$ over $\Z_p$, its derived $p$-adic completion is exactly its classical $p$-adic completion  $\varprojlim_m C/p^m$. 
	This in fact holds true in full generality for complexes as follows.
	\begin{lemma}\label{derived com lem}
		Let $C$ be a cochain complex of $p$-torsionfree $\ZZ_p$-modules.
		Then the derived $p$-completion of $C$ can be represented by a cochain complex $\wt C \in \Ch(\mathbb{Z}_p)$, which is obtained by the term-wise classical $p$-completion of $C$.
	\end{lemma}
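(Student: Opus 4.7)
The plan is to unravel the definition of the derived $p$-completion and show that, under the flatness hypothesis, both the derived tensor product and the derived inverse limit reduce to their classical counterparts, applied term by term.

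First, I would reinterpret the defining formula. Recall from Paragraph \ref{derived com} that the derived $p$-completion of $C\in\scrD(\ZZ_p)$ is
\[
\Rlim_{m\in\NN}\bigl(C\otimes^L_{\ZZ_p}(\xymatrix@C-0.3cm{\ZZ_p\ar[r]^{p^m}&\ZZ_p})\bigr),
\]
and the two-term complex $(\xymatrix@C-0.3cm{\ZZ_p\ar[r]^{p^m}&\ZZ_p})$ is quasi-isomorphic to $\ZZ_p/p^m$ concentrated in degree zero, since $\ZZ_p$ is $p$-torsion free. Hence the derived $p$-completion can be computed as $\Rlim_m (C\otimes^L_{\ZZ_p}\ZZ_p/p^m)$.

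Next, I would evaluate the derived tensor product term by term. Because each $C^i$ is $p$-torsion free, it is flat over $\ZZ_p$, so
\[
C^i\otimes^L_{\ZZ_p}\ZZ_p/p^m \;\simeq\; C^i/p^m
\]
is concentrated in degree zero with no higher Tor contribution. Consequently the derived tensor product $C\otimes^L_{\ZZ_p}\ZZ_p/p^m$ is represented by the honest complex $C/p^m$ obtained by termwise reduction mod $p^m$, with the differentials induced from those of $C$.

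Finally, I would pass to the derived inverse limit. For each fixed cohomological degree $i$, the transition maps $C^i/p^{m+1}\to C^i/p^m$ are surjective, so the inverse system $\{C^i/p^m\}_m$ satisfies the Mittag-Leffler condition and its $\varprojlim^1$ vanishes. Therefore $\Rlim_m C/p^m$ is represented in each degree by the ordinary inverse limit, which is precisely the term-wise classical $p$-completion $\wt C$. Combining the three steps identifies the derived $p$-completion of $C$ with $\wt C$.

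The only subtle point is the very last step: one must justify that a termwise Mittag-Leffler condition for a pro-system of cochain complexes implies that the derived limit is computed by the termwise ordinary limit. This is standard (for instance, via the homotopy limit spectral sequence, whose $E_2$-page collapses when $\varprojlim^1$ vanishes degreewise), but it is the one place where care is required; everything else is a direct manipulation of the definitions and of the flatness of $p$-torsion free $\ZZ_p$-modules.
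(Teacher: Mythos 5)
Your proof is correct, but it follows a genuinely different route from the paper's. You compute the derived $p$-completion head-on: since $(\ZZ_p\xrightarrow{p^m}\ZZ_p)$ is a bounded complex of free modules it is K-flat, so $C\otimes^L_{\ZZ_p}\ZZ/p^m$ is the cone of $p^m$ on $C$, which by term-wise injectivity of $p^m$ is quasi-isomorphic to the term-wise quotient $C/p^m$ — note this needs no boundedness hypothesis — and then you identify $\Rlim_m C/p^m$ with the term-wise limit using surjectivity of the transition maps. The paper instead first observes that $\wt C$ is derived $p$-complete (checking on cohomology), proves the bounded-to-the-right case by comparing the two derived complete objects after reduction mod $p^m$ via \cite[Tag 09AU]{Sta}, and then reduces the general case to the bounded one by naive truncation and an interchange of derived limits. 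Your argument is shorter and avoids the truncation step entirely; the paper's buys a reusable pattern (recognize derived completeness, then test mod $p$) at the cost of the extra limit bookkeeping. One small correction to your closing remark: the "homotopy limit spectral sequence" is not quite the right tool, since its $\varprojlim^1$-terms involve the cohomology groups $\rmH^i(C/p^m)$, which need not form a Mittag-Leffler system even when the terms $C^i/p^m$ do. The fact you actually need — and whose hypothesis you correctly verify — is that for a tower of complexes with term-wise surjective transition maps, $\Rlim$ is represented by the term-wise inverse limit; this follows from the fibre sequence $\Rlim K_m\to \prod_m K_m\to \prod_m K_m$, whose second map is then term-wise surjective with kernel the term-wise limit. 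With that justification substituted, the proof is complete.
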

	\begin{proof}
		We first notice that $\wt C$ is derived $p$-complete, as the derived $p$-completeness can be checked by cohomology (\cite[Tag 091N]{Sta}) and each $\rmH^i(\wt C)$ is derived $p$-complete.
		
		When $C$ is bounded to the right, the derived tensor product $C\otimes_{\ZZ_p}^L \Z/p^m$ is represented by the cochain complex $C/p^m  \in \Ch(\mathbb{Z}_p)$, obtained via term-wise quotient by $p^m$. 
		In this case, the claim follows from via \cite[Tag 09AU]{Sta} and can be checked by taking the mod $p^n$ reductions, as $\wt C\otimes_{\ZZ_p}^L \Z/p^m=C/p^m$.
		
		In general, consider the naive truncation 
		\[
		\sigma^{>n} C \rra C \rra \sigma^{\leq n} C.
		\]
		By $\mathbb{Z}_p/p^m$ is quasi-isomorphic to a perfect complex over $\mathbb{Z}_p$, the derived tensor product functor $-\otimes_{\ZZ_p}^L \Z/p^m$ commutes with derived limit functor.
		In particular, we have
		\[
		C\otimes_{\ZZ_p}^L \Z/p^m \simeq R\varprojlim_n \left( (\sigma^{\leq n} C)\otimes_{\ZZ_p}^L \Z/p^m \right).
		\]
		Hence we have
		\begin{align*}
			R\varprojlim_m C\otimes_{\ZZ_p}^L \Z/p^m & \simeq R\varprojlim_m R\varprojlim_n \left( (\sigma^{\leq n} C)\otimes_{\ZZ_p}^L \Z/p^m\right) \\
			& \simeq R\varprojlim_m R\varprojlim_n \left( (\sigma^{\leq n}  C)/p^m \right)\\
			& \simeq R\varprojlim_n R\varprojlim_m \left( (\sigma^{\leq n}  C)/p^m \right)\\
			& \simeq R\varprojlim_n \sigma^{\leq n} \wt C\\
			& = \wt C.
		\end{align*}
	\end{proof}
	
	\noindent\textbf{Analytic cotangent complex for affine formal schemes.}\label{aff cot construction}
	Now we introduce the definition and the basic properties of analytic cotangent complexes, for a map of algebras over $\Ainfe$. 
	The analogous discussion for topologically finite type algebras over $K$ can be found in \cite[Section 7.1]{GR}.
	
	\begin{construction}
		Let $f:A\ra B$ ba a map of $\Ainfe$-algebras in $\Afe$.
		Namely both $A$ and $B$ are $p$-adically complete $p$-torsionfree algebras over $\Ainfe$ that are quotients of $\Ainfe\langle T_1,\ldots,T_m\rangle$ for some $m\in \NN$.
		As an $A$-algebra, the ring $B$ admits a standard simplicial resolution 
		\[
		P_\bullet\ra B,
		\]
		where each $P_i$ is a polynomial over $A$ (\cite[Tag 08PM]{Sta}).
		This allows us to give a simplicial $P_\bullet$-modules $\Omega_{P_\bullet/A}^1$, where each $\Omega_{P_i/A}^1$ is the algebraic  differential of $P_i$ over $A$.
		Recall that the \emph{algebraic cotangent complex $\LL_{B/A}$} is the image of the cochain complex $\Omega_{P_\bullet/A}^1\otimes_{P_\bullet}B$ in the derived category over $A$. 
		The \emph{analytic cotangent complexes $\LL_{B/A}^\an$ for the $\Ainfe$-algebras $B\ra A$} is then defined as the image of the derived $p$-adic completion of the $\Omega_{P_\bullet/A}^1\otimes_{P_\bullet} B$, in the derived category of $A$-modules.
	\end{construction}
	\begin{remark}
		As the polynomial resolution is functorial with respect to the pair $(A,B)$, by Lemma \ref{derived com lem} the analytic cotangent complex $\LL_{B/A}^\an$ can be represented functorially by the cochain complex in  $\Ch(B)$, produced by the term-wise $p$-adic completion of $\Omega_{P_\bullet/A}^1\otimes_{P_\bullet} B$.
	\end{remark}
	There exists a canonical map from the algebraic cotangent complex $\LL_{B/A}$ to the analytic cotangent complex $\LL_{B/A}^\an$.
	This is  given by the counit map of the adjoint pair for the derived $p$-completion and the inclusion functor $\scrD_{p}(A)\ra \scrD(A)$.
	
	Here are some useful results for the analytic cotangent complex of $\Ainfe$-algebras.
	\begin{proposition}\label{cot sm}
		Let $f:A\ra B$ be a map of $\Ainfe$-algebras in $\Afe$.
		Assume $f$ is formally smooth.
		Then we have a canonical isomorphism
		\[
		\LL_{B/A}^\an\rra \Omega_{B/A}^{1,\an}[0],
		\]
		where the right side is the module of ($p$-adic) continuous differential forms.
	\end{proposition}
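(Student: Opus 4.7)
My plan has two main stages: reduce the formally smooth case to the building-block case of a completed polynomial algebra $A\langle T_1,\ldots,T_n\rangle$ over $A$, then handle that building block by a direct homotopical computation.

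First I would set up the canonical comparison map. The augmentation of the standard polynomial resolution $P_\bullet\to B$ produces $\Omega^1_{P_\bullet/A}\otimes_{P_\bullet} B\to \Omega^1_{B/A}$, and composition with $\Omega^1_{B/A}\to \Omega^{1,\an}_{B/A}$ followed by derived $p$-completion yields the desired map $\LL^\an_{B/A}\to \Omega^{1,\an}_{B/A}[0]$. The target is already derived $p$-complete: $\Omega^{1,\an}_{B/A}$ is a finitely generated $B$-module (in fact finitely presented, using Corollary~\ref{rel ft}), hence $p$-adically complete by Lemma~\ref{mod over cafe}(iv). Next I would establish a transitivity cofiber sequence for analytic cotangent complexes: given $A\to B\to C$ in $\Afe$, derived $p$-completion of the classical transitivity triangle gives
\[
\LL^\an_{B/A}\otimes^L_B C\rra \LL^\an_{C/A}\rra \LL^\an_{C/B}.
\]

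For the reduction step, I would use formal smoothness plus topological finite presentation to factor $f$ locally on $\Spf B$ as $A\to A\langle T_1,\ldots,T_n\rangle\to B$ with the second map formally \'etale (this is the analog in $\Afe$ of the Jacobi criterion, obtainable by choosing $T_i\in B$ whose differentials generate $\Omega^{1,\an}_{B/A}$ and invoking formal smoothness to lift). One then shows that for a formally \'etale map $C\to D$ in $\Afe$, the analytic cotangent complex vanishes: uniqueness of lifts along $p$-adic square-zero extensions forces $\Omega^{1,\an}_{D/C}=0$, and the corresponding obstruction vanishing handles $H^{-1}$; the higher cohomological degrees are controlled by applying the same criterion to the simplicial levels of a resolution, or equivalently by observing that $\LL^\an_{D/C}$ represents a cohomologically trivial functor. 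Combined with the transitivity triangle, this reduces Proposition~\ref{cot sm} to the case $B=A\langle T_1,\ldots,T_n\rangle$.

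For the completed polynomial case, the conceptual statement is that $A\langle T_1,\ldots,T_n\rangle$ is the free $p$-complete $A$-algebra on $n$ generators, hence cofibrant in the $\infty$-category of derived $p$-complete simplicial $A$-algebras; so its analytic cotangent complex is computed by the constant resolution and gives $\Omega^{1,\an}_{B/A}[0]$ directly. To make this compatible with the definition in the paper (which uses the standard polynomial resolution $P_\bullet$), I would prove the following invariance: if $Q_\bullet\to B$ is any simplicial resolution of $B$ by $p$-completely free objects $Q_n=A\langle T_s, s\in S_n\rangle$, then $\Omega^{1,\an}_{Q_\bullet/A}\otimes_{Q_\bullet} B$ also computes $\LL^\an_{B/A}$. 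Applied to the trivial constant resolution of $A\langle T_1,\ldots,T_n\rangle$ by itself this yields the claim, and applied to the comparison between the standard $P_\bullet$ and any $Q_\bullet$ it yields the required agreement.

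The technical obstacle is this invariance statement. The difficulty is that derived $p$-completion does not commute with the infinite direct sums appearing in the standard polynomial resolution, so a direct termwise argument is not available; one has to compare $P_\bullet$ and $Q_\bullet$ by a zig-zag of weak equivalences through a common refinement and then control the derived $p$-completions via the flatness and finiteness tools of Lemma~\ref{mod over cafe} and Lemma~\ref{p-comp}. This runs parallel to the Gabber--Ramero analysis in \cite[Section~7]{GR} for $K$-affinoid algebras, and once this homotopy invariance is in place the remaining arguments (transitivity, \'etale vanishing, constant resolution) are formal.
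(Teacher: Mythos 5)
Your proposal takes a genuinely different route from the paper, and it contains a gap at its key step. The paper's own proof is much shorter: by Elkik's algebraization theorem, a formally smooth $B$ in $\Afe$ is the $p$-adic completion of a smooth $A$-algebra, hence $A\ra B$ is flat and each reduction $A/p^n\ra B/p^n$ is smooth in the usual algebraic sense; derived base change then gives $\LL_{B/A}\otimes^L_A A/p^n\cong\LL_{(B/p^n)/(A/p^n)}\cong\Omega^1_{B/A}/p^n[0]$, and passing to $\Rlim_n$ identifies the derived $p$-completion $\LL^{\an}_{B/A}$ with $\Omega^{1,\an}_{B/A}[0]$ in one stroke. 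No factorization through $A\langle T_1,\ldots,T_n\rangle$, no \'etale vanishing lemma, and no resolution-invariance statement is needed.

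The gap in your argument is the vanishing of $\LL^{\an}_{D/C}$ for a formally \'etale map in cohomological degrees $\le -2$. Uniqueness of lifts against square-zero extensions controls only $\mathrm{Ext}^0(\LL,M)$ and $\mathrm{Ext}^1(\LL,M)$, i.e.\ $H^0$ and $H^{-1}$ of the cotangent complex; the assertion that the higher degrees are handled ``by applying the same criterion to the simplicial levels of a resolution'' or because the complex ``represents a cohomologically trivial functor'' is not a proof — formally \'etale ring maps that are not of finite presentation can have nonvanishing cotangent complex, so some finiteness input must enter. The standard repair is to reduce modulo $p^n$, where topological finite presentation makes the map genuinely \'etale and flat, and then invoke derived base change — but that is exactly the paper's mechanism, and once you have it the detour through $A\langle T_1,\ldots,T_n\rangle$ and the resolution-invariance problem you flag as ``the technical obstacle'' become unnecessary. (That last problem is also something of a red herring: $\LL^{\an}_{B/A}$ is by definition the derived $p$-completion of $\LL_{B/A}$, which is a functor on the derived category and hence independent of the chosen resolution; the genuine content is the computation of $\LL^{\an}_{A\langle T\rangle/A}$, which again is most easily done modulo $p^n$ using $A\langle T\rangle/p^n=A[T]/p^n$.)
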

	\begin{proof}
		Since $f$ is formally smooth, by Elkik's algebraization result of formally smooth adic algebras (for this specific situation, see \cite[Page 11, Footnote 6]{BS}, where the noetherian assumption is not needed), $B$ is isomorphic to the $p$-adic completion of a smooth $A$-algebra.
		In particular, $f$ is flat (Lemma \ref{p-comp}) and the map $f_n:=(A/p^n\ra B/p^n)$ is smooth.
		So by the derived base change formula for the algebraic cotangent complex (\cite[Tag 08QQ]{Sta}), since $B/p^n=B\otimes_A^L A/p^n$, we have
		\[
		\LL_{B/A}\otimes^L_A A/p^n=\LL_{(B/p^n)/(A/p^n)}.
		\]
		Moreover, the smoothness of $f_n$ gives a canonical isomorphism
		\[
		\LL_{(B/p^n)/(A/p^n)}\simeq \Omega_{(B/p^n)/(A/p^n)}^1[0]=\Omega_{B/A}^1\otimes_A A/p^n[0].
		\]
		In this way, by taking the derived $p$-adic completion of $\LL_{B/A}$ and notice the $p$-torsionfreeness of $B$ and $A$, we get
		\begin{align*}
			\LL_{B/A}^\an=&\Rlim_{n\in \NN} \LL_{B/A}\otimes^L_A A/p^n\\
			\simeq& \Rlim_{n\in \NN} \Omega_{B/A}^1/p^n[0]\\
			=&\Omega_{B/A}^{1,\an}[0].
		\end{align*}

	\end{proof}
	In the next result, we show that the analytic cotangent complex for a finite morphism coincides with the associated algebraic cotangent complex.
	Recall that for an object $L^\bullet$ in the derived category of $R$-modules, it is called \emph{pseudo-coherent} if it is isomorphic to a upper-bounded complex of finite free $R$-modules.
	\begin{proposition}\label{cot finite}
		Let $A\ra B$ be a map of two topologically finitely presented $\Ainfe$-algebras in $\Afe$, such that $B$ is a finitely presented $A$-module.
		Then the algebraic cotangent complex $\LL_{B/A}$ is pseudo-coherent. 
		In particular, $\LL_{B/A}$ is derived $p$-complete and we have a canonical isomorphism
		\[
		\LL_{B/A}\rra \LL_{B/A}^\an.
		\]
	\end{proposition}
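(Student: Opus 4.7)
The proposition has two assertions: (a) pseudo-coherence of $\LL_{B/A}$, and (b) that this forces $\LL_{B/A}$ to be derived $p$-complete, hence canonically isomorphic to $\LL_{B/A}^\an$. The implication (a)$\Rightarrow$(b) is essentially formal, so the substantive content is (a).

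First, I would upgrade the hypothesis ``$B$ finitely presented as $A$-module'' to ``$B$ finitely presented as $A$-algebra.'' Picking generators $b_1,\dots,b_n$ of $B$ as an $A$-module yields a surjection $\phi\colon A[T_1,\dots,T_n]\twoheadrightarrow B$, $T_i\mapsto b_i$; its kernel $J$ is generated by the finitely many relations in the $A$-module presentation of $B$ together with the multiplicativity relations $T_iT_j-\sum_k c_{ij}^kT_k$, where $b_ib_j=\sum_k c_{ij}^k b_k$. Thus $B=A[T_1,\dots,T_n]/J$ with $J$ finitely generated. The transitivity triangle for $A\to A[T]\to B$ reads
\[
\LL_{A[T]/A}\otimes_{A[T]}^{L}B\rra \LL_{B/A}\rra \LL_{B/A[T]},
\]
whose first term equals $\Omega^1_{A[T]/A}\otimes_{A[T]}B \simeq B^n[0]$, which is finite free, hence pseudo-coherent. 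So pseudo-coherence of $\LL_{B/A}$ reduces to that of $\LL_{B/A[T]}$ for the surjection $A[T]\to B$ with finitely generated kernel $J$. For this, I would construct a simplicial polynomial resolution $P_\bullet\to B$ over $A[T]$ of finite type in each degree by iteratively adjoining variables mapping to the finitely many generators of $J$, and then to their successive finite syzygies. The resulting cotangent complex $\Omega^1_{P_\bullet/A[T]}\otimes_{P_\bullet}B$ is then represented by an upper-bounded complex of finite free $B$-modules.

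For (b): once $\LL_{B/A}$ is pseudo-coherent, it is quasi-isomorphic to an upper-bounded complex of finite free $B$-modules. Each such module is classically $p$-adically complete and separated by Lemma~\ref{mod over cafe}(iv), hence derived $p$-complete; since $\scrD_p(B)\subset\scrD(B)$ is closed under finite limits and the relevant homotopy colimits of such objects, $\LL_{B/A}$ is itself derived $p$-complete. But $\LL_{B/A}^\an$ is by construction the derived $p$-completion of $\LL_{B/A}$, and the comparison map is the unit of the adjunction between the inclusion $\scrD_p(B)\hookrightarrow\scrD(B)$ and its left adjoint; this unit is an isomorphism on objects already lying in $\scrD_p(B)$.

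\textbf{Main obstacle.} The delicate point is the iterative finite-generation of syzygies needed to construct the finite-type simplicial resolution over the (a priori non-Noetherian) polynomial ring $A[T_1,\dots,T_n]$, since coherence is not preserved by general polynomial extensions. One must exploit the specific structure of $A\in\Afe$ via Lemma~\ref{mod over cafe}, Lemma~\ref{p-comp}, and Corollary~\ref{rel ft} rather than a black-box coherence argument; alternatively, one may hope to bypass polynomial rings entirely and work directly with a simplicial finite-type resolution of $B$ over $A$ whose existence is guaranteed by the finite presentation of $B$ as an $A$-algebra.
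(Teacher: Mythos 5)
Your reduction to the surjective case and your step (b) are fine, but the substantive step (a) has a genuine gap at exactly the point you flag. To build a degreewise finite-type simplicial resolution of $B$ over $A[T_1,\dots,T_n]$ you need the kernel $J$, all of its successive syzygies, and the homotopy groups of the intermediate simplicial rings to be finitely generated. Since $\Ainfe$, and hence $A$, is not Noetherian --- it is only coherent in the restricted sense of Lemma~\ref{mod over cafe} --- and coherence is not in general inherited by polynomial extensions, this iteration is not justified. Your route leaves the category $\Afe$ (where all the finiteness lemmas of the paper live) the moment it passes to the non-complete ring $A[T_1,\dots,T_n]$, and no replacement for Noetherian finiteness is supplied; you would need something like stable coherence of $A$, which you neither prove nor cite. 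So the proof is incomplete at its crucial step.

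The paper's proof circumvents this entirely and never constructs a finite resolution. The reduction is different: each module generator $x_i$ of $B$ satisfies a monic polynomial $f_i$ over $A$, so $B':=A[x_1,\dots,x_r]/(f_1,\dots,f_r)$ is a finite free $A$-module lying again in $\Afe$ and surjecting onto $B$, with $\LL_{B'/A}\otimes^L_{B'}B\simeq B^{\oplus r}[1]$ because $(f_1,\dots,f_r)$ is a regular sequence; this reduces the claim to the surjection $B'\to B$ \emph{inside} $\Afe$. There, $B$ is shown to be pseudo-coherent as a $B'$-module by repeatedly applying Lemma~\ref{mod over cafe}(i) (finitely generated $p$-torsion free submodules of free modules are finitely presented). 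Finally, with $P$ the standard (infinite-type) polynomial resolution and $J=\ker(P\otimes_{B'}B\to B)$, pseudo-coherence of $\LL_{B/B'}$ is extracted by decreasing induction on $n$-pseudo-coherence, using the triangle $J\to\LL_{B/B'}\to J^2[1]$, the vanishing $\cH^m(J^i)=0$ for $i>-m$, and the identification $J^j/J^{j+1}\simeq L\Sym^j_B(\LL_{B/B'})$ together with the fact that $L\Sym^j$ preserves $n$-pseudo-coherence. That filtration argument is the standard device for obtaining pseudo-coherence of the cotangent complex without ever producing a degreewise finite resolution, and it is what your proposal is missing.
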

	\begin{proof}
		We first show that it suffices to assume $A\ra B$ is a surjection.
		Too see this, we first pick a polynomial algebra $A[x_1,\ldots, x_r]$ that maps surjectively onto $B$.
		By the finite presentedness assumption of $B$ over $A$, each $x_i$ satisfies a monic polynomial $f_i(x_i)$ of $x_i$ in $A$, and the induced map $B'=A[x_1,\ldots, x_r]/(f_1, \ldots, f_r)\ra B$ is also surjective.
		Here we note that the ring $B'$, as a finite algebra over $A$ that is $p$-torsionfree, is automatically $p$-complete and is also in $\Afe$.
		Moreover, notice that since the sequence $\{f_1,\ldots, f_r\}$ is a regular sequence in $A[x_i]$, 
		by the distinguished triangle of algebraic cotangent complexes for $A\ra B' \ra B$ we get
		\[
		\LL_{B'/A}\otimes^L_{B'} B \rra \LL_{B/A} \rra \LL_{B/B'}.
		\]
		Here we note that as $B'$ is relatively of local complete intersection over $A$, by  \cite[\href{https://stacks.math.columbia.edu/tag/08SL}{Tag 08SL}]{Sta} we know $\LL_{B'/A}$ is a perfect complex over $B'$.
		Thus to show the pseudo-coherence of $\LL_{B/A}$, it suffices to show this for $\LL_{B/B'}$, where $B'\ra B$ is a surjective map of algebras in $\Afe$.

		Recall that by assumption $B$ is a  finite $A$-module that is $p$-torsionfree.
		So Lemma \ref{mod over cafe} implies that $B$ is a finitely presented $A$-module and there exists an exact sequence of $A$-modules as below
		\[
		\xymatrix{A^{\oplus m}\ar[r]^f &A^{\oplus r} \ar[r]& B\ar[r]&0.}
		\]
		Moreover, as the image of $f$ is a submodule of $A^{\oplus r}$, which by Lemma \ref{mod over cafe}.(i) is finitely presented, we know $\ker(f)$ is also finitely generated (hence finitely presented as it is inside of $A^{\oplus m}$).
		This procedure allows to give a finite free $A$-module resolution of $B$.
		In particular, this shows that $B$ is pseudo-coherent over $A$.

		We then take $P$ to be a simplicial polynomial resolution of $B$ over $A$, and let $J$ be the kernel of the map $P\otimes_A B\ra B$.
		Then by the finite presentedness of $B$ over $A$, the simplicial $A$-algebra $P$ is also pseudo-coherent over $A$.
		So by taking a base change along $A\ra B$, we see $P\otimes_A B$ is a simplicial $B$-algebra that is pseudo-coherent over $B$.
		Moreover, since the map $P\otimes_A B\ra B$ has a natural section, the kernel $J$ is also pseudo-coherent (\cite[Tag 064X]{Sta}).
		Notice that the cotangent complex $\LL_{B/A}$ fits into the distinguished triangle (cf. \cite[Chap.\ III.\ 3.3.2]{Ill71}
		\[
		J\rra \LL_{B/A}\rra J^2[1]. 
		\]

		To show the pseudo-coherence of $\LL_{B/A}$, by \cite[Tag 064U]{Sta} it suffices to show by decreasing induction that $\LL_{B/A}$ is $n$-pseudo-coherent for each $n\leq 1$.
		When $n=1$, the result is clear as $\LL_{B/A}$ has cohomological degree $\leq 0$.
		Suppose the result is true for $n\leq 0$.
		Since $A\ra B$ is a surjection, the induced surjective map $P\otimes_A B\ra B$ is isomorphic on $\pi_0$ with kernel living in cohomological degree $\leq -1$.
		Thus by \cite{Ill71} Chap III, 3.3, we have
		\[
		\cH^m(J^i)=0,~for~i>-m.
		\]
		This implies that when $i> -n$, $J^i$ is $n$-pseudo-coherent.
		On the other hand, by \cite{Ill71} Chap III, 3.3.2, there exists an isomorphism.
		\[
		J^j/J^{j+1}\rra L\Sym^j_B(\LL_{B/A}),~j\geq 0.
		\]
		The derived symmetric product preserves the $n$-pseudo-coherence (\cite{GR} 7.1.18), so $J^j/J^{j+1}$ is $n$-pseudo-coherent for any $j\geq 0$.
		Thus the fiber sequence for the quotient $J^i\ra J^i/J^{i+1}$ allows us to deduce the $n$-pseudo-coherent of every $J^i$.
		In particular, when $i=2$, by taking the cohomological twist we know $J^2[1]$ is $(n-1)$-pseudo-coherent.
		So combining with the quasi-coherence of $J$, we get the $(n-1)$-quasi-coherence of $I/I^2=\LL_{B/A}$ by \cite[Tag 064V]{Sta}.
		
	\end{proof}
	\begin{corollary}\label{comp int}
		Let $A$ be a $p$-torsionfree topologically finitely presented $\Ainfe$-algebra, and $I$ be a finitely generated regular ideal in $A$ such that $B:=A/I$ is $p$-torsionfree.
		Then we have a canonical isomorphism
		\[
		\LL_{B/A}^\an\rra  I/I^2[1].
		\]
	\end{corollary}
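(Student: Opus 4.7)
The approach is to combine two observations: (i) the purely algebraic cotangent complex already satisfies $\LL_{B/A} \cong I/I^2[1]$ because $I$ is generated by a regular sequence; (ii) this complex is already derived $p$-complete, so the canonical comparison map $\LL_{B/A} \to \LL_{B/A}^\an$ is an isomorphism.

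For (i), I would write $I = (f_1, \ldots, f_r)$ with $f_1, \ldots, f_r$ a regular sequence and apply the transitivity triangle for algebraic cotangent complexes to the tower $A \to A/(f_1, \ldots, f_k) \to B$, inducting on $k$. At each step the cotangent complex of the quotient by a single nonzerodivisor $f_{k+1}$ is concentrated in degree $-1$ with value the principal conormal module, so one easily assembles the full statement. Equivalently, one may simply invoke the standard computation for quotients by regular sequences (\cite{Ill71}, Ch.~III, 3.2). This identifies $\LL_{B/A}$ with $I/I^2[1]$, where $I/I^2$ is in fact a free $B$-module of rank $r$.

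For (ii), the natural map $\LL_{B/A} \to \LL_{B/A}^\an$ is the derived $p$-adic completion by construction, so it is enough to check that $I/I^2[1]$ is already derived $p$-complete. Since $A$ is topologically finitely presented over $\Ainfe$ and $I$ is finitely generated, $B = A/I$ is itself topologically finitely presented over $\Ainfe$; combined with the $p$-torsion freeness of $B$, this places $B$ in $\Afe$ by Corollary \ref{mod over cafe 2}(ii). As $I$ is finitely generated, $I/I^2$ is finitely generated as a $B$-module, and then finitely presented by the coherence of $B$ (Lemma \ref{mod over cafe}(i)--(ii)). A finite presentation $B^m \to B^n \to I/I^2 \to 0$ exhibits $I/I^2$ as a cokernel of derived $p$-complete modules---indeed, $B$ is classically $p$-adically complete and $p$-torsion free, hence derived $p$-complete---and the subcategory of derived $p$-complete objects is closed under cokernels. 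Thus $I/I^2[1]$ is derived $p$-complete.

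Combining (i) and (ii) gives the canonical chain of isomorphisms $\LL_{B/A}^\an \xleftarrow{\sim} \LL_{B/A} \xrightarrow{\sim} I/I^2[1]$. The only mild subtlety is to keep track of the fact that $A$ itself is not assumed $p$-torsion free, so one cannot appeal directly to Proposition \ref{cot finite}; however, both steps above are insensitive to this, as the regular-sequence computation is purely an $A$-module statement, and the derived completeness argument depends only on $B \in \Afe$, which the hypothesis on $B$ already guarantees.
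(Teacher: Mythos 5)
Your proof is correct and follows essentially the same route as the paper's: identify $\LL_{B/A}$ with $I/I^2[1]$ via the standard regular-sequence computation for the algebraic cotangent complex, then observe that this complex is already derived $p$-complete, so the completion map to $\LL_{B/A}^\an$ is an isomorphism. Your direct verification that $I/I^2$ is derived $p$-complete (using only that $B\in\Afe$ and that finitely generated modules over such rings are $p$-adically complete) is in fact slightly more careful than the paper's one-line appeal to Proposition \ref{cot finite}, whose stated hypotheses also require $A$ itself to lie in $\Afe$.
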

	\begin{proof}
		Since $B$ is $p$-torsionfree, by the Corollary \ref{mod over cafe 2} we know $B$ is in $\Afe$.
		So the result follows from Proposition \ref{cot finite} and the case for algebraic cotangent complex.
	\end{proof}
	
	Here is another useful result about the distinguished triangles for triples:
	\begin{proposition}\label{tri}
		Let $A\ra B\ra C$ be maps in $\Afe$.
		Then we have
		\begin{enumerate}[(i)]
			\item The analytic cotangent complex $\LL_{B/A}^\an$ is a pseudo-coherent object in the derived category of $B$-modules.
			\item there exists a natural distinguished triangle of pseudo-coherent objects in the derived category of $C$-modules
			\[
			\LL_{B/A}^\an\otimes_B^L C\rra \LL_{C/A}^\an\rra \LL_{C/B}^\an. 
			\]
			
		\end{enumerate}
	\end{proposition}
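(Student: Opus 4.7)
My plan is to deduce both parts from the corresponding algebraic statements by applying derived $p$-completion, using the pseudo-coherence criteria already established in Propositions \ref{cot sm} and \ref{cot finite}.

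For (i), I would factor $f\colon A\to B$ through a Tate algebra. By Corollary \ref{rel ft} applied to $A\to B$, one can choose a surjection $P:=A\langle T_1,\ldots,T_n\rangle\twoheadrightarrow B$ whose kernel is finitely generated; both $P$ and $B$ lie in $\Afe$. Proposition \ref{cot sm} applied to the formally smooth map $A\to P$ identifies $\LL_{P/A}^\an$ with the free $P$-module $\Omega_{P/A}^{1,\an}$ of rank $n$, hence its base change $\LL_{P/A}^\an\otimes_P^L B$ is a finite free $B$-module (in particular pseudo-coherent). On the other hand, Proposition \ref{cot finite} applied to the finite presentation $P\twoheadrightarrow B$ (noting $B$ is $p$-torsion free, so lies in $\Afe$) identifies $\LL_{B/P}^\an$ with the pseudo-coherent algebraic cotangent complex $\LL_{B/P}$. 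The algebraic transitivity triangle for $A\to P\to B$, after derived $p$-completion, fits $\LL_{B/A}^\an$ into a fiber sequence whose two outer terms are pseudo-coherent bounded-above $B$-complexes; since the class of pseudo-coherent bounded-above objects is closed under fiber sequences, $\LL_{B/A}^\an$ is also pseudo-coherent.

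For (ii), I would start from the classical transitivity triangle of algebraic cotangent complexes
\[
\LL_{B/A}\otimes_B^L C\rra \LL_{C/A}\rra \LL_{C/B}
\]
and apply derived $p$-completion term-by-term. Since the derived $p$-completion functor is a left adjoint (to the inclusion $\scrD_p\hookrightarrow \scrD$), it preserves cofiber sequences, so the result is automatically a fiber sequence of derived $p$-complete $C$-complexes; the right two terms are $\LL_{C/A}^\an$ and $\LL_{C/B}^\an$ by definition. The substantive identification is of the left-hand term with $\LL_{B/A}^\an\otimes_B^L C$. By part (i), $\LL_{B/A}^\an$ is bounded-above pseudo-coherent over $B$, hence quasi-isomorphic to a bounded-above complex of finite free $B$-modules; base-changing to $C$ gives a bounded-above complex of finite free $C$-modules, which is derived $p$-complete because $C$ is $p$-adically complete and derived $p$-completeness is closed under finite direct sums and the homotopy limits computing bounded-above complexes. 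Thus $\LL_{B/A}^\an\otimes_B^L C$ already agrees with its own derived $p$-completion, and this coincides with the derived $p$-completion of $\LL_{B/A}\otimes_B^L C$. Pseudo-coherence of all three terms as $C$-complexes then follows: for the left term from (i) and preservation of pseudo-coherence under base change, and for the right two terms from (i) applied to $A\to C$ and $B\to C$ respectively.

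The main delicate point is the interchange of derived $p$-completion with derived tensor product in the identification of the left-hand term. The pseudo-coherence of $\LL_{B/A}^\an$ established in (i) is exactly what reduces this question to the trivial observation that bounded-above complexes of finite free modules over a $p$-adically complete ring are derived $p$-complete; without this input, one would need a more elaborate commutation statement. A second minor technical point is extracting the analytic transitivity triangle from the algebraic one without explicitly modelling it on a simplicial polynomial resolution — this is handled cleanly by the left-adjointness of $p$-completion, which automatically commutes with the formation of the cofiber.
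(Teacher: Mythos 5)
Your proof is correct and follows essentially the same route as the paper: part (i) via the transitivity triangle for $A\to A\langle T_i\rangle\to B$ combined with Propositions \ref{cot sm} and \ref{cot finite}, and part (ii) by derived $p$-completing the algebraic transitivity triangle and identifying the left-hand term using pseudo-coherence. The one assertion you leave unjustified — that the derived $p$-completions of $\LL_{B/A}\otimes_B^L C$ and $\LL_{B/A}^\an\otimes_B^L C$ agree — is exactly the auxiliary lemma the paper proves by reducing mod $p^m$, so the structure matches.
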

	Before the proof, we make the following claim:
	\begin{lemma}
		Let $A\ra B$ be a map of algebras in $\Afe$.
		Let $K$ be a bounded above complex of $A$-modules, and $K'$ be its derived $p$-completion. 
		Then the derived $p$-completion of $K\otimes_A^L B$ is isomorphic to the derived $p$-completion of $K'\otimes_A^L B$.
	\end{lemma}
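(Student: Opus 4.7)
The plan is to reduce the statement to derived Nakayama. Let $f:K\to K'$ be the natural map from $K$ to its derived $p$-completion, and let $C$ be its cofiber in $\scrD(A)$. The key property is that derived $p$-completion is a Bousfield localization, so $f$ induces an equivalence on derived $p$-completions; equivalently, $C$ has vanishing derived $p$-completion. By derived Nakayama (cf.\ \cite[Tag 0G1U]{Sta}), this is equivalent to $C\otimes_{\ZZ_p}^L \ZZ/p = 0$.

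Next I would apply the exact functor $-\otimes_A^L B$ to the cofiber sequence $K\to K'\to C$, obtaining a cofiber sequence
\[
K\otimes_A^L B \rra K'\otimes_A^L B \rra C\otimes_A^L B
\]
in $\scrD(B)$. By the same derived Nakayama criterion, to conclude that the two outer terms have the same derived $p$-completion it suffices to check that $C\otimes_A^L B$ vanishes modulo $p$. But tensor products over $A$ and over $\ZZ_p$ commute:
\[
(C\otimes_A^L B)\otimes_{\ZZ_p}^L \ZZ/p \;\cong\; (C\otimes_{\ZZ_p}^L \ZZ/p)\otimes_A^L B \;=\; 0,
\]
since the first factor already vanishes. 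Hence the derived $p$-completion of $C\otimes_A^L B$ is zero, so the map $K\otimes_A^L B\to K'\otimes_A^L B$ becomes an isomorphism after derived $p$-completion, which is exactly the claim.

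The only subtle point is the commutation of the two tensor products, but this is formal: $-\otimes_{\ZZ_p}^L \ZZ/p$ can be computed by tensoring with the two-term Koszul complex $(\xymatrix@C-0.3cm{\ZZ_p\ar[r]^{p}&\ZZ_p})$, and tensoring with a bounded complex of flat $\ZZ_p$-modules manifestly commutes with $-\otimes_A^L B$. There is no issue with boundedness assumptions here because $K$ is bounded above (hence so are $K'$ and $C$ up to pseudo-coherence issues), and we are only computing the reduction mod $p$ of the cofiber, not its full completion. Thus the argument is essentially formal once one has identified derived $p$-completion as a localization characterized by mod-$p$ reduction.
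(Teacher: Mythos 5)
Your proof is correct and is essentially the paper's argument in slightly different packaging: the paper checks directly that derived $p$-completion only depends on the reductions $-\otimes_{\ZZ_p}^L \ZZ/p^m$ and that $K$ and $K'$ agree after such reduction, then commutes the tensor products, while you phrase the same reduction via the cofiber of $K\to K'$ and derived Nakayama. No gap.
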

	\begin{proof}[Proof of Lemma]
		It suffices to check by applying the derived tensor product functor $-\otimes_{\Z_p}^L \Z/p^m$, which is then clear as $K\otimes_{\Z_p}^L \Z/p\simeq K'\otimes_{\Z_p}^L \Z/p^m$.
	\end{proof}
	\begin{proof}[Proof of \Cref{tri}]
		\begin{enumerate}[(i)]
			\item By the Corollary \ref{rel ft}, we may write $B$ as the quotient $P/I$, where $P=A\langle T_i\rangle$ is a convergent power series ring over $A$, and $I$ is a finitely generated ideal by the Corollary \ref{mod over cafe 2}.(iii).
			We take the distinguished triangle of algebraic cotangent complexes for $A\ra P\ra B$, and get
			\[
			\LL_{P/A}\otimes_P^L B\rra \LL_{B/A}\rra \LL_{B/P}.
			\]
			Note that $\LL_{P/A}^\an$ is isomorphic to the finite free $P$-module $\Omega_{P/A}^{1,\an}$ (Proposition \ref{cot sm}), so after applying the derived $p$-completion and using the lemma above, we get a distinguished triangle
			\[
			\Omega_{P/A}^{1,\an}\otimes_P B[0]\rra \LL_{B/A}^\an\rra \LL_{B/P}^\an.
			\]
			Here the $\LL_{B/P}^\an$ is pseudo-coherent by Proposition \ref{cot finite}.
			Thus we are done.
			
			\item For (ii), take the distinguished triangle for algebraic cotangent complexes, we get
			\[
			\LL_{B/A}\otimes_B^L C\rra \LL_{B/A}\rra \LL_{C/B}. 
			\]
			So the result follows from the lemma above and the pseudo-coherence of each analytic cotangent complex.
			%$A\ra B\ra C$
			
		\end{enumerate}
	\end{proof}
	
	\noindent\textbf{Analytic cotangent complex for affinoid rigid spaces.}
	We then introduce the basics of the analytic cotangent complex for a map of affinoid algebras over $\Bdre$, using the integral construction given in the last paragraph.
	The analogous discussion for topologically finite type algebras over $K$ can be found in \cite[Section 7.2]{GR}.
	
	\begin{construction}\label{ana cot}

		Let $f:A\ra B$ be a map of topologically finite type affinoid algebras over $\Bdre$.
		Namely both $A$ and $B$ are quotients of $\Bdre\langle T_1,\ldots,T_m\rangle$ for some $m\in \NN$.
		Denote by $\calC_{B/A}$  the category of pairs of rings $(B_0,A_0)$, where $A_0$ and $B_0$ are rings of definition of $A$ and $B$ respectively, such that both of them are in $\Afe$, and $f(A_0)\subset B_0$.
		The morphism among pairs is defined by inclusion maps on each entry respectively.
		
		Assume $(B_0,A_0)$ is an object of $\calC_{B/A}$.
		By the construction of the last paragraph, we can construct the analytic cotangent complex $\LL_{B_0/A_0}^\an$ for $B_0/A_0$, as the derived $p$-completion of the algebraic cotangent complex $\LL_{B_0/A_0}$.
		\begin{definition}
			The \emph{analytic cotangent complexes for affinoid algebras $B/A$} is defined as the colimit
			\[
			\LL_{B/A}^\an:=\underset{(B_0,A_0)\in\calC_{B/A}}{\colim} (\LL_{B_0/A_0}^\an[\frac{1}{p}]),
			\]
			as an object in the derived category of $B$-modules.
			
		\end{definition}
	\end{construction}
	\begin{remark}\label{rat cot actual complex}
		As there exists a canonical actual complex representing $\LL_{B_0/A_0}^\an$ (by the term-wise $p$-adic completion of $\Omega_{P/A_0}^1\otimes_P B$, where $P$ is the standard polynomial resolution of $B_0$ over $A_0$), the analytic cotangent complexes can also be represented by a canonical actual complex, defined by taking the colimit of the actual term-wise complete complexes and then invert by $p$.
	\end{remark}
	Here we note that as there exists a canonical map from the algebraic cotangent complex $\LL_{B_0/A_0}$ to $\LL_{B_0/A_0}^\an$, induced by the adjoint pair for the derived completion.
	By inverting $p$ we also have a canonical map from the algebraic cotangent complex $\LL_{B/A}$ to the analytic cotangent complex $\LL_{B/A}^\an$, for a map of algebras $A\ra B$ over $\Bdre$.

	We then give a simple description of the analytic cotangent complex for a smooth morphism.
	\begin{proposition}\label{int-rat}
		Let $A_0\ra B_0$ be a map of algebras in $\Afe$, and let $A=A_0[\frac{1}{p}]$ and $B=B_0[\frac{1}{p}]$ be their generic fibers respectively, with the induced map $f:A\ra B$. 
		Assume the corresponding map of affinoid rigid spaces $\Spa(B)\ra \Spa(A)$ is smooth.
		Then we have a natural isomorphism
		\[
		\LL_{B_0/A_0}^\an[\frac{1}{p}]\rra  (\Omega_{B_0/A_0}^{1,\an}[\frac{1}{p}]).
		\]
	\end{proposition}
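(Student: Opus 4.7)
The plan is to reduce to a convenient presentation of $B_0$ over $A_0$ and then exploit smoothness of the generic fiber to collapse the cotangent complex to its degree $0$ cohomology. The natural map itself comes from the universal property of Kähler differentials: the simplicial polynomial resolution $P_\bullet \to B_0$ over $A_0$ has augmentation $\Omega^1_{P_\bullet/A_0}\otimes_{P_\bullet} B_0 \to \Omega^1_{B_0/A_0}$, and taking $p$-adic completion and then inverting $p$ gives the map in question.

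For the isomorphism, I would first use Corollary \ref{rel ft} to choose a surjection $P_0 := A_0\langle T_1, \ldots, T_n\rangle \twoheadrightarrow B_0$ with kernel a finitely generated ideal $I\subset P_0$. Since $B_0$ is $p$-torsion free and finitely generated over $P_0$, Lemma \ref{mod over cafe}(i) makes it finitely presented as a $P_0$-module, so Proposition \ref{cot finite} applies to $P_0 \to B_0$ to give $\LL_{B_0/P_0}^\an \simeq \LL_{B_0/P_0}$. Applying the transitivity triangle of Proposition \ref{tri}(ii) to $A_0 \to P_0 \to B_0$, together with $\LL_{P_0/A_0}^\an \simeq \Omega_{P_0/A_0}^{1,\an}[0]$ from Proposition \ref{cot sm}, yields the distinguished triangle
\[
\Omega_{P_0/A_0}^{1,\an} \otimes_{P_0} B_0\,[0] \rra \LL_{B_0/A_0}^\an \rra \LL_{B_0/P_0}.
\]
After inverting $p$, the rightmost term becomes the algebraic cotangent complex $\LL_{B/P}$, where $P = P_0[\tfrac{1}{p}]$ is a Tate $A$-algebra.

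Next I would invoke smoothness of the generic fiber. The surjection $P \twoheadrightarrow B$ exhibits $B$ as a closed analytic subspace of the smooth $A$-affinoid $\Spa(P)$, with $B/A$ itself smooth. The analytic Jacobian criterion (cf.\ \cite[1.6.9]{Hu96}) then shows that $I[\tfrac{1}{p}]$ is, locally on $\Spa(B)$, generated by a regular sequence whose differentials extend to a basis of $\Omega^{1,\an}_{P/A}\otimes_P B$; equivalently, the conormal sequence
\[
0 \rra I[\tfrac{1}{p}]/I[\tfrac{1}{p}]^2 \rra \Omega^{1,\an}_{P/A}\otimes_P B \rra \Omega^{1,\an}_{B/A} \rra 0
\]
is short exact with locally free terms over $B$, and $\LL_{B/P}$ is concentrated in degree $-1$ with cohomology $I[\tfrac{1}{p}]/I[\tfrac{1}{p}]^2$.

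Plugging these identifications into the triangle after inverting $p$ and taking the long exact sequence of cohomology, the connecting map is precisely the inclusion of the conormal bundle into $\Omega^{1,\an}_{P/A}\otimes_P B$; its cokernel is $\Omega^{1,\an}_{B/A}$ and its kernel vanishes, so $\LL_{B_0/A_0}^\an[\tfrac{1}{p}]$ is concentrated in degree $0$ with value $\Omega^{1,\an}_{B_0/A_0}[\tfrac{1}{p}]$, as required. The main obstacle in this plan is verifying that the passage from $\LL_{B_0/P_0}^\an$ to $\LL_{B/P}$ upon inverting $p$ commutes with the identification of $\LL_{B/P}$ with $I[\tfrac{1}{p}]/I[\tfrac{1}{p}]^2[1]$; this combines the pseudo-coherence from Proposition \ref{cot finite} (so that inverting $p$ commutes with the relevant derived operations) with the classical fact that regularity of the defining ideal is an open condition that holds on the whole generic fiber under our smoothness hypothesis.
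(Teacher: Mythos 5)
Your proposal is correct and follows essentially the same route as the paper's own proof: present $B_0$ as a quotient of $P_0=A_0\langle T_1,\ldots,T_n\rangle$ via Corollary \ref{rel ft}, identify $\LL_{B_0/P_0}^\an$ with the algebraic cotangent complex via Proposition \ref{cot finite}, use the Jacobian criterion on the smooth generic fiber to see that $B$ is a local complete intersection in $P$ with injective conormal map, and conclude from the transitivity triangle of Proposition \ref{tri} together with Proposition \ref{cot sm}. The compatibility you flag at the end is handled in the paper exactly as you suggest, by flat base change for the algebraic cotangent complex along $P_0\to P$.
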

	\begin{proof}
		By the Corollary \ref{rel ft}, $B_0$ is a topologically finitely presented $A_0$-algebra.
		So we can write $B_0$ as the quotient ring of the relative convergent power series ring $P_0=A_0\langle T_1,\ldots, T_m\rangle$, by some finitely generated ideal $I_0\subset P_0$.
		Denote by $P$ and $I$  the ring $P_0[\frac{1}{p}]$ and the ideal $I_0[\frac{1}{p}]$ respectively.
		Then the surjection $P\ra B$ induces a closed immersion of $\Spa(B)$ into the $m$-dimensional unit disc $\Spa(P)$ over $\Spa(A)$.
		Since both $\Spa(B)$ and $\Spa(P)$ are smooth over $\Spa(A)$, by the Jacobian criterion for the smoothness of adic spaces (\cite{Hu96}, 1.6.9), for each maximal ideal $\frakP$ of $P$ that contains $I$, we can always find generators $s_1,\ldots,s_l$ of $I_\frakP$ such that their derivatives $ds_1,\ldots ds_l$ can be extended to a basis of the continuous differential $\Omega_{P/A,\frakP}^1$ at $\frakP$.
		We denote by $\frakp$  the intersection $\frakP\cap A$.
		Then the above implies that the image of $s_i$ in $P_\frakP\otimes_A (A_\frakp/\frakp A_\frakp)$ forms a regular sequence.
		So by the flatness of $P_\frakP$ over $A_\frakp$ and \cite[Chap.\ 0, Proposition 15.1.16]{EGA IV}, $(s_i)$ forms a regular sequence in $P_\frakP$.
		Since this is true for every maximal ideal $\frakP$ of $P$ containing $I$, we see $B$ is a local complete intersection of $P$.
		
		Now thanks to the surjectivity of $P_0\ra B_0$, Proposition \ref{cot finite} implies that
		\[
		\LL_{B_0/P_0}^\an\simeq \LL_{B_0/P_0}.
		\]
		Moreover, by the flat base change formula for the algebraic cotangent complex (\cite[\href{https://stacks.math.columbia.edu/tag/08QQ}{Tag 08QQ}]{Sta}), there exits a canonical isomorphism of algebraic cotangent complexes
		\[
		\LL_{B_0/P_0}[\frac{1}{p}]\simeq \LL_{B/P},
		\]
		which by the local complete intersection of $P\ra B$, is isomorphic to $I/I^2[1]$.
		On the other hand, by Proposition \ref{tri} and Proposition \ref{cot sm}, we have a natural distinguished triangle
		\[
		(\Omega_{P_0/A_0}^{1,\an}\otimes_{P_0} B_0[\frac{1}{p}]) \rra \LL_{B_0/A_0}^\an[\frac{1}{p}]\rra \LL_{B_0/P_0}^\an[\frac{1}{p}].
		\]
		Replacing the right side by the ideal $I/I^2[1]$ in degree $1$, we get an isomorphism
		\[
		\LL_{B_0/A_0}^\an[\frac{1}{p}]\simeq \mathrm{cofib}(I/I^2\rra \Omega_{P_0/A_0}^{1,\an}\otimes_{P_0} B_0[\frac{1}{p}]).
		\]
		Notice that by \cite[Prop.\ 1.6.9.(ii)]{Hu96} and the smoothness of $\Spa(B)\ra \Spa(A)$, the above map in the right hand side is injective.
		Hence the cofiber complex above lives in cohomological degree zero and is by definition equal to
		\[
		(\Omega_{B_0/A_0}^{1,\an}[\frac{1}{p}])[0].
		\]
	\end{proof}
	As a quite useful upshot, to compute the analytic cotangent complex for affinoid rings, it suffices to use one single pair of rings of definition.
	\begin{proposition}\label{rat cot 1}
		Let $A_0\ra B_0$ be a map of algebras in $\Afe$, and let $A=A_0[\frac{1}{p}]$ and $B=B_0[\frac{1}{p}]$ be their generic fibers respectively, with the induced map $f:A\ra B$. 
		Then the map below is an isomorphism
		\[
		\LL_{B_0/A_0}^\an[\frac{1}{p}] \rra \LL_{B/A}^\an .
		\]
	\end{proposition}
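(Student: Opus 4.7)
The plan is to reduce the proposition to a vanishing statement via the transitivity triangle, and then to use the formula for the analytic cotangent complex of a smooth morphism. Concretely, I would show that every morphism in the filtered category $\calC_{B/A}$ becomes an equivalence after inverting $p$; then the filtered colimit
\[
\LL_{B/A}^\an = \underset{(B_0',A_0')\in\calC_{B/A}}{\colim} \LL_{B_0'/A_0'}^\an[\tfrac{1}{p}]
\]
is computed by the given term $\LL_{B_0/A_0}^\an[\tfrac{1}{p}]$. I would begin by verifying that $\calC_{B/A}$ is a filtered poset: given two pairs $(A_0, B_0)$ and $(A_0', B_0')$, the $p$-adic closure of the subring of $A$ generated by $A_0 \cup A_0'$, together with the analogous $B_0''\subset B$, provides a common upper bound lying in $\calC_{B/A}$. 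This is standard for rings of definition of a Tate ring.

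Fix now a morphism $(A_0, B_0)\subset (A_0', B_0')$ in $\calC_{B/A}$. I would consider the two factorizations $A_0\to B_0\to B_0'$ and $A_0\to A_0'\to B_0'$ of the composition $A_0\to B_0'$, and apply the transitivity triangle of \Cref{tri}(ii) to each:
\[
\LL_{B_0/A_0}^\an\otimes_{B_0}^L B_0' \rra \LL_{B_0'/A_0}^\an \rra \LL_{B_0'/B_0}^\an,
\]
\[
\LL_{A_0'/A_0}^\an\otimes_{A_0'}^L B_0' \rra \LL_{B_0'/A_0}^\an \rra \LL_{B_0'/A_0'}^\an.
\]
Since $B_0'[\tfrac{1}{p}]=B$, inverting $p$ and comparing the two triangles reduces the desired isomorphism $\LL_{B_0/A_0}^\an[\tfrac{1}{p}]\cong \LL_{B_0'/A_0'}^\an[\tfrac{1}{p}]$ to the two vanishings $\LL_{A_0'/A_0}^\an[\tfrac{1}{p}]=0$ and $\LL_{B_0'/B_0}^\an[\tfrac{1}{p}]=0$.

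For these vanishings, both $A_0\subset A_0'$ and $B_0\subset B_0'$ are inclusions of rings of definition sharing the same generic fiber $A$ and $B$ respectively. Hence each induces, on adic generic fibers, the identity map $\Spa(A)\to\Spa(A)$ (resp.\ on $B$), which is smooth, so \Cref{int-rat} applies and yields
\[
\LL_{A_0'/A_0}^\an[\tfrac{1}{p}]\cong \Omega_{A_0'/A_0}^{1,\an}[\tfrac{1}{p}],
\]
and similarly for the pair $(B_0,B_0')$. The vanishing of these continuous differentials after inverting $p$ is then direct: any $a\in A_0'$ lies in $A_0[\tfrac{1}{p}]$, so $p^N a\in A_0$ for some $N$, hence $p^N\cdot da = d(p^N a)=0$ in $\Omega_{A_0'/A_0}^{1,\an}$; thus every $da$ is $p^\infty$-torsion and dies after inverting $p$.

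Overall, the argument is formal once the transitivity triangle and \Cref{int-rat} are granted. The only mildly delicate point is the filteredness of $\calC_{B/A}$, which amounts to checking that the $p$-adic closure of the ring generated by two rings of definition inside a given affinoid remains in $\Afe$ (topologically finitely presented and $p$-torsion free). This is a standard consequence of boundedness and openness of rings of definition in a Tate ring, so I do not expect any substantial obstacle in executing the plan.
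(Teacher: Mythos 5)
Your proposal is correct and follows essentially the same route as the paper: reduce to showing every transition map in $\calC_{B/A}$ becomes an equivalence after inverting $p$, use the two transitivity triangles for $A_0\ra A_0'\ra B_0'$ and $A_0\ra B_0\ra B_0'$ to reduce to the vanishing of $\LL_{A_0'/A_0}^\an[\frac{1}{p}]$ (and its analogue for $B_0\ra B_0'$), and kill that via \Cref{int-rat} together with the observation that the continuous differentials are $p^\infty$-torsion. The only difference is that you spell out the filteredness of $\calC_{B/A}$, which the paper leaves implicit.
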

	\begin{proof}
		It suffices to show that for any commutative diagram of topologically finitely presented rings of definition
		\[
		\xymatrix{
			A_0'\ar[r] &B_0'\\
			A_0\ar[u]\ar[r]& B_0\ar[u]},
		\]
		the induced morphism $\LL_{B_0/A_0}^\an[\frac{1}{p}]\ra \LL_{B_0'/A_0'}^\an[\frac{1}{p}]$ is an isomorphism.
		Moreover, using the distinguished triangles for $A_0\ra A_0'\ra B_0'$ and $A_0\ra B_0\ra B_0'$ respectively, we can reduce to show that 
		\[
		\LL_{A_0'/A_0}^\an[\frac{1}{p}]=0.
		\]
		
		Then we notice that since $A_0\ra A_0'$ is an isomorphism after inverting by $p$, this satisfies the assumption of Proposition \ref{int-rat}.
		So it suffices to prove that $\Omega_{A_0'/A_0}^{1,\an}[\frac{1}{p}]=0$.
		By the Corollary \ref{rel ft}, $A_0'$ is a topologically finitely presented algebra over $A_0$.
		We pick a set of generators $x_i$ of $A_0'$ over $A_0$.
		Then by $A_0'[\frac{1}{p}]=A_0[\frac{1}{p}]$, there exists a positive integer $N$ such that $p^Nx_i\in A_0$.
		Note that the continuous differential forms $\Omega_{A_0'/A_0}^{1,\an}$ is generated by the $dx_i$.
		So we get
		\[
		p^Ndx_i=d(p^Nx_i)=0.
		\]
		This implies that $\Omega_{A_0'/A_0}^{1,\an}$ is $p^\infty$-torsion.
		In particular, we have
		\[
		\Omega_{A_0'/A_0}^{1,\an}[\frac{1}{p}]=0.
		\]
		So we are done.
		
	\end{proof}
	Here are some applications of the above result.
	\begin{corollary}\label{rat cot 2}
		Let $f:A\ra B$ be a map of topologically finite type algebras over $\Bdre$, such that $\Spa(B)\ra \Spa(A)$ is smooth.
		Then the projection onto the zero-th homotopy group induces natural isomorphism
		\[
		\LL_{B/A}^\an\rra \Omega_{B/A}^1[0],
		\]
		where the right side is the modules of continuous differential forms.
	\end{corollary}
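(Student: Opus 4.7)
The plan is to reduce the statement to the integral computation of Proposition \ref{int-rat} via Proposition \ref{rat cot 1}. First I would choose a pair of rings of definition $(A_0, B_0) \in \calC_{B/A}$; such a pair exists because $A$ and $B$ are topologically finite type over $\Bdre$, and one may pick any surjection from $\Ainfe\langle T_i \rangle$ whose generic fiber yields the given presentations of $A$ and $B$, arranging that $f(A_0) \subset B_0$ by enlarging $B_0$ if necessary.

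Next, by Proposition \ref{rat cot 1}, the natural map
\[
\LL_{B_0/A_0}^\an[\tfrac{1}{p}] \rra \LL_{B/A}^\an
\]
is an isomorphism, so the computation is localized to a single pair of integral models. Since the generic fiber $\Spa(B) \to \Spa(A)$ is smooth by hypothesis, the hypothesis of Proposition \ref{int-rat} is satisfied, and we obtain a canonical isomorphism
\[
\LL_{B_0/A_0}^\an[\tfrac{1}{p}] \rra \Omega_{B_0/A_0}^{1,\an}[\tfrac{1}{p}][0].
\]
Chaining these two isomorphisms produces the identification of $\LL_{B/A}^\an$ with an actual $B$-module sitting in degree zero.

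The remaining step is to identify $\Omega_{B_0/A_0}^{1,\an}[\tfrac{1}{p}]$ with the continuous differentials $\Omega_{B/A}^1$ of the affinoid algebras, and to check that the resulting isomorphism agrees with the projection onto $\mathrm{H}^0$ of $\LL_{B/A}^\an$. For the former, the universal property of $\Omega_{B_0/A_0}^{1,\an}$ among continuous $A_0$-linear derivations, combined with the fact that inverting $p$ turns continuous $A_0$-linear derivations of $B_0$ into continuous $A$-linear derivations of $B$ (since any continuous derivation $B \to M$ with $M$ a $p$-adically separated $B$-module is determined by its restriction to a ring of definition), yields the identification $\Omega_{B_0/A_0}^{1,\an}[\tfrac{1}{p}] \cong \Omega_{B/A}^1$. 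For the compatibility, one observes that the map from $\LL_{B_0/A_0}^\an$ to $\Omega_{B_0/A_0}^{1,\an}[0]$ constructed in Proposition \ref{int-rat} is by construction the canonical projection onto $\mathrm{H}^0$, and this compatibility passes through the colimit and localization at $p$.

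The only mildly subtle point is the last identification $\Omega_{B_0/A_0}^{1,\an}[\tfrac{1}{p}] \cong \Omega_{B/A}^1$, which requires knowing that continuous K\"ahler differentials for topologically finite type $\Bdre$-affinoids are computed integrally and commute with inverting $p$; this is a standard computation analogous to \cite[Section 7.2]{GR} in the $K$-affinoid setting, and it extends verbatim to the $\Bdre$-affinoid setting since $A_0$ and $B_0$ lie in $\Afe$ and the continuous differential module is finitely generated (hence $p$-complete and behaves well under $[\tfrac{1}{p}]$) by Lemma \ref{mod over cafe}. No genuine obstacle is expected.
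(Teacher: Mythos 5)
Your proposal is correct and follows the same route as the paper, whose proof simply cites Proposition \ref{rat cot 1} and Proposition \ref{int-rat}; your additional care with the identification $\Omega_{B_0/A_0}^{1,\an}[\tfrac{1}{p}] \cong \Omega_{B/A}^1$ fills in a step the paper leaves implicit.
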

	
	\begin{proof}
		This follows from Proposition \ref{rat cot 1} and Proposition \ref{int-rat}.
	\end{proof}
	
	Similar to the integral case, when $B$ is a quotient ring of $A$, the analytic cotangent complex coincides with the algebraic cotangent complex.
	\begin{corollary}\label{cot finite 2}
		Let $f:A\ra B$ ba a map of topologically finite type algebras over $\Bdre$ such that $B$ is a finite $A$-module.
		Then the natural map from the algebraic cotangent complex to the analytic cotangent complex below is an isomorphism
		\[
		\LL_{B/A} \rra \LL_{B/A}^\an.
		\]
	\end{corollary}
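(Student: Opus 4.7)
The plan is to reduce the statement to its integral analogue (Proposition \ref{cot finite}) after choosing a suitably compatible pair of rings of definition.

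First I would construct rings of definition $A_0 \subset A$ and $B_0 \subset B$ with $(B_0, A_0) \in \calC_{B/A}$ such that $B_0$ is finite over $A_0$. Starting from any $A_0 \in \Afe$ that is a ring of definition of $A$, pick generators $b_1, \ldots, b_r$ of $B$ as an $A$-module; each $b_i$ is integral over $A$ and satisfies a monic relation $b_i^{n_i} + a_{i,n_i-1} b_i^{n_i-1} + \cdots + a_{i,0} = 0$ with $a_{i,j} \in A$. Multiplying by $p^{N n_i}$ and choosing $N \gg 0$ so that every $p^{jN} a_{i,n_i-j}$ lies in $A_0$, we see that each $p^N b_i$ is integral over $A_0$. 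Hence $B_0 := A_0[p^N b_1, \ldots, p^N b_r] \subset B$ is a finite $A_0$-algebra, automatically $p$-adically complete by Lemma \ref{mod over cafe} (iv), and $p$-torsion free since $p$ is a unit in $\Bdre$ (hence in $B$). By construction $B_0[\tfrac{1}{p}] = B$, so $(B_0, A_0) \in \calC_{B/A}$ and $B_0$ is in $\Afe$.

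Next, Proposition \ref{cot finite} applied to the finite map $A_0 \ra B_0$ yields a natural isomorphism of algebraic and analytic integral cotangent complexes
\[
\LL_{B_0/A_0} \xrightarrow{\sim} \LL_{B_0/A_0}^\an.
\]
Inverting $p$ on both sides gives $\LL_{B_0/A_0}[\tfrac{1}{p}] \cong \LL_{B_0/A_0}^\an[\tfrac{1}{p}]$. By the flat base change for the algebraic cotangent complex along $A_0 \ra A_0[\tfrac{1}{p}] = A$, together with $B_0[\tfrac{1}{p}] = B$, the left-hand side is canonically identified with $\LL_{B/A}$. Meanwhile, Proposition \ref{rat cot 1} (using the chosen pair $(B_0, A_0)$) identifies the right-hand side with $\LL_{B/A}^\an$. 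Composing these identifications produces the desired natural isomorphism $\LL_{B/A} \cong \LL_{B/A}^\an$, which one checks is the canonical comparison map (induced by the derived $p$-completion adjunction applied integrally and then inverted).

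The only delicate point is the construction of the finite integral model $B_0$: one must verify that generators of $B$ over $A$ can be rescaled into $B_0 \subset B^\circ$ while preserving their integral relations over $A_0$. Everything else is formal consequence of previously established results. Since this rescaling succeeds for any $A_0$ (by enlarging $N$), the argument is clean and no further obstruction arises.
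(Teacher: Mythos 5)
Your proposal is correct and follows essentially the same route as the paper: rescale $A$-module generators of $B$ by a power of $p$ to produce a ring of definition $B_0$ finite over $A_0$, apply the integral statement (Proposition \ref{cot finite}) to $A_0\ra B_0$, and then invert $p$, identifying the two sides via flat base change for the algebraic cotangent complex and via Proposition \ref{rat cot 1} for the analytic one. No gaps.
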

	\begin{proof}
		Pick a ring of definition $A_0$ of $A$ that is topologically finitely presented over $\Ainfe$.
		We first notice that under the assumption of $A\ra B$, we can find a ring of definition $B_0$ of $B$ that contains $f(A_0)$ and is finite over $A_0$.
		To find such $B_0$, we pick s set of $A$-module generators $x_i$ of $B$ over $A$.
		Then each $x_i$ satisfies a monic polynomial $f_i(X)=\sum_{j=0}^{r_i}a_{ij}X^j$ with coefficients in $A$.
		Since $A=A_0[\frac{1}{p}]$, we can pick a common integer $N\in \NN$, such that coefficients $p^Na_{ij}$ are inside of $A_0$ for each $i$ and $j$.
		From this, we see the element $p^Nx_i$ satisfies a monic polynomial with coefficient in $A_0$.
		In other words, the subring $B_0=f(A_0)[p^Nx_i]$ of $B$ is finite over $A_0$.
		
		Now the corollary follows easily from \Cref{rat cot 1} and Proposition \ref{cot finite}, since $\LL_{B/A}^\an$ is isomorphic to $\LL_{B_0/A_0}^\an[\frac{1}{p}]$, while the latter is computed by inverting $p$ at the algebraic cotangent complex $\LL_{B_0/A_0}$.
		Notice that thanks to the flat base change of the algebraic cotangent complex, $\LL_{B_0/A_0}[\frac{1}{p}]$ is exactly the algebraic cotangent complex of $B$ over $A$.
		So we get the result.
		
	\end{proof}
	
	As expected, we have the following simple description of the analytic cotangent complex for regular immersion.
	\begin{corollary}\label{comp int 2}
		Let $f:A\ra B$ ba a surjective map of topologically finite type algebras over $\Bdre$, such that the kernel $I$ is a regular ideal in $A$.
		Then we have a natural isomorphism
		\[
		\LL_{B/A}^\an \rra I/I^2[1].
		\]
	\end{corollary}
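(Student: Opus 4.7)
The plan is to mirror the strategy of Corollary~\ref{comp int} in the integral case, reducing the statement to a standard fact about algebraic cotangent complexes of regular immersions via the comparison with the analytic cotangent complex established for finite morphisms.

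First, I would observe that since $f : A \to B$ is surjective, $B$ is automatically a finite $A$-module (in fact cyclic), so the hypothesis of Corollary~\ref{cot finite 2} applies. Hence the canonical comparison map provides a natural isomorphism
\[
\LL_{B/A} \xrightarrow{\sim} \LL_{B/A}^{\an}.
\]
This reduces the statement to computing the algebraic cotangent complex $\LL_{B/A}$, at which point the role of the non-archimedean topology disappears.

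Next, I would invoke the standard fact from the theory of algebraic cotangent complexes (e.g.\ \cite{Sta} Tag~08SJ, already used in the integral Corollary~\ref{comp int}) that for a surjection $A \twoheadrightarrow B = A/I$ whose kernel $I$ is a regular ideal, the transitivity triangle degenerates to give a canonical isomorphism
\[
\LL_{B/A} \xrightarrow{\sim} I/I^{2}[1],
\]
concentrated in cohomological degree $-1$ (as a complex, this is $I/I^2$ placed so that its cohomology sits in degree $1$ after the shift). Composing with the isomorphism of the previous paragraph yields the desired natural isomorphism $\LL_{B/A}^{\an} \cong I/I^{2}[1]$.

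There is no serious obstacle here: Corollary~\ref{cot finite 2} handles the subtle point (the passage from algebraic to analytic completions commutes with the cotangent complex for finite morphisms), and the algebraic input is classical. The one small sanity check worth performing is that the regularity hypothesis on $I \subset A$ is interpreted in the usual sense of being locally generated by a Koszul-regular (equivalently, in the noetherian setting relevant here, a regular) sequence, so that the algebraic cotangent complex collapses as stated; this is automatic given the coherence of $A$ (Lemma~\ref{mod over cafe}) and the finite presentation of $I$ which follows from $B$ being finitely presented over $A$ via Corollary~\ref{mod over cafe 2}.
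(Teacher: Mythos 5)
Your proposal is correct and is exactly the argument the paper intends: the corollary is stated immediately after Corollary~\ref{cot finite 2} precisely because a surjection makes $B$ a cyclic, hence finite, $A$-module, so the algebraic-to-analytic comparison applies, and the rest is the classical computation of $\LL_{B/A}$ for a regular ideal (mirroring the proof of the integral Corollary~\ref{comp int}). The only blemish is a self-contradictory aside about where $I/I^2$ sits after the shift (it lives in cohomological degree $-1$ in $I/I^2[1]$, not degree $1$), but this does not affect the argument.
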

	\begin{proof}
		This follows from \Cref{cot finite 2} and \cite[\href{https://stacks.math.columbia.edu/tag/08SJ}{Tag 08SJ}]{Sta}.
	\end{proof}
	
	Another quick upshot is the pseudo-coherence of the analytic cotangent complex.
	\begin{corollary}
		Let $A\ra B$ be a map of topologically finite type algebras over $\Bdre$.
		Then the analytic cotangent complex $\LL_{B/A}^\an$ is a pseudo-coherent complex of $B$-modules.
	\end{corollary}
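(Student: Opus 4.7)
The plan is to reduce the statement directly to the integral version in Proposition \ref{tri}(i) via the localization $\Ainfe \to \Bdre$. First I would choose a pair of rings of definition $(A_0, B_0) \in \calC_{B/A}$ in $\Afe$, so that $f$ restricts to an $\Ainfe$-algebra homomorphism $A_0 \to B_0$ with $A_0[\tfrac{1}{p}] = A$ and $B_0[\tfrac{1}{p}] = B$. By Proposition \ref{rat cot 1}, the natural arrow
$$\LL_{B_0/A_0}^{\mathrm{an}}[\tfrac{1}{p}] \;\xrightarrow{\ \sim\ }\; \LL_{B/A}^{\mathrm{an}}$$
is an isomorphism in the derived category of $B$-modules.

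Next, by Proposition \ref{tri}(i), the integral complex $\LL_{B_0/A_0}^{\mathrm{an}}$ is a bounded above pseudo-coherent object of $\scrD(B_0)$; equivalently, it admits a representative $P^{\bullet}$ with each $P^i$ a finite free $B_0$-module and $P^i = 0$ for $i > 0$. Since $B = B_0[\tfrac{1}{p}]$ is a flat $B_0$-algebra (being a localization), the derived base change coincides with the ordinary one, so
$$\LL_{B/A}^{\mathrm{an}} \;\simeq\; \LL_{B_0/A_0}^{\mathrm{an}} \otimes^{L}_{B_0} B \;\simeq\; P^{\bullet} \otimes_{B_0} B \;=\; P^{\bullet}[\tfrac{1}{p}].$$
The right-hand side is a bounded above complex of finite free $B$-modules, exhibiting $\LL_{B/A}^{\mathrm{an}}$ as pseudo-coherent and bounded above over $B$.

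There is essentially no serious obstacle beyond bookkeeping: the entire content lies in Proposition \ref{tri}(i) combined with Proposition \ref{rat cot 1}, together with the elementary observation that inverting $p$ in a bounded above complex of finite free $B_0$-modules yields a bounded above complex of finite free $B$-modules. If one preferred to avoid the choice of $(A_0, B_0)$, an alternative route would be to pick a surjection $A\langle T_1, \ldots, T_m\rangle \twoheadrightarrow B$ from a convergent power series ring and combine the associated distinguished triangle of analytic cotangent complexes with Corollary \ref{rat cot 2} (smooth term, finite free in a single degree) and Corollary \ref{cot finite 2} (finite term, reducing to the algebraic cotangent complex, known to be pseudo-coherent); but the integral-localization argument above is the most direct given what has already been established.
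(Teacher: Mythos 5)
Your proof is correct and follows exactly the paper's route: the paper's own argument is precisely the combination of Proposition \ref{rat cot 1} (identifying $\LL_{B/A}^\an$ with $\LL_{B_0/A_0}^\an[\frac{1}{p}]$ for a pair of rings of definition) with Proposition \ref{tri}(i) (pseudo-coherence of the integral analytic cotangent complex), and your observation that inverting $p$ preserves bounded-above complexes of finite free modules is the only bookkeeping needed.
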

	\begin{proof}
		This follows from Proposition \ref{rat cot 1} and the integral version of the result Proposition \ref{tri}.
	\end{proof}
	
	We also obtain the distinguished triangle for triples as follow.
	\begin{corollary}\label{rat cot 3}
		Let $A\ra B\ra C$ be maps of topologically finite type algebras over $\Bdre$.
		Then there exists a distinguished triangle of analytic cotangent complexes of affinoid algebras
		\[
		\LL_{B/A}^\an\otimes_B^L C\rra \LL_{C/A}^\an\rra \LL_{C/B}^\an.
		\]
	\end{corollary}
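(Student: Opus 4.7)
The plan is to reduce the statement to the integral analogue already established in \Cref{tri}(ii) by choosing compatible rings of definition and then inverting $p$, invoking \Cref{rat cot 1} to identify the resulting complexes with the desired analytic cotangent complexes.

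First, I would construct a compatible chain of rings of definition $A_{0}\subset A$, $B_{0}\subset B$, $C_{0}\subset C$, with all three in $\Afe$ and with $A_0\to B_0\to C_0$. To do this, start with any $A_0\in \Afe$ which is a ring of definition of $A$ (for instance, choose a surjection $\Ainfe\langle T_i\rangle \twoheadrightarrow A_0$). Next, pick an arbitrary ring of definition $B_0'$ of $B$ in $\Afe$; if it does not contain $f(A_0)$, enlarge it using the trick from the proof of \Cref{cot finite 2}: pick generators of $f(A_0)$, scale each by a suitable power of $p$ so they become integral over $B_0'$, and adjoin them to obtain a ring of definition $B_0\supset f(A_0)$ which is finite (hence topologically finitely presented) over $B_0'$ and still lies in $\Afe$. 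Apply the same procedure to $C$ to obtain $C_0\in \Afe$ with $g(B_0)\subset C_0$.

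Next, I would apply \Cref{tri}(ii) to the composition $A_0\to B_0\to C_0$, obtaining a distinguished triangle of pseudo-coherent bounded-above complexes of $C_0$-modules
\[
\LL_{B_0/A_0}^\an\otimes_{B_0}^L C_0 \rra \LL_{C_0/A_0}^\an \rra \LL_{C_0/B_0}^\an.
\]
Since $C_0$ is $p$-torsion free and each term is a complex of $C_0$-modules, inverting $p$ is exact and preserves distinguished triangles, yielding a distinguished triangle of $C$-modules
\[
\LL_{B_0/A_0}^\an[\tfrac{1}{p}]\otimes_{B}^L C \rra \LL_{C_0/A_0}^\an[\tfrac{1}{p}] \rra \LL_{C_0/B_0}^\an[\tfrac{1}{p}].
\]
Here I use that $\LL_{B_0/A_0}^\an\otimes_{B_0}^L C_0$ becomes $\LL_{B_0/A_0}^\an[\frac{1}{p}]\otimes_{B}^L C$ after inverting $p$, using that $C=C_0[\frac{1}{p}]$ and $B=B_0[\frac{1}{p}]$ and the compatibility of derived tensor product with localization.

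Finally, I would apply \Cref{rat cot 1} to each of the three terms: the isomorphisms $\LL_{B_0/A_0}^\an[\frac{1}{p}]\cong \LL_{B/A}^\an$, $\LL_{C_0/A_0}^\an[\frac{1}{p}]\cong \LL_{C/A}^\an$, and $\LL_{C_0/B_0}^\an[\frac{1}{p}]\cong \LL_{C/B}^\an$ identify the above triangle with the desired one. I expect no substantive obstacle here: the construction of compatible rings of definition is the only mildly nontrivial step, and it is essentially the same integrality scaling argument used in \Cref{cot finite 2}. The functoriality of the analytic cotangent complex (through the representing actual complexes described in \Cref{rat cot actual complex}) ensures the resulting triangle agrees with the one induced by the maps $A\to B\to C$.
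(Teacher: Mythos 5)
Your proposal is correct and follows essentially the same route as the paper: reduce to the integral triangle of \Cref{tri}(ii) for a compatible chain of rings of definition, invert $p$ (using compatibility of the derived tensor product with the base change along $\Z_p\ra\Q_p$), and identify the three terms via \Cref{rat cot 1}. One small remark on your enlargement step: adjoining the scaled elements $p^N x_i$ to $B_0'$ produces a ring that need not contain $f(A_0)$ itself; the cleaner move is to adjoin the images $f(a_i)$ of topological generators of $A_0$ directly, which are power-bounded since $f$ is continuous, so the resulting subring is still a topologically finite type ring of definition in $\Afe$.
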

	\begin{proof}
		Let $A_0\ra B_0\ra C_0$ be an arbitrary choice of rings of definition of $A\ra B\ra C$ that are topologically finitely presented over $\Ainfe$.
		By Proposition \ref{tri}, we have a distinguished triangle 
		\[
		\LL_{B_0/A_0}^\an\otimes_{B_0}^L C_0\rra \LL_{C_0/A_0}^\an\rra \LL_{C_0/B_0}^\an.
		\]
		Note that for the first term above we have the isomorphism
		\begin{align*}
			(\LL_{B_0/A_0}^\an\otimes_{B_0} C_0)\otimes_{\Z_p}^L \Q_p&\simeq (\LL_{B_0/A_0}^\an\otimes_{\Z_p}^L \Q_p)\otimes_{B_0\otimes^L_{\Z_p}\Q_p}^L (C_0\otimes^L_{\Z_p}\Q_p)\\
			&\simeq \LL_{B/A}^\an \otimes_B^L C.
		\end{align*}
		So the corollary follows from Proposition \ref{rat cot 1} by the base change along $\Z_p\ra \Q_p$.
	\end{proof}
	A quick consequence is the following change of base equality.
	\begin{corollary}\label{rat cot 4, et bases}
		Let $A\ra A'\ra B$ be maps of topologically finite algebras over $\Bdre$, with $A\ra A'$ being \'etale.
		Then the natural map below is an isomorphism
		\[
		\LL^\an_{B/A}\rra \LL^\an_{B/A'}.
		\]
	\end{corollary}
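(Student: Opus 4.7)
The plan is to deduce the corollary as a direct consequence of the transitivity triangle established in Corollary \ref{rat cot 3} combined with the explicit computation in the smooth case (Corollary \ref{rat cot 2}). First, I would invoke the distinguished triangle
\[
\LL_{A'/A}^\an\otimes_{A'}^L B \rra \LL_{B/A}^\an \rra \LL_{B/A'}^\an
\]
associated to the composition $A\to A'\to B$. It then suffices to show that $\LL_{A'/A}^\an$ vanishes, since vanishing is preserved under the derived base change $-\otimes_{A'}^L B$.

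Next, since $A\to A'$ is \'etale (hence in particular smooth as a map of topologically finite type $\Bdre$-algebras), Corollary \ref{rat cot 2} applies and gives a canonical isomorphism
\[
\LL_{A'/A}^\an \rra \Omega^1_{A'/A}[0],
\]
where the right-hand side is the module of continuous K\"ahler differentials. Finally, because $A\to A'$ is \'etale, the diagonal $\Spa(A')\to \Spa(A'\widehat\otimes_A A')$ is an open immersion, and the standard computation $\Omega^1_{A'/A}=0$ follows (for example, via the conormal description $I/I^2$ of continuous differentials used in Section \ref{sec cry}, which vanishes precisely when the diagonal is open). Hence $\LL_{A'/A}^\an\simeq 0$, and the triangle collapses to the desired isomorphism $\LL^\an_{B/A}\rra \LL^\an_{B/A'}$.

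No serious obstacle is anticipated here: the argument is purely formal once Corollary \ref{rat cot 3} (transitivity) and Corollary \ref{rat cot 2} (smooth computation) are in hand. The only point to verify carefully is that \'etaleness in the adic sense implies both smoothness (so that Corollary \ref{rat cot 2} applies) and the vanishing of continuous differentials, but both are standard facts from Huber's theory (\cite{Hu96}, 1.6).
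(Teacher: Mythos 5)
Your proposal is correct and follows essentially the same route as the paper: invoke the transitivity triangle of Corollary \ref{rat cot 3} to reduce to the vanishing of $\LL_{A'/A}^\an$, and then conclude from Corollary \ref{rat cot 2} since $\Omega^1_{A'/A}=0$ for an \'etale map. The extra justification you give for the vanishing of continuous differentials via the open diagonal is a harmless elaboration of what the paper leaves implicit.
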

	\begin{proof}
		By the distinguished triangle of the triple in the Corollary \ref{rat cot 3}, it suffices to show that $\LL_{A'/A}^\an$ vanishes.
		So this follows from the assumption and the Corollary \ref{rat cot 2}.
	\end{proof}
	As last, we have the \'etale  base change formula as below.
	\begin{corollary}\label{rat cot 5, et base change}
		Let $A\ra B\ra B'$ be maps of topologically finite algebras over $\Bdre$, with $B'/B$ being \'etale.
		Then we have the following natural isomorphism
		\[
		\LL_{B/A}^\an\otimes_B B'\rra \LL_{B'/A}^\an.
		\]
		
		In particular, when there exists an \'etale morphism $A\ra A'$ such that $B'=A'\otimes_A B$, we get the base change formula
		\[
		\LL_{B/A}^\an\otimes_A A'\simeq \LL_{B'/A'}^\an. 
		\]
	\end{corollary}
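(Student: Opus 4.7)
The plan is to deduce both statements formally from the distinguished triangle of Corollary \ref{rat cot 3} together with the computation of the analytic cotangent complex for smooth maps in Corollary \ref{rat cot 2}. For the first statement, I would apply Corollary \ref{rat cot 3} to the composition $A\to B\to B'$ to obtain the distinguished triangle
\[
\LL_{B/A}^\an\otimes_B^L B' \rra \LL_{B'/A}^\an \rra \LL_{B'/B}^\an.
\]
Since $B\to B'$ is \'etale, the map $\Spa(B')\to\Spa(B)$ is smooth of relative dimension zero, hence $\Omega_{B'/B}^1=0$. By Corollary \ref{rat cot 2}, this forces $\LL_{B'/B}^\an \cong \Omega_{B'/B}^1[0]=0$, so the first arrow of the triangle is an isomorphism. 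Because \'etale maps are flat, the derived tensor product $\LL_{B/A}^\an\otimes_B^L B'$ agrees with the ordinary tensor product $\LL_{B/A}^\an\otimes_B B'$, yielding the desired isomorphism.

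For the second statement, assume $B'=A'\otimes_A B$ with $A\to A'$ \'etale. First I would apply the triangle of Corollary \ref{rat cot 3} to $A\to A'\to B'$, namely
\[
\LL_{A'/A}^\an\otimes_{A'}^L B' \rra \LL_{B'/A}^\an \rra \LL_{B'/A'}^\an,
\]
and use the vanishing $\LL_{A'/A}^\an=0$ (same argument as above applied to $A\to A'$) to conclude that $\LL_{B'/A}^\an\cong\LL_{B'/A'}^\an$. Since $B\to B'$ is the base change of the \'etale map $A\to A'$, it is itself \'etale, so the first part of the corollary gives $\LL_{B'/A}^\an\cong \LL_{B/A}^\an\otimes_B B'$. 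Finally the identification $\LL_{B/A}^\an\otimes_B B' = \LL_{B/A}^\an\otimes_B(A'\otimes_A B)= \LL_{B/A}^\an\otimes_A A'$ finishes the argument.

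There is essentially no serious obstacle here once the preceding machinery is in place; the only point requiring a moment of care is the identification of ``\'etale'' for topologically finite type $\Bdre$-algebras with ``smooth of relative dimension zero'' so that Corollary \ref{rat cot 2} can be invoked to kill $\LL^\an$. Flatness of \'etale morphisms, which is needed to pass from the derived to the underived tensor product, is a standard input.
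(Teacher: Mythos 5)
Your proof is correct and follows essentially the same route as the paper: the first isomorphism via the transitivity triangle of Corollary \ref{rat cot 3} plus the vanishing of $\LL_{B'/B}^\an$ from Corollary \ref{rat cot 2}, and the second via the invariance under \'etale change of base (which you re-derive inline but is exactly Corollary \ref{rat cot 4, et bases}) together with the tensor identity $\LL_{B/A}^\an\otimes_B B'=\LL_{B/A}^\an\otimes_A A'$. Nothing further is needed.
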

	\begin{proof}
		The first isomorphism follows from the distinguished triangle (Corollary \ref{rat cot 3}) for $A\ra B\ra B'$, and  the \'etaleness of $B'/B$ (Corollary \ref{rat cot 2}).
		The second isomorphism follows from the Corollary \ref{rat cot 4, et bases} and the isomorphism 
		\[
		\LL_{B/A}^\an\otimes_B B'=\LL_{B/A}^\an\otimes_B B\otimes_A A'.
		\]
	\end{proof}

	\subsection{Derived de Rham complex: affinoid case}\label{subsec aff ddR}
	In this subsection, we construct the derived de Rham cohomology for topologically finite type algebras over $\Bdre$.
	
	As a preparatory step, we first construct the rational analytic derived de Rham complex for a map $A_0\ra B_0$ of $\Ainf_e$-algebras, which is a complete filtered complex over $A_0[\frac{1}{p}]$, namely an object in $\wh\DF(A_0[\frac{1}{p}])$.
	We also refer the reader to \cite[Const.\ 4.1]{Bha12} for basics on the algebraic derived de Rham complex of algebras.
	\begin{construction}\label{const ddr}
		Let $A_0$ be an $\Ainfe$-algebra in $\Afe$.
		We want to build a functor $F:\Afe_{/A_0} \ra \wh\DF(\Bdre),$ sending $A_0\ra B_0$ to a filtered complete $\mathbb{E}_\infty$-algebra over $A_0[\frac{1}{p}]$.
		\begin{itemize}
			\item[Step 1]
			Let $P$ be the standard polynomial resolution of $B_0$ over $A_0$.
			The de Rham complex $\Omega_{P/A_0}^\bullet$ of $P$ over $A_0$ is then a simplicial complex of $P$-modules.
			Moreover, the (direct sum) totalization $\Tot(\Omega_{P/A_0}^\bullet)$ is a cochain complex of $A_0$-modules that comes with a canonical decreasing filtration, defined by $\Fil^i=\Tot(\Omega_{P/A_0}^{\geq i})$.
			
			\item[Step 2]
			Now we take the derived $p$-adic completion of the filtered cochain complex $(\Tot(\Omega_{P/A_0}^\bullet), \Fil^i)$, to get an object $(E,\Fil^i E)$ in the filtered derived category, such that $E$ and $\Fil^i E$ are all derived $p$-adic complete.
			Then we invert $p$ in $(E, \Fil^i E)$, to get an object $(E[\frac{1}{p}], \Fil^i E[\frac{1}{p}])$ in the filtered derived category of $A_0[\frac{1}{p}]$-modules.
			
			\item[Step 3]
			At last, we denote $F(B_0/A_0)$ to be the filtered completion of $(E[\frac{1}{p}], \Fil^i E[\frac{1}{p}])$.
			\footnote{Roughly speaking, the filtered completion here means we are taking the derived inverse limit $R\varprojlim_j E[\frac{1}{p}]/\Fil^j E[\frac{1}{p}]$, together with the induced filtration with the $i$-th filtration being $R\varprojlim_{j\geq i} \Fil^iE[\frac{1}{p}]/\Fil^j E[\frac{1}{p}]$. This can be understood more canonically as the image of $(E[\frac{1}{p}], \Fil^i E[\frac{1}{p}])$ along the symmetric monoidal \emph{filtered completion functor} $\DF(A_0[\frac{1}{p}]) \to \wh\DF(A_0[\frac{1}{p}])$.}
			Thus we get a functor from maps in $\Afe$ to the filtered complete derived category of $\Bdre$-modules (even $A_0[\frac{1}{p}]$-modules), sending $A_0\ra B_0$ to $F(B_0/A_0)$.
			
		\end{itemize}
	\end{construction}
	
	\begin{remark}
		From the construction above, it is clear that the $i$-th graded piece of $F(B_0/A_0)$ is isomorphic to 
		\[
		\bigl( L\wedge^i \LL_{B_0/A_0}^\an[\frac{1}{p}] \bigr)[-i].
		\]
	\end{remark}
	\begin{remark}
		Recall given a complex of $\Z_p$-modules $C$, it admits a natural map onto its derived $p$-adic completion $\wt C$.
		Applying this to the construction above (Step 2), we see there exists a natural filtered map from the algebraic derived de Rham complex $\wh{\mathrm{dR}}_{B_0[\frac{1}{p}]/A_0[\frac{1}{p}]}$ to $F(B_0,A_0)$.
	\end{remark}
	\begin{remark}\label{ddr, to diff}
		From the construction above, the natural map from $P$ to $B_0$ induces a filtered map from $F(B_0,A_0)$ to the continuous de Rham complex $\Omega_{B_0/A_0}^{\bullet,\an}[\frac{1}{p}]$, which is compatible with the differentials.
		Here the filtration on the latter is the usual Hodge filtration.
	\end{remark}
	\begin{remark}
		As the de Rham complex is equipped with a structure of commutative differential graded algebra, and the above constructions are all lax-symmetric monoidal, the filtered complex $F(B_0/A_0)$ is also naturally a filtered $\mathbb{E}_\infty$-algebra in $\Bdre$-modules.
	\end{remark}

	Now we consider the constructions for affinoid rigid spaces.
	Let $f:A\ra B$ be a map of topologically finite type algebras over $\Bdre$.
	Recall that the category $\calC_{B/A}$ is defined as pairs of rings $(B_0,A_0)$, where $A_0$ and $B_0$ are rings of definition of $A$ and $B$ separately, such that both of them are topologically finitely presented over $\Ainfe$, and $f(A_0)\subset B_0$.
	The morphism among pairs is defined by the natural inclusion map of pairs. 
	Here we note that by the Corollary \ref{rel ft}, $B_0$ is a topologically finitely presented algebra over $A_0$ automatically.
	
	\begin{definition}\label{ddr def}
		Let $f:A\ra B$ be a map of topologically finite type algebras over $\Bdre$, and let $\calC_{B/A}$ be the category of pairs of their rings of definitions as above.
		The \emph{analytic derived de Rham complex of $B$ over $A$}, denoted by $\dR_{B/A}$, is an object in $\wh\DF(\Bdre)$, defined as 
		\[
		\dR_{B/A}:=filtered~completion~of~\underset{(B_0,A_0)\in\calC_{B/A}}{\colim} F(B_0/A_0).
		\]
	\end{definition}
	The filtration of $\dR_{B/A}$ is called the \emph{algebraic Hodge filtration}.
	If we forget the filtered structure, we get the \emph{underlying complex of $\dR_{B/A}$}.
	It is denoted as $\ul {\wh{\mathrm{dR}}}^\an_{B/A}$, and is defined as the 0-th filtration of $\dR_{B/A}$,
	which is the image under the natural projection functor
	\begin{align*}
		\wh\DF(\Bdre)\subset \Fun(\NN^\op, \scrD(\Bdre)) &\rra \scrD(\Bdre);\\
		C_\bullet &\lmt C_0.
	\end{align*}

	\begin{corollary}
		Let $A\ra B$ be a map of topologically finite type algebras over $\Bdre$.
		Then the $i$-th graded piece of $\dR_{B/A}$ is naturally isomorphic to $L\wedge^i \LL_{B/A}^\an[-i]$.
		In particular, for any pair of rings of definition $(B_0,A_0)\in \calC_{B/A}$, the natural map below is a filtered isomorphism
		\[
		F(B_0,A_0)\rra \dR_{B/A}.
		\]
	\end{corollary}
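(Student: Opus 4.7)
The plan is to derive both statements of the corollary from the same core input, namely Proposition \ref{rat cot 1}, by tracking graded pieces through the colimit and filtered completion steps in Construction \ref{const ddr} and Definition \ref{ddr def}.

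First I would compute the graded pieces of $\dR_{B/A}$. The filtered completion functor $\DF(\Bdre) \to \wh\DF(\Bdre)$ is left adjoint to the inclusion, and it preserves the associated graded pieces in each filtration degree (it only modifies the limit at $\infty$). Hence
\[
\gr^i \dR_{B/A} \;\cong\; \gr^i\!\!\underset{(B_0,A_0)\in\calC_{B/A}}{\colim} F(B_0/A_0).
\]
The indexing category $\calC_{B/A}$ is filtered: given two pairs $(B_0,A_0)$ and $(B_0',A_0')$, the pair $(B_0 + B_0',\, A_0 + A_0')$ still lies in $\calC_{B/A}$ by Corollary \ref{mod over cafe 2}. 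Since the functor $\gr^i$ is a colimit-preserving functor on $\DF$, and $L\wedge^i$ commutes with filtered colimits in the derived $\infty$-category, I obtain
\[
\gr^i \dR_{B/A} \;\cong\; \underset{(B_0,A_0)}{\colim} \, L\wedge^i \LL_{B_0/A_0}^\an\!\left[\tfrac{1}{p}\right][-i] \;\cong\; L\wedge^i\!\left(\underset{(B_0,A_0)}{\colim} \LL_{B_0/A_0}^\an\!\left[\tfrac{1}{p}\right]\right)[-i].
\]
By Construction \ref{ana cot}, the inner colimit is exactly $\LL_{B/A}^\an$, proving the graded piece identification.

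Second, for the claim that $F(B_0,A_0) \to \dR_{B/A}$ is a filtered isomorphism, I would argue as follows. By construction, $F(B_0,A_0)$ already lives in $\wh\DF(\Bdre)$, so both the source and the target are filtered complete. A standard fact about filtered complete objects (e.g., by induction on truncations $C_\bullet/\Fil^n$ combined with $C_\bullet \cong \lim_n C_\bullet/\Fil^n$) says that a morphism between two filtered complete objects is a filtered isomorphism as soon as it induces an isomorphism on every $\gr^i$. Combining the graded piece formula above with Proposition \ref{rat cot 1}, the induced map
\[
\gr^i F(B_0/A_0) \;=\; L\wedge^i \LL_{B_0/A_0}^\an\!\left[\tfrac{1}{p}\right][-i] \;\longrightarrow\; L\wedge^i \LL_{B/A}^\an[-i] \;=\; \gr^i \dR_{B/A}
\]
is an isomorphism for every $i \geq 0$, so the filtered map itself is an isomorphism.

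The only real subtlety in this plan is justifying that graded pieces and $L\wedge^i$ genuinely commute with the filtered colimit along $\calC_{B/A}$ inside the $\infty$-categorical framework set up in Paragraph \ref{par, infty}; once one has filteredness of $\calC_{B/A}$ and the presentability of $\scrD(\Bdre)$, this is a formal consequence of the fact that $\gr^i$ and $L\wedge^i$ are both left adjoints (or more precisely, preserve filtered colimits). Everything else reduces cleanly to Proposition \ref{rat cot 1}.
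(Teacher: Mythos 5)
Your proposal is correct and follows essentially the same route as the paper: both reduce the statement to the graded pieces of the algebraic Hodge filtration and then invoke Proposition \ref{rat cot 1} (equivalently \ref{int-rat}) to identify each $L\wedge^i \LL_{B_0/A_0}^\an[\frac{1}{p}]$ with $L\wedge^i\LL_{B/A}^\an$, the only cosmetic difference being that the paper collapses the colimit of graded pieces by noting its transition maps are isomorphisms, whereas you commute $\gr^i$ and $L\wedge^i$ past the filtered colimit and identify the inner colimit with $\LL_{B/A}^\an$ by definition. (One small imprecision: the upper bound of two pairs in $\calC_{B/A}$ should be the subring generated by $B_0$ and $B_0'$, not the literal sum $B_0+B_0'$, but this does not affect the argument.)
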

	\begin{proof}
		By the construction above, the algebraic Hodge filtration $\Fil^i \dR_{B/A}$ has the graded factor 
		\[
		\gr^i \dR_{B/A}=\underset{(B_0,A_0)\in\calC_{B/A}}{\colim}L\wedge^i\LL_{B_0/A_0}^\an[\frac{1}{p}][-i].
		\]
		By \Cref{rat cot 1} and the assumption on $(B_0,A_0)$, each $L\wedge^i\LL_{B_0/A_0}^\an[\frac{1}{p}]$ is isomorphic to the $i$-th derived wedge product of the analytic cotangent complex for the affinoid algebras $B/A$.
		In particular, the transition maps in the colimit above are all isomorphisms, and we can replace them by one single term.
		Thus we get
		\[
		\gr^i \dR_{B/A}\simeq L\wedge^i \LL_{B/A}^\an[-i].
		\]
		As a consequence, thanks to the filtered completeness, the filtered isomorphism can be checked on the graded pieces, and hence the natural map $F(B_0,A_0) \rra \dR_{B/A}$ is a filtered isomorphism.
		
	\end{proof}
	
	\begin{remark}
		By taking the colimit for the natural filtered map
		\[ 
		\wh{\mathrm{dR}}_{B_0[\frac{1}{p}]/A_0[\frac{1}{p}]} \rra F(B_0,A_0),
		\] 
		we get a canonical filtered map from the algebraic  derived de Rham complex $\wh{\mathrm{dR}}_{B/A}$   to the analytic derived de Rham complex $\dR_{B/A}$.
		This is compatible with the canonical map of each graded factor
		\[
		L\wedge^i\LL_{B/A} \rra L\wedge^i\LL_{B/A}^\an.
		\]
	\end{remark}
	\begin{remark}
		As the colimit is a lax-symmetric monoidal functor, the analytic derived de Rham complex $\dR_{B/A}$ is naturally a filtered $\mathbb{E}_\infty$-algebra in $\scrD(\Bdre)$.
	\end{remark}

	Here we provide a simple description of the analytic derived de Rham complex for two special cases: the smooth case and the complete intersections.
	\begin{proposition}\label{ddr sm}
		Let $A\ra B$ be a smooth map of topologically finite type algebras over $\Bdre$.
		Then 
		the natural morphism below from the analytic derived de Rham complex to the continuous de Rham complex is a filtered isomorphism:
		\[
		\dR_{B/A}\rra \Omega_{B/A}^\bullet.
		\]
		
	\end{proposition}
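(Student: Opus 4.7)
The plan is to verify the filtered isomorphism by checking on the associated graded pieces of the algebraic Hodge filtration on the source and the naive (b\^ete) filtration $\sigma^{\geq i}\Omega^\bullet_{B/A}$ on the target. The filtered map itself is the one from Remark \ref{ddr, to diff}, obtained by taking the colimit over $(B_0,A_0)\in\calC_{B/A}$ of the maps $F(B_0/A_0)\to \Omega_{B_0/A_0}^{\bullet,\an}[\tfrac{1}{p}]$ induced by the augmentation $P\to B_0$ of the standard simplicial resolution. Both sides are filtered complete: the source by construction, and the target because $\sigma^{\geq i}\Omega^\bullet_{B/A}$ is concentrated in cohomological degrees $\geq i$, so the inverse limit vanishes and hence the filtration is complete in $\DF(\Bdre)$.

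The first step is to identify the $i$-th graded piece of the target: by definition of the b\^ete filtration, $\gr^i\sigma^{\geq \bullet}\Omega^\bullet_{B/A}$ is the single object $\Omega_{B/A}^i[-i]$. On the source side, we already know that $\gr^i\dR_{B/A}\cong L\wedge^i\LL^\an_{B/A}[-i]$. The map between them, under these identifications, is the natural map $L\wedge^i \LL^\an_{B/A}\to \Omega^i_{B/A}$ coming from the projection $\LL^\an_{B/A}\to \Omega^1_{B/A}[0]$ followed by the antisymmetrization.

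The second step is to use the smoothness hypothesis. By Corollary \ref{rat cot 2}, the map $\LL^\an_{B/A}\to \Omega^1_{B/A}[0]$ is an isomorphism, and $\Omega^1_{B/A}$ is a finite projective (in particular flat) $B$-module because $\Spa(B)\to \Spa(A)$ is smooth. Hence $L\wedge^i\LL^\an_{B/A}$ coincides with the ordinary wedge power $\wedge^i_B\Omega^1_{B/A}=\Omega^i_{B/A}$, and the map on $\gr^i$ becomes the identity. Thus the filtered map is a graded isomorphism, and so by completeness of both sides a filtered isomorphism.

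The only point that requires a little care — and the main place one could slip — is justifying that the natural map $\gr^i\dR_{B/A}\to \Omega^i_{B/A}[-i]$ really is (after the identifications above) the expected antisymmetrization map on wedge powers. To see this, one traces through Construction \ref{const ddr}: for $(B_0,A_0)\in\calC_{B/A}$ and $P$ the standard polynomial resolution, the $i$-th graded piece of the Hodge filtration on $\Tot(\Omega^\bullet_{P/A_0})$ is precisely the simplicial complex $\Omega^i_{P/A_0}\otimes_{P} B_0$ in cohomological degree $i$, whose derived $p$-completion (followed by inverting $p$) computes $L\wedge^i\LL^\an_{B_0/A_0}[\tfrac{1}{p}][-i]$; the augmentation to $\Omega^i_{B_0/A_0}^{,\an}[\tfrac{1}{p}][-i]$ is induced functorially by $P\to B_0$, and after passing to the colimit over $\calC_{B/A}$ agrees with the wedge-power map described above. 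This identification is essentially formal, and given the flatness of $\Omega^1_{B/A}$ established via smoothness, no obstruction arises.
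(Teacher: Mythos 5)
Your proposal is correct and follows essentially the same route as the paper: use the filtered map of Remark \ref{ddr, to diff}, reduce to graded pieces via filtered completeness of both sides, and apply Corollary \ref{rat cot 2} together with the finite projectivity of $\Omega^1_{B/A}$ to identify $L\wedge^i\LL^\an_{B/A}$ with $\Omega^i_{B/A}$. The only cosmetic difference is that you justify completeness of the target by the increasing connectivity of $\sigma^{\geq i}\Omega^\bullet_{B/A}$, whereas the paper invokes the vanishing of $\Omega^i_{B/A}$ for $i>\dim_A(B)$; both are valid.
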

	\begin{proof}
		By the Remark \ref{ddr, to diff}, there exists a natural filtered map from $\dR_{B/A}$ to the continuous de Rham complex $\Omega_{B/A}^\bullet$, which is compatible with the differential maps.
		By the assumption and Corollary \ref{rat cot 2}, the analytic cotangent complex $\LL_{B/A}^\an$ is isomorphic to the module of continuous differential forms $\Omega_{B/A}^1[0]$, which is a free $B$-module whose rank is equal to the relative dimension $\dim_A(B)$.
		On the other hand, the de Rham complex of affinoid algebras $B$ over $A$ is bounded above by the relative dimension and is thus complete under the Hodge filtration.
		The derived wedge product $L\wedge^i\LL_{B/A}^\an$ is isomorphic to $\wedge^i\Omega_{B/A}^1[0]=\Omega_{B/A}^i[0]$, which vanishes when $i>\dim_A(B)$.
		So by the Construction \ref{const ddr} above, the natural map from the analytic derived de Rham complex to the de Rham complex of $B/A$ induces an isomorphism from the $i$-th graded factor $\gr^i\dR_{B/A}=L\wedge^i\LL_{B/A}^\an[-i]$ to the $i$-th continuous differential $\Omega_{B/A}^i[-i]$.
		In this way, we get a filtered isomorphism from $\dR_{B/A}$ to the de Rham complex $\Omega_{B/A}^\bullet$.
		%So the filtered derived completion 
	\end{proof}

	\begin{proposition}\label{ddr finite}
		Let $A\ra B$ be a surjective map of topologically finite type algebras over $\Bdre$. %, such that $B$ is a finite $A$-module.
		Then the canonical map below is a filtered isomorphism 
		\[
		\wh{\mathrm{dR}}_{B/A} \rra \dR_{B/A} .
		\] 	
		As an upshot, the underlying  complex $\ul {\wh{\mathrm{dR}}}^\an_{B/A}$ is isomorphic to the formal completion $\wh A$ for the surjection $A\ra B$.
	\end{proposition}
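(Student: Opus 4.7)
The plan is to settle the two assertions in sequence.

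For the filtered isomorphism $\wh{\mathrm{dR}}_{B/A}\to\dR_{B/A}$, I would first observe that both objects lie in $\wh{\DF}(\Bdre)$ by construction: the analytic side is obtained by an explicit filtered completion in Definition \ref{ddr def}, and the algebraic side is Hodge-completed by definition. Hence it suffices to check that the comparison is an isomorphism on each associated graded piece, since a map between filtered-complete objects whose graded pieces are equivalent is itself a filtered equivalence. The $i$-th graded piece is $L\wedge^i \LL_{B/A}[-i]$ algebraically and $L\wedge^i \LL_{B/A}^\an[-i]$ analytically, and the induced map between them comes from the canonical natural transformation $\LL_{B/A}\to\LL_{B/A}^\an$. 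The decisive input here is Corollary \ref{cot finite 2}: since $A\to B$ is surjective, $B$ is a finite $A$-module, and that corollary asserts the algebraic and analytic cotangent complexes coincide. Derived wedge powers preserve isomorphisms of complexes, so the comparison is an isomorphism on every graded piece.

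For the identification of the underlying complex with $\wh A$, by the filtered isomorphism just established it is enough to show that the underlying complex of the algebraic Hodge-completed derived de Rham complex $\wh{\mathrm{dR}}_{B/A}$ is the $I$-adic formal completion of $A$ along $I=\ker(A\to B)$. This is the main result of Bhatt's work \cite{Bha12} on completions of derived de Rham cohomology, which upgrades the earlier LCI-case theorems of Illusie, Hartshorne and Herrera--Lieberman to arbitrary surjections in characteristic zero. In our setting $A$ is a topologically finite type (hence noetherian) $\Bdre$-algebra containing $\QQ$, and $I$ is finitely generated, so the formal completion $\wh A=\varprojlim_n A/I^n$ is well-behaved and the theorem applies verbatim to yield $\ul{\wh{\mathrm{dR}}}_{B/A}\cong \wh A$.

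The main obstacle, such as it is, lies in bookkeeping rather than substance: one must confirm that passing through the colimit over pairs of rings of definition $(B_0,A_0)\in\calC_{B/A}$ and inverting $p$ preserves the identification of graded pieces and completeness of the filtration. This is handled uniformly by Proposition \ref{rat cot 1}, which ensures that the transition maps in the colimit are already isomorphisms of analytic cotangent complexes after inverting $p$; thus the colimit stabilises termwise, and the filtered completion step is compatible with the algebraic one.
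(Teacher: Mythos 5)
Your proposal is correct and follows essentially the same route as the paper: reduce to graded pieces via filtered completeness, identify the map on graded pieces with (derived wedge powers of) the canonical comparison $\LL_{B/A}\to\LL_{B/A}^\an$, invoke Corollary \ref{cot finite 2} using the surjectivity hypothesis, and quote \cite{Bha12} for the identification of the underlying algebraic complex with $\wh A$. The only cosmetic difference is that you spell out the wedge-power step and the colimit stabilisation (via Proposition \ref{rat cot 1}) explicitly, which the paper leaves implicit.
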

	\begin{proof}
		As both $\dR_{B/A}$ and $\wh{\mathrm{dR}}_{B/A}$ are filtered complete, it suffices to show that the induced map on each graded factor is an isomorphism.
		For each $i\in \NN$, the induced map $\gr^i\wh{\mathrm{dR}}_{B/A} \ra \gr^i\dR_{B/A}$ is exactly the natural map induced from the derived $p$-completion integrally (Construction \ref{ana cot}).
		So the first claim follows from the assumption and the Corollary \ref{cot finite 2}.
		
		For the second claim, it follows from the isomorphism between the underlying complex $\ul{\wh{\mathrm{dR}}}_{B/A}$ of the algebraic derived de Rham complex and the formal completion $\wh A$, which is the main result in \cite{Bha12} (see \cite[4.14, 4.16]{Bha12}).
	\end{proof}
	
	\begin{corollary}\label{ddr lci}
		Let $A\ra B$ be a surjective map of topologically finite type algebras over $\Bdre$, such that the kernel ideal $I$ is regular in $A$.
		Then for each $i\in \NN$, we have a natural isomorphism
		\[
		A/I^i[0] \rra \dR_{B/A}/\Fil^i.
		\]
		In particular, by taking the derived limit, we get a filtered isomorphism of algebras
		\[
		\wh A \simeq \dR_{B/A},
		\]
		where the left side is the (classical) $I$-adic completion of $A$.
	\end{corollary}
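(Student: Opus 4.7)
The strategy is to combine the computation of the analytic cotangent complex for a regular immersion with the graded-pieces description of $\dR_{B/A}$, and then run an induction on $i$ using the filtration exact triangles.

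First I would note that by Proposition~\ref{ddr finite} the natural map $\wh{\mathrm{dR}}_{B/A}\to\dR_{B/A}$ is a filtered isomorphism, so in particular $\dR_{B/A}$ is filtered complete and its $i$-th graded piece is $L\wedge^i\LL_{B/A}^\an[-i]$. Using Corollary~\ref{comp int 2}, the regularity hypothesis gives $\LL_{B/A}^\an\cong I/I^2[1]$, and the D\'ecalage/shift identity for derived exterior powers yields
\[
\gr^i\dR_{B/A}\;\cong\;L\wedge^i(I/I^2[1])[-i]\;\cong\;L\Gamma^i(I/I^2).
\]
Since $I$ is a regular ideal, $I/I^2$ is a flat $B=A/I$-module, so $L\Gamma^i(I/I^2)=\Gamma^i(I/I^2)=\Sym^i(I/I^2)$ (we are in characteristic zero), and by the standard identification for regular ideals this is canonically $I^i/I^{i+1}$, sitting in degree $0$.

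Next I would produce the comparison map. The natural map $A\to B$, together with functoriality of $\wh{\mathrm{dR}}$ in the source, gives a filtered $\mathbb{E}_\infty$-algebra map $A=\wh{\mathrm{dR}}_{A/A}\to\wh{\mathrm{dR}}_{B/A}\simeq\dR_{B/A}$. Under this map, the kernel $I$ of $A\to B$ factors through $\Fil^1\dR_{B/A}$ (as $\Fil^1$ is precisely the kernel of $\dR_{B/A}\to B$), and by multiplicativity of the algebraic Hodge filtration $I^i$ lands in $\Fil^i\dR_{B/A}$. This yields a compatible system of maps
\[
\varphi_i\colon A/I^i\rightarrow\dR_{B/A}/\Fil^i\dR_{B/A}.
\]
One now argues by induction on $i$, using the map of fiber sequences
\[
\begin{array}{ccccc}
I^i/I^{i+1}[0] & \rightarrow & A/I^{i+1} & \rightarrow & A/I^i \\
\downarrow & & \downarrow\varphi_{i+1} & & \downarrow\varphi_i \\
\gr^i\dR_{B/A} & \rightarrow & \dR_{B/A}/\Fil^{i+1} & \rightarrow & \dR_{B/A}/\Fil^i,
\end{array}
\]
in which $\varphi_1$ is tautologically an isomorphism (both sides are $B[0]$). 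The base case $\varphi_0$ is trivial, and the inductive step reduces to checking that the left vertical arrow is the identification of $I^i/I^{i+1}$ with $\gr^i\dR_{B/A}$ arrived at in the previous paragraph; that is a matter of unwinding the construction of $\varphi_i$ through the canonical map $I\to\Fil^1\dR_{B/A}$ and the symmetric algebra structure on $\gr^\ast\dR_{B/A}$, noting that the multiplication $\gr^1\otimes\cdots\otimes\gr^1\to\gr^i$ factors through $\Sym^i$ and recovers the natural surjection $\Sym^i(I/I^2)\twoheadrightarrow I^i/I^{i+1}$. Granting this, the five lemma gives that $\varphi_{i+1}$ is an equivalence, completing the induction.

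Finally, since $\dR_{B/A}$ is filtered complete by construction, passing to the derived limit of the equivalences $\varphi_i$ produces a filtered equivalence of $\mathbb{E}_\infty$-algebras $\wh A=\varprojlim_i A/I^i\xrightarrow{\sim}\dR_{B/A}$, where the left-hand side carries the $I$-adic filtration; here the derived limit agrees with the classical one because each $A/I^i$ sits in a single degree and the transition maps are surjective. The main technical obstacle is the explicit identification of the connecting/multiplication map on graded pieces in the inductive step (i.e.\ checking that $\varphi_{i+1}$ really induces the canonical isomorphism $\Sym^i(I/I^2)\cong I^i/I^{i+1}$ on $\gr^i$); everything else is formal from Proposition~\ref{ddr finite}, Corollary~\ref{comp int 2}, and filtered completeness.
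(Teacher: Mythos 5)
Your proposal is correct in substance, but it takes a more self-contained route than the paper. The paper's own proof is a two-line citation: it invokes Proposition \ref{ddr finite} to identify $\dR_{B/A}$ with the algebraic Hodge-completed derived de Rham complex $\wh{\mathrm{dR}}_{B/A}$ (filtered), and then quotes the known algebraic statement $\wh{\mathrm{dR}}_{B/A}/\Fil^i\cong A/I^i$ for regular immersions from Bhatt's Example 4.5, originally Illusie's Theorem 2.2.6. You begin the same way (via Proposition \ref{ddr finite} and Corollary \ref{comp int 2}), but then reprove the cited result: the d\'ecalage identification $\gr^i\cong L\Gamma^i(I/I^2)\cong\Sym^i(I/I^2)\cong I^i/I^{i+1}$, the construction of $\varphi_i$ from multiplicativity of the filtration, and the induction via the five lemma are all sound, and this buys a reader a proof that does not defer to the literature. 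Be aware, though, that the step you flag as "the main technical obstacle" --- checking that the multiplication $\Sym^i(\gr^1)\to\gr^i$ realizes the canonical isomorphism $\Sym^i(I/I^2)\cong I^i/I^{i+1}$ compatibly with $\varphi_{i+1}$ --- is precisely the nontrivial content of the theorem the paper cites, so you have relocated rather than removed the work; it does go through (the graded algebra $\gr^\bullet\wh{\mathrm{dR}}_{B/A}$ is $L\Sym^\bullet(\LL_{B/A}[-1])$ as a graded $\mathbb{E}_\infty$-algebra, and the map $I\to\Fil^1$ induces the identity on $I/I^2=\gr^1$), but a complete write-up would need to spell this out. The final passage to the limit is fine since the towers are Mittag-Leffler and $\dR_{B/A}$ is filtered complete.
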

	\begin{proof}
		This follows from Proposition \ref{ddr finite}, and the case for algebraic cotangent complex explained in the Example 4.5 in \cite{Bha12}, originally proved in \cite[Theorem 2.2.6]{Ill72}.
	\end{proof}

	\subsection{Global constructions}\label{sub glo ddr}
	In this subsection, we construct the global analytic cotangent complexes and the global analytic derived de Rham complexes.
	Our strategy is to show that the affinoid constructions  satisfy the hyperdescent for the rigid topology, thus can be extended to a complex of sheaves over the given rigid space.

	\noindent\textbf{Unfolding.}\label{par unfold}
	We first recall the unfolding of a hypersheaf in $\infty$-category.
	We refer the reader to \cite[\S6.5.3]{Lu09} for the more detailed discussion on hypercoverings, hypersheaves, and the hypercompletion.
	
	Let $X$ be a site that admits fiber products, and let $\mathcal{B}$ be a basis of $X$,
	namely $\mathcal{B}$ is a subcategory of $X$ such that for each object $U$ in $X$, 
	there exists an object $U'$ in $\mathcal{B}$ covering $U$.
	So any hypercovering of an object in $X$ can be refined to a hypercovering with each term in $\mathcal{B}$.
	
	Let $\scrC$ be a presentable $\infty$-category.
	Consider a hypersheaf $\calF\in \Sh^{\mathrm{hyp}}(\mathcal{B},\scrC)$   over $\mathcal{B}$.
	We can then \emph{unfold} the sheaf $\calF$ to a hypersheaf $\calF'$ on $X$, 
	such that its evaluation at any $V\in X$ is given by
	\[
	\calF'(V)=\underset{U'_\bullet\ra V}{\colim}\varprojlim_{[n]\in \Delta} \calF(U'_n),
	\]
	where the colimit is indexed over all hypercoverings $U'_\bullet\ra V$ with $U'_n\in \mathcal{B}$ for all $n$.
	It follows from \cite[Thm.\ 6.5.3.12]{Lu09} (see also the discussion at the beginning of \cite[\S 6.5.3]{Lu09}) that one hypercovering suffices to compute the value of $\calF'(V)$ in the above formula; in other words for a hypercovering $U'_\bullet\ra V$ with each $U'_n$ in the basis $\mathcal{B}$,
	we have a natural weak-equivalence
	\[
	R\lim_{[n]\in \Delta} \calF(U'_n) \rra \calF'(V).
	\]
	In particular for any $U \in \mathcal{B}$, the natural map  $\calF(U) \rra \calF'(U)$ is a weak-equivalence. 
	
	The above construction is functorial with respect to $\calF\in \Sh^{\mathrm{hyp}}(\mathcal{B}, \scrC)$, 
	and we get a natural unfolding functor
	\[
	\Sh^{\mathrm{hyp}}(\mathcal{B},\scrC) \rra \Sh^{\mathrm{hyp}}(X, \scrC),
	\]
	which is in fact an equivalence, with the inverse given by the restriction functor 
	$\Sh^{\mathrm{hyp}}(X,\scrC)\ra \Sh^{\mathrm{hyp}}(\mathcal{B},\scrC)$.
	Here we mention that the equivalence follows quickly from the limit formula in the last paragraph, which says that the restriction of $\mathcal{F'}$ onto the basis $\mathcal{B}$ naturally coincide with the input sheaf $\mathcal{F}\in \Sh^{\mathrm{hyp}}(\mathcal{B}, \scrC)$.
	
	Recall in the special case when $\scrC=\scrD(R)$ is the derived $\infty$-category of $R$-modules, we have a natural equivalence 
	\begin{align*}
		\scrD(X,R) & \rra \Sh^{\mathrm{hyp}}(X, \scrD(R));\\
		C & \lmt (V \mapsto R\Gamma(V,C)).
	\end{align*}
	As an upshot, to define a complex of sheaves of $R$-modules over $X$, 
	it suffices to specify a contravariant functor from the basis $\mathcal{B}$ to $\scrD(R)$, 
	such that it satisfies the hyperdescent condition within $\mathcal{B}$.

	\begin{comment}

	We use the language of the derived $\infty$-category in this subsection, where the basics could be found in \cite{Lu09}, Chapter I.
	Here the use of $\infty$-category is only to make the colimits commute with different natural functors, which in general does not hold for ordinary derived categories.
	%For reader's convenience, we briefly recall basic concepts of derived \ic and the filtered derived \ic in the following.
	
	\noindent\textbf{(Filtered) derived \ic}\label{derived basics}
	We first recall the basics around the derived \ic and its relation to model category.
	
	Recall that a \emph{simplicial category} $\scrC$ is a category $\scrC_0$ enriched in simplicial sets.
	Namely for each pair of objects $x,y\in \scrC_0$, we associate it a simplicial set $\Map_\scrC(x,y)$, satisfying some natural axioms (\cite{Lu09}, 1.1.4.1).
	Here $\scrC_0$ (denoted mostly by $\rmh\scrC$) is called the \emph{homotopy category} of $\scrC$, and the simplicial set $\Map_\scrC(x,y)$ is called the \emph{mapping space} of $x$ to $y$.
	An \emph{\ic} is a simplicial category $\scrC$ such that any map from an inner horn $\Lambda^n_i\ra \scrC$ for $0<i<n$ can be filled into a map from the $n$-simplex $\Delta^n\ra \scrC$ (see \cite{Lu09}, 1.1.2.1 for details).
	Furthermore, for a given category $\scrC_0$, there exists a functorial way to associate it a simplicial category $\rmN(\scrC_0)$ whose underlying ordinary category is $\scrC_0$ itself.
	The simplicial category $\rmN(\scrC_0)$ is called the \emph{nerve} of $\scrC_0$ (\cite{Lu09}, 1.1.2.2), and is often identified with the ordinary category $\scrC_0$.
	\end{comment}

	\noindent\textbf{Hyperdescent of $\LL_{B/A}^\an$ and $\dR_{B/A}$.}
	
	We first consider the analytic cotangent complex.
	\begin{proposition}\label{des, cot}
		Let $A\ra B$ be a map of topologically finite type algebras over $\Bdre$, and let $B \ra B_\bullet$ be a map from $B$ to a cosimplicial algebras over $\Bdre$, such that the associated map of rigid spaces $\Spa(B_\bullet)\ra \Spa(B)$ is a rigid open hypercovering.
		Then the induced map below is an isomorphism
		\[
		\LL_{B/A}^\an \rra R\lim_{[n]\in \Delta} \LL_{B_n/A}^\an.
		\]
	\end{proposition}
	\begin{proof}
		We first notice that by the \'etaleness of the map $B\ra B_n$ and the Corollary \ref{rat cot 5, et base change}, $\LL_{B_n/A}^\an$ is naturally isomorphic to the base change $\LL_{B/A}\otimes_B^L B_n$.
		So it suffices to show the map below  is an isomorphism
		\[
		\LL_{B/A}^\an \rra R\lim_{[n]\in \Delta} \LL_{B/A}^\an \otimes_B^L B_n.
		\]
		Note that since each $\Spa(B_n)\ra \Spa(B)$ is an open covering of the rigid space $\Spa(B)$, the induced map $B\ra B_n$ is flat on structure sheaves (\cite[Proposition 1.7.6]{Hu96}).
		In this way, by the surjectivity of $\Spa(B_n)\ra \Spa(B)$, 
		we see the map of affine schemes $\Spec(B_\bullet) \ra  \Spec(B)$ is a faithfully flat hypercover.
		
		Finally, we recall from the faithfully flat descent of quasi-coherent sheaves over the affine scheme $\Spec(B)$ that for a given $B$-module $M$, the canonical map $M \to  R\lim_{[n]\in \Delta} M \otimes_B^L B_n$ is an isomorphism.
		This by induction implies that for any bounded complex of $B$-modules $C$, we have $C\simeq  R\lim_{[n]\in \Delta} C \otimes_B^L B_n$.
		As a consequence, since $\mathbb{L}_{B/A}^\an$  is bounded above, by taking cohomological truncations of $\mathbb{L}_{B/A}^\an$, we get
		\begin{align*}
			\mathbb{L}_{B/A}^\an & = R\varprojlim_{i\leq 0} \tau^{\geq i} 	\mathbb{L}_{B/A}^\an \\
			& \simeq R\varprojlim_{i\leq 0} R\lim_{[n]\in \Delta}  \bigl( \tau^{\geq i} \LL_{B/A}^\an \otimes_B^L B_n \bigr)\\
			& \simeq  R\lim_{[n]\in \Delta}   R\varprojlim_{i\leq 0} \bigl( \tau^{\geq i} \LL_{B/A}^\an \otimes_B^L B_n \bigr) \\
			& \simeq R\lim_{[n]\in \Delta} R\varprojlim_{i\leq 0} \tau^{\geq i} \bigl(\LL_{B/A}^\an \otimes_B^L B_n \bigr) \\
			& = R\lim_{[n]\in \Delta}  \LL_{B/A}^\an \otimes_B^L B_n.
		\end{align*}
		Here the second isomorphism uses the fact that limit functors commute to each other, and the third isomorphism uses the flatness of $B_n$ over $B$.
	\end{proof}
	Using the unfolding technique in Paragraph \ref{par unfold}, we can extend the affinoid construction of the analytic cotangent complex to the global case.
	\begin{corollary}
		Let $X\ra Y=\Spa(A)$ be a map of rigid spaces over $\Bdre$.
		Then there exists a complex of sheaves of $A$-modules $\LL_{X/Y}^\an$ over $X$, such that for any affinoid open subset $U=\Spa(B)$ of $X$,
		we have a natural isomorphism
		\[
		R\Gamma(U,\LL_{X/Y}^\an) = \LL_{B/A}^\an.
		\]
		The complex $\LL_{X/Y}^\an$ is called the \emph{analytic cotangent complex of $X$ over $Y$}.
	\end{corollary}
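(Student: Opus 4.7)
The plan is to produce $\LL_{X/Y}^\an$ by unfolding the affinoid construction $U=\Spa(B)\mapsto \LL_{B/A}^\an$ along the basis of affinoid opens, using the machinery recalled in Paragraph~\ref{par unfold} together with the hyperdescent statement just established in Proposition~\ref{des, cot}.

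First, let $\mathcal{B}\subset X_\rig$ be the full subcategory of affinoid opens of $X$; this is a basis of $X_\rig$ closed under fiber products. By the functoriality of the construction in Definition~\ref{ddr def} (and more basically of the analytic cotangent complex itself, which is functorial in pairs of affinoid algebras via Construction~\ref{ana cot}), the assignment $U=\Spa(B)\mapsto \LL_{B/A}^\an$ defines a presheaf
\[
\calL:\mathcal{B}^\op \rra \scrD(A)
\]
valued in the derived $\infty$-category of $A$-modules.

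Next I verify that $\calL$ is a hypersheaf on $\mathcal{B}$ in the sense of Paragraph~\ref{par, sheaf}. Any hypercover in $\mathcal{B}$ of an affinoid $U=\Spa(B)$ refines to one whose terms are themselves affinoid opens $\Spa(B_n)$, and the associated map $\Spa(B_\bullet)\ra \Spa(B)$ is a rigid open hypercover. By Proposition~\ref{des, cot} the natural map
\[
\LL_{B/A}^\an \rra R\lim_{[n]\in \Delta^\op}\LL_{B_n/A}^\an
\]
is an equivalence, which is precisely the hyperdescent condition for $\calL$ on $\mathcal{B}$. Hence $\calL\in \Sh^{\mathrm{hyp}}(\mathcal{B},\scrD(A))$.

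Finally, I apply the unfolding equivalence
\[
\Sh^{\mathrm{hyp}}(\mathcal{B},\scrD(A)) \xrightarrow{\ \sim\ } \Sh^{\mathrm{hyp}}(X_\rig,\scrD(A))\simeq \scrD(X,A)
\]
recalled in Paragraphs~\ref{par, sheaf}--\ref{par unfold}, and set $\LL_{X/Y}^\an$ to be the image of $\calL$. By the unfolding formula, the restriction of $\LL_{X/Y}^\an$ to any $U\in\mathcal{B}$ agrees with $\calL(U)=\LL_{B/A}^\an$, and unwinding the equivalence $\Sh^{\mathrm{hyp}}(X_\rig,\scrD(A))\simeq \scrD(X,A)$ yields the desired identification $R\Gamma(U,\LL_{X/Y}^\an)\cong \LL_{B/A}^\an$ for every affinoid $U=\Spa(B)\subset X$. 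The only substantive input is the hyperdescent in Proposition~\ref{des, cot}; everything else is formal from the basis-unfolding equivalence, so there is no real obstacle here beyond ensuring we apply it in the $\infty$-categorical setting so that the derived limit in the descent condition is correctly interpreted.
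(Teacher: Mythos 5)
Your proposal is correct and is essentially the paper's own argument: the paper derives this corollary directly from Proposition~\ref{des, cot} together with the unfolding equivalence $\Sh^{\mathrm{hyp}}(\mathcal{B},\scrD(A))\simeq \Sh^{\mathrm{hyp}}(X_\rig,\scrD(A))\simeq \scrD(X,A)$ of Paragraphs~\ref{par, sheaf}--\ref{par unfold}, exactly as you do. Your write-up simply makes explicit the two formal steps (functoriality of $B\mapsto \LL_{B/A}^\an$ giving a presheaf on the affinoid basis, and the hyperdescent check) that the paper leaves implicit.
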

	
	Similarly, we could unfold the construction of the analytic derived de Rham complex to an arbitrary rigid space.
	\begin{proposition}
		Let $A\ra B$ be a map of topologically finite type algebras over $\Bdre$, and let $B \ra B_\bullet$ be a map from $B$ to a cosimplicial algebras over $\Bdre$, such that the associated map of rigid spaces $\Spa(B_\bullet)\ra \Spa(B)$ is a rigid open hypercovering.
		Then the induced filtered map below is an isomorphism
		\[
		\dR_{B/A} \rra R\lim_{[n]\in \Delta} \dR_{B_n/A}.
		\]
	\end{proposition}
	\begin{proof}
		As a limit functor preserves the filtered completeness, $R\lim_{[n]\in \Delta} \dR_{B_n/A}$ is an object in $\wh\DF(A)$, and checking the isomorphism above can be reduced to their graded pieces.
		Moreover, notice that the graded piece functor commutes with small limits and colimits (cf. \cite[Lemma 5.2]{BMS2}).
		Thus we get
		\begin{align*}
			\gr^i \dR_{B/A}= L\wedge^i\LL_{B/A}^\an [-i] & \rra \gr^i (R\lim_{[n]\in \Delta} \dR_{B_n/A}) \\
			& \simeq R\lim_{[n]\in \Delta} \gr^i \dR_{B_n/A}\\
			& = R\lim_{[n]\in \Delta} L\wedge^i \LL_{B_n/A}^\an [-i].
		\end{align*}
		Notice that the wedge product functor commutes with the tensor product, and for each $n\in \NN$ we have
		\begin{align*}
			L\wedge^i \LL_{B_n/A}^\an &\simeq L\wedge^i (\LL_{B/A}^\an\otimes_B^L B_n) \\
			& \simeq (L\wedge^i\LL_{B/A}^\an) \otimes_B^L B_n,
		\end{align*}
		In this way, the natural map of graded pieces above is an isomorphism, by the similar fpqc hyperdescent for $B\ra B_\bullet$ as in the proof of Proposition \ref{des, cot}.
		So we are done.
		
	\end{proof}
	\begin{corollary}\label{ddr, global}
		Let $X\ra Y=\Spa(A)$ be a map of rigid spaces over $\Bdre$.
		Then there exists a filtered complex of sheaves of $A$-modules $\dR_{X/Y}$ over $X$, such that for any affinoid open subset $U=\Spa(B)$ of $X$,
		we have a natural isomorphism
		\[
		R\Gamma(U,\dR_{X/Y}) =\dR_{B/A}.
		\]
		The complex $\dR_{X/Y}$ is called the \emph{analytic derived de Rham complex of $X$ over $Y$}.
	\end{corollary}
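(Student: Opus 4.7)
The plan is to apply the unfolding machinery recalled in Paragraph \ref{par unfold} to the functor $\Spa(B) \mapsto \dR_{B/A}$ on the basis of affinoid opens of $X$, using the preceding hyperdescent proposition as the essential input.

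First I would fix the site. Let $X_\aff \subset X_\rig$ be the full subcategory whose objects are affinoid opens $U = \Spa(B)$ of $X$, equipped with the induced topology from $X_\rig$. This is a basis for $X_\rig$ in the sense of Paragraph \ref{par unfold}, since every open of a rigid space admits an affinoid open covering and $X_\aff$ is stable under fiber products (intersections) inside $X_\rig$. The assignment
\[
\calF_0 : X_\aff^\op \longrightarrow \wh{\DF}(A), \qquad \Spa(B) \longmapsto \dR_{B/A},
\]
is functorial thanks to the functoriality of Definition \ref{ddr def} in the pair $(A,B)$, and takes values in the presentable $\infty$-category $\wh{\DF}(A)$ of filtered complete objects over $A$.

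Next I would verify that $\calF_0$ is a hypersheaf on $X_\aff$. For any affinoid $U = \Spa(B)$ in $X_\aff$ and any hypercovering $U'_\bullet \to U$ within $X_\aff$, each $U'_n = \Spa(B_n)$ is affinoid and the induced map $\Spa(B_\bullet) \to \Spa(B)$ is in particular a rigid open hypercovering in the sense of the preceding proposition. That proposition then yields the filtered weak-equivalence
\[
\dR_{B/A} \longrightarrow \varprojlim_{[n] \in \Delta^\op} \dR_{B_n/A},
\]
which is exactly the hyperdescent condition for $\calF_0$ at $U$. Hence $\calF_0 \in \Sh^{\mathrm{hyp}}(X_\aff, \wh{\DF}(A))$.

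Now I would apply the unfolding equivalence
\[
\Sh^{\mathrm{hyp}}(X_\aff, \wh{\DF}(A)) \xrightarrow{\ \sim\ } \Sh^{\mathrm{hyp}}(X_\rig, \wh{\DF}(A))
\]
to $\calF_0$ to obtain a hypersheaf $\calF$ on $X_\rig$ with values in $\wh{\DF}(A)$. By the general property recalled in Paragraph \ref{par unfold}, the restriction of $\calF$ to the basis $X_\aff$ recovers $\calF_0$ up to natural equivalence, so for every affinoid $U = \Spa(B)$ we have a functorial equivalence $\calF(U) \simeq \dR_{B/A}$. Forgetting the filtration (or equivalently applying the projection $\wh{\DF}(A) \to \scrD(A)$ termwise) yields the underlying hypersheaf $\ul{\calF}$ in $\Sh^{\mathrm{hyp}}(X_\rig, \scrD(A))$; by the equivalence $\Sh^{\mathrm{hyp}}(X_\rig, \scrD(A)) \simeq \scrD(X_\rig, A)$ noted in Paragraph \ref{par, sheaf}, this hypersheaf is represented by an honest complex of sheaves of $A$-modules, which I define to be $\dR_{X/Y}$ (retaining the filtration from the unfolded $\calF$ as the algebraic Hodge filtration). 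The identification $R\Gamma(U, \dR_{X/Y}) \simeq \dR_{B/A}$ for affinoid $U = \Spa(B)$ is then exactly the basis-restriction property of the unfolding, and functoriality in the map $X \to Y$ follows formally from the functoriality of each step. No serious obstacle arises beyond confirming that $\wh{\DF}(A)$ is a presentable $\infty$-category to which the unfolding formalism applies, which is standard.
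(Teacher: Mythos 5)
Your proposal is correct and follows exactly the route the paper intends: check hyperdescent of $\Spa(B)\mapsto \dR_{B/A}$ on the basis of affinoid opens via the preceding proposition, then unfold to a hypersheaf on $X_\rig$ and identify it with an actual complex of sheaves via the equivalence of Paragraph \ref{par, sheaf}. (One immaterial quibble: intersections of affinoid opens need not be affinoid in general, but the unfolding formalism as stated in Paragraph \ref{par unfold} only requires $X_\aff$ to be a basis, not to be closed under fiber products, so nothing in your argument depends on that remark.)
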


	\subsection{Infinitesimal cohomology and derived de Rham complex}\label{sub inf-ddr}
	In this subsection, we give a comparison theorem between the infinitesimal cohomology and (the underlying complex of) the analytic derived de Rham complex of a rigid space $X$ over $\Bdre$.

	\noindent\textbf{Affinoid comparison.}
	We first consider the affinoid case.
	Our tool is the \v{C}ech--Alexander complex for the infinitesimal cohomology, and the structure of the analytic derived de Rham complex for closed immersions.
	
	\begin{theorem}\label{inf-ddr aff}
		Let $X=\Spa(A)$ be an affinoid rigid space over $\Bdre$.
		Then there exists a natural isomorphism as below 
		\[
		\ul{\wh{\mathrm{dR}}}_{A/\Bdre}^\an \rra R\Gamma(X/\Sigma_{e \inf},\calO_{X/\Sigma_{e}}) .
		\]
		Here $\ul{\wh{\mathrm{dR}}}_{A/\Bdre}^\an$ is the underlying complex of the analytic derived de Rham complex.
	\end{theorem}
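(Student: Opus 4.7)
The plan is to combine the \v{C}ech--Alexander description of infinitesimal cohomology (Proposition \ref{cech}) with the key identification of envelopes as (underlying complexes of) analytic derived de Rham complexes (Proposition \ref{ddr finite}). More precisely, since $X=\Spa(A)$ is affinoid, we may choose a surjection of $\Bdre$-affinoid algebras $P\twoheadrightarrow A$ with $P$ smooth, e.g.\ a Tate algebra $P=\Bdre\langle T_1,\dots,T_m\rangle$. This yields a closed immersion $X\hookrightarrow Y=\Spa(P)$. Writing $Y(n)=Y\times_{\Sigma_e}\cdots\times_{\Sigma_e}Y$ ($n{+}1$ copies) and $D(n)$ for the envelope of $X$ in $Y(n)$ with structure ring $\calD(n)$, Proposition \ref{cech} gives
\[
R\Gamma(X/\Sigma_{e\,\inf},\calO_{X/\Sigma_e})\;\cong\;\mathrm{Tot}\bigl(\calD(\bullet)\bigr).
\]

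Next, since each map $\calO(Y(n))\twoheadrightarrow A$ is a surjection of topologically finite type $\Bdre$-algebras, Proposition \ref{ddr finite} provides canonical isomorphisms
\[
\calD(n)\;\cong\;\ul{\wh{\mathrm{dR}}}^{\an}_{A/\calO(Y(n))},
\]
functorial in the cosimplicial index. Combined with the natural functoriality of the analytic derived de Rham complex with respect to the base (the canonical map $\ul{\wh{\mathrm{dR}}}^{\an}_{A/\Bdre}\to\ul{\wh{\mathrm{dR}}}^{\an}_{A/\calO(Y(n))}$ induced by $\Bdre\to\calO(Y(n))$), this assembles into a morphism
\[
\ul{\wh{\mathrm{dR}}}^{\an}_{A/\Bdre}\;\longrightarrow\;\mathrm{Tot}\bigl(\ul{\wh{\mathrm{dR}}}^{\an}_{A/\calO(Y(\bullet))}\bigr)\;\cong\;\mathrm{Tot}(\calD(\bullet))\;\cong\; R\Gamma(X/\Sigma_{e\,\inf},\calO_{X/\Sigma_e}),
\]
giving the desired comparison map in $\scrD(\Bdre)$.

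To prove this map is an equivalence, the strategy is to pass to graded pieces of the algebraic Hodge filtration. Both sides carry compatible descending filtrations: on the left it is the Hodge filtration of $\dR_{A/\Bdre}$ with graded pieces $L\wedge^i\LL^{\an}_{A/\Bdre}[-i]$, and on the right the \v{C}ech--Alexander complex inherits the cosimplicial algebraic Hodge filtration from each $\dR_{A/\calO(Y(n))}$ (whose graded pieces are $L\wedge^i\LL^{\an}_{A/\calO(Y(n))}[-i]$). Using the transitivity triangle of Corollary \ref{rat cot 3} applied to $\Bdre\to\calO(Y(n))\to A$, namely
\[
\Omega^{1}_{\calO(Y(n))/\Bdre}\otimes^L_{\calO(Y(n))}A\;\longrightarrow\;\LL^{\an}_{A/\Bdre}\;\longrightarrow\;\LL^{\an}_{A/\calO(Y(n))},
\]
(with smoothness of $Y(n)$ giving the first term via Proposition \ref{rat cot 1} and Corollary \ref{rat cot 2}), the comparison reduces to showing that the cosimplicial complex $\Omega^{1}_{\calO(Y(\bullet))/\Bdre}\otimes_{\calO(Y(\bullet))}A$ is totalization-acyclic. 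This is precisely the content of the Poincar\'e-type acyclicity established in Lemma \ref{lemA} (applied in the form relevant to the relative differentials of the product-envelope system), which is why the same tools used to identify $R\Gamma_{\inf}$ with de Rham cohomology in the smooth case reappear here.

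The main technical obstacle is making the descent argument rigorous in the derived $\infty$-categorical setting: one must show that the cosimplicial maps of filtered complexes genuinely produce a filtered equivalence in $\wh\DF(\Bdre)$ rather than only in the underlying derived category, and that the totalization commutes appropriately with the filtered completion and the graded piece functors. This will be handled by the formal compatibility of $\gr^i$ with limits (as in \cite{BMS2}) and by applying the derived wedge-product machinery to the cosimplicial acyclicity of Lemma \ref{lemA}, reducing the claim at each graded level $i$ to the equivalence $L\wedge^i\LL^{\an}_{A/\Bdre}\xrightarrow{\sim}\mathrm{Tot}\bigl(L\wedge^i\LL^{\an}_{A/\calO(Y(\bullet))}\bigr)$.
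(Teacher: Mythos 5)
Your proposal follows essentially the same route as the paper's proof: the \v{C}ech--Alexander complex of \Cref{cech} for the envelope system $D(\bullet)$ of $X$ in $Y(\bullet)$, the identification $\calD(n)\cong \ul{\wh{\mathrm{dR}}}^{\an}_{A/\calO(Y(n))}$ via \Cref{ddr finite}, and the reduction by filtered completeness and the transitivity triangle of \Cref{rat cot 3} to the cosimplicial acyclicity of the (wedge powers of the) differentials of the \v{C}ech nerve of $\Bdre\to P$. The only difference is where that final acyclicity is sourced --- you invoke the cosimplicial-homotopy-to-zero argument underlying \Cref{cosim}/\Cref{lemA}, while the paper cites \cite[Corollary 2.7]{Bha12} for the vanishing of the cotangent complex of the \v{C}ech nerve --- and these are the same fact.
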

	Before the proof, we want to mention that in the proof below, we will see the isomorphism in the statement is induced from a chosen closed immersion $X \ra Y$, where $Y$ is a smooth rigid space.
	Later on, we will use this observation to globalize a general comparison.
	
	\begin{proof}
		Let $P=\Bdre\langle T_1,\ldots, T_m\rangle\ra A$ be a surjection of topologically finite type algebras over $\Bdre$.
		By Proposition \ref{cech} for the crystal $\calF=\calO_{X/\Sigma_e}$, the infinitesimal cohomology of $X/\Sigma_{e \inf}$ can be computed by the cosimplicial cochain complex
		\[
		R\Gamma(X/\Sigma_{e \inf},\calO_{X/\Sigma_e})\simeq (\calO_{X/\Sigma_e}(D(0))\rra \calO_{X/\Sigma_e}(D(1))\rra \cdots),
		\]
		where $D(\bullet)$ is the cosimplicial object of sheaves over the infinitesimal site, produced by the envelope of $A$ in $P^{\wh\otimes_{\Bdre}\bullet+1}$ (see the discussion before Theorem \ref{aff-coh}).
		Here we recall that by the definition of envelope (cf. \Cref{env}), the sheaf $D(m)$ is the direct limit of all infinitesimal neighborhoods of $\Spa(A)$ in $\Spa(P^{\wh\otimes  m+1})$.
		In particular, we have the following isomorphism
		\[
		\calO_{X/\Sigma_e}(D(m))=\varprojlim_i P^{\wh\otimes m+1}/I(m)^i,
		\]
		where $I(m)$ is the kernel of the surjection $P^{\wh\otimes_{\Bdre} m+1}\ra A$.
		
		Now by Proposition \ref{ddr finite}, there exists a natural filtered morphism inducing an isomorphism of their underlying complexes
		\[
		\ul{\wh{\mathrm{dR}}}^\an_{A/P^{\wh\otimes m+1}} \rra \varprojlim_i P^{\wh\otimes  m+1}/I(m)^i .
		\]
		By taking the cosimplicial version of the above isomorphism, we get isomorphisms of cosimplicial complexes 
		\[
		R\Gamma(X/\Sigma_{e \inf},\calO_{X/\Sigma_e})\rra \calO_{X/\Sigma_e}(D(\bullet)) \longleftarrow \ul{\wh{\mathrm{dR}}}^\an_{A/P^{\wh\otimes \bullet+1}}.
		\]
		So in order to prove the theorem, it suffices to show that the natural map $\Bdre\ra P^{\wh\otimes_{\Bdre}\bullet+1}\ra A$ induces an isomorphism on analytic derived de Rham complexes.
		Moreover, by the filtered completeness, it reduces to show the isomorphism 
		\[
		\LL_{A/\Bdre}^\an\rra \LL_{A/P^{\wh\otimes \bullet+1}}^\an.
		\]
		Note that as both $A$ and $P^{\wh\otimes \bullet+1}$ are term-wise topologically finite type over $\Bdre$, by the distinguished triangle of cotangent complexes for triples (Proposition \ref{rat cot 3}), it suffices to show the vanishing of the following
		\[
		\LL_{P^{\wh\otimes \bullet+1}/\Bdre}^\an.
		\]
		
		At last, we notice that by Proposition \ref{rat cot 1}, the complex $\LL_{P^{\wh\otimes \bullet+1}/\Bdre}^\an$ can be computed by inverting $p$ at the term-wise derived $p$-completion of the algebraic cotangent complex of the \v{C}ech nerve $\Cech(\Ainfe\ra \Ainfe[T_1,\ldots, T_r])$.
		So the vanishing we want follows from the vanishing of the algebraic cotangent complex $\LL_{\Cech(\Ainfe\ra \Ainfe[T_i])/(\Ainfe)}$, which is proved in the first part of the Corollary 2.7 in \cite{Bha12}.
	\end{proof}
	In the special case when $A$ is a complete intersection, the above can be improved into a filtered isomorphism.
	Here we recall that the filtration structure on the infinitesimal cohomology, which is called \emph{infinitesimal filtration}, is defined via $R\Gamma(X/\Sigma_{e \inf}, \calJ_{X/\Sigma_e}^\bullet)$, where $\calJ_{X/\Sigma_e}$ is the kernel of the surjection $\calO_{X/\Sigma_e} \ra \calO_X$ over the infinitesimal site (cf. the discussion above \Cref{base K0}).
	\begin{corollary}\label{inf-ddr reg}
		Let $X=\Spa(A)$ be an affinoid rigid space that admits a regular closed immersion into a smooth affinoid rigid space over $\Bdre$.
		Then there exists a natural filtered isomorphism as below 
		\[
		\dR_{A/\Bdre} \longrightarrow R\Gamma(X/\Sigma_{e \inf},\calO_{X/\Sigma_{e}}).
		\]
	\end{corollary}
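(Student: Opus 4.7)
The plan is to upgrade the underlying equivalence of Theorem \ref{inf-ddr aff} to a filtered one, exploiting the regularity hypothesis to match graded pieces through the \v{C}ech-Alexander complex.

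First I fix the regular closed immersion $X = \Spa(A) \hookrightarrow Y = \Spa(P)$ with kernel ideal $J$, where $P$ is a smooth affinoid $\Bdre$-algebra, and form the \v{C}ech nerve $Y(\bullet) = \Spa(P^{\wh\otimes_{\Bdre} \bullet + 1})$ over $\Spa(\Bdre)$. Writing $I(m)$ for the kernel of $P^{\wh\otimes m+1} \to A$, each such surjection is again regular, since it factors as the diagonal $P^{\wh\otimes m+1} \to P$ (regular because $Y$ is smooth over $\Sigma_e$) followed by the regular surjection $P \to A$. By Proposition \ref{cech} applied to $\calF = \calO_{X/\Sigma_e}$, together with the explicit description $\calO_{X/\Sigma_e}(D(m)) = \wh{P^{\wh\otimes m+1}}_{I(m)}$, I obtain
\[
R\Gamma(X/\Sigma_{e \inf}, \calO_{X/\Sigma_e}) \cong \lim_{[m] \in \Delta} \wh{P^{\wh\otimes m+1}}_{I(m)}
\]
as filtered complete objects, with the infinitesimal filtration on the left corresponding termwise to the $I(m)$-adic filtration on the right.

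Next, Corollary \ref{ddr lci} applied to each regular surjection $P^{\wh\otimes m+1} \to A$ yields a filtered isomorphism $\dR_{A/P^{\wh\otimes m+1}} \cong \wh{P^{\wh\otimes m+1}}_{I(m)}$, giving the filtered identification $R\Gamma(X/\Sigma_{e \inf}, \calO_{X/\Sigma_e}) \cong \lim_{[m]} \dR_{A/P^{\wh\otimes m+1}}$. The functoriality of $\dR$ in its source then produces a compatible family of filtered maps $\dR_{A/\Bdre} \to \dR_{A/P^{\wh\otimes m+1}}$, and passing to the limit delivers the natural filtered comparison map
\[
\dR_{A/\Bdre} \rra \lim_{[m]} \dR_{A/P^{\wh\otimes m+1}} \cong R\Gamma(X/\Sigma_{e \inf}, \calO_{X/\Sigma_e}).
\]
That the underlying complex of this map is an equivalence is exactly Theorem \ref{inf-ddr aff}, so the remaining task is to promote this to a filtered equivalence.

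Since both sides lie in $\wh\DF(\Bdre)$, it suffices to check equivalence on each graded piece. The $i$-th graded piece on the left is $L\wedge^i \LL_{A/\Bdre}^\an[-i]$, and since the associated graded functor commutes with small limits of filtered-complete objects, the right-hand graded piece is $\lim_{[m]} L\wedge^i \LL_{A/P^{\wh\otimes m+1}}^\an[-i]$. The transitivity triangle of Corollary \ref{rat cot 3},
\[
\LL_{P^{\wh\otimes m+1}/\Bdre}^\an \otimes^L_{P^{\wh\otimes m+1}} A \rra \LL_{A/\Bdre}^\an \rra \LL_{A/P^{\wh\otimes m+1}}^\an,
\]
induces a natural decreasing filtration on $L\wedge^i \LL_{A/\Bdre}^\an$ with associated graded pieces of the form $L\wedge^j(\LL_{P^{\wh\otimes m+1}/\Bdre}^\an|_A) \otimes L\wedge^{i-j}\LL_{A/P^{\wh\otimes m+1}}^\an$. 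The main technical hurdle, and the crux of the argument, is to show that after applying $\lim_{[m]}$, every contribution with $j \geq 1$ vanishes, leaving only $L\wedge^i \LL_{A/P^{\wh\otimes m+1}}^\an$. The case $j = 1$, where the relevant factor $\LL_{P^{\wh\otimes m+1}/\Bdre}^\an|_A$ is the free module $\Omega_{P^{\wh\otimes m+1}/\Bdre}^1|_A$, is exactly the vanishing invoked at the end of the proof of Theorem \ref{inf-ddr aff}, appealing to \cite[Corollary 2.7]{Bha12}; the higher-$j$ vanishing should follow by an analogous Koszul-resolution computation after derived $p$-completion and inversion of $p$. Granting this, the graded pieces match and the filtered isomorphism follows.
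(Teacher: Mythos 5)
Your argument is correct and is essentially the paper's own proof: the paper simply declares the proof ``identical to the proof of Theorem \ref{inf-ddr aff}, with the use of the Corollary \ref{ddr lci}'', which is precisely your substitution of the filtered statement of Corollary \ref{ddr lci} for Proposition \ref{ddr finite} at each level of the \v{C}ech nerve, after noting (as you do) that each composite surjection $P^{\wh\otimes m+1}\to A$ is again regular. The one step you leave as ``should follow'' --- the vanishing after totalization of the $j\geq 1$ graded pieces of the transitivity filtration on $L\wedge^i\LL^\an_{A/\Bdre}$ --- is exactly what the paper also outsources to \cite[Corollary 2.7]{Bha12} (via the cosimplicial contractibility of $\Omega^1_{P^{\wh\otimes\bullet+1}/\Bdre}\otimes A$, as in Lemma \ref{cosim}), so no gap is introduced beyond what the paper itself leaves implicit.
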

	\begin{proof}
		The proof is identical to the proof of Theorem \ref{inf-ddr aff}, with the use of the Corollary \ref{ddr lci}.
	\end{proof}

	\noindent\textbf{Comparison in general.}
	We are now ready to prove the comparison between the infinitesimal cohomology and (the underlying complex of) the analytic derived de Rham complex, for a general rigid space over $\Bdre$.
	
	We first introduce the category of smooth immersions.
	\begin{definition}\label{sm emb, def}
		Let $X$ be a rigid space over $\Bdre$.
		We define the \emph{site $\SE_X$ of smooth immersions of $X$} where:
		\begin{itemize}
			\item objects of $\SE_X$ consist of tuples  $(U,Z,i:U\ra Z)$ with $U$ being an affinoid open subset of $X$, $Z$ a smooth affinoid rigid space over $\Sigma_e$, and $i:U\ra Z$ is a closed immersion;
			\item morphisms from $(U_1,Z_1,i_1:U_1\ra Z_1)$ and $(U_2,Z_2,i_2:U_2\ra Z_2)$ consist of commutative diagrams as below
			\[
			\xymatrix{ U_1 \ar[r]^{i_1} \ar[d]& Z_1\ar[d]\\
				U_2 \ar[r]_{i_2} & Z_2,}
			\]
			where $U_1\ra U_2$ is an open immersion over $X$.
			\item A collection of maps $\{(U_j, Z_j, i_j) \ra (U,Z,i)\}$ is a covering if $\{U_j \ra U\}$ and $\{Z_j \ra Z\}$ are coverings of rigid spaces respectively.
		\end{itemize}
	\end{definition}
	
	There exists a natural projection functor from $\SE_X$ to the category of affinoid open subsets $X_\aff$ of $X$, by sending an object $(U,Z,i:U\ra Z)$ to the open subset $U$ in $X$.
	This functor is continuous under their topology.
	Here the associated \emph{push-forward  functor} $\pi_*$ is the constant functor; namely for an ordinary sheaf $\calF$ in the topos $\Sh(X_\aff)$, the push-forward $\pi_*\calF$ satisfies
	\[
	(\pi_*\calF)(U,Z,i)=\calF(U).
	\]
	The \emph{pullback functor} $\pi^{-1}$ sends a sheaf $\calG$ in $\Sh(\SE_X)$ to the sheaf associated with the presheaf
	\[
	(\pi^{-1}\calG)(U)=\underset{(U,Z,i)}{\colim}\calG(U,Z,i),~U\in X_\rig.
	\]
	The colimit above is a filtered colimit, as given any two closed immersions $i_1:U\ra Z_1$ and $i_2:U\ra Z_2$, we can find a common refinement of them by $i:U\ra Z_1\times_{\Sigma_e} Z_2$, with natural projection maps $Z_1\times_{\Sigma_e} Z_2\ra Z_j$ for $j=1,2$.
	In particular, the colimit (thus the inverse image functor $\pi^{-1}$) is exact.
	So, by translating this into the language of sites (\cite[Tag 00X1]{Sta}), we get a natural morphism of  sites
	\[
	\pi:X_\aff\rra \SE_X.
	\]

	Before we prove the main theorem, we first notice that to check two objects in $\scrD(X_\aff)=\scrD(X_\aff, \ZZ)$ are isomorphic, it suffices to do so by pulling back to the category of smooth immersions. 
	Precisely, we have the following general lemma.
	\begin{lemma}\label{sm emb, test}
		Let $\calF$ and $\calG$ be two objects in the derived category $\scrD(X_\aff)$ of sheaves of abelian groups over the site $X_\aff$.
		Assume $f:R\pi_*\calF\ra R\pi_*\calG$ is an isomorphism in $\scrD(\SE_X)$.
		Then the following natural arrows are all isomorphisms
		\[
		\xymatrix{\pi^{-1} R\pi_*\calF\ar[r]^{~~~~~~\wt f} \ar[d]& \calG\\
			\calF&,}
		\]
		where both arrows are counit maps associated with $f:\calF\ra \calG$ and the identity $\calF\ra \calF$ respectively.
		In particular, there exists a natural isomorphism $\calF\ra \calG$ in the derived category $\scrD(X_\aff)$.
	\end{lemma}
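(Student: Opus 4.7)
The plan is to reduce the statement to a single universal claim: the adjunction counit $\pi^{-1}R\pi_{*}\calE\to \calE$ is an isomorphism in $\scrD(X_\aff)$ for every $\calE$. Since $\pi^{-1}$ is already exact (the defining colimit is filtered and sheafification is exact, as established just before the lemma), applying $\pi^{-1}$ to $f$ produces an isomorphism $\pi^{-1}R\pi_{*}\calF\xra{\sim}\pi^{-1}R\pi_{*}\calG$. The map $\wt f$ then factors as this isomorphism followed by the counit at $\calG$, while the other arrow in the diagram is the counit at $\calF$. If both counits are isomorphisms, composing them yields the asserted natural isomorphism $\calF\to \calG$.

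For the universal claim, I would resolve $\calE$ by an injective resolution $\calE\to I^\bullet$ on $X_\aff$ (or a $K$-injective resolution if $\calE$ is unbounded), so that $R\pi_{*}\calE$ is represented by the complex of sheaves $\pi_{*}I^\bullet$ on $\SE_X$, whose sections over any $(U,Z,i)$ are simply $I^\bullet(U)$. By exactness, $\pi^{-1}$ applies termwise, reducing the problem to showing that for each single sheaf $I:=I^n$ on $X_\aff$ the canonical map $\pi^{-1}\pi_{*}I\to I$ is an isomorphism of sheaves on $X_\aff$.

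This last reduction is a direct computation on sections. For an affinoid $U\subseteq X$, the value of the presheaf defining $\pi^{-1}\pi_{*}I$ is $\colim_{(U,Z,i)} I(U)$, where the indexing category consists of all smooth closed immersions of $U$. This category is nonempty — for $U=\Spa(A)$, any choice of generators of $A$ over $\Bdre$ produces a closed immersion into a polydisc — and connected, since any two objects $(U,Z_1,i_1)$ and $(U,Z_2,i_2)$ admit a common refinement $(U,Z_1\times_{\Sigma_e}Z_2,(i_1,i_2))$ via the two projections. Because every transition map in this diagram fixes $U$ and hence acts as the identity on $I(U)$, the colimit of the constant diagram is simply $I(U)$. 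No further sheafification is needed since $I$ is already a sheaf on $X_\aff$, so we recover $I$ and conclude $\pi^{-1}\pi_{*}I\cong I$. The main subtlety to watch is keeping track of the orientation of arrows in $\SE_X|_U$ versus its opposite when asserting the relevant filteredness and connectedness, which govern both the exactness of $\pi^{-1}$ and the triviality of the constant colimit.
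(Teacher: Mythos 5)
Your proof is correct and follows essentially the same route as the paper: both arguments reduce everything to showing the counit $\pi^{-1}R\pi_*\calE\to\calE$ is an isomorphism, and both establish this by observing that on sections over a fixed affinoid $U$ one is taking the colimit of a constant diagram over the nonempty, connected (indeed filtered, via products of embeddings) category of smooth immersions of $U$. The only cosmetic difference is that you pass through an injective resolution and termwise exactness of $\pi^{-1}$, whereas the paper computes $R\Gamma(U,\pi^{-1}R\pi_*\calF)$ directly as a filtered colimit of $R\Gamma(U,\calF)$.
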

	\begin{proof}
		For each $U\in X_\aff$, we have
		\begin{align*}
			R\Gamma(U, \pi^{-1}R\pi_*\calF)&=\underset{(U,Z,i)}{\colim} ~R\Gamma((U,Z,i),R\pi_*\calF)\\
			&=\underset{(U,Z,i)}{\colim} ~R\Gamma(U,\calF)\\
			&\simeq R\Gamma(U,\calF),
		\end{align*}
		where the last  map is an isomorphism as the colimit above is filtered (thus the geometric realization of the index set is contractible).
		In particular, this implies that the counit maps $\pi^{-1}R\pi_*\calF\ra \calF$ and $\pi^{-1}R\pi_*\calG\to \calG$ are isomorphisms.
		Thus the claim follows from the following diagram of natural isomorphisms
		\[
		\xymatrix{ \pi^{-1}R\pi_*\calF \ar[r]^{\pi^{-1}f} \ar[d] & \pi^{-1}R\pi_*\calG \ar[d]\\
			\calF \ar@{.>}[r]_{\wt f} & \calG.}
		\]
	\end{proof}
	
	Now we are able to prove the comparison theorem.
	\begin{theorem}\label{inf-ddR}
		Let $X$ be a rigid space over $\Bdre$.
		Then there exists a natural filtered morphism from the analytic derived de Rham complex to the Hodge-filtered infinitesimal cohomology sheaf as below
		\[
		\dR_{X/\Sigma_e}\rra Ru_{X/\Sigma_e *} \calJ^\ast_{X/\Sigma_e}.
		\]
		Moroever, the induced map between their underlying complexes is an isomorphism
		\[
		\ul{\wh{\mathrm{dR}}}_{X/\Sigma_e}^\an \rra Ru_{X/\Sigma_e*} \calO_{X/\Sigma_e}.
		\]
	\end{theorem}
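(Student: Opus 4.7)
The plan is to globalize the affinoid comparison of Theorem \ref{inf-ddr aff} by working over the site $\SE_X$ of smooth immersions and invoking Lemma \ref{sm emb, test}, which reduces the verification of isomorphisms in $\scrD(X_\aff)$ to checks after pulling back along $\pi\colon X_\aff\to\SE_X$.

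First, I would construct the filtered morphism locally. For each object $(U=\Spa(A),\, Z=\Spa(P),\, i\colon U\hookrightarrow Z)\in\SE_X$, Theorem \ref{glo-coh} provides a natural filtered identification of $R\Gamma(U/\Sigma_{e\inf},\calO_{U/\Sigma_e})$ with the completed de Rham complex $R\Gamma(U,\calO_D\otimes\Omega_D^\bullet)$ of the envelope $D=D_U(Z)$, equipped with its infinitesimal filtration $J^{j-\bullet}\cdot\calO_D\otimes\Omega_D^\bullet$. On the other hand, the analytic derived de Rham complex $\dR_{A/\Bdre}$ comes equipped with a natural filtered map to $\dR_{A/P}$ by functoriality for the composition $\Bdre\to P\to A$; composing with the natural filtered morphism $\dR_{A/P}\to R\Gamma(U,\calO_D\otimes\Omega_D^\bullet)$ built from the standard simplicial resolution (as in Proposition \ref{ddr finite} and the proof of Theorem \ref{inf-ddr aff}, where the \v{C}ech--Alexander complex is compared term by term to the derived de Rham complex) yields the desired filtered map $\dR_{A/\Bdre}\to R\Gamma(U/\Sigma_{e\inf},\calO_{U/\Sigma_e})$.

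Next, I would check that this local construction is functorial in morphisms of $\SE_X$, using that any two smooth embeddings of $U$ are dominated by their fiber product $Z_1\times_{\Sigma_e}Z_2$, so the associated envelopes and derived de Rham complexes fit into canonical comparison diagrams. The resulting family assembles into a morphism of filtered hypersheaves on $\SE_X$, and via the unfolding equivalence (cf.\ Paragraph \ref{par unfold}), together with the observation that the pullback along $\pi$ computes filtered colimits over smooth embeddings, this descends to a natural filtered morphism
\[
\dR_{X/\Sigma_e}\longrightarrow\bigl(Ru_{X/\Sigma_e *}\calO_{X/\Sigma_e},\; Ru_{X/\Sigma_e *}\calJ_{X/\Sigma_e}^{\bullet}\bigr)
\]
of filtered complexes of sheaves of $\Bdre$-modules on $X$. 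Both sides satisfy rigid-analytic hyperdescent on $X_\aff$ (by Corollary \ref{ddr, global} for the source, and by construction for the target), so the glued map is well-defined.

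Finally, to prove that the induced map of underlying complexes is an isomorphism, I would apply Lemma \ref{sm emb, test}: it suffices to verify that applying $R\pi_*$ yields an isomorphism in $\scrD(\SE_X)$. Evaluating at any $(U=\Spa(A),Z,i)\in\SE_X$ yields the map $\ul {\wh{\mathrm{dR}}}^\an_{A/\Bdre}\to R\Gamma(U/\Sigma_{e\inf},\calO_{U/\Sigma_e})$, which is an isomorphism by Theorem \ref{inf-ddr aff}. The main obstacle is producing the local filtered map in a canonical and embedding-independent way and propagating it to a global filtered morphism; this is handled by verifying compatibility for morphisms in $\SE_X$ via dominating product embeddings, after which the unfolding formalism absorbs the remaining coherence data. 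Once the filtered morphism is in place, the underlying-complex isomorphism follows formally from the affinoid case.
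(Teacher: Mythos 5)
Your global strategy --- reduce to the site $\SE_X$ of smooth immersions via Lemma \ref{sm emb, test} and the unfolding formalism, then quote the affinoid comparison --- is exactly the paper's. The gap is in your local construction of the comparison map. You propose to factor it as $\dR_{A/\Bdre}\to\dR_{A/P}\to R\Gamma(U,\calO_D\otimes\Omega_D^\bullet)$ for a single smooth surjection $P\twoheadrightarrow A$. But by Proposition \ref{ddr finite} the underlying complex of $\dR_{A/P}$ is just the adic completion $\wh{P}=\calO_D(U)$ sitting in cohomological degree $0$; the only natural map from it to the de Rham complex of the envelope is the degree-zero inclusion, and the preceding map $\ul{\wh{\mathrm{dR}}}^\an_{A/\Bdre}\to\wh{P}$ is the projection onto the zeroth term of the \v{C}ech--Alexander resolution, which is far from a quasi-isomorphism. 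Already for $A=P$ smooth your composite reads $\Omega^\bullet_{A/\Bdre}\to A\to\Omega^\bullet_{A/\Bdre}$, i.e.\ ``kill all positive-degree forms, then include in degree zero''; this vanishes on $\rmH^{>0}_{\mathrm{dR}}$, which is nonzero even for a closed disc. So your candidate map is not the isomorphism produced by Theorem \ref{inf-ddr aff}, and you cannot invoke that theorem to conclude that it is an isomorphism.

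The repair is the route the paper actually takes: replace the single level $\dR_{A/P}$ by the full cosimplicial object $\dR_{A/P^{\wh\otimes\bullet+1}}$. Since $\LL^\an_{P^{\wh\otimes\bullet+1}/\Bdre}$ is cosimplicially acyclic, the map $\dR_{A/\Bdre}\to R\lim_{[n]\in\Delta^\op}\dR_{A/P^{\wh\otimes n+1}}$ is an equivalence, and Proposition \ref{ddr finite} applied termwise identifies the target, on underlying complexes, with the \v{C}ech--Alexander complex $\calO_{D(\bullet)}$, which computes $R\Gamma(U/\Sigma_{e\inf},\calO_{X/\Sigma_e})$ by Proposition \ref{cech}. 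The bridge between the \v{C}ech--Alexander complex and the envelope de Rham complex $\calO_D\otimes\Omega_D^\bullet$ is the bicomplex of Theorem \ref{aff-coh}, not a direct map out of the corner term $M(0)=\wh{P}$. With the local map so corrected, the remainder of your argument --- functoriality over $\SE_X$ via dominating product embeddings, unfolding, and Lemma \ref{sm emb, test} --- goes through as you describe.
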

	\begin{proof}
		We first notice that the isomorphism can be constructed using smooth embeddings.
		To see this, we recall the equivalence of $\infty$-categories $\scrD(X,R)\simeq \Sh^{\mathrm{hyp}}(X,R)$ (cf. Paragraph \ref{par, sheaf}), where the right side is the full sub-$\infty$-category of contravariant functors from $U\in X_\rig$ to the derived category $\scrD(R)$.
		As $X_\aff$ is a basis of the rigid site $X_\rig$, we may use the equivalence $\Sh^{\mathrm{hyp}}(X,R) = \Sh^{\mathrm{hyp}}(X_\aff,R)$ and regard objects in $\scrD(X,R)$ as contravariant functors on affinoid open subsets of $X$.
		So a map or an isomorphism of sheaves of complexes can be constructed via their evaluations at $X_\aff$.
		Moreover, by Lemma \ref{sm emb, test}, by applying the constant functor $R\pi_*$ to the given two objects, it suffices to show that there is an isomorphism between their images over the site of smooth immersions $\SE_X$.
		
		Now for each smooth immersion $i:U=\Spa(B)\ra Z=\Spa(P)$ for affinoid open subset $U\subset X$, as in the proof of \Cref{inf-ddr aff}, we have the following maps of cosimplicial objects in $\wh\DF(\Bdre)$
		\[
		\xymatrix{\dR_{B/\Bdre} \ar[r] &\dR_{B/P^{\wh\otimes \bullet+1}}  \ar[r] &  \ar[r] \wh{\mathrm{dR}}_{B/P^{\wh\otimes \bullet+1}} \ar[r]& \calO_{D(\bullet)} & R\Gamma(U/\Sigma_{e \inf}, \calO_{X/\Sigma_e}) \ar[l]},
		\]
		where $D(n)$ is the envelope of the surjection 
		\[
		P^{\wh\otimes (n+1)=}P^{\wh\otimes_{\Bdre} (n+1)} \ra B.
		\]
		Here we recall that the first map above is induced from the map of rings $\Bdre\to P^{\wh\otimes \bullet+1}$, the second map is the inverse of the algebraicity isomorphism in Theorem \ref{inf-ddr aff}, the third map is the isomorphism in \cite[Cor.\ 4.14]{Bha12}, and the last map is induced from the limit presentation of the cohomology complex.
		In particular, the above arrows are all functorial with respect to the smooth embedding $i:U\to P$, and the induced maps of their underlying complexes above are all isomorphisms, by the proof of the affinoid comparison in Theorem \ref{inf-ddr aff}.
		
		At last, note that the above maps are functorial with respect to  smooth immersions $i:U\ra Z$, so we can improve the above map into the level of  sheaves over $\SE_X$
		\[
		\xymatrix{R\pi_* \dR_{X/\Sigma_e} \ar[r] &\wh{\mathrm{dR}}_{\SE_X}^\an \ar[r] & \calO_{\SE_X} & R\pi_*  Ru_{X/\Sigma_e *}, \calO_{X/\Sigma_e} \ar[l].}
		\]
		Here 
		where $\wh{\mathrm{dR}}_{\SE_X}^\an$ is the (cosimplicial) sheaf sending $i:U\ra Z$ to the filtered complex $\dR_{B/P^{\wh\otimes \bullet+1}}$, and $\calO_{\SE_X}$ is the (cosimplicial) sheaf sending $i:U\ra Z$ to the structure sheaves $\calO_{D(\bullet)}$ of envelopes $D(\bullet)$.
		In this way, the isomorphism of the underlying complexes over the site $\SE_X$ of smooth immersions
		\[
		R\pi_* \ul{\wh{\mathrm{dR}}}_{X/\Bdre}^\an\rra 	R\pi_* Ru_{X/\Sigma_e *}, \calO_{X/\Sigma_e}
		\]
		follows from the above computation at the sections $i:U\ra Z$, and thus we get the result by Lemma \ref{sm emb, test}.
		So we are done.
		
	\end{proof}
	\begin{remark}
		The above comparison map, though functorial with respect to the rigid space $X$, is constructed in an indirect way.
		It is natural to ask if we can directly produce a natural morphism from the analytic derived de Rham complex to the infinitesimal cohomology sheaf.
		Here we want to mention that for algebraic schemes over $\CC$, this could be achieved by the universal property of the derived de Rham complex in the \ic of filtered $\mathbb{E}_\infty$-algebras.
		
	\end{remark}

	\section{\'Eh descent}\label{sec eh}
	In this section, we prove the \'eh hyperdescent of the infinitesimal cohomology of crystals in vector bundles over $X/K$ and $X/\Bdre_{\inf}$ for a rigid space $X$.
	Our goal is to show the comparison between the \'eh de Rham cohomology and the infinitesimal cohomology of a crystal.

	In order to extend a crystal over $X$ to any rigid space that admits a map to $X$ (which in particular is not necessarily an open immersion), we will work with coherent crystals over the big site.
	Here we note that this is only for the technical convenience, as crystals and their cohomology over $X/\Sigma_{e \inf}$ or $X/\Sigma_{e \Inf}$ are equivalent via pullback and restrictions (Proposition \ref{cry, big and small}, Proposition \ref{coh, big small, cry}).

	\subsection{Descent for blowup coverings}
	We first deal with the descent for the blowup covering over the base $\Sigma_1=\Spa(K)$, for an arbitrary complete non-archimedean $p$-adic field $K$, not necessarily algebraically closed.
	The essential idea follows from Hartshorne's proof for algebraic de Rham cohomology \cite[Chap II, Section 4]{Har75}, where he provides a long exact sequence of the algebraic de Rham cohomology for a blowup square.

	We first give a Mayer-Vietories sequence for the infinitesimal cohomology:
	\begin{proposition}[Mayer-Vietories sequence]\label{des MV}
		Let $X$ be a union of two closed analytic subspaces $X_1$ and $X_2$ over $K$, and let $\calF$ be a coherent crystal over $X/K_{\Inf}$.
		Then the map of rigid spaces $X_1\cap X_2\ra X_1\cup X_2\ra X$ induces a natural distinguished triangle as below 
		\[
		Ru_{X/K *}\calF\rra Ru_{X_1/K *}\calF \moplus Ru_{X_2/K *}\calF \rra Ru_{X_1\cap X_2/K *}\calF.
		\]
	\end{proposition}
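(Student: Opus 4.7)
The plan is to reduce the question to the case where $X$ admits a closed immersion into a smooth rigid space $Y$ over $K$, and then deduce the distinguished triangle from a Mayer--Vietoris short exact sequence at the level of envelopes. Since both sides of the claimed triangle are (complexes of) sheaves on $X$ and the formation of $Ru_{-/K*}\calF$ is compatible with restriction to open subspaces (via Corollary \ref{coh, big small}, Proposition \ref{cry, big and small}, and the functoriality in Subsection \ref{subsec functoriality}), the statement is local on $X$. Thus I may assume $X = \Spa(A)$ is affinoid and fits into a closed immersion $X \hookrightarrow Y$ with $Y$ affinoid and smooth over $K$. Let $I_1, I_2 \subset \calO_Y$ denote the ideal sheaves of $X_1, X_2 \hookrightarrow Y$; then the ideals of $X$ and $X_1 \cap X_2$ in $Y$ are $I_1 \cap I_2$ and $I_1 + I_2$, respectively. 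Write $D, D_1, D_2, D_{12}$ for the corresponding envelopes.

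The key computation is a Mayer--Vietoris sequence for envelope structure sheaves. For each $n$ the classical short exact sequence
\[
0 \rra \calO_Y/(I_1^n \cap I_2^n) \rra \calO_Y/I_1^n \moplus \calO_Y/I_2^n \rra \calO_Y/(I_1^n + I_2^n) \rra 0
\]
is exact, and the three pro-systems on display are cofinal with the corresponding $(I_1 \cap I_2)^\bullet$-adic, $I_i^\bullet$-adic and $(I_1 + I_2)^\bullet$-adic systems: the containments $(I_1 \cap I_2)^n \subseteq I_1^n \cap I_2^n$ and $(I_1+I_2)^{2n-1} \subseteq I_1^n+I_2^n \subseteq (I_1+I_2)^n$ are elementary, while the reverse cofinality for intersections follows from Artin--Rees applied to the noetherian affinoid algebra $\calO(Y)$. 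Since all transition maps are surjective, Mittag--Leffler holds and passing to the inverse limit produces a short exact sequence
\[
0 \rra \calO_D \rra \calO_{D_1} \moplus \calO_{D_2} \rra \calO_{D_{12}} \rra 0.
\]

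I would then tensor this sequence with $\calF_D$ and the (locally free) de Rham complex $\Omega_D^\bullet$. By Corollary \ref{cry, vec-bun, K}, $\calF$ is a crystal in vector bundles, so $\calF_D$ is flat over $\calO_D$; combined with Lemma \ref{diff} (which gives $\Omega_{D_i}^\bullet = \Omega_D^\bullet \otimes_{\calO_D} \calO_{D_i}$) and the crystal identifications $\calF_{D_i} = \calF_D \otimes_{\calO_D} \calO_{D_i}$, $\calF_{D_{12}} = \calF_D \otimes_{\calO_D} \calO_{D_{12}}$, this yields a short exact sequence of de Rham complexes. Invoking Theorem \ref{glo-coh} to identify each term with the corresponding $Ru_{-/K*}\calF$ produces the desired distinguished triangle on $X$. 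Since these identifications are functorial, the construction glues over a cover of $X$ by affinoid opens admitting closed immersions into smooth affinoids, giving the global statement. The main delicate point is controlling the inverse limits: one must verify the cofinality and Mittag--Leffler assertions for the interacting filtrations $\{I_1^n \cap I_2^n\}$ and $\{(I_1 \cap I_2)^n\}$, and ensure that tensoring with $\calF_D \otimes \Omega_D^\bullet$ preserves exactness -- both of which reduce to the flatness coming from the crystal-in-vector-bundles hypothesis together with standard Artin--Rees.
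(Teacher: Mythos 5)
Your proposal is correct and follows essentially the same route as the paper's own proof: localize to an affinoid $X$ with a closed immersion into a smooth affinoid, identify each $Ru_{-/K*}\calF$ with the de Rham complex of $\calF$ over the corresponding envelope via Theorem \ref{glo-coh}, and reduce to the short exact sequence $0 \ra \calO_D \ra \calO_{D_1}\oplus\calO_{D_2}\ra \calO_{D_{12}}\ra 0$, obtained from the termwise Mayer--Vietoris sequences together with the cofinality of $\{(I_1\cap I_2)^n\}$ with $\{I_1^n\cap I_2^n\}$ and of $\{(I_1+I_2)^n\}$ with $\{I_1^n+I_2^n\}$ over the noetherian affinoid algebra. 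Your explicit appeal to Artin--Rees and to the crystal-in-vector-bundles flatness when tensoring is a slightly more careful rendering of steps the paper leaves implicit, but it is the same argument.
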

	\begin{proof}
		As the functor $u_{X/K*}$ is the sheaf-version of the global section functor $\Gamma(X/K_{\Inf},-)$ (see Subsection \ref{sub inf-rig}), it suffices to show that the maps in the statement above produce a natural distinguished triangle after applying $R\Gamma(U,-)$, for every $U\subset X$ open affinoid.
		So we may assume there exists a smooth affinoid rigid space $Z=\Spa(P)$ over $\Sigma_e$, together with a closed immersion of $X=\Spa(P/I)$ into $Z$, where $I$ is the defining ideal.
		Let $X_i$ be the closed analytic subspace defined by the ideal $I_i$ in $P$.
		Then by assumption, $X$ is defined by the ideal $I_1\cap I_2$, and the intersection $X_3:=X_1\cap X_2$ is defined by $I_3:=I_1+I_2$.
		We denote by $D$ and $D_i$  the envelope of $X$ and $X_i$ in $Z$ respectively.
		Here we regard $D$ and $D_i$ to be the ringed spaces, where the underlying topological spaces are $X$ and $X_i$, and their (global sections of) structure sheaves are $\calO_D=\varprojlim_n P/I^n$ and $\calO_{D_i}=\varprojlim P/I_i^n$ respectively (cf. Remark \ref{env space}).
		
		Now by Theorem \ref{glo-coh}, the infinitesimal cohomology of the coherent crystal can be functorially identified as the derived global sections of the following sequence
		\[
		\calF_D\otimes\Omega_D^\bullet\rra \calF_{D_1}\otimes\Omega_{D_1}^\bullet\moplus \calF_{D_2}\otimes\Omega_{D_2}^\bullet\rra \calF_{D_3}\otimes\Omega_{D_3}^\bullet,
		\]
		where $\calF_D\otimes\Omega_D^\bullet$ (resp. $\calF_{D_i}\otimes\Omega_{D_i}^\bullet$) is the restriction of the de Rham complex of the coherent crystal $\calF$ on the envelope $D_X(Z)$ (resp. $D_{X_i}(Z)$).
		Moreover,  by the crystal condition, the finite projective $\calO_{D_i}$-module $\calF_{D_i}$ is equal to the tensor product $\calF_D\otimes_{\calO_D}\calO_{D_i}$.
		Combining with \Cref{diff}, we know that each term $\calF_{D_i}\otimes\Omega_{D_i}^n$ of the de Rham complex, regarded as a module over $\mathcal{O}_{D_i}$, is equal to the pullback of $\mathcal{F}_D\otimes_{\mathcal{O}_Z} \Omega^1_{Z/K}$ along $D_i\ra D$.
		As a consequence, by taking the naive truncations termwisely, to show the sequence above is distinguished, it suffices to show that the following natural sequence of rings is a short exact sequence
		\[
		0\rra \calO_D\rra \calO_{D_1}\moplus \calO_{D_2}\rra \calO_{D_3}\rra 0.
		\]
		And since each structure sheaf of envelopes are given by the formal completions of the ring $P$, we reduce the question to show that the following sequence of inverse systems is exact
		\[
		0\rra \{P/(I_1\cap I_2)^n\}_n\rra \{P/I_1^n\}_n\moplus \{P/I_2^n\}_n\rra \{P/(I_1+ I_2)^n\}_n\rra 0.
		\]
		Notice that for fixed $n$, we always have the following short exact sequence
		\[
		0\rra P/(I_1^n\cap I_2^n)\rra P/I_1^n\moplus P/I_2^n\rra P/(I_1^n+I_2^n)\rra 0.
		\]
		The proof thus follows since the ring $P$ is noetherian, and the inverse systems below are canonically isomorphic
		\[
		\{P/(I_1^n+ I_2^n)\}_n\rra \{P/(I_1+I_2)^n\}_n,~\{P/(I_1\cap I_2)^n\}_n\rra \{P/(I_1^n\cap I_2^n)\}_n.
		\]
		
	\end{proof}

	Here is our main theorem in this subsection.
	\begin{theorem}\label{des blowup}
		Let $X$ be a rigid space over $K$, and let $Y$ be a smooth analytic closed subset of $X$ over $K$, with the blowup map $f:X':=\Bl_X(Y)\ra X$ and the preimage $Y':=Y\times_X X'$ in $X'$.
		Then for any  coherent crystal $\calF$ over $X/K_{\Inf}$, the blowup square for $X'\ra X$ induces the following distinguished triangle
		\[
		Ru_{X/K *}\calF\rra Rf_*Ru_{X'/K *}\calF  \moplus Ru_{Y/K *}\calF \rra Rf_*Ru_{Y'/K *}\calF .\tag{$\ast$}
		\]
		In particular, by taking the derived global sections at $X$, we get a distinguished triangle of the infinitesimal cohomology
		\[
		R\Gamma(X/K_{\Inf},\calF)\rra R\Gamma(X'/K_{\Inf},\calF)\moplus R\Gamma(Y/K_{\Inf},\calF)\rra R\Gamma(Y'/K_{\Inf},\calF).
		\]
	\end{theorem}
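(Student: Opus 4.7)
The plan is to follow the template of Hartshorne \cite[Chap II]{Har75}: embed into smooth ambient spaces, reduce to a statement about completed de Rham complexes on the envelopes, and then exploit a blowup of the ambient smooth space together with Mayer--Vietoris.

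First I would localize. The triangle $(\ast)$ is a sheaf statement on $X$, so after restricting to an affinoid open we may assume $X = \Spa(A)$ is affinoid and admits a closed immersion into a smooth affinoid rigid space $Z = \Spa(P)$ over $K$. Since $Y$ is a smooth closed analytic subspace of $X$, it is a smooth closed subspace of $Z$ as well, so the blowup $Z' := \Bl_Y(Z)$ is again smooth over $K$. The strict transform of $X$ in $Z'$ is $X'=\Bl_Y X$, and $Y' = Y\times_X X'$ lies inside the exceptional divisor $E\subset Z'$, itself smooth over $K$. In this way I obtain simultaneous closed immersions of $X, X', Y, Y'$ into smooth rigid spaces over $K$, with compatible maps between them.

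Second, I would transport the problem to the de Rham complexes on the four envelopes $D_X(Z)$, $D_{X'}(Z')$, $D_Y(Z)$, $D_{Y'}(Z')$ using Theorem \ref{glo-coh}. The crystal condition (Lemma \ref{cry pullback}) identifies the restriction of $\calF$ on each envelope with the pullback from $D_X(Z)$, and by Lemma \ref{diff} the continuous differential sheaves on the envelopes agree with the pullback of $\Omega^\bullet_Z$ (resp.\ $\Omega^\bullet_{Z'}$) to the appropriate formal completion. So the desired distinguished triangle $(\ast)$ amounts to showing that
\[
\calF_{D_X(Z)}\otimes\widehat{\Omega}^\bullet_{Z/K}\rra Rf_*\left(\calF_{D_{X'}(Z')}\otimes\widehat{\Omega}^\bullet_{Z'/K}\right)\moplus \calF_{D_Y(Z)}\otimes\widehat{\Omega}^\bullet_{Z/K}\rra Rf_*\left(\calF_{D_{Y'}(Z')}\otimes\widehat{\Omega}^\bullet_{Z'/K}\right)
\]
is a distinguished triangle (completions along the respective closed subspaces). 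Filtering by the Hodge filtration on $\widehat\Omega^\bullet$, whose graded pieces are locally free of finite rank on each envelope, reduces this in turn to the analogous statement termwise, i.e.\ for the coherent sheaves $\calF_{D_\bullet}\otimes\Omega^i$.

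The key input, and main obstacle, is therefore a coherent-sheaf blowup descent: for the smooth blowup $f:Z'\to Z$ along the smooth center $Y$, one has $Rf_*\calO_{Z'}=\calO_Z$ and, more generally, a canonical distinguished triangle relating $\Omega^i_{Z/K}$, $Rf_*\Omega^i_{Z'/K}$, $\Omega^i_{Y/K}$ and $Rf_*\Omega^i_{E/K}$ (the classical blowup formula for differentials on a smooth center). After formally completing this distinguished triangle along $X$ and twisting with the coherent crystal $\calF$, and combining with the Mayer--Vietoris sequence of Proposition \ref{des MV} applied to the decomposition of the formal neighborhood of $X$ in $Z$ as $X'\cup Y$, one obtains exactly the required four-term distinguished triangle. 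Finally, taking derived global sections on $X$ yields the cohomological statement. The filtered completion step and the cohomology-and-base-change arguments for $Rf_*$ on the formal blowup are standard once the key blowup formula for differentials is in hand, so the essential difficulty is isolated in proving that formula in the rigid setting and verifying that it passes through the formal completion along $X$.
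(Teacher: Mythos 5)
Your overall architecture (localize to an affinoid $X\hookrightarrow Z$ with $Z$ smooth, blow up $Z$ along $Y$, pass to completed de Rham complexes on the four envelopes via Theorem \ref{glo-coh}, and reduce termwise along the Hodge filtration) matches the paper's. But the step you isolate as the ``key input'' is the wrong statement, and this is where the proof actually lives. After the reduction you correctly set up, the third and fourth terms of the triangle are $\calF_{D_Y(Z)}\otimes(\Omega^i_{Z/K}\otimes\calO_{D_Y(Z)})$ and $Rf_*\bigl(\calF_{D_{Y'}(Z')}\otimes(\Omega^i_{Z'/K}\otimes\calO_{D_{Y'}(Z')})\bigr)$ --- completions of the \emph{ambient} differentials of $Z$ and $Z'$ along $Y$ and $Y'$. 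The ``classical blowup formula'' you then invoke involves the \emph{intrinsic} differentials $\Omega^i_{Y/K}$ and $\Omega^i_{E/K}$, which are different sheaves (of the wrong rank), and is in any case incompatible with a termwise reduction: identifying $Ru_{Y/K*}\calF$ with the intrinsic de Rham complex of $Y$ uses independence of the embedding and only holds for the total complex, not degree by degree. Likewise, Proposition \ref{des MV} applies to a union of two closed analytic subspaces of a single space; $X'$ is not a closed subspace of $X$ or $Z$, so ``the decomposition of the formal neighborhood of $X$ in $Z$ as $X'\cup Y$'' is not meaningful, and Mayer--Vietoris in fact plays no role in the paper's proof of this theorem.

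The mechanism the paper actually uses, and which your sketch is missing, is a completion-insensitivity statement for the cone: for $\calM=\calG\otimes\Omega^i_{Z/K}$ with $\calG$ a vector bundle on $Z$ extending $\calF_{D_X(Z)}$ and $\calM'=f^*\calM$ on $Z'$, the cone $C_1$ of $\calM\to Rf_*\calM'$ has all cohomology sheaves coherent and supported on $Y$ (because $f$ is an isomorphism away from $Y$ and $\calM\to f_*\calM'$ is injective), hence $C_1$ is unchanged when one completes $\calM$ along $X$ (resp.\ $Y$) and $\calM'$ along $X'$ (resp.\ $Y'$); the comparison rests on Kiehl's formal function theorem (Theorem \ref{formal function}). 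The identification $C_2\cong C_1\cong C_3$ of the two completed cones is precisely what produces the four-term distinguished triangle. Since you explicitly defer ``proving that formula in the rigid setting and verifying that it passes through the formal completion,'' the essential content of the theorem is absent from the proposal.
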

	
	Before we prove the result, first we recall the formal function theorem for a proper map of rigid spaces.
	\begin{theorem}\cite[Thm.\ 3.7]{Kie67}\label{formal function}
		Let $f:X'\ra X$ be a proper map of rigid space over $K$, and let $\calI$ be a sheaf of ideal over $X$, with $Y$ the analytic closed subset of $X$ defined by $\calI$.
		Assume $\calG$ is a coherent sheaf over $X'$.
		Then the following natural map is an isomorphism:
		\[
		(R^jf_*\calG)^\wedge\rra R^jf_*(\wh\calG).
		\]
		Here $(-)^\wedge$ is the classical completion of a sheaf of $\calO_X$-modules (or $\calO_{X'}$-modules) with respect to the ideal $\calI$ (resp. $f^{-1}\calI\cdot \calO_{X'}$).
	\end{theorem}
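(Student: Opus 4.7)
The plan is to follow the classical strategy of Grothendieck's formal function theorem, adapted to the rigid analytic setting using Kiehl's proper mapping theorem and the Artin--Rees lemma for Noetherian affinoid algebras. Since the claim is local on $X$, I first reduce to the case where $X=\Spa(A)$ is affinoid, with $\calI$ corresponding to an ideal $I\subset A$; then $A$ is Noetherian and $R^jf_*\calG$ corresponds, by Kiehl's proper mapping theorem, to a finitely generated $A$-module $M:=H^j(X',\calG)$. The sheaf $(R^jf_*\calG)^\wedge$ then corresponds to the classical $I$-adic completion $M^\wedge = \varprojlim_n M/I^nM$, while $\wh\calG = \varprojlim_n \calG/(f^{-1}\calI\cdot\calO_{X'})^n\calG$ by definition.

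The heart of the argument is to show that both sides agree with $\varprojlim_n H^j(X', \calG_n)$ where $\calG_n := \calG/(f^{-1}\calI\cdot\calO_{X'})^n\calG$. For the right-hand side, I would use that $R^jf_*\wh\calG$ is computed by cohomology of the inverse limit, and that the inverse system of coherent sheaves $\{\calG_n\}$ is Mittag--Leffler on $X'$, so that one may commute $Rf_*$ with $R\varprojlim$, giving $R^jf_*\wh\calG = \varprojlim_n R^jf_*\calG_n$. For the left-hand side, I would consider the short exact sequences $0\to I^n\calG\to \calG\to \calG_n\to 0$, take their long exact sequences in cohomology, and compare the resulting inverse system $\{H^j(X', \calG_n)\}_n$ with the quotients $M/I^nM$.

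The main obstacle is the Artin--Rees / Mittag--Leffler step, which requires showing that the inverse system of $A$-submodules $\mathrm{Im}(H^j(X', I^n\calG)\to M)$ of the finitely generated $A$-module $M$ has the property that its $I$-adic topology coincides with the subspace topology induced from $M$. This is the rigid analytic analogue of the classical Artin--Rees lemma on the higher cohomology, and it is the step where Kiehl's finiteness result is used in an essential way: one applies it to the graded coherent sheaf $\bigoplus_n I^n\calG/I^{n+1}\calG$ (viewed as a coherent sheaf on the relative analytic spec of the Rees algebra, or handled directly by a graded module argument over $A$) to show that $\bigoplus_n H^j(X', I^n\calG/I^{n+1}\calG)$ is finitely generated over the Rees algebra $\bigoplus_n I^n$. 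Once this finite generation is established, a standard Artin--Rees argument on $A$-modules yields the desired Mittag--Leffler property, and the comparison with the $I$-adic completion becomes formal, identifying $\varprojlim_n H^j(X', \calG_n)$ with $M^\wedge$ as claimed.
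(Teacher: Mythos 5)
The paper does not actually prove this statement: it is quoted verbatim from Kiehl (\cite{Kie67}, Theorem 3.7), so the only comparison available is with the classical argument behind Kiehl's theorem, and your sketch reconstructs exactly that EGA~III-style proof — affinoid reduction, Kiehl's Endlichkeitssatz to get $M=\rmH^j(X',\calG)$ finite over the Noetherian affinoid algebra $A$, a Rees-algebra finiteness statement feeding an Artin--Rees argument, and Mittag--Leffler bookkeeping to identify $\varprojlim_n \rmH^j(X',\calG_n)$ with both sides. So the strategy is correct and is essentially the one in the cited source (the paper's own remark after the theorem also points to the derived-completion variant in \cite{Sta}, Tag 0A0H, which would give an alternative modern route).

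Two points in your write-up deserve tightening, though neither is fatal. First, $\bigoplus_n \calI^n\calG/\calI^{n+1}\calG$ is \emph{not} a coherent sheaf on $X'$ (it has infinite rank), so Kiehl's finiteness theorem does not apply to it verbatim; to make your parenthetical precise you must genuinely pass to the relative analytic $\mathrm{Spec}$ of the Rees or associated graded algebra over the affinoid base — a construction available in rigid geometry via relative analytification (cf. \cite{Con06}) — or run Kiehl's original device with completed \v{C}ech complexes. Second, the commutation step is slightly misattributed: $Rf_*$ commutes with derived inverse limits unconditionally, so Mittag--Leffler for $\{\calG_n\}$ is not what lets you ``commute $Rf_*$ with $R\varprojlim$.'' What you actually need is (i) that $\wh\calG\ra R\varprojlim_n \calG_n$ is an isomorphism, which follows since on an affinoid basis the transition maps $\calG_{n+1}(U)\ra\calG_n(U)$ are surjective by vanishing of $\rmH^1$ of coherent sheaves on affinoids, and (ii) the vanishing of the $R^1\varprojlim_n \rmH^{j-1}(X',\calG_n)$ term when extracting $\rmH^j$ from $R\varprojlim_n R\Gamma(X',\calG_n)$ — and it is precisely this second Mittag--Leffler condition that your Artin--Rees finiteness over the Rees algebra supplies, via the standard splitting of the system $\{\rmH^j(X',\calG_n)\}_n$ into a part with stable images from $\rmH^j(X',\calG)$ and an essentially zero part.
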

	\begin{remark}
		Here we note that we may get a more derived version of the above theorem using the derived completion, as in \cite[Tag 0A0H]{Sta}.
		For our uses, we do not jump into this generality.
	\end{remark}

	The rest of this subsection is devoted to prove Theorem \ref{des blowup}.

	\noindent\textbf{Special case: $X$ is smooth.}
	First we deal with the special case, assuming $X$ itself is smooth over $K$.
	
	When $X$ is smooth, as the blowup center $Y$ is assumed to be smooth, the blowup $X'$ itself is also smooth over $K$.
	\footnote{To see this, one can use the algebraization argument in the proof of \cite[Prop.\ 5.1.1]{Guo19} to reduce to a blowup of a smooth closed subscheme in a smooth scheme, where the claim is clear.}
	By Theorem \ref{glo-coh}, the derived direct image of the coherent crystal over $X/K_{\inf}$ and $X'/K_{\inf}$ can be computed by their de Rham complexes $\calF_X\otimes \Omega_{X/K}^\bullet$ and $\calF_{X'}\otimes\Omega_{X'/K}^\bullet=f^*\calF_X\otimes\Omega_{X'/K}^\bullet$ respectively.
	On the other hand, the derived direct image $Ru_{Y/K *}\calF$ and $Ru_{Y'/K *}\calF$ are naturally isomorphic to the de Rham complex over the envelopes $D_Y(X)$ and $D_{Y'}(X')$; namely the complexes
	\[
	\calF_D\otimes\Omega_{D_Y(X)}^\bullet, ~~~\calF_{D'}\otimes\Omega_{D_{Y'}(X')}^\bullet,
	\]
	which are compatible with the Hodge filtrations of $\Omega_{X/K}^\bullet$ and $\Omega_{X'/K}^\bullet$.
	So the sequence $(\ast)$ in Theorem \ref{des blowup} is naturally isomorphic to the following sequence of de Rham complexes
	\[
	\calF_X\otimes\Omega_{X/K}^\bullet\rra Rf_*(f^*\calF_X\otimes\Omega_{X'/K}^\bullet)\moplus\calF_D\otimes\Omega_{D_Y(X)}^\bullet\rra Rf_*(\calF_{D'}\otimes\Omega_{D_{Y'}(X')}^\bullet).
	\]
	In fact, we want to show the following more general statement:
	\begin{proposition}\label{des blowup special}
		Let $f:X'\ra X$ be a proper morphism of smooth connected rigid spaces over $K$, and let $Y$ be a closed analytic subset of $X$, with $Y'=f^{-1}(Y)$.
		Let $\calG'$ and $\calG$ be coherent sheaves over $X'$ and $X$ separately, such that $f:X'\ra X$ induces an injective map of $\calO_X$-modules $\calG\ra f_*\calG'$.
		Assume $f$ induces an isomorphism between open subsets $X'\backslash Y'$ and $X\backslash Y$, and also induces an isomorphism of the restriction of $\calG\ra f_*\calG'$ on $X\backslash Y$.
		Then the following natural sequence is a distinguished triangle
		\[
		\calG\otimes\Omega_{X/K}^i\rra Rf_*(\calG'\otimes\Omega_{X'/K}^i)\moplus\calG\otimes\Omega_{D_Y(X)/K}^i\rra Rf_*(\calG'\otimes \Omega_{D_{Y'}(X')/K}^i),~i\in \NN.
		\]
		
	\end{proposition}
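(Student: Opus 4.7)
The plan is to show that the cone $C^\bullet$ of the proposed sequence vanishes in the derived category of sheaves on $X$. Following Hartshorne's strategy \cite{Har75}, I would verify this separately on the open complement $X\setminus Y$ and on the formal neighborhood of $Y$ in $X$, then combine using the coherence of cohomology sheaves coming from properness.

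\textbf{Off $Y$.} Since the envelopes $D_Y(X)$ and $D_{Y'}(X')$ have underlying topological spaces $Y$ and $Y'$ respectively, the sheaves $\calG\otimes\Omega_{D_Y(X)/K}^i$ and $Rf_*(\calG'\otimes\Omega_{D_{Y'}(X')/K}^i)$ are supported on $Y$, hence vanish on $X\setminus Y$. The hypotheses that $f$ restricts to an isomorphism $X'\setminus Y'\cong X\setminus Y$ and that $\calG\to f_*\calG'$ is an isomorphism there (combined with properness to control $Rf_*$) reduce the restriction of the sequence to $X\setminus Y$ to the identity map $\calG\otimes\Omega_{X/K}^i\cong Rf_*(\calG'\otimes\Omega_{X'/K}^i)$. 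Thus $C^\bullet|_{X\setminus Y}\simeq 0$, and by properness each $\mathcal{H}^j(C^\bullet)$ is a coherent sheaf on $X$ supported on $Y$.

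\textbf{Near $Y$.} By Lemma \ref{diff}, there are canonical identifications $\Omega_{D_Y(X)/K}^i\cong (\Omega_{X/K}^i)^\wedge$ and $\Omega_{D_{Y'}(X')/K}^i\cong (\Omega_{X'/K}^i)^\wedge$, where $(-)^\wedge$ denotes formal completion along the defining ideals $\calI_Y$ and $\calI_{Y'}=f^{-1}\calI_Y\cdot\calO_{X'}$ respectively; since $\calG$ and $\calG'$ are coherent, tensoring with them commutes with these completions. Applying the formal function theorem (Theorem \ref{formal function}) to $f$ with respect to $\calI_Y$ identifies $R^jf_*(\calG'\otimes\Omega_{X'/K}^i)^\wedge\cong R^jf_*(\calG'\otimes\Omega_{D_{Y'}(X')/K}^i)$ for all $j$. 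Consequently the $\calI_Y$-adic completion of the proposed sequence takes the tautological Mayer--Vietoris form
\[
A\rra B\moplus A\rra B,
\]
with $A=(\calG\otimes\Omega_{X/K}^i)^\wedge$ and $B=Rf_*(\calG'\otimes\Omega_{X'/K}^i)^\wedge$, which is split distinguished. Hence $(C^\bullet)^\wedge\simeq 0$.

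\textbf{Conclusion.} A coherent sheaf on (affinoid) $X$ supported on $Y$ is locally annihilated by some power of $\calI_Y$ and therefore agrees with its $\calI_Y$-adic completion; the two vanishings above thus force $\mathcal{H}^j(C^\bullet)=0$ for all $j$, proving the triangle is distinguished. The principal obstacle I anticipate is promoting the formal-function-theorem identification from individual $R^jf_*$ to a statement about the complex $Rf_*$ as an object in the derived category. I would handle this by treating one exterior power $i$ at a time (the statement is already term-wise in $i$), combining the hypercohomology spectral sequence with the Mittag--Leffler property of the pro-system of infinitesimal neighborhoods $\{Y'_n\hookrightarrow X'\}$, in the spirit of the inverse-limit argument carried out in the proof of Theorem \ref{glo-coh}.
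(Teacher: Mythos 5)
Your overall strategy is the one the paper uses (following Hartshorne): kill the defect of the triangle by checking it away from $Y$ and after formal completion along $Y$, using the formal function theorem and the fact that a coherent sheaf supported on $Y$ coincides with its $\calI_Y$-adic completion. The gap is the assertion that ``by properness each $\calH^j(C^\bullet)$ is a coherent sheaf on $X$ supported on $Y$.'' Kiehl's finiteness theorem gives coherence of $R^jf_*$ of \emph{coherent} sheaves on $X'$, but two of the terms entering $C^\bullet$ are formal completions: $\calG\otimes\Omega^i_{D_Y(X)/K}$ is not a coherent $\calO_X$-module and $\calG'\otimes\Omega^i_{D_{Y'}(X')/K}$ is not coherent on $X'$, so neither properness nor the long exact sequence of the total cone yields coherence of $\calH^j(C^\bullet)$. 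This is not cosmetic: the concluding implication ``supported on $Y$ and killed by completion $\Rightarrow$ zero'' genuinely fails without coherence (or at least locally bounded $\calI_Y$-torsion) --- for instance the $t$-divisible $t$-torsion module $k((t))/k[[t]]$ over $k[[t]]$ is supported on $V(t)$ yet has vanishing $t$-adic completion.

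The paper closes exactly this hole by never forming the total cone of the three-term sequence directly. Writing $\calM=\calG\otimes\Omega^i_{X/K}$ and $\calM'=\calG'\otimes\Omega^i_{X'/K}$, it compares $C_1=\Cone(\calM\to Rf_*\calM')$ with $C_2=\Cone(\wh\calM\to Rf_*\wh{\calM'})$; since $C^\bullet\simeq\Cone(C_1\to C_2)$ up to shift, it suffices to show $C_1\to C_2$ is an isomorphism. Here $C_1$ visibly has coherent cohomology supported on $Y$ (the injectivity hypothesis --- which your sketch never uses --- makes $\calH^0(C_1)$ a cokernel of coherent sheaves, and $\calH^j(C_1)=R^jf_*\calM'$ for $j\ge 1$ is coherent by Kiehl and vanishes off $Y$), and the formal function theorem together with the exactness of $\calI_Y$-adic completion on coherent sheaves identifies $\calH^j(C_2)$ with $(\calH^j(C_1))^{\wedge}=\calH^j(C_1)$. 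If you reorganize your ``near $Y$'' and ``conclusion'' steps along these lines --- establishing coherence for $C_1$ first and transporting it to $C_2$ through completion, rather than asserting it for $C^\bullet$ --- your argument goes through; this also disposes of the $Rf_*$-versus-$R^jf_*$ issue you flag, since everything is then checked on individual cohomology sheaves.
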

	\begin{proof}
		Let $\calM$ be the coherent sheaf $\calG\otimes \Omega_{X/K}^i$ over $X$, and let $\calM'$ be the coherent sheaf $\calG'\otimes \Omega_{X'/K}^i$ over $X'$.
		By the assumption of smoothness, both $\Omega_{X/K}^i$ and $\Omega_{X'/K}^i$ are locally free, and the natural map $\calM\ra f_*\calM'$ is injective.
		Furthermore, the restriction of the map $\calM\ra f_*\calM'$ on the open subset $X\backslash Y$ is an isomorphism.
		
		Recall the $i$-th differential sheaf $\Omega_{D_Y(X)/K}^i$ of $D_Y(X)$, as a sheaf over $X$, is defined as the inverse limit $\varprojlim_m \Omega_{X_m/K}^i$, where $X_m$ is the $m$-th infinitesimal neighborhood of $Y$ in $X$.
		As is shown in Lemma \ref{diff}, the sheaf $\Omega_{D_Y(X)/K}^i$ is naturally isomorphic to the formal completion of the coherent sheaf  $\Omega_{X/K}^i$ along $Y\ra X$, which is also equal to the tensor product $\Omega_{X/K}^i\otimes\calO_{D_Y(X)}$.
		Moreover, since $\calG$ is coherent over $X$, the tensor product $\calG\otimes\Omega_{D_Y(X)}^i$ is isomorphic to the formal completion $\wh\calM$ of $\calM=\calG\otimes \Omega_{X/K}^i$ along $Y\ra X$.
		The same also holds for $X',Y'$ and $\calM'$.

		We denote by $C_1$ and $C_2$  cones of the map $\calM\ra Rf_*\calM'$ and $\wh \calM\ra Rf_*\wh\calM'$ respectively.
		Consider the following commutative diagram
		\[
		\xymatrix{\calM\ar[r]\ar[d] &Rf_*\calM' \ar[r]\ar[d] & C_1\ar[d]\\
			\wh\calM\ar[r] & Rf_*\wh\calM' \ar[r] & C_2.}
		\]
		Here both rows are distinguished.
		
		Now we make the following claim.
		\begin{claim}\label{des blowup claim}
			The natural map $C_1\ra C_2$ of cones above is an isomorphism.
		\end{claim}	
		\begin{proof}[Proof of Claim]
			First we notice that since the map $f:X'\ra X$ is an isomorphism on the open subsets $X'\backslash Y'\ra X\backslash Y$ and both $X'$ and $X$ are smooth, the sheaves of differentials $\Omega_{X/K}^i$ and $\Omega_{X'/K}^i$ are vector bundles, and the induced map $\Omega_{X/K}^i\ra f_*\Omega_{X'/K}^i$ is injective.
			\footnote{For an open immersion $i:U\to X$, thanks to the smoothness, the restriction map $\Omega^i_{X/K}(X) \to \Omega^i_{X/K}(U)$ is injective. When $U$ is away from $Y$, the open immersion $U\to X$ factors through $X'$, and we get the injectivity of the composition $\Omega^i_{X/K}(X)\to \Omega^i_{X'/K}(X') \to \Omega^i_{X/K}(U)$ as well, hence $\Omega^i_{X/K}(X)\to \Omega^i_{X'/K}(X')$ is injective. The same applies when $X$ and $X'$ are replaced by open subspaces, which justifies the injectivity in the level of sheaves.}
			On the other hand, the map $\calG\ra f_*\calG'$ is assumed to be an injective  map of coherent sheaves in the Proposition.
			Combining the above two conditions, we see the map $\calM\ra f_*\calM'$ is injective, and the cone lives in cohomological degree $[0,+\infty)$.
			
			By the Formal Function Theorem \ref{formal function}, the cohomology sheaf $R^jf_*\wh\calM'$ is naturally isomorphic to the formal completion $(R^jf_*\calM')^\wedge$ of $R^jf_*\calM'$ along $Y\ra X$.
			Moreover, by the exactness of the formal completion on coherent sheaves, the natural map $\wh \calM\ra (f_*\calM')^\wedge$ is injective, and we have a short exact sequence
			\[
			0\rra \wh \calM\rra (f_*\calM')^\wedge\rra \calH^0(C_2)\rra 0.
			\]
			This implies that $C_2$ lives also in cohomological degree no smaller than zero.
			Furthermore, by the exactness of the above sequence, the cohomology sheaf $\calH^0(C_2)$ is isomorphic to the formal completion of $\calH^0(C_1)$ at $Y$.
			But since $\calH^0(C_1)$ is coherent and is already supported at $Y$, we have 
			\[
			\calH^0(C_1)=\calH^0(C_1)^\wedge\simeq \calH^0(C_2).
			\]
			This finishes the degree zero part.
			
			For the higher cohomology, we consider the following diagram of cohomologies
			\[
			\xymatrix{
				R^jf_*\calM'\ar[r] \ar[d] & \calH^j(C_1) \ar[d]\\
				(R^jf_*\calM')^\wedge\ar[r] & \calH^j(C_2).}
			\]
			As $\calM$ and $\wh\calM$ are living in cohomological degree zero, the horizontal maps above are isomorphisms, and it suffices to show for each $i>0$, the left vertical map above is an isomorphism.
			But notice that since $f$ induces an isomorphism between $\calM$ and $\calM'$ over $X\backslash Y$, the higher cohomology sheaf $R^jf_*\calM'$ is coherent and is supported over $Y$.
			In particular, the formal completion of $R^jf_*\calM'$ along $Y\ra X$ is equal to itself; namely the natural map below is an isomorphism
			\[
			R^jf_*\calM'\rra (R^jf_*\calM')^\wedge.
			\]
			This leads to the isomorphism
			\[
			\calH^j(C_1)\simeq\calH^j(C_2),~\forall j\geq 1,
			\]
			and we finish the isomorphism between $C_1$ and $C_2$.
			
		\end{proof}
		We change the notation back to Proposition \ref{des blowup special}.
		Then we get two rows of distinguished triangles
		\[
		\xymatrix{\calG\otimes\Omega^i_{X/K}\ar[r]\ar[d] &Rf_*(\calG'\otimes\Omega^i_{X'/K}) \ar[r]\ar[d] & C_1\ar[d]\\
			\calG\otimes\Omega^i_{D_Y(X)/K}\ar[r] & Rf_*(\calG'\otimes \Omega^i_{D_{Y'}(X')/K}) \ar[r] & C_2,}
		\]
		the third vertical map is an isomorphism.

		Finally, we consider the following two maps, 
		\[
		\begin{tikzcd}
			\phi:& \calG\otimes \Omega_{X/K}^i \arrow[r, "{(+,+)}"] & Rf_*(\calG'\otimes\Omega^i_{X'/K})\moplus (\calG\otimes\Omega_{D_Y(X)/K}^i);\\
			\psi:& Rf_*(\calG'\otimes\Omega^i_{X'/K})\moplus (\calG\otimes\Omega_{D_Y(X)/K}^i) \arrow[r, "{(-, +)}"]& Rf_*(\calG'\otimes\Omega_{D_{Y'}(X')/K}^i),
		\end{tikzcd}
		\]
		where $+$ and $-$ are indicating the signs of the map.
		As the composition of the above two maps is equal to zero, the map $\psi$ factors though a morphism $\Cone(\phi)\ra Rf_*(\calG'\otimes\Omega_{D_{Y'}(X')/K}^i)$ (\cite[Tag 08RI]{Sta}).
		In this way, by  chasing diagrams and the Claim above, the map $\Cone(\phi)\ra Rf_*(\calG'\otimes\Omega_{D_{Y'}(X')/K}^i)$ is an isomorphism, and we get the distinguished triangle we want.
	\end{proof}

	\noindent\textbf{General case.}
	We then deal with the general case of Theorem \ref{des blowup}.
	
	\begin{proof}[Proof of  Theorem \ref{des blowup}.]
		As the theorem is a local statement, by passing to an open covering if necessary it suffices to assume $X$ is affinoid and admits a closed immersion into a smooth affinoid rigid space $Z$.
		Moreover, as any coherent crystal over $X/K_{\Inf}$ is a crystal in vector bundles (Corollary \ref{cry, vec-bun, K}), by further taking  open subsets we may assume $\calF_X\simeq \calO_X^{\oplus m}$ is trivial over $X$.

		As $Y$ is smooth over $K$, the blowup $Z'=\Bl_Z(Y)$ of the smooth rigid space $Z$ at the center $Y$ is also smooth.
		Moreover, as $X\ra Z$ is a closed immersion while $X'$ is the blowup of $X$ at $Y$, the natural map $X'\ra Z'$ is also a closed immersion, which is equal to the preimage of $X$ along the blowup map $f:Z'\ra Z$.
		So we get the following commutative diagrams of rigid spaces over $K$ with both of the square being cartesian
		\[
		\xymatrix{ Y'\ar[r] \ar[d] &X'\ar[r] \ar[d] &Z'\ar[d]\\
			Y\ar[r] &X\ar[r] &Z.}
		\]

		Since the restriction $\calF_{D_X(Z)}$ is a vector bundle over $\calO_{D_X(Z)}$ whose pullback along $X \ra D_X(Z)$ is trivial, 
		by Nakayama's lemma we know $\calF_{D_X(Z)}$ is already a trivial bundle, and we may let $\calG$ be a trivial vector bundle over $Z$ such that the tensor product $\calG\otimes_{\calO_Z} \calO_{D_X(Z)}$ is equal to $\calF_{D_X(Z)}$.
		Let $\calG'$ be the pullback $f^*\calG$, as a trivial bundle over $Z'$.
		Then by the crystal condition of $\calF$ we have
		\[
		\calG'\otimes_{\calO_{Z'}} \calO_{D_{X'}(Z')} =\calF_{D_{X'}(Z')}.
		\]
		Here we note that by our choices and the diagram above, the map $\calG\ra f_*\calG'$ is injective and is an isomorphism when restricted to open subsets $Z\backslash Y$ and $Z\backslash X$.
		Similar to the proof of Proposition \ref{des blowup special}, we let $\calM$ and $\calM'$ be the tensor products $\calG\otimes\Omega_{Z/K}^i$ and $\calG'\otimes\Omega_{Z'/K}^i$ over $Z$ and $Z'$ respectively.

		Now by the proof of Proposition \ref{des blowup special}, we have the following natural commutative diagrams, with each row being distinguished 
		\[
		\xymatrix{
			\calM\ar[r] \ar[d]& Rf_*\calM' \ar[d] \ar[r] &C_1  \ar[d]\\
			\wh{\calM_{/X}} \ar[r] \ar[d]& Rf_* \wh{\calM'_{/X'}} \ar[r] \ar[d]& C_2 \ar[d]\\
			\wh{\calM_{/Y}} \ar[r] & Rf_*\wh{\calM'_{/Y'}} \ar[r]& C_3.}
		\]
		Here the sheaf $\wh{\calM_{/X}}$ and similar for the others is denoted to be the formal completion of $\calM$ along $X\ra Z$.
		Thanks to Claim \ref{des blowup claim}, the map $C_1\ra C_2$ and the $C_1\ra C_3$ are isomorphisms.
		In particular, the map $C_2\ra C_3$ is an isomorphism.
		In this way, as in the last part of the proof for Proposition \ref{des blowup special}, the second and the third rows above produces the following distinguished triangle
		\[
		\begin{tikzcd}
			\wh{\calM_{/X}} \arrow[r,"{(+,+)}"] & Rf_* \wh{\calM'_{/X'}} \bigoplus \wh{\calM_{/Y}} \arrow[r,"{(-,+)}"] & Rf_*\wh{\calM'_{/Y'}}.
		\end{tikzcd}
		\]
		Hence by Lemma \ref{diff}, we may replace those formal completions by their corresponding sheaves over envelopes, and obtain the distinguished triangle below
		\[
		\xymatrix{ \calF_{D_X(Z)}\otimes\Omega_{D_X(Z)}^i \ar[r]& Rf_*(\calF_{D_{X'}(Z')}\otimes\Omega_{D_{X'}(Z')}^i) \moplus \calF_{D_Y(Z)}\otimes\Omega_{D_Y(Z)}^i \ar[r] & Rf_*(\calF_{D_{Y'}(Z')}\otimes\Omega_{D_{Y'}(Z')}^i),}
		\]
		which implies the theorem by taking different $i$ and Theorem \ref{glo-coh}.

	\end{proof}

	\begin{remark}
		With the help of the blowup triangle in Theorem \ref{des blowup}, we could show the following:
		for a universal homeomorphism of rigid spaces $f:X'\ra X$  over $K$ and a coherent crystal $\calF$ over $X/K_{\Inf}$,
		there exists a natural isomorphism of cohomology sheaves as below
		\[
		Ru_{X/K *}\calF\rra Rf_*Ru_{X'/K *}\calF.
		\]
	\end{remark}

	\subsection{\'Eh-hyperdescent in general}
	Now we are ready to prove the \'eh descent for a crystal over the infinitesimal site.
	We first deal with the case for the infinitesimal cohomology over an arbitrary $p$-adic field $K$, where the strategy is to use the blowup square for the \'eh-topology in \cite{Guo19} and the descent results in the first subsection.
	After that, we generalize to the case over $\Bdre$.
	
	Recall that the \'eh site $X_\eh$ is defined over the category $\Rig_K|_X$ of all $K$-rigid spaces over $X$ and is equipped with the \'eh-topology (cf. \cite[Section 2]{Guo19}).
	For an object $X'\ra X$ in $\Rig_K|_X$, we denote by $\pi_{X'}:X_\eh\ra X'_\rig$  the map from the \'eh site of $X$ to the small rigid site of $X'$, and denote by $\pi:X_\et \to \Rig_K|_{X}$ the map from the \'eh site of $X$ to the big rigid site over $X$.

	We first associate an infinitesimal crystal together with its de Rham complex an analogous construction over the \'eh topology.
	\begin{construction}[\'eh de Rham complex]\label{eh-de Rham complex}
		Let $\calF$ be an infinitesimal sheaf of $\calO_{X/K}$-modules over the big infinitesimal site $X/K_{\Inf}$.
		We then associate a sheaf $\calF_\rig:=i_{X/K}^{-1}\calF$ of $\calO_X=i_{X/K}^{-1}\calO_{X/K}$-modules over the big rigid site $\Rig_K|_X$, where $i_{X/K}:\Sh(\Rig_K|_X) \ra \Sh(X/K_{\Inf})$ is the morphism of topoi as in Subsection \ref{sub inf-rig}.
		Here the section of $\calF_\rig$ at an object $f:X'\ra X$ in $\Rig_K|_X$ is the $\calO_{X'}(X')$-module
		\[
		\calF(X',X'),
		\]
		where $(X',X')\in X/\Sigma_{e \Inf}$ is the trivial thickening of $X'$.
		We could then sheafify it with the \'eh-topology, and thus get an \'eh-sheaf $\calF_\eh$ over the \'eh site $X_\eh$.

		Now we specify $\calF$ to be a coherent crystal over big the infinitesimal site.
		As in the discussion of Paragraph \ref{cry dR cpx}, we could associate $\calF$ its de Rham complex over the big infinitesimal site
		\[
		\calF\otimes_{\calO_{X/K}} \Omega_{X/K_{\Inf}}^\bullet.
		\]
		This allows us to get a complex of sheaves over $\Rig_K|_X$ and $X_\eh$ respectively
		\begin{align*}
			\calF_\rig \rra \calF_\rig \otimes_{\calO_\rig} \Omega_\rig^1 \rra \calF_\rig \otimes_{\calO_\rig} \Omega_\rig^2 \rra \cdots;\\
			\calF_\eh \rra \calF_\eh \otimes_{\calO_\eh} \Omega_\eh^1 \rra \calF_\eh \otimes_{\calO_\eh} \Omega_\eh^2 \rra \cdots.
		\end{align*}
		Here the sheaf $\Omega_\rig^i$ is the usual continuous K\"ahler differential sheaf over the rigid site, and $\Omega_\eh^i$ is the \'eh continuous K\"ahler differential, which is equal to the \'eh sheafification of the usual continuous differential.
		The complex $\calF_\eh\otimes\Omega_\eh^\bullet$ is called \emph{\'eh de Rham complex associated with the crystal $\calF$}.

	\end{construction}

	The Construction \ref{eh-de Rham complex} produces two maps of objects in the derived category of the big rigid topos:
	\[
	\xymatrix{ Ru_{X/K*}(\calF\otimes_{\calO_{X/K}} \Omega_{X/K_{\Inf}}^\bullet) \ar[r] & \calF_\rig \otimes_{\calO_\rig} \Omega_\rig^\bullet \ar[r] & R\pi_*(\calF_\eh\otimes_{\calO_\eh} \Omega_\eh^\bullet).}
	\]
	Formally the first map is given by the sheafified version of the natural transformation 
	\[
	\Rig_K|_X \ni X' \lmt \left( R\Gamma(X'/\Sigma_{e \Inf}, -) \ra R\Gamma((X',X'), -) \right)
	\]
	at the complex of infinitesimal sheaves $\calF\otimes_{\calO_{X/K}}\Omega_{X/K_{\Inf}}^\bullet$.
	The second map above comes from the counit morphism for the adjoint pair $(\pi^{-1},\pi_*)$, where $\pi^{-1}$ is the \'eh-sheafification functor.
	Moreover, the above map can be improved into the filtered derived category, where the left side is equipped with the infinitesimal filtration, and the rest two complexes are equipped with their Hodge filtrations.
	
	We also note that the natural map from the de Rham complex $\calF\otimes_{\calO_{X/K}}\Omega_{X/K_{\Inf}}^\bullet$ to the crystal $\calF$ itself induces a natural isomorphism as below
	\[
	\begin{tikzcd}
		Ru_{X/K*}(\calF\otimes_{\calO_{X/K}}\Omega_{X/K_{\Inf}}^\bullet) \arrow[r, "\sim"] & Ru_{X/K*}\calF,
	\end{tikzcd}
	\]
	which is proved in Proposition \ref{van}.
	
	Now we can state the descent result.
	\begin{theorem}\label{inf-eh}
		Let $X$ be a rigid space over $K$, and let $\calF$ be a coherent crystal over the big infinitesimal site $X/K_{\Inf}$.
		Then the natural map of $K$-linear complexes below is an isomorphism.
		\[
		Ru_{X/K*}(\calF\otimes_{\calO_{X/K}} \Omega_{X/K_{\Inf}}^\bullet) \rra R\pi_{X *}(\calF_\eh\otimes_{\calO_\eh} \Omega_\eh^\bullet).
		\]
		In particular, the infinitesimal cohomology of the coherent crystal $\calF$ satisfies the \'eh-hyperdescent.
	\end{theorem}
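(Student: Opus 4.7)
The plan is to argue that both sides define \'eh-hypersheaves on $\Rig_K|_X$, and that they agree on smooth rigid spaces; then resolution of singularities for rigid spaces (\cite{BM08}, \cite{Tem12}) reduces the general comparison to the smooth case via an \'eh-hypercover.

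First I would observe that the right-hand side is by construction derived pushforward of a complex of \'eh-sheaves, hence automatically an \'eh-hypersheaf as a functor on $\Rig_K|_X$. For the left-hand side, using the filtered truncation isomorphism from Proposition \ref{van}, it suffices to show that the assignment $(X' \to X) \mapsto R\Gamma(X'/K_{\Inf}, \calF|_{X'})$ satisfies \'eh-hyperdescent. The \'eh-topology is generated by \'etale coverings, coverings associated with blow-ups along nowhere-dense smooth centers, and universal homeomorphisms. \'Etale descent for infinitesimal cohomology is essentially formal, because for an \'etale map $f\colon X'\to X$ the functor $f^{-1}_{\Inf}$ is exact and pullback of a coherent crystal remains a coherent crystal (Lemma \ref{cry pullback}), while \'etale descent for coherent sheaves (combined with Theorem \ref{glo-coh} locally) yields descent in cohomology. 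Descent along universal homeomorphisms follows from the remark after Theorem \ref{des blowup}. The key nontrivial descent is along blow-ups with smooth centers, and this is exactly the content of Theorem \ref{des blowup}.

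Next I would reduce hyperdescent to descent for these generating covers. Since \'eh-hypercovers can be refined to successive compositions of the above types of coverings (by the analogue for rigid spaces of Voevodsky's description of the $h$-topology, as developed in \cite{Guo19}), an inductive spectral sequence argument, together with the boundedness of the complexes involved (guaranteed by Theorem \ref{glo-coh} and the finite relative dimension), yields hyperdescent from descent along these generating squares.

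Now I would compare the two sides on smooth $X$. Applying Theorem \ref{glo-coh} to the diagonal closed immersion $X \to X$ gives $Ru_{X/K*}\calF \cong \calF_X \otimes_{\calO_X} \Omega_{X/K}^\bullet$. On the other hand, for smooth rigid spaces $X$ the natural map $\Omega_{X/K}^i \to R\pi_{X*}\Omega_\eh^i$ is an isomorphism (this is the main local content of \cite[Section 6]{Guo19}, coming from the fact that the \'eh-sheafification of $\Omega^i$ agrees with the continuous differentials on the smooth locus). Since $\calF_X$ agrees with $\calF_\eh|_{X_\rig}$ on smooth $X$ for the same reason (coherent crystals are vector bundles by Corollary \ref{cry, vec-bun, K}, and coherent sheaves satisfy \'eh-hyperdescent on smooth rigid spaces by \cite{Guo19}), the natural map is an isomorphism over smooth $X$, compatibly with the de Rham differential.

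Finally, combining these steps: given an arbitrary $X$, choose an \'eh-hypercover $X_\bullet \to X$ with each $X_n$ smooth, which exists by resolution of singularities combined with standard Verdier-style refinement. The left-hand and right-hand sides both satisfy hyperdescent along $X_\bullet \to X$, and they agree termwise on $X_\bullet$, hence they agree on $X$. The main obstacle I anticipate is the bookkeeping for extending the blow-up descent of Theorem \ref{des blowup} to descent along arbitrary \'eh-hypercovers — one has to check compatibility with the other covering types and confirm that the inductive reduction to generating squares is uniformly bounded, so that the hypercover spectral sequence converges strongly. Once this is set up, the comparison follows formally.
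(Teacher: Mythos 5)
Your overall strategy --- compare both sides on smooth spaces and propagate via resolution of singularities --- is the right idea, and your smooth-case comparison is essentially the paper's Step 1 (Theorem \ref{glo-coh} applied to the identity immersion plus \cite[Theorem 4.0.2]{Guo19}, after locally trivializing the crystal via Corollary \ref{cry, vec-bun, K}). But the load-bearing step of your argument is precisely the one you flag as an ``obstacle,'' and it is a genuine gap: you assert that \'eh-hyperdescent for $Ru_{X/K*}\calF$ follows from \v{C}ech descent along the generating coverings (\'etale covers, blow-up squares, universal homeomorphisms) by ``an inductive spectral sequence argument.'' Knowing generators of a topology and verifying descent for each generator does not by itself give the sheaf condition for arbitrary coverings of the generated topology (which involves compositions and refinements), and the further passage from \v{C}ech descent to hyperdescent requires a hypercompleteness/Postnikov argument with a \emph{uniform} bound on cohomological amplitude. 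That uniformity is not available on the big site $\Rig_K|_X$, whose objects have unbounded dimension, and none of this is supplied. Moreover, your appeal to the remark after Theorem \ref{des blowup} for universal homeomorphisms is circular as an input, since that remark is itself derived from the blow-up triangle by the same kind of argument you are trying to set up.

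The paper sidesteps this entirely by reversing the logic: it never proves hyperdescent of the left-hand side directly. Instead, after reducing to $X$ reduced (via the constant hypercover $X_\red \to X$) and quasi-compact, it inducts on $\dim X$. Temkin's resolution gives a finite chain of blow-ups $X_m \to \cdots \to X_0 = X$ at smooth nowhere-dense centers $Y_{i-1}$ with $X_m$ smooth; for each blow-up one has two distinguished triangles --- Theorem \ref{des blowup} for $Ru_{-/K*}$ and \cite[Proposition 5.1.4]{Guo19} for $R\pi_{-*}(\calF_\eh\otimes\Omega_\eh^\bullet)$ --- and the functorial comparison map sends one triangle to the other. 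Since the statement holds for $X_m$ (smooth case) and for all $Y_i, Y_i'$ (strictly smaller dimension, by induction), a finite descending induction through the chain gives the isomorphism for $X$. Hyperdescent of infinitesimal cohomology is then a \emph{corollary} of the isomorphism, because the right-hand side is manifestly a hypersheaf. To repair your proof you would either need to carry out this direct triangle comparison, or genuinely establish the \v{C}ech-to-hyperdescent implication you are invoking, which is substantially harder than the one-sentence reduction suggests.
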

	\begin{proof}
		$~$
		\begin{itemize}
			\item[Step 1]
			In the Step 1, we show that by restricting to a smooth rigid space $X'$ over $K$ that admits a map to $X$, the morphism in the statement is an isomorphism.
			Namely the natural morphism below is an isomorphism
			\[
			Ru_{X'/K*}(\calF\otimes_{\calO_{X/K}} \Omega_{X/K_{\Inf}}^\bullet) \rra R\pi_{X' *}(\calF_\eh\otimes_{\calO_\eh} \Omega_\eh^\bullet).
			\]
			
			We first apply $R\Gamma(X',-)$ for the smooth rigid space $X'$.
			On the one hand, we apply Theorem \ref{glo-coh} to the trivial closed immersion $X'\ra X'$ to get
			\begin{align*}
				R\Gamma(X'/K_\Inf,\calF) & \simeq R\Gamma(X'/K_{\Inf},\calF\otimes_{\calO_{X/K}} \Omega_{X/K_{\Inf}}^\bullet) \\
				&\simeq R\Gamma(X',\calF_D\otimes \Omega_D^\bullet)\\
				&=R\Gamma(X'_\rig,\calF_{X'}\otimes \Omega_{X'/K}^\bullet),
			\end{align*}
			where the envelope $D$ for the trivial closed immersion $X'\ra X'$ is just $X'$ itself.
			
			On the other hand, the \'eh-cohomology $R\Gamma(X'_\eh, \calF_\eh\otimes\Omega_\eh^\bullet)$ is in fact isomorphic to the cohomology of the de Rham complex of $\calF_{X'}$ given by the restriction of $\calF_\rig\otimes \Omega_\rig^\bullet$ at $X'$.
			To see this, we notice that as the natural map of complexes $\calF_\rig\otimes \Omega_\rig^\bullet \ra R\pi_{X' *} (\calF_\eh \otimes \Omega_\eh^\bullet)$ is  filtered with respect to the Hodge filtrations, it suffices to show for each $i\in \NN$ the following map is an isomorphism
			\[
			R\Gamma(X'_\rig,\calF_{X'}\otimes \Omega_{X'/K}^i) \rra  R\Gamma(X'_\eh,\calF_\eh\otimes\Omega_\eh^i).
			\]
			Here the restriction $\calF_\eh|_{X'}$ at the \'eh site of $X'$ can be given by the \'eh-sheafification of the rigid sheaf $\calF_\rig|_{X'}$ over the big site $\Rig_K|_{X'}$.
			Moreover, as $\calF$ is a crystal in vector bundles (Corollary \ref{cry, vec-bun, K}) and the statement is local on $X'$ (namely both of the above two complexes satisfies the hyperdescent for rigid topology), by passing to an open rigid subspace of $X'$ if necessary we may assume the restriction $\calF_\rig|_{X'}$ of $\calF$ at $X'$ is isomorphic to the direct sum $\calO_{X'}^m$ of structure sheaves.
			Thus we reduce to show that the natural map of cohomology of differentials below for a smooth $K$-rigid space $X'$ is an isomorphism
			\[
			R\Gamma(X'_\rig, \Omega_{X'/K}^i) \rra R\Gamma(X'_\eh,\Omega_\eh^i),
			\]
			which is proved in \cite[Theorem 4.0.2]{Guo19}.
			
			In this way, as the map in the statement is given by the composition
			\[
			R\Gamma(X'/K_{\Inf},\calF\otimes_{\calO_{X/K}} \Omega_{X/K_{\Inf}}^\bullet) \rra R\Gamma(X'_\rig, \calF_{X'}\otimes \Omega_{X'/K}^\bullet) \rra R\Gamma(X'_\eh, \calF_\eh\otimes_{\calO_\eh} \Omega_\eh^\bullet),
			\]
			we see both maps above are isomorphisms when $X'$ is smooth over $K$.
			
			At last, notice that the cofiber $C$ of the map $Ru_{X'/K*}(\calF\otimes_{\calO_{X/K}} \Omega_{X/K_{\Inf}}^\bullet) \rra R\pi_{X' *}(\calF_\eh\otimes_{\calO_\eh} \Omega_\eh^\bullet)$ is a bounded below complex of sheaves over the small rigid site $X'_\rig$.
			If $C$ is not acyclic, then there would exist an open subspace $U$ of $X'$ such that the cohomology $R\Gamma(U_\rig, C)$ does not vanishes, which contradicts to the computation above.
			So we get the isomorphism for smooth $K$-rigid space $X'$ that admits a map to $X$:
			\[
			Ru_{X'/K*}(\calF\otimes_{\calO_{X/K}} \Omega_{X/K_{\Inf}}^\bullet) \rra R\pi_{X' *}(\calF_\eh\otimes_{\calO_\eh} \Omega_\eh^\bullet).
			\]

			\item[Step 2]
			Now we prove the isomorphism of the derived push-forwards as in the statement.
			
			Let $i:X_\red \ra X$ be the closed immersion by the reduced sub rigid space of $X$.
			We first notice that the natural map below is an isomorphism
			\[
			Ru_{X/K *}\calF\rra i_*Ru_{X_\red/K *}(\calF),
			\]
			This is because locally both of them are computed using the de Rham complex $\calF_D\otimes \Omega_D^\bullet$, where $D$ is the envelope of $X$ in a smooth rigid spaces (Theorem \ref{glo-coh}).
			The same isomorphism holds for the derived direct image of the infinitesimal de Rham complex $\calF\otimes\Omega_{X_\red/K_{\Inf}}^\bullet$ by Proposition \ref{van}.
			On the other hand, We notice that as the closed immersion $i:X_\red\ra X$ is an \'eh-covering (\cite[Section 2.4]{Guo19}), which forms a constant \'eh-hypercovering as the product $X_\red\times_X X_\red$ is equal to $X_\red$ itself, we get
			\[
			R\pi_{X*}(\calF_\eh\otimes \Omega_\eh^\bullet)\simeq i_*R\pi_{X_\red*}(\calF_\eh\otimes\Omega_\eh^\bullet).
			\]
			Thus the above two isomorphisms allow us to assume $X$ is reduced, and by passing to an open subset if necessary we may assume $X$ is quasi-compact.
			
			Now we can do the induction on the dimension of $X$.
			When $\dim(X)$ is of dimension zero, as $X$ is quasi-compact and reduced, it is then equal to a disjoint union of finite points and in particular  is smooth over $K$, where the statement follows from the Step 1.
			
			In general, by Temkin's resolution of singularities for rigid spaces (\cite[1.2.1, 5.2.2]{Tem12}), we can find a finite compositions $X_n\ra \cdots X_1\ra X_0=X$, where each $X_{i}\ra X_{i-1}$ is a blowup at a smooth nowhere dense closed subspace $Y_i \subset X_{i-1}$, such that in the last step $X_n$ is smooth over $K$.
			We denote by $Y_i'$  the preimage $Y_i\times_{X_{i-1}} X_i$ in $X_i$, which is of dimension strictly smaller than $\dim(X_i)=\dim(X)$, and we let $f_i$ be the blowup map $X_i\ra X_{i-1}$.
			Then for each $1\leq i\leq n$, we get a natural distinguished triangle by Theorem \ref{des blowup}
			\[
			Ru_{X_{i-1}/K *} (\calF\otimes\Omega_{\Inf}^\bullet) \rra Rf_{i *}Ru_{X_i/K *}(\calF\otimes\Omega_{\Inf}^\bullet) \moplus Ru_{Y_i/K *}(\calF\otimes\Omega_{\Inf}^\bullet) \rra Rf_{i *}Ru_{Y'_i/K *} (\calF\otimes\Omega_{\Inf}^\bullet). \tag{$\ast$}
			\]
			Again here we use Proposition \ref{van} to replace the $\calF$ by its de Rham complex.
			On the other hand, by the sheafified version of the blowup square in the \'eh-topology (\cite[Proposition 5.1.4]{Guo19}), we have a natural distinguished triangle
			\[
			R\pi_{X_{i-1} *} (\calF_\eh\otimes \Omega_\eh^\bullet) \rra R\pi_{X_i *} (\calF_\eh\otimes \Omega_\eh^\bullet) \moplus R\pi_{Y_i *}(\calF_\eh\otimes \Omega_\eh^\bullet)  \rra R\pi_{Y_i' *} (\calF_\eh\otimes \Omega_\eh^\bullet). \tag{$\ast\ast$}
			\]
			The functoriality of the map in the statement allows us to produce a map from the triangle $(\ast)$ to the triangle $(\ast\ast)$.
			Moreover, by the dimension assumption and the induction assumption, we know the statement in the Theorem is true for $X_n$ and all of $Y_i$ and $Y_i'$.
			In this way, since $X_0=X$, by a finite step of descending inductions via comparing the above two triangles $(\ast)$ and $(\ast\ast)$, the natural map below is then an isomorphism
			\[
			Ru_{X/K *} (\calF\otimes \Omega_{X/K_\Inf}^\bullet) \rra R\pi_{X *} (\calF_\eh\otimes \Omega_\eh^\bullet).
			\]
			So we are done.

		\end{itemize}
		
	\end{proof}
	
	\begin{remark}
		As in \Cref{rmk direct summand}, by the construction of the map in Theorem \ref{inf-eh}, we see that (the underlying complex of) the infinitesimal cohomology of a coherent crystal $\calF$ over $X/K_{\Inf}$ is a direct summand of the cohomology $R\Gamma(X_\rig, \calF_X\otimes\Omega_{X/K}^\bullet)$ of the usual de Rham complex over $X_\rig$.
	\end{remark}
	\begin{remark}
		The isomorphism in the Theorem above cannot always be improved into a filtered isomorphism.
		The discrepancy already appears in the schematic theory (see \cite[Example 5.6]{Bha12}).
	\end{remark}
	
	Now we are able to generalize the \'eh-hyperdescent to coherent crystals over $X/\Sigma_{e \Inf}$ for general $e$, not just $K$-linear crystals.
	We assume $K$ is complete and algebraically closed in the next Theorem, so $\Bdre$ is well-defined for $K$.
	\begin{theorem}\label{inf-eh, Bdre}
		Let $X$ be a rigid space over $X$, and let $\calF$ be a crystal in vector bundles over the big infinitesimal site $X/\Sigma_{e \Inf}$.
		Then the infinitesimal cohomology of $\calF$ over $X/\Sigma_{e \Inf}$ satisfies the \'eh-hyperdescent.
		Namely for an \'eh-hypercovering $X_\bullet'\ra X'$ of $K$-rigid spaces over $X$, the following natural map is an isomorphism
		\[
		R\Gamma(X'/\Sigma_{e \Inf},\calF) \rra R\lim_{\Delta} \left( R\Gamma(X'_\bullet/\Sigma_{e \Inf},\calF)\right).
		\] 
	\end{theorem}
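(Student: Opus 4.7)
The plan is to proceed by induction on $e$, with the base case $e=1$ handled by \Cref{inf-eh}. For the inductive step, I would combine the base change formula of \Cref{coh, change of bases} with the distinguished triangle of coefficient modules
\[
\mathrm{B}_{\mathrm{dR},e-1}^+ \xrightarrow{\cdot \xi} \Bdre \rra K,
\]
where the first map is multiplication by $\xi$ (identifying $\xi\Bdre$ with $\mathrm{B}_{\mathrm{dR},e-1}^+$) and the second is the canonical surjection.

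Tensoring the derived pushforward $Ru_{X/\Sigma_e *}\calF$ with this triangle and applying \Cref{coh, change of bases} to rewrite the outer terms as cohomology over the lower thickenings, I obtain the distinguished triangle
\[
Ru_{X/\Sigma_{e-1} *}(\calF \otimes^L_{\Bdre} \mathrm{B}_{\mathrm{dR},e-1}^+) \rra Ru_{X/\Sigma_e *}\calF \rra Ru_{X/K *}(\calF \otimes^L_{\Bdre} K).
\]
Since $\calF$ is a crystal in vector bundles, it is flat over $\Bdre$ in the sense of \Cref{cry flat} (cf. \Cref{cry, vec-bun}), so both derived tensor products are concentrated in cohomological degree zero and yield honest crystals in vector bundles over $X/\Sigma_{e-1, \Inf}$ and $X/K_\Inf$ respectively, namely $\calF/\xi^{e-1}\calF$ and $\calF/\xi\calF$.

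Replacing $X$ in the above construction by each term of an \'eh-hypercover $X'_\bullet \to X'$ and forming the derived limit $R\lim_{\Delta^\op}$ termwise produces a morphism of distinguished triangles, whose middle vertical arrow is exactly the \'eh-descent map for $\calF$ over $X'/\Sigma_{e, \Inf}$ appearing in the statement. The induction hypothesis applied to the crystal in vector bundles $\calF \otimes_{\Bdre} \mathrm{B}_{\mathrm{dR},e-1}^+$ over $X/\Sigma_{e-1, \Inf}$ shows the left vertical map is an isomorphism, while \Cref{inf-eh} applied to the $K$-linear crystal $\calF/\xi\calF$ over $X/K_\Inf$ shows the same for the right. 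The five lemma in the triangulated category then forces the middle map to be an isomorphism.

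The main point requiring care is to verify that the base change isomorphism of \Cref{coh, change of bases} is genuinely compatible with the triangle of coefficient rings above, so that the outer terms of the resulting triangle can be rewritten as infinitesimal cohomology over $\Sigma_{e-1}$ and over $K$ of crystals in vector bundles on those smaller infinitesimal sites. Once this compatibility is in hand, together with the verification that $\calF/\xi^{j}\calF$ retains the crystal-in-vector-bundles property at each step, the argument assembles itself from the base case, the inductive hypothesis, and the triangulated five lemma.
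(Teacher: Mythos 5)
Your proposal is correct and follows essentially the same route as the paper's own proof: induction on $e$ with base case \Cref{inf-eh}, a distinguished triangle produced by tensoring $Ru_{X/\Sigma_e *}\calF$ with a short exact sequence of coefficient rings and identifying the outer terms via the base change formula of \Cref{coh, change of bases}, then concluding by the two-out-of-three property for triangles. The only cosmetic difference is that the paper uses the sequence $0\to K\to \Bdre\to \mathrm{B}^+_{\mathrm{dR},e-1}\to 0$ where you use $0\to \mathrm{B}^+_{\mathrm{dR},e-1}\to \Bdre\to K\to 0$, which swaps which outer term is handled by the base case versus the inductive hypothesis and is immaterial.
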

	\begin{proof}
		We prove the result by induction on $e$.
		For $e=1$, it is Theorem \ref{inf-eh}.
		In general, we take the derived tensor product of short exact sequence $0\ra K \ra \Bdre \ra \mathrm{B}_{\mathrm{dR},e-1}^+\ra 0$ with the complex of sheaves $Ru_{X/\Sigma_e *} \calF$.
		By the big site version of the base change formula in Proposition \ref{coh, change of bases} (cf. Corollary \ref{coh, big small}), we get a natural distinguished triangle
		\[
		Ru_{X/K*} \calF_1 \rra Ru_{X/\Sigma_e *} \calF \rra Ru_{X/\Sigma_{e-1} *} \calF_{e-1},
		\]
		where $\calF_1$ and $\calF_{e-1}$ are pullbacks of $\calF$ along maps of sites $X/K_{\Inf} \ra X/\Sigma_{e \Inf}$ and $X/\Sigma_{e-1, \Inf} \ra X/\Sigma_{e \Inf}$ respectively.
		In this way, applying the natural transformation $R\Gamma(X',-) \ra  R\lim_{\Delta}R\Gamma(X'_\bullet,-)$ to the above triangle, we get the result by induction.
	\end{proof}

	\subsection{Finiteness}
	With the use of the \'eh-hyperdescent, we show in this subsection the finiteness and the cohomological boundedness of the infinitesimal cohomology, assuming the properness of the rigid space.

	\begin{theorem}\label{fin Bdre}
		Let $X$ be a proper rigid space over $K$, and let $\calF$ be a crystal in vector bundles over the big infinitesimal site $X/\Sigma_{e \Inf}$.
		Then the infinitesimal cohomology $R\Gamma(X/\Sigma_{e \Inf}, \calF)$ is a bounded complex supported in the cohomological degrees $[0,2n]$, where each cohomology is a finite $\Bdre$-module.
	\end{theorem}
	\begin{proof}
		$~$
		\begin{itemize}
			\item[Smooth case] 
			
			We first notice that when $X$ is smooth, the infinitesimal cohomology $R\Gamma(X/K_{\Inf},\calF)$ is computed by the cohomology of the de Rham complex $\calF\otimes_{\calO_X}\Omega_{X/K}^\bullet$ via Theorem \ref{glo-coh}; namely we have
			\[
			R\Gamma(X/K_{\Inf}, \calF) \simeq R\Gamma(X, \calF\otimes \Omega_{X/K}^\bullet).
			\]
			Moreover, each term $\calF\otimes \Omega_{X/K}^i$ of the de Rham complex is a coherent sheaf over $X$.
			Notice that the cohomology of a coherent sheaf over a quasi-compact rigid space vanishes when the degree is above the dimension (\cite[Proposition 2.5.8]{dJvdP}).
			Thus by the Hodge--de Rham spectral sequence for $\calF\otimes\Omega_{X/K}^\bullet$, we get the result for $R\Gamma(X/K_{\Inf},\calF)$ with smooth proper $X$.
			
			In general, we use the  base change formula in Proposition \ref{coh, change of bases}.
			By taking the derived tensor product of $Ru_{X/\Sigma_e *}\calF$ with the short exact sequence $0\ra K \ra \Bdre \ra \mathrm{B}_{\mathrm{dR},e-1} \ra 0$, we get a distinguished triangle
			\[
			R\Gamma(X/K_{\Inf}, \calF)   \rra R\Gamma(X/\Sigma_{e \Inf}, \calF) \rra R\Gamma(X/\Sigma_{e-1 \Inf}, \calF).
			\]
			In this way, the claim for smooth proper $X$ follows from the induction on $e$.
			
			\item[General case]
			In general, we prove by induction on the dimension of $X$.
			When $X$ is of dimension zero, it is equal to a nilpotent extension of several points $\Spa(K)$.
			So the result follows from the \'eh-hyperdescent along closed immersions by reduced subspaces in Theorem \ref{inf-eh, Bdre} (in other words, we apply to the \v{C}ech nerve at closed immersion of the reduced subspace).
			
			Now assume $X$ is reduced of dimension $n$, and the claim is true for any rigid space of smaller dimension.
			By the resolution of singularities of rigid space in \cite{Tem18}, there exists a finite sequence of maps $X_m\ra \cdots X_1\ra X_0=X$, where $X_m$ is smooth, and each map $X_i\ra X_{i-1}$ is a blowup at a closed analytic subspace $Y_{i-1}$ of $X_{i-1}$, such that each $Y_{i-1}$ is nowhere dense in $X_{i-1}$.
			We denote $E_i$ to be the exceptional locus $Y_{i-1}\times_{X_{i-1}} X_i$ of the $i$-th blowup.
			We could then apply the \'eh-hyperdescent in Theorem \ref{inf-eh, Bdre} to the \v{C}ech nerve associated with the blowup covering $X_i\coprod Y_{i-1} \ra X_{i-1}$.
			The limit $R\lim_{\Delta}$ of the infinitesimal cohomology for the hypercovering is isomorphic to the fiber of the blowup square 
			\[
			R\Gamma(X_{i}/K_{\Inf},\calF) \moplus R\Gamma(Y_{i-1}/K_{\Inf},\calF)\rra R\Gamma(Y_i/K_{\Inf},\calF),
			\]
			and thus we get a long exact sequence
			\[
			\cdots \rra \rmH^j(X_{i-1}/K_{\Inf},\calF)\rra \rmH^j(X_{i}/K_{\Inf},\calF) \moplus \rmH^j(Y_{i-1}/K_{\Inf},\calF)\rra \rmH^j(Y_i/K_{\Inf},\calF) \rra \cdots.
			\]
			In this way, with the help of the induction assumption for all $Y_i$, a further descending induction from $X_m$ to $X_0=X$ finishes the proof.
			
		\end{itemize}
	\end{proof}

	\subsection{Algebraic and analytic infinitesimal cohomology}\label{subsec ana-alg}
	At the end of the section, we prove the comparison between the algebraic infinitesimal cohomology and the analytic infinitesimal cohomology, for a proper algebraic variety.
	
	Recall that for an algebraic variety\footnote{For our purpose,  a \emph{variety} is defined to be a locally 	of finite type scheme over a field in the article.} $\scrX$ over a $p$-adic field $K$, we can define its \emph{(algebraic) infinitesimal site $\scrX/K_{\inf}$}, whose objects are schematic infinitesimal thickenings $(\scrU, \scrT)$, where $\scrU$ is an Zariski open subset of $\scrX$.
	The infinitesimal site $\scrX/K_{\inf}$ is equipped with a structure sheaf $\calO_{\scrX/K}$, and its cohomology is called the \emph{algebraic infinitesimal cohomology}.
	Moreover, the infinitesimal structure sheaf admits a surjection $\calO_{\scrX/K}\ra \calO_{\scrX}$ to the Zariski structure sheaf, whose kernel $\calJ_{\scrX/K}$ defines a natural filtration on $R\Gamma(\scrX/K_{\inf},\calO_{\scrX/K})$.
	Similar to the analytic theory, we call this filtration the \emph{(algebraic) infinitesimal filtration}.

	Let $X=\scrX^\an$ be the rigid space over $K$ defined as the analytification of a variety $\scrX$.
	As the analytification functor $\Sch_K \ra \Rig_K$ preserves open and closed immersions, it induces a natural map of ringed sites 
	\[
	(X/K_{\inf}, \calO_{X/K}) \rra  (\scrX/K_{\inf}, \calO_{\scrX/K}).
	\]
	Moreover, as the surjection $\calO_{\scrX/K} \ra \calO_{\scrX}$ is compatible with $\calO_{X/K} \ra \calO_X$, 
	the natural map of infinitesimal structure sheaves above is then a filtered map.
	As a consequence, by passing to their cohomology, we get a natural filtered morphism in derived category
	\[
	R\Gamma(\scrX/K_{\inf}, \calO_{\scrX/K}) \rra R\Gamma(X/K_{\inf}, \calO_{X/K}).
	\]
	
	Our main result in this subsection is the following.
	\begin{theorem}\label{ana-alg}
		Let $\scrX$ be a proper algebraic variety over $K$, and let $X$ be its analytifiation.
		Then the analytification functor induces a filtered isomorphism of the infinitesimal cohomology
		\[
		R\Gamma(\scrX/K_{\inf}, \calO_{\scrX/K}) \rra R\Gamma(X/K_{\inf}, \calO_{X/K}).
		\]
	\end{theorem}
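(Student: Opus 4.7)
The plan is to reduce the comparison to the case where $\scrX$ is smooth and proper, where the statement becomes rigid analytic GAGA applied to the Hodge-filtered de Rham complex.

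First I would handle the smooth proper case. The algebraic analogue of \Cref{main1}(i), due to Grothendieck, gives a filtered isomorphism $R\Gamma(\scrX/K_{\inf},\calO_{\scrX/K}) \cong R\Gamma(\scrX,\Omega^\bullet_{\scrX/K})$ with the Hodge filtration, and \Cref{main1}(i) itself yields the analogous analytic statement $R\Gamma(X/K_{\inf},\calO_{X/K}) \cong R\Gamma(X,\Omega^\bullet_{X/K})$. The rigid analytic GAGA theorem of Kiehl--K\"opf applied to each coherent sheaf $\Omega^j_{\scrX/K}$, whose analytification is $\Omega^j_{X/K}$, produces term-wise isomorphisms $R\Gamma(\scrX, \Omega^j_{\scrX/K}) \cong R\Gamma(X, \Omega^j_{X/K})$, which assemble into a filtered isomorphism of Hodge-filtered de Rham complexes. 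A direct inspection confirms that this filtered isomorphism is the one induced by the analytification functor on infinitesimal sites.

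Next, for arbitrary proper $\scrX$, I would mimic the argument of \Cref{inf-eh}, using blow-up descent in place of the full \'eh-hyperdescent. A nilpotent thickening changes neither side: locally, for a closed immersion $\scrX \hra \scrZ$ into a smooth ambient scheme, the envelopes along $\scrX$ and $\scrX_\red$ have canonically isomorphic structure sheaves, since the two defining ideals $J$ and $\sqrt{J}$ induce the same adic topology on any finitely generated module over the noetherian ring $\calO_\scrZ$. Hence one may assume $\scrX$ is reduced. Hironaka's resolution of singularities then furnishes a sequence $\scrX_m \ra \cdots \ra \scrX_0 = \scrX$ in which each $\scrX_i \ra \scrX_{i-1}$ is a blow-up along a smooth nowhere dense centre $\scrY_{i-1}$ with exceptional locus $\scrY'_i \subset \scrX_i$, and $\scrX_m$ is smooth. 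On the analytic side \Cref{des blowup} supplies a filtered distinguished triangle
\[
R\Gamma(X_{i-1}/K_{\inf},\calO_{X/K}) \rra R\Gamma(X_i/K_{\inf},\calO_{X/K}) \moplus R\Gamma(Y_{i-1}/K_{\inf},\calO_{X/K}) \rra R\Gamma(Y'_i/K_{\inf},\calO_{X/K}),
\]
and the same argument carried out schematically---using Grothendieck's formal function theorem for proper maps of noetherian schemes, and the fact that the envelope of $\scrX \hra \scrZ$ into a smooth scheme computes the formal completion of $\Omega^\bullet_{\scrZ/K}$ along $\scrX$, as in \cite[Chap.~II]{Har75}---yields a parallel filtered blow-up triangle on the algebraic side. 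Analytification induces a morphism of triangles, and a descending induction on $i$, combined with induction on dimension applied to the strictly lower-dimensional $\scrY_{i-1}$ and $\scrY'_i$, propagates the smooth proper base case back to $\scrX_0 = \scrX$.

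The main technical hurdle will be checking the filtered compatibility of the two blow-up triangles under analytification. Concretely, this amounts to verifying that analytification commutes with the three ingredients entering the proof of \Cref{des blowup}: the formation of envelopes, the formal completion of coherent sheaves of K\"ahler differentials along a closed immersion, and the proper direct images controlled by the formal function theorem. Each of these is ultimately a GAGA-type compatibility, so once the schematic analogue of \Cref{des blowup} is written out as a filtered triangle, the comparison map between the two triangles is automatic; the induction then concludes the filtered isomorphism claimed in the theorem.
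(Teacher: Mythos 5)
Your outline for the underlying (unfiltered) complexes matches the paper's in substance---the paper also reduces to the smooth proper case via resolution of singularities and GAGA on the Hodge-filtered de Rham complex, packaged as \'eh-hyperdescent on both sides rather than an explicit induction over blow-up triangles---but the filtered part of your argument has a genuine gap. The blow-up triangle of Theorem \ref{des blowup} is a distinguished triangle of \emph{underlying} complexes only; it is not a filtered triangle for the infinitesimal filtration, and it cannot be made one. If it were, passing to the zeroth graded piece would assert that $R\Gamma(X,\calO_X)\rra Rf_*R\Gamma(X',\calO_{X'})\oplus R\Gamma(Y,\calO_Y)\rra Rf_*R\Gamma(Y',\calO_{Y'})$ is a distinguished triangle, i.e.\ blow-up descent for coherent cohomology of the structure sheaf, which fails for non-Du-Bois singularities; the paper flags exactly this obstruction in the remark following Theorem \ref{inf-eh}, citing \cite[Example 5.6]{Bha12}. (Your reduction to $\scrX_{\red}$ has the same defect: the completions along $J$ and $\sqrt J$ agree, but the filtrations by their powers do not.) So your induction only yields the isomorphism of underlying complexes, and the ``main technical hurdle'' you defer is not a GAGA-compatibility check between two parallel triangles but a false intermediate statement on each side separately.

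The paper closes this gap by treating the filtration by a separate, non-descent-theoretic argument: Proposition \ref{ana, fil} computes the graded pieces $Ru_{X/K*}(\calJ_{X/K}^n/\calJ_{X/K}^{n+1})$ via the \v{C}ech--Alexander complex and a flat base change of the defining ideals of the envelopes, identifying them with the analytifications of the corresponding algebraic coherent complexes; rigid GAGA applied to their truncations then gives the isomorphism on each graded piece, and filtered completeness of both sides upgrades this, together with the unfiltered comparison, to the claimed filtered isomorphism. If you want to keep your blow-up-induction architecture for the underlying complexes, you would still need an independent argument of this kind for the graded pieces.
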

	
	Before the proof, we first recall that there is a natural map of ringed sites $(X_\rig, \calO_X) \ra (\scrX_{\Zar}, \calO_{\scrX})$.
	Here the rigid structure sheaf is flat over the Zariski structure sheaf, 
	and the pullback along the map induces a fully faithful functor from coherent $\calO_\scrX$-modules to coherent $\calO_X$-modules.
	
	Moreover, the above map of sites is compatible with the infinitesimal topos.
	Recall that there exists a natural map of topoi
	\begin{align*}
		u_{\scrX/K}: \Sh(\scrX/K_{\inf}) & \rra \Sh(\scrX_{\Zar});\\
		\calF & \lmt (\scrU\mapsto \Gamma(\scrU/K_{\inf}, \calF|_\scrU)).
	\end{align*}
	By construction, this functor is compatible with its rigid version $u_{X/K}:\Sh(X/K_{\inf}) \ra \Sh(X_\rig)$ (cf. Subsection \ref{sub inf-rig}).
	Namely, the following diagram is commutative 
	\[
	\xymatrix{ \Sh(X/K_{\inf}) \ar[d]_{u_{X/K}} \ar[r] & \Sh(\scrX/K_{\inf}) \ar[d]^{u_{\scrX/K}} \\
		\Sh(X_\rig) \ar[r] & \Sh(\scrX_\Zar).}
	\]

	We then claim the following result.
	\begin{proposition}\label{ana, fil}
		Let $\scrX$ be an algebraic variety over $K$, and let $X$ be its analytifcation.
		Then the complex of coherent $\calO_X$-modules $Ru_{X/K *} (\calJ_{X/K}^n/\calJ_{X/K}^{n+1})$ is naturally isomorphic to the analytification of the complex of coherent $\calO_{\scrX}$-modules $Ru_{\scrX/K *} (\calJ_{\scrX/K}^n/\calJ_{\scrX/K}^{n+1})$.
	\end{proposition}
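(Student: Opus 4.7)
The plan is to reduce to a local situation where both derived direct images are computed by explicit complexes of coherent sheaves, and then to invoke the fact that analytification is exact on coherent sheaves and commutes with the formation of K\"ahler differentials, ideal powers, and tensor products.

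First, since the statement is local on $\scrX$ (both sides are complexes of sheaves pulled back along $X_\rig\to\scrX_\Zar$, and the comparison map is natural with respect to Zariski open restriction on $\scrX$), I would reduce to the case where $\scrX=\Spec(A)$ is affine and admits a closed immersion $\scrX\hookrightarrow\scrY$ into a smooth affine variety $\scrY$ (e.g.\ an affine space) with ideal sheaf $\scrI$. Analytifying, we obtain a closed immersion $X\hookrightarrow Y$ of rigid spaces with $Y$ smooth over $K$ and defining ideal $I=\scrI^\an$.

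Next, I would apply the filtered version of \Cref{glo-coh} in both the algebraic and the analytic setting. In the analytic setting this yields a filtered isomorphism
\[
Ru_{X/K*}\calO_{X/K}\ \cong\ \wh{\Omega_{Y/K}^\bullet},\qquad F^n=\wh{I^{n-\bullet}\Omega_{Y/K}^\bullet},
\]
and similarly in the algebraic setting, using the (term-wise) formal completion along $\scrX\hookrightarrow\scrY$:
\[
Ru_{\scrX/K*}\calO_{\scrX/K}\ \cong\ \wh{\Omega_{\scrY/K}^\bullet},\qquad F^n=\wh{\scrI^{n-\bullet}\Omega_{\scrY/K}^\bullet}.
\]
Taking the $n$-th graded piece of the infinitesimal filtration then represents $Ru_{X/K*}(\calJ_{X/K}^n/\calJ_{X/K}^{n+1})$ by the explicit bounded complex of coherent $\calO_X$-modules
\[
\bigl[\,I^n/I^{n+1}\ \to\ (I^{n-1}/I^{n})\otimes_{\calO_Y}\Omega_{Y/K}^1\ \to\ \cdots\ \to\ (\calO_Y/I)\otimes_{\calO_Y}\Omega_{Y/K}^n\bigr],
\]
and similarly for $Ru_{\scrX/K*}(\calJ_{\scrX/K}^n/\calJ_{\scrX/K}^{n+1})$ with $(\scrI,\scrY)$ in place of $(I,Y)$.

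The comparison then reduces to the fact that the analytification functor $\scrX_\Zar\to X_\rig$ is flat and hence exact on coherent sheaves, sends $\scrI^k$ to $I^k$ (for all $k$) and $\Omega_{\scrY/K}^i$ to $\Omega_{Y/K}^i$, and commutes with tensor products of coherent sheaves. Term-wise analytification of the algebraic complex above therefore agrees canonically with the analytic complex, yielding the desired natural isomorphism. The main obstacle will be globalization: the explicit representatives depend on a choice of closed immersion, so I need to check that the comparison map is independent of this choice. This I would handle by noting that both the algebraic and the analytic derived direct images are intrinsically defined, and by choosing a common Zariski hypercover $\{\scrX_j\hookrightarrow\scrY_j\}$ of embeddings into smooth affines (with $\scrY_{j_0\cdots j_k}=\prod\scrY_{j_i}$) to descend the term-wise comparisons, using that analytification preserves such hypercovers. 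A minor additional check, easily handled by the same flatness argument, is that analytification commutes with the derived $\scrI$-adic completion which appears in $\wh{\scrI^{n-\bullet}\Omega_{\scrY/K}^\bullet}$; this is automatic on each graded piece since those are already annihilated by a power of $\scrI$.
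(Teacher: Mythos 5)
Your proposal is correct in outline, but it follows a genuinely different route from the paper. The paper localizes to an affine $\scrX=\Spec(A)$ embedded in an affine space $\scrY=\Spec(A')$, further restricts $X$ to an affinoid disc $\Spa(B')\subset\scrY^\an$ meeting $X$ in $\Spa(B)$, and then compares the two \emph{\v{C}ech--Alexander complexes} directly: the $n$-th graded piece of the algebraic one is the cosimplicial module $J_{D(m)}^n/J_{D(m)}^{n+1}$ built from the formal completions of ${A'}^{\otimes m+1}\to A$, the analytic one is built from ${B'}^{\wh\otimes m+1}\to B$, and the key computation is that $A'(m)\to B'(m)$ is flat with $J_{\calD(m)}=J_{D(m)}B'(m)$, so the graded pieces base-change term by term. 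You instead invoke the filtered de Rham comparison over a smooth embedding (\Cref{glo-coh} on the analytic side, its schematic analogue on the algebraic side) and compare the finite complexes $(I^{n-j}/I^{n-j+1})\otimes\Omega^j_{Y/K}$; both reductions ultimately rest on the same flatness of analytification, and your identification of the graded pieces is correct. However, your route needs two inputs you should make explicit: (a) the \emph{filtered} algebraic infinitesimal--de Rham comparison for schemes in characteristic zero, which the paper only asserts in a remark following \Cref{lemB} rather than proving; and (b) the compatibility of the two de Rham comparison isomorphisms with the intrinsic map $\bigl(Ru_{\scrX/K*}(\cdot)\bigr)^\an\to Ru_{X/K*}(\cdot)$ induced by the morphism of ringed infinitesimal sites. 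For (b), your hypercover of embeddings is more machinery than needed: the comparison map exists globally a priori (it is induced by pullback along $X_\rig\to\scrX_\Zar$), so one only has to check locally, for one chosen embedding, that it is an isomorphism --- which is how the paper proceeds. The \v{C}ech--Alexander route has the advantage of requiring only the elementary cosimplicial description on the algebraic side and making the naturality of the comparison manifest; your route buys a cleaner finite complex of coherent sheaves at the cost of the deeper filtered comparison input.
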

	\begin{proof}
		We denote the complex of coherent $\calO_{\scrX}$-modules $Ru_{\scrX/K *} (\calJ_{\scrX/K}^n/\calJ_{\scrX/K}^{n+1})$ by $C$, and $Ru_{X/K *} (\calJ_{X/K}^n/\calJ_{X/K}^{n+1})$ by $C'$.
		Then it suffices to show that the natural map below induced from the pullback from $\scrX_\Zar$ to $X_\rig$ is an isomorphism of complexes of $\calO_X$-modules
		\[
		C\otimes_{\calO_\scrX} \calO_X \rra C'.
		\]
		As the result is a local statement for $\scrX$, let us assume $\scrX=\Spec(A)$ is a finite type affine scheme over $K$ and $\scrX \ra \scrY=\Spec(A')$ be a closed immersion into an affine space over $K$.
		Moreover, notice that the isomorphism could be checked locally on $X$, so we may take an open affinoid disc of certain radius $\Spa(B')$ in $\scrY^\an$, with the open subset $X\cap \Spa(B')=\Spa(B)$ in $X$.
		From our choices, we get a cartesian diagram as below, where horizontal maps are surjective and vertical maps are flat
		\[
		\xymatrix{B' \ar@{>>}[r] & B \\
			A'\ar[u] \ar@{>>}[r] & A \ar[u].}
		\]
		So it suffices to show that 
		\[
		R\Gamma(\Spec(A)/K_{\inf},\calJ_{\scrX/K}^n/\calJ_{\scrX/K}^{n+1})\otimes_A B \simeq R\Gamma(\Spa(B)/K_{\inf}, \calJ_{X/K}^n/\calJ_{X/K}^{n+1}).
		\]
		
		We then recall from \cite[Section 2]{BdJ} that the algebraic infinitesimal cohomology can be computed by the \v{C}ech-Alexander complex as below
		\[
		D \rra D(1) \rra D(2) \rra \cdots,
		\]
		where $D(m)$ is the formal completion of $A'(m):={A'}^{\underset{K}{\otimes} m+1}$ along the surjection $A'(m)\ra A$.
		We take the $n$-th graded piece for the algebraic infinitesimal filtration, 
		then the cohomology group $R\Gamma(\Spec(A)/K_{\inf},\calJ_{\scrX/K}^n/\calJ_{\scrX/K}^{n+1})$ is isomorphic to the following map of $A$-linear cosimplicial complexes
		\[
		J_D^n/J_D^{n+1} \rra J_{D(1)}^n/J_{D(n)}^{n+1} \rra J_{D(2)}^n/J_{D(2)}^{n+1} \rra \cdots,
		\]
		where $J_{D(m)}$ is the kernel of the surjection $A'(m) \ra A$.
		On the other hand, by the \v{C}ech-Alexander complex for rigid spaces in Proposition \ref{cech}, we have
		\[
		R\Gamma(\Spa(B)/K_{\inf}, \calJ_{X/K}^n/\calJ_{X/K}^{n+1}) \simeq \left( J_\calD^n/J_\calD^{n+1} \rra J_{\calD(1)}^n/J_{\calD(1)}^{n+1} \rra J_{\calD(2)}^n/J_{\calD(2)}^{n+1} \rra \cdots \right),
		\]
		where $\calD(m)$ is the formal completion for the surjection $B'(m):={B'}^{\underset{K}{\wh\otimes} m+1} \ra B$, and $J_{\calD(m)}$ is the kernel of the map $B'(m)\ra B$.
		Thus we are left to show the quasi-isomorphism for the canonical map of $B$-linear cosimplicial complexes below
		\begin{align*}
			\bigl(J_D^n/J_D^{n+1} \to J_{D(1)}^n/J_{D(n)}^{n+1} \to J_{D(2)}^n/J_{D(2)}^{n+1} &\to \cdots \bigr) \otimes_A B \rra \\
			\bigl(J_\calD^n/J_\calD^{n+1} &\to J_{\calD(1)}^n/J_{\calD(1)}^{n+1} \to J_{\calD(2)}^n/J_{\calD(2)}^{n+1} \to \cdots \bigr).
		\end{align*}

		At last notice that by our choices, the rigid space $\Spa(B')$ is an open disc of some radius in the affine space $\Spec(A')^\an$.
		In particular, the following map of rings is a cartesian diagram such that vertical maps are flat
		\[
		\xymatrix{ B'(m) \ar@{>>}[r] & B' \\
			A'(m) \ar@{>>}[r] \ar[u] & A' \ar[u].}
		\]
		In this way, combining this with the cartesian diagram in the first paragraph, we see the kernel $J_{\calD(m)}$ of the surjection $B'(m)\ra B$ is equal to the base change of $J_{D(m)}$ along the flat map $A'(m) \ra B'(m)$.
		Hence we get the natural equalities
		\begin{align*}
			& J_{D(m)}B'(m)=J_{D(m)}\otimes_{A'(m)} B'(m)=J_{\calD(m)};\\
			& (J_{D(m)}^n/J_{D(m)}^{n+1})\otimes_A B = (J_{D(m)}^n/J_{D(m)}^{n+1}) \otimes_{A'(m)} B'(m) = J_{\calD(m)}^n/J_{\calD(m)}^{n+1}.
		\end{align*}
		So we are done.

	\end{proof}

	At last, we finish the proof of Theorem \ref{ana-alg}.
	\begin{proof}[Proof of Theorem \ref{ana-alg}]
		To show the natural map in the statement is a filtered isomorphism, it suffices to show the isomorphisms for their underlying complexes and each graded piece separately, as both of them are filtered complete.
		\footnote{For each affinoid infinitesimal thickening $(U,T)$, the kernel ideal $\mathcal{J}_T=\ker(\calO_T\to \mathcal{O}_U)$ is nilpotent and hence $\mathcal{O}_T$ is complete under $\mathcal{J}_T$-adic topology.
			This in particular implies the inverse limit formula $\mathcal{O}_{X/K}=\varprojlim_i \mathcal{O}_{X/K}/\mathcal{J}_{X/K}^i$, and similarly for the algebraic version.}
		
		For the underlying complexes, this follows from the \'eh descent.
		To see this, we first notice that when $\scrX$ is smooth and proper over $K$, then the algebraic and the analytic infinitesimal cohomology are isomorphic to the algebraic and the analytic de Rham cohomology respectively (\cite{Gr68}, Theorem \ref{glo-coh}), 
		which are isomorphic to each other by applying the GAGA theorem to their Hodge-filtrations (cf. \cite[Appendix A.1]{Con06}).
		In general, we may assume $\scrX_\bullet\ra \scrX$ is a simplicial smooth varieties by resolving singularities.
		Then its analytification $X_\bullet \ra X$ is an \'eh-hypercovering by smooth rigid spaces, and we get the isomorphism
		\begin{align*}
			R\Gamma(\scrX/K_{\inf}, \calO_{\scrX/K}) &\simeq R\lim_{[n]\in \Delta} 	R\Gamma(\scrX_n/K_{\inf}, \calO_{\scrX_n/K}) \\
			&\simeq R\lim_{[n]\in \Delta} 	R\Gamma(X_n/K_{\inf}, \calO_{X_n/K})\\
			&\simeq 	R\Gamma(X/K_{\inf}, \calO_{X/K}),
		\end{align*}
		where the first equality is the h-hyperdescent of algebraic de Rham cohomology for blowups in \cite{Har75}, and the last is the \'eh-hyperdescent for the analytic infinitesimal cohomology in \ref{inf-eh}.
		
		For the graded pieces, by Proposition \ref{ana, fil} we have
		\[
		R\Gamma(X/K_{\inf}, \calJ_{X/K}^n/\calJ_{X/K}^{n+1}) = R\Gamma(X_\rig, Ru_{X/K *} (\calJ_{X/K}^n/\calJ_{X/K}^{n+1})) \simeq R\Gamma(X_\rig, \left(Ru_{\scrX/K *} (\calJ_{\scrX/K}^n/\calJ_{\scrX/K}^{n+1})\right)^\an).
		\]
		We denote $C$ to be the bounded below complex of coherent $\calO_X$-modules $Ru_{\scrX/K *} (\calJ_{\scrX/K}^n/\calJ_{\scrX/K}^{n+1})$.
		As $R\Gamma(X_\Zar, \tau^{> n} C)$ lives in cohomogical degree larger than $n$, we have the natural equalities
		\[
		R\Gamma(X_\Zar, C)= \colim_n R\Gamma(X_\Zar, \tau^{\leq n} C).
		\]
		Similarly we have
		\[
		R\Gamma(X_\rig, C^\an) =\colim_n  R\Gamma(X_\rig, \tau^{\leq n} (C^\an)).
		\]
		On the other hand, as the rigid structure sheaf $\calO_X$ is flat over $\calO_\scrX$, the analytification functor $(-)^\an=-\otimes_{\calO_\scrX} \calO_X$ on coherent complexes is an exact functor.
		So for each $n\in \NN$, there exists a natural equality
		\[
		\tau^{\leq n} C^\an = (\tau^{\leq n} C)^\an.
		\]
		Notice that for each bounded complex $\tau^{\leq n} C$ of coherent sheaves, by rigid GAGA theorem (\cite[Appendix A.1]{Con06}) we have
		\[
		R\Gamma(X_\Zar, \tau^{\leq n} C)\simeq R\Gamma(X_\rig, (\tau^{\leq n} C)^\an).
		\]
		In this way, combining all of the isomorphisms above, we get
		\begin{align*}
			R\Gamma(X_\Zar, C) &\simeq \colim_n R\Gamma(X_\Zar, \tau^{\leq n} C)\\
			&\simeq \colim_n R\Gamma(X_\rig, (\tau^{\leq n} C)^\an) \\
			&=\colim_n R\Gamma(X_\rig, \tau^{\leq n} (C^\an)) \\
			&\simeq R\Gamma(X_\rig, C^\an).
		\end{align*}
		At last, substituting back the definition of $C$ and Proposition \ref{ana, fil}, we then obtain the formula for graded piece of infinitesimal filtrations:
		\[
		R\Gamma(\scrX/K_{\inf}, \calJ_{\scrX/K}^n/\calJ_{\scrX/K}^{n+1}) \simeq R\Gamma(X/K_{\inf}, \calJ_{X/K}^n/\calJ_{X/K}^{n+1}).
		\]
		
	\end{proof}
	
	As an application, we get the comparison with singular cohomology when $K$ is abstractly isomorphic to the field of complex numbers, proving Theorem \ref{main1}.(v).
	\begin{corollary}\label{ana-sing}
		Assume there exists an abstract isomorphism of fields $K\ra \CC$.
		Then for any proper algebraic variety $\scrX/K$ with its analytification $X$, there exists a filtered isomorphism of cohomology
		\[
		\rmH^i(X/K_{\inf} ,\calO_{X/K}) \simeq \rmH^i_\Sing(\scrX(\CC),\CC),
		\]
		where singular cohomology of $\scrX(\CC)$ is filtered by the algebraic infinitesimal filtration.
	\end{corollary}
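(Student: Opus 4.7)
The plan is to chain together three comparisons: the analytic--to--algebraic comparison over $K$ proved in Theorem \ref{ana-alg}, a base change of algebraic infinitesimal cohomology along the abstract isomorphism $K\cong\CC$, and finally the classical comparison between algebraic infinitesimal cohomology and singular cohomology in the complex setting. All three comparisons are compatible with the infinitesimal filtration, so the composition yields a filtered isomorphism.

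First, Theorem \ref{ana-alg} furnishes a filtered isomorphism
\[
R\Gamma(\scrX/K_{\inf},\calO_{\scrX/K}) \;\cong\; R\Gamma(X/K_{\inf},\calO_{X/K}),
\]
reducing the problem to the algebraic side. Second, using the abstract isomorphism $\sigma:K\xrightarrow{\sim}\CC$, one obtains a scheme $\scrX_\CC := \scrX\otimes_{K,\sigma}\CC$ which is proper over $\CC$. Algebraic infinitesimal cohomology is computed, locally on a closed immersion into a smooth variety, by the formal completion of the algebraic de Rham complex of the ambient smooth variety; by \v{C}ech--Alexander (as in \cite{BdJ} or as reviewed in the proof of Proposition~\ref{ana, fil}), this is a filtered complex of $K$-vector spaces whose terms are of the form $J_{D(m)}^n/J_{D(m)}^{n+1}$ for the schematic envelopes $D(m)$ of $\scrX\hookrightarrow \scrY^{\times (m+1)}$. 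Since these envelopes and their ideals are defined by base change along $K\to\CC$, flat base change along $\sigma$ together with the properness-induced finiteness yields a filtered isomorphism
\[
R\Gamma(\scrX/K_{\inf},\calO_{\scrX/K})\otimes_{K,\sigma}\CC \;\cong\; R\Gamma(\scrX_\CC/\CC_{\inf},\calO_{\scrX_\CC/\CC}).
\]
Here the infinitesimal filtration on the left tensored with $\CC$ corresponds to the infinitesimal filtration on the right.

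Third, invoke the classical filtered comparison (Grothendieck \cite{Gr68}, Hartshorne \cite{Har75}, Bhatt \cite{Bha12}) which gives a filtered isomorphism
\[
\rmH^i(\scrX_\CC/\CC_{\inf},\calO_{\scrX_\CC/\CC}) \;\cong\; \rmH^i_{\Sing}(\scrX_\CC(\CC),\CC),
\]
the right side being filtered by the algebraic infinitesimal filtration. Finally, since $\sigma:K\xrightarrow{\sim}\CC$ induces an identification $\scrX(\CC) = \scrX_\CC(\CC)$ of topological spaces (the $\CC$-points of $\scrX$ regarded via $\sigma$ coincide with those of $\scrX_\CC$), composing all the isomorphisms produces the desired filtered identification
\[
\rmH^i(X/K_{\inf},\calO_{X/K})\otimes_{K,\sigma}\CC \;\cong\; \rmH^i_{\Sing}(\scrX(\CC),\CC).
\]
The main obstacle is verifying the base change step in a filtered manner: one has to argue that the formation of the envelope $D(m)$ and of the ideals $J_{D(m)}^n$ commutes with extension of scalars $K\to\CC$, and that each graded piece remains a finite-dimensional $K$-vector space (which uses the properness of $\scrX$ together with the finiteness consequence of Theorem~\ref{ana-alg} and Theorem~\ref{fin, Bdre}) so that $(-)\otimes_K\CC$ is exact on the terms of the \v{C}ech--Alexander spectral sequence. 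Once this is in hand, the rest of the argument is a formal assembly of the three compatibilities above.
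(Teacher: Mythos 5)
Your proof is correct and follows the same route as the paper, which simply combines Theorem \ref{ana-alg} with Hartshorne's classical comparison, so the only content beyond the paper's one-line proof is your explicit transport along $\sigma$. The one remark worth making is that your ``main obstacle'' --- base change of the envelopes, ideals and filtration along $K\to\CC$ --- is vacuous here: since $\sigma$ is an isomorphism of fields, $\Spec(\CC)\to\Spec(K)$ is an isomorphism of schemes and the whole infinitesimal site of $\scrX_\CC/\CC$ is identified with that of $\scrX/K$ by transport of structure, so no flat base change or properness/finiteness input is actually needed for that step.
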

	\begin{proof}
		This follows from Theorem \ref{ana-alg} and the classical result of Hartshorne in \cite{Har75}.
	\end{proof}
	
	Using the same idea of the proof for Proposition \ref{ana, fil} and Theorem \ref{ana-alg}, we can prove the base extension formula for the infinitesimal cohomology.
	\begin{corollary}\label{base ext}
		Let $K_0$ be a complete $p$-adic extension of $\QQ_p$, and let $K$ be a complete extension of $K_0$.
		Assume $X$ is a proper rigid space over $K_0$, and let $\calF$ be a coherent crystal over $X/K_{0,\inf}$.
		Then the following natural map of filtered complexes is an isomorphism
		\[
		R\Gamma(X/K_{0, \inf}, \calF)\otimes_{K_0} K \rra R\Gamma(X_K/K_{\inf}, \calF_K).
		\]
	\end{corollary}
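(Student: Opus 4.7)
The plan is to adapt the strategy used in \Cref{ana-alg}. Since both sides of the map in the statement are filtered complete with respect to the infinitesimal filtration (for the left side, tensoring with the field $K$ preserves filtered completeness), I would first reduce the filtered isomorphism to two independent checks: an isomorphism of underlying complexes, and an isomorphism on each graded piece $\calJ_{X/K_0}^n/\calJ_{X/K_0}^{n+1}\otimes\calF$ of the infinitesimal filtration.

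For the graded pieces I would argue locally on $X$ by choosing a closed immersion $X\hookrightarrow Y$ of an affinoid open into a smooth $K_0$-rigid space. By \Cref{glo-coh}, the $n$-th graded piece $Ru_{X/K_0*}(\calJ^n/\calJ^{n+1}\otimes\calF)$ is represented by a bounded complex of coherent $\calO_X$-modules coming from $\calF_D\otimes_{\calO_D}(J^n/J^{n+1}\otimes_{\calO_D}\Omega_D^{\bullet-n})$ on the envelope $D=D_X(Y)$. Base changing $X\hookrightarrow Y$ to $X_K\hookrightarrow Y_K$ commutes with the envelope construction, since the envelope is the formal completion of $\calO_Y$ along $X$ and this is compatible with flat extension of the Tate algebra along $K_0\to K$; the crystal condition then sends $\calF_D$ to the corresponding structure for $\calF_K$. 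Consequently the $n$-th graded piece on $X_K$ is the flat base change of the one on $X$, so the isomorphism on graded pieces reduces to flat base change for coherent cohomology on the proper rigid space $X$ along $K_0\to K$.

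For the underlying complex I would combine \'eh-hyperdescent (\Cref{inf-eh}) with Temkin's resolution of singularities. Choose a smooth proper \'eh-hypercover $X_\bullet\to X$ over $K_0$; termwise base change yields a smooth proper \'eh-hypercover $X_{\bullet,K}\to X_K$ over $K$, so both underlying complexes are homotopy limits over $\Delta^{\op}$ of their values on the smooth terms. In the smooth proper case, \Cref{glo-coh} identifies infinitesimal cohomology with the hypercohomology of the de Rham complex of the integrable connection associated to the crystal in vector bundles $\calF$ (automatic by \Cref{cry, vec-bun, K}), and flat base change for coherent sheaf cohomology on a smooth proper rigid space over the base field handles this reduction. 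The finiteness statement of \Cref{fin, Bdre} lets the functor $-\otimes_{K_0}K$ be pulled past the homotopy limit along the hypercover, closing the descent argument.

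The main obstacle will be organizational rather than geometric: one must verify that the coherent crystal $\calF$ and its canonical connection, the envelope construction $D_X(Y)$, and the \'eh-hypercover $X_\bullet\to X$ all base-change along $K_0\to K$ in a manner compatible with the infinitesimal filtration, so that the two separate checks in the preceding paragraphs really assemble into the claimed filtered isomorphism. The one nontrivial geometric input is flat base change of coherent cohomology on a proper rigid space for extensions of the base field, which is standard.
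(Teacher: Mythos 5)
Your proposal is correct and follows essentially the same route as the paper, whose own proof of \Cref{base ext} is literally a one-line appeal to ``the same idea of the proof for Proposition \ref{ana, fil} and Theorem \ref{ana-alg}'': filtered completeness reduces the claim to the underlying complex (handled by \'eh-hyperdescent, resolution of singularities, and the smooth proper de Rham case) and to the graded pieces (handled by the local envelope computation of \Cref{glo-coh} together with flat base change of bounded coherent complexes on the proper space $X$ along $K_0\to K$). You have in effect supplied the details the paper leaves implicit, including the correct observations that $\calF$ is automatically a crystal in vector bundles by \Cref{cry, vec-bun, K} and that uniform cohomological boundedness lets $-\otimes_{K_0}K$ pass through the cosimplicial limit.
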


	\section{Cohomology over $\Bdr$}\label{sec Bdr}
	In this section, we extend previous results to the infinitesimal site over $\Bdr$, for a rigid space $X$ over $\Sigma_r$ for some fixed $r\in \NN$.
	Our goal is to show  \Cref{main2} from the introduction.
	
	\subsection{Infinitesimal sites and topoi over $\Bdr$}
	We fix a complete algebraic closed $p$-adic field $K$.
	Let $X$ be a rigid space over $\Bdr/\xi^r$ for some fixed $r\in \NN$.
	To build an infinitesimal cohomology theory with the coefficient being $\Bdr=\varprojlim_{e\in \NN} \Bdre$, we construct an infinitesimal site $X/\Sigma_{\inf}$ as a union of all $X/\Sigma_{e \inf}$ for $e\in \NN_{\geq r}$, and consider its relation to each infinitesimal site $X/\Sigma_{e \inf}$.
	
	\noindent\textbf{The site $X/\Sigma_{\inf}$.}

	We first give the definition of the infinitesimal site over $\Sigma=\varinjlim_{e\in \NN} \Sigma_e$, where the latter is regarded as the ringed space whose underlying topological space is $\Spa(K)$ with the structure sheaf given by $\Bdr$.
	\begin{definition}\label{inf site}
		Let $X$ be a rigid space over $\Sigma_r=\Spa(\Bdr/\xi^r)$, for some fixed $r\in \NN$.
		The \emph{infinitesimal site $X/\Sigma_{\inf}$  over $\Bdr$} is defined as follows: 
		\begin{itemize}
			\item The underlying category of $X/\Sigma_{\inf}$ is the category of pairs $(U,T)$, for $(U,T)$ being an thickening in $X/\Sigma_{e \inf}$ for some $e\geq r$.
			
			A morphism between $(U_1,T_1)$ and $(U_2,T_2)$ is a morphism of objects in $X/\Sigma_{e \inf}$, for $e$ large enough such that both pairs are objects in $X/\Sigma_{e \inf}$.
			\item A collection of morphism $(U_i,T_i)\ra (U,T)$ in $X/\Sigma_{\inf}$ is a covering if $\{T_i\ra T\}$ is an open covering for the rigid space $T$.
		\end{itemize} 
	\end{definition}

	As a category, $X/\Sigma_{\inf}$ is the union of $X/\Sigma_{e \inf}$ for all $e\geq r$.
	It is clear that the topology is locally rigid over each object in $X/\Sigma_{\inf}$.
	Thus the description of a sheaf over $X/\Sigma_{\inf}$ is similar to that of a sheaf over $X/\Sigma_{e \inf}$ as in Section \ref{sec inf}.
	
	\begin{remark}\label{inf-lev, big site}
		Similarly to the Discussion in Section \ref{sec inf}, we could define the big version infinitesimal site $X/\Sigma_{\Inf}$, where the objects are infinitesimal thickenings $(U,T)$ for $U$ being a rigid space over $X$ and $U\ra T$ a nil-extension over $\Bdr$.
		The relation between the big infinitesimal sites $X/\Sigma_{\Inf}$ and the small one $X/\Sigma_{\inf}$, including the constructions in the rest of the subsection, are exactly identical to the case over $\Bdre$ in Paragraph \ref{site big small}, and we will not duplicate again here.
		
	\end{remark}
	
	\noindent\textbf{Functoriality of $\Sh(X/\Sigma_{\inf})$.}
	
	The infinitesimal topos $\Sh(X/\Sigma_{\inf})$ is functorial with respect to the rigid space $X$.
	Namely, for a map of $\Bdr$-rigid spaces $f:X\ra Y$ where $\xi$ is nilpotent in both $\mathcal{O}_X$ and $\mathcal{O}_Y$, we have a natural map of topoi
	\[
	f_{\inf}:\Sh(X/\Sigma_{\inf}) \rra \Sh(Y/\Sigma_{\inf}).
	\]
	The corresponding adjoint pair of functors are given by the following:
	\begin{itemize}
		\item For a sheaf $\calG\in \Sh(Y/\Sigma_{\inf})$, the inverse image $f_{\inf}^{-1}\calG$ is given by the restriction of $\mu_Y^{-1}\calG$ to the category $X/\Sigma_{\inf}$ along the map $f$, and is equal to the sheaf associated with the presheaf
		\[
		X/\Sigma_{\inf}\ni (U,T) \lmt \varinjlim_{\substack{(U,T)\ra (V,S)\\ (V,S)\in Y/\Sigma_{\inf},\\
				U\ra V~compatible~with~f}} \calG(V,S).
		\]
		\item The direct image functor $f_{\inf *}$ sends a sheaf $\calF\in \Sh(X/\Sigma_{\inf})$ to the sheaf 
		\[
		f_{\inf *}\calF(V,S)=\varprojlim_{\substack{(U,T)\ra (V,S)\\ (U,T)\in X/\Sigma_{\Inf}\\ U\ra V~compatible~with~f}} \calF(U,T).
		\]
	\end{itemize}
	We want to remind the reader that the construction of those two functors are identical with the construction of the functoriality morphism $\Sh(X/\Sigma_{e \inf})\ra \Sh(Y/\Sigma_{e' \inf})$ for the map of rigid spaces
	\[
	\xymatrix{X \ar[r] \ar[d]& Y \ar[d]\\
		\Sigma_e \ar[r]& \Sigma_{e'},}
	\]
	as in Subsection \ref{subsec functoriality}.

	\noindent\textbf{Relation with $X/\Sigma_{e \inf}$.}
	%There exists a natural relation between $X/\Sigma_{\inf}$
	Topologically, the infinitesimal site $X/\Sigma_{\inf}$ is the limit of $X/\Sigma_{e \inf}$ for $e\geq r$.
	To make this precise, we consider the following morphism of sites:
	\[
	u_e:X/\Sigma_{\inf}\rra X/\Sigma_{e \inf},
	\]
	whose corresponding functor is the canonical inclusion functor that sends $(U,T)\in X/\Sigma_{e \inf}$ to the object $(U,T)\in X/\Sigma_{\inf}$.
	Note that by construction, this cocontinuous functor is a fully faithful embedding.

	This morphism induces an adjoint pair of functors $(u_e^{-1},u_{e *})$ given as follows:
	\begin{itemize}
		\item The functor $u_{e *}$ is the restriction functor, in a way that for a sheaf $\calF\in \Sh(X/\Sigma_{\inf})$ we have
		\[
		(u_{e *}\calF)_T=\calF_T.
		\]
		\item For a sheaf $\calG\in \Sh(X/\Sigma_{e \inf})$, the sheaf $u_e^{-1}\calG$ is the sheaf associated with the presheaf
		\begin{align*}
			(V,S)&\mapsto\varinjlim_{\substack{(V,S)\ra (U,T)\\ (U,T)\in X/\Sigma_{e \inf}}} \calG(U,T)\\
			&=\begin{cases}
				\varnothing,~S\notin \Rig_{\Sigma_e};\\
				\calG(V,S),~S\in \Rig_{\Sigma_e}.
			\end{cases}
		\end{align*}
	\end{itemize}
	So by the definition of the site $X/\Sigma_{\inf}$, the restriction of $u_e^{-1}\calG$ at $(V,S)$ is
	\[
	(u_e^{-1}\calG)_S=\begin{cases}
		\varnothing,~S\notin \Rig_{\Sigma_e};\\
		\calG_S,~S\in \Rig_{\Sigma_e}.
	\end{cases}
	\]
	Here we notice that when $\calG=h_{(U,T)}$ is the representable sheaf for some object $(U,T)\in X/\Sigma_{e \inf}$, the inverse image $u_e^{-1}h_{(U,T)}$ is nothing but the representable sheaf $h_{(U,T)}$ in $\Sh(X/\Sigma_{\inf})$.
	
	The morphism of site $u_e:X/\Sigma_{\inf}\ra X/\Sigma_{e \inf}$ induces a map of topoi
	\[
	u_e:\Sh(X/\Sigma_{\inf}) \rra \Sh(X/\Sigma_{e \inf}).
	\]
	It admits a section $i_e:\Sh(X/\Sigma_{e \inf}) \rra \Sh(X/\Sigma_{\inf})$, where the corresponding adjoint pair of functors is given as follows:
	\begin{itemize}
		\item For a sheaf $\calG\in \Sh(X/\Sigma_{inf})$, the inverse image $i_e^{-1}\calG$ is the sheaf associated with the presheaf
		\[
		X/\Sigma_{e \inf}\ni (U,T) \lmt \varinjlim_{\substack{(U,T)\ra (U,S)\\ (U,S)\in X/\Sigma_{\inf}}} \calG(U,S)=\calG(U,T).
		\]
		Namely, $i_e^{-1}=u_{e *}$ is the restriction functor.
		\item The direct image functor $i_{e *}$ sends a sheaf $\calF\in \Sh(X/\Sigma_{e \inf})$ to the sheaf 
		\[
		i_{e *}\calF(V,S)=\varprojlim_{\substack{(V,T)\ra (V,S)\\ (V,T)\in X/\Sigma_{e \inf}}} \calF(V,T)=\calF(V,S\times_{\Sigma}\Sigma_e).
		\]
	\end{itemize}
	It is  clear that the composition $u_e\circ i_e$ is equal to the identity.
	We also note that the above functors are functorial with respect to $e$.
	
	\begin{remark}
		Here we notice that the map $i_e$ is in fact induced from a natural map of sites
		\begin{align*}
			i_e: X/\Sigma_{e \inf} &\rra X/\Sigma_{\inf};\\
			(U,T\times_{\Sigma} \Sigma_e) & \reflectbox{$\longmapsto$} (U,T).
		\end{align*}
		This is analogous to the nilpotent bases situation, as in the Remark \ref{change of bases}
	\end{remark}

	\begin{remark}
		We also want to reminder the reader that the construction of map $i_e$ could be regarded as the functoriality morphism of infinitesimal topoi associated with the following diagram
		\[
		\xymatrix{X \ar[r]^\id \ar[d]& X \ar[d]\\
			\Sigma_{e}\ar[r] & \Sigma.}
		\]
	\end{remark}
	
	\begin{remark}
		The construction of $u_e$ and $i_e$ is compatible with the functoriality morphism of infinitesimal topoi $f_{\inf}:\Sh(X/\Sigma_{e \inf})\ra \Sh(Y/\Sigma_{e' \inf})$ for a map of rigid spaces $f:X/\Sigma_e\ra Y/\Sigma_{e'}$.
		Namely we have the following commutative diagrams among infinitesimal topoi
		\[
		\xymatrix{ \Sh(X/\Sigma_{\inf}) \ar[r]^{u_e} \ar[d]_{f_{\inf}} & \Sh(X/\Sigma_{e \inf}) \ar[d]^{f_{\inf}} \\
			\Sh(Y/\Sigma_{\inf}) \ar[r]_{u_{e'}} & \Sh(Y/\Sigma_{e' \inf});}~~~~~~~~~~
		\xymatrix{ \Sh(X/\Sigma_{e \inf}) \ar[r]^{i_e} \ar[d]_{f_{\inf}} & \Sh(X/\Sigma_{\inf}) \ar[d]^{f_{\inf}} \\
			\Sh(Y/\Sigma_{e' \inf}) \ar[r]_{i_{e'}} & \Sh(Y/\Sigma_{\inf}).}
		\]
	\end{remark}

	\noindent\textbf{Relation to the rigid topos $\Sh(X_\rig)$.}
	Analogous to Subsection \ref{sub inf-rig}, there exists a natural map of topoi to the rigid site $X_\rig$ as below
	\[
	u_{X/\Sigma}:\Sh(X/\Sigma_{\inf}) \rra \Sh(X_\rig).
	\]
	The corresponding preimage and the direct image functors are given as below
	\begin{itemize}
		\item	 \begin{align*}
			u_{X/\Sigma *}:\Sh(X/\Sigma_{\inf})&\rra \Sh(X_\rig);\\
			\calF&\lmt (U\mapsto \Gamma(U/\Sigma_{\inf},\calF)).\end{align*}
		
		\item \begin{align*}
			u_{X/\Sigma}^{-1}:\Sh(X_\rig)&\rra \Sh(X/\Sigma_{\inf});\\
			\calE&\lmt ((U,T)\mapsto \calE(U)).
		\end{align*}
	\end{itemize}
	Namely the push-forward functor $u_{X/\Sigma*}$ is the sheafified version of the infinitesimal global section functor.
	
	\begin{remark}
		The functor $u_{X/\Sigma}$ is functorial with respect to the rigid space $X$.
		Precisely, given a map of rigid spaces $f:X\ra Y$ over $\Sigma$ where $\xi$ is nilpotent in both $\mathcal{O}_X$ and $\mathcal{O}_Y$, we have the following commutative diagram
		\[
		\xymatrix{ \Sh(X/\Sigma_{\inf}) \ar[r]^{u_{X/\Sigma}} \ar[d]_{f_{\inf}} & \Sh(X_\rig) \ar[d]^f\\
			\Sh(Y/\Sigma_{\inf}) \ar[r]_{u_{Y/\Sigma}} & \Sh(Y_\rig).}
		\]
	\end{remark}
	\begin{remark}
		The functor $u_{X/\Sigma}$ is also compatible with $u_e:\Sh(X/\Sigma_{\inf}) \ra \Sh(X/\Sigma_{e \inf})$ and $i_e:\Sh(X/\Sigma_{e \inf}) \ra \Sh(X/\Sigma_{\inf})$.
		Namely, the following diagrams commute:
		\[
		\xymatrix{
			\Sh(X/\Sigma_{\inf}) \ar[rd]_{u_e} \ar[rr]^{u_{X/\Sigma}} && \Sh(X_\rig) \\
			& \Sh(X/\Sigma_{e \inf}) \ar[ru]_{u_{X/\Sigma_{e}}} &;}~~~~~~~~~~~~
		\xymatrix{
			\Sh(X/\Sigma_{\inf})  \ar[rr]^{u_{X/\Sigma}} && \Sh(X_\rig) \\
			& \Sh(X/\Sigma_{e \inf}) \ar[lu]^{i_e} \ar[ru]_{u_{X/\Sigma_{e}}} &.}
		\]
		Here $u_{X/\Sigma_e}:\Sh(X/\Sigma_{e \inf}) \ra \Sh(X_\rig)$ is the analogous functor of $u_{X/\Sigma}$ onto the rigid site defined in Subsection \ref{sub inf-rig}.
	\end{remark}

	\subsection{Cohomology of crystals over $X/\Sigma_{\inf}$}\label{subsec Bdr coh}
	In this section, we consider the cohomology of a crystal $\calF$ over the infinitesimal site $X/\Sigma_{\inf}$.
	Our strategy is to interpret the cohomology of $\calF$ as the derived inverse limit of the cohomology of the pullback $i_e^*\calF$, where $i_e^*\calF$ is a crystal over the site $X/\Sigma_{e \inf}$.
	
	To start with, we first describe a crystal over the infinitesimal site $X/\Sigma_{\inf}$.
	\begin{definition}
		Let $X$ be a rigid space over $\Sigma_{\inf}$ where $\xi$ is nilpotent.
		\begin{enumerate}[label=\upshape{(\roman*)}]
			\item The \emph{infinitesimal structure sheaf} over $X/\Sigma_{\inf}$, denoted as $\calO_{X/\Sigma}$, is a sheaf over $X/\Sigma_{\inf}$ sending a thickening $(U,T)\in X/\Sigma_{\inf}$ onto the global section of $\calO_T$ at $T$ as below
			\[
			\calO_{X/\Sigma}: (U,T) \lmt \calO_T(T).
			\]
			\item The \emph{infinitesimal ideal sheaf} over $X/\Sigma_{\inf}$, denoted as $\calJ_{X/\Sigma}$, is a sheaf over $X/\Sigma_{\inf}$ sending a thickening $(U,T)\in X/\Sigma_{\inf}$ onto the global section of $\ker(\calO_T\to \mathcal{O}_U)$ at $T$ as below
			\[
			\calO_{X/\Sigma}: (U,T) \lmt \ker(\calO_T(T)\to \mathcal{O}_U(U)).
			\]
			\item A \emph{coherent crystal} over $X/\Sigma_{\inf}$ is a $\calO_{X/\Sigma}$-coherent sheaf $\calF$ over $X/\Sigma_{\inf}$ satisfies the crystal condition as in Definition \ref{cry def}.
			It is called \emph{a crystal in vector bundle} if the restriction $\calF_T$ at each infinitesimal thickening $(U,T)\in X/\Sigma_{\inf}$ is a vector bundle over $\calO_T$.
		\end{enumerate}
	\end{definition}
	Here we mention that similarly to Proposition \ref{cry, big and small}, it can be shown the categories crystals over big and small sites are equivalent.
	
	We notice that the morphism of sites $i_e:X/\Sigma_{e \inf}\ra X/\Sigma_{\inf}$ in the last subsection is naturally a morphism of ringed sites for their structure sheaves.
	Moreover, since the preimage functor $i_e^{-1}$ is equal to the restriction functor onto the subcategory $X/\Sigma_{e \inf}$, we get
	\[
	i_e^{-1}\calO_{X/\Sigma}=\calO_{X/\Sigma_e},
	\]
	and similarly for the infinitesimal ideal sheaves.
	So we can define the \emph{pullback functor} $i_e^*\calF:=i_e^{-1}\calF\otimes_{i_e^{-1}\calO_{X/\Sigma}} \calO_{X/\Sigma_e}$, which is the same as the restriction functor $i_e^{-1}\calF$ itself; namely for an infinitesimal thickening $(U,T)\in X/\Sigma_{e \inf}$, we have
	\[
	(i_e^*\calF)_T=(i_e^{-1}\calF)_T=\calF_T.
	\]
	Here we want to remark that the pullback functor $i_e^*=i_e^{-1}$ is compatible with the pullback functor $f_{\inf}^*$ of the morphism $f_{\inf}:\Sh(X/\Sigma_{e \inf}) \ra \Sh(Y/\Sigma_{e' \inf})$ for a map of rigid spaces $f:X/\Sigma_e \ra Y/\Sigma_{e'}$.
	
	The main tool of the subsection is the following lemma, relating a coherent crystal over $X/\Sigma_{\inf}$ with those over $X/\Sigma_{e \inf}$ of $\xi$-nilpotent coefficients.
	\begin{lemma}\label{inf-lev, tool}
		Let $\calF$ be a coherent crystal over the infinitesimal site $X/\Sigma_{\inf}$ (resp. $X/\Sigma_{\Inf}$), and let $X$ be defined over $\Sigma_r$ for some $r\in \NN$.
		Then we have the following.
		\begin{enumerate}[label=\upshape{(\roman*)}]
			\item The pullback $i_e^*\calF$ for each $e\in \NN_{\geq r}$ is a crystal over $X/\Sigma_{e \inf}$.
			When $\calF$ is a crystal in vector bundles, so is $\calF$ over $X/\Sigma_{e \inf}$.
			\item The counit map for the adjoint pairs $(i_e^*,i_{e *})$ induces the following isomorphism
			\[
			\calF/\xi^e \rra Ri_{e *}i_e^*\calF.
			\]
			In particular, we have the natural equivalences as below
			\[
			\calF \rra R\varprojlim_{e\geq r} \calF/\xi^e \rra R\varprojlim_{e\geq r} Ri_{e *}i_e^*\calF.
			\]
			Here the transition maps in the last limit are given by the map of infinitesimal sites $X/\Sigma_{e \inf} \ra X/\Sigma_{e+1 \inf}$ (resp. $X/\Sigma_{e \Inf} \ra X/\Sigma_{e+1 \Inf}$) for the closed immersions of bases. 
		\end{enumerate}
	\end{lemma}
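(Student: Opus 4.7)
The strategy is to handle (i) and (ii) separately, with the work concentrated in (ii). I would split (ii) into three pieces: an underived comparison, a vanishing of higher direct images (the main technical point), and an eventual-constancy argument for the inverse limit. For (i), I would simply unwind the description of $i_e^* = i_e^{-1}$ as restriction along the fully faithful inclusion $X/\Sigma_{e \inf} \hookrightarrow X/\Sigma_{\inf}$: for any $(U,T) \in X/\Sigma_{e \inf}$ one has $(i_e^*\calF)_T = \calF_T$, and morphisms in $X/\Sigma_{e \inf}$ remain morphisms in $X/\Sigma_{\inf}$. Both the crystal transition isomorphism and local freeness of $\calF_T$ therefore transport verbatim from $\calF$ to $i_e^*\calF$.

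For the underived statement $\calF/\xi^e \cong i_{e*}i_e^*\calF$, I would use the explicit formula $i_{e*}i_e^*\calF(V,S) = \calF(V, S\times_\Sigma \Sigma_e)$ and invoke the crystal condition applied to the closed immersion $(V, S\times_\Sigma \Sigma_e) \hookrightarrow (V,S)$, whose defining ideal in $\calO_S$ is precisely $\xi^e \calO_S$. This yields $\calF_{S\times_\Sigma \Sigma_e} \cong \calF_S/\xi^e \calF_S$, giving the desired sheaf-level identification after taking global sections and sheafifying on $X/\Sigma_{\inf}$.

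The main obstacle will be promoting this to a derived equivalence, i.e.\ showing that $R^q i_{e*}i_e^*\calF$ vanishes for $q > 0$. My plan is to localize on an affinoid cover and reduce to an affinoid thickening $(V,S)$ with $V$ admitting a closed immersion into a smooth rigid space $Y$ defined over some $\Sigma_n$. I would then invoke the \v Cech-Alexander resolution of Proposition \ref{cech}: both $R\Gamma((V,S)/\Sigma_{\inf}, \calF)$ and the localized cohomology $R\Gamma((V, S\times_\Sigma \Sigma_e)/\Sigma_{e \inf}, i_e^*\calF)$ are computed by cosimplicial systems built from envelopes of $V$ in $Y^{\times_\Sigma (\bullet+1)}$ and in its base change to $\Sigma_e$, respectively. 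Because the envelope construction commutes with the nilpotent closed immersion $\Sigma_e \hookrightarrow \Sigma_n$, and because the crystal condition makes sections compatible with this base change, the second \v Cech-Alexander complex should be exactly the mod-$\xi^e$ reduction of the first, and the required derived identification will then follow after sheafification.

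Lastly, for the isomorphism $\calF \cong R\varprojlim_{e\geq r}\calF/\xi^e$, I would argue pointwise: every $(V,S) \in X/\Sigma_{\inf}$ lies over some $\Sigma_{e_0}$, so $\xi^{e_0}$ annihilates $\calF_S$, and the inverse system $\{\calF/\xi^e\}_{e\geq r}$ stabilizes at $\calF_S$ for $e \geq e_0$. The derived limit therefore collapses to the ordinary limit, recovering $\calF$ on each object; composing with the isomorphism from the previous step gives the chain displayed in the statement.
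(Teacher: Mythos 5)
Parts (i), your degree-zero identification $i_{e*}i_e^*\calF \cong \calF/\xi^e$ via the crystal condition for the closed immersion $(V,S\times_\Sigma\Sigma_e)\hookrightarrow (V,S)$, and your eventual-constancy argument for $\calF \cong R\varprojlim_e \calF/\xi^e$ all agree in substance with the paper's proof. The gap is in the step you yourself flag as the main obstacle: the vanishing of $R^qi_{e*}i_e^*\calF$ for $q>0$.

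The object $(Ri_{e*}i_e^*\calF)(V,S)$ is the derived sections of $i_e^*\calF$ over the \emph{single object} $(V,S_e)$ of $X/\Sigma_{e\inf}$, where $S_e=S\times_\Sigma\Sigma_e$. Since covering sieves of $(V,S_e)$ are generated by open coverings of the rigid space $S_e$, these derived sections are just coherent sheaf cohomology $R\Gamma(S_e,\calF_{S_e})$, which for affinoid $S$ is concentrated in degree $0$ by Kiehl's theorem and, because $S_e\to S$ is a homeomorphism, equals $R\Gamma(S,\calF_S/\xi^e)$. That is the entire argument the paper uses. Your plan instead invokes Proposition \ref{cech}, i.e.\ the \v{C}ech--Alexander complex built from envelopes of $V$ in $Y^{\times(\bullet+1)}$; but that proposition computes $R\Gamma(V/\Sigma_{e\inf},-)$, the cohomology of the \emph{whole} infinitesimal topos of $V$ (equivalently the pushforward $Ru_{V/\Sigma_e*}$ to the rigid site), not the derived sections over one thickening $(V,S_e)$. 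These are genuinely different: the former is the de Rham cohomology of the envelope and is not concentrated in degree $0$ even for affinoid $V$. Carrying out your comparison of \v{C}ech--Alexander complexes would, at best, reprove the base-change statement of Theorem \ref{inf-lev, coh} (which lies downstream of this lemma) and would not yield the asserted isomorphism $\calF/\xi^e \rra Ri_{e*}i_e^*\calF$ of complexes of sheaves on $X/\Sigma_{\inf}$. Replacing that step by the direct affinoid computation above closes the proof; flatness of envelopes, which your reduction-mod-$\xi^e$ of cosimplicial systems would additionally require, is then not needed here.
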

	\begin{proof}
		$~$
		\begin{enumerate}[(i)]
			\item The proof of the (i) follows from the definition of the crystal condition.
			\item We recall from the last subsection that the push-forward functor $i_{e *}\calG$ is given by
			\[
			(i_{e *}\calG)(U,T)=\calG(U, T\times_{\Sigma} \Sigma_e),
			\]
			for a sheaf  $\calG\in \Sh(X/\Sigma_{e \inf})$.
			We denote the fiber product $ T\times_\Sigma \Sigma_e$ by $T_e$, which is an infinitesimal thickening of $U$ that is defined over $\Sigma_e$.
			Apply the above to the pullback $\calG=i_e^*\calF$ of the crystal $\calF$, and notice that $i_e^*$ is the restriction functor, we get
			\begin{align*}
				(R i_{e *}i_e^*\calF)(U,T) & = R\Gamma((U, T_e), \calF) \\
				& =R\Gamma( T_e, \calF_{T_e}) \\
				& = R\Gamma( T_e, \calF_T/\xi^e) \\
				& = R\Gamma( T, \calF_T/\xi^e),
			\end{align*}
			where the last equality follows from the observation that $T_e\ra T$ has the same underlying topological spaces.
			Hence the cone of $\calF/\xi^e \ra Ri_{e *}i_e^*\calF$, which is bounded below and has no cohomology, vanishes in the derived category.
			
			At last, notice that for a coherent sheaf $\calF$ of $\calO_{X/\Sigma}$-modules over $X/\Sigma_{\inf}$, we always have
			\[
			\calF \simeq R\varprojlim_{e\geq r} \calF/\xi^e \simeq \varprojlim_{e\geq r} \calF/\xi^e.
			\]
			So the last claim in $(ii)$ follows.
			
		\end{enumerate}
	\end{proof}

	Now we are able to give the main result about the cohomology of crystals over the infinitesimal site $X/\Sigma_{\inf}$.
	Analogous to the case over $\Sigma_e$, for a coherent crystal  $\calF$ over $X/\Sigma_{\inf}$, we define a canonical filtration on it by $\Fil^i \mathcal{F}:= \mathcal{J}_{X/\Sigma}^i\mathcal{F}$, for $i\in \mathbb{N}$.
	This then naturally induces a filtration on its derived direct image along the functor $u_{X/\Sigma}$ and on its derived global section respectively.
	\begin{theorem}\label{inf-lev coh}
		Let $X$ be a rigid space over some $\Sigma_r$, and let $\calF$ be a coherent crystal over $X/\Sigma_{\inf}$.
		\begin{enumerate}[label=\upshape{(\roman*)}]
			\item There exists a natural filtered isomorphism of complexes of sheaves of $\Bdr$-modules as below:
			\[
			Ru_{X/\Sigma *} \calF \rra R\varprojlim_{e\geq r} Ru_{X/\Sigma_{e} *} (i_e^*\calF).
			\]
			In particular, by applying the derived global section functor, we get a filtered isomorphism
			\[
			R\Gamma(X/\Sigma_{\inf},\calF) \simeq R\varprojlim_{e\geq r} R\Gamma(X/\Sigma_{e \inf}, i_e^*\calF).
			\]
			\item Let $\{Y_e\}_{e\geq r}$ be a direct system of rigid spaces over $\Sigma_e$, such that each $Y_e$ is smooth over $\Sigma_e$ with $Y_{e+1}\times_{\Sigma_{e+1}} \Sigma_e\simeq Y_e$.
			Assume $X$ admits a closed immersions into $Y_r$.
			Then we have natural filtered isomorphisms of complexes of sheaves of $\Bdr$-modules as below
			\[
			Ru_{X/\Sigma *} \calF \rra \calF_D\otimes \Omega_D^\bullet \simeq R\varprojlim_{e\geq r} (\calF_{D_X(Y_e)} \otimes \Omega_{D_X(Y_e)}^\bullet),
			\]
			where $D=\varinjlim_{e\geq r} D_X(Y_e)$ is the colimit		\footnote{As in \Cref{env}, we again regard $D=\varinjlim_{e\geq r} D_X(Y_e)$ as an ind-representable sheaf in the infinitesimal topos, where the colimit always exits. }
			of envelopes, and $\calF_D\otimes \Omega_D^\bullet$ is the filtered de Rham complex of $\calF$ over $D$ 
			\footnote{Here the filtration on the de Rham complex is defined analogously to the discussion above \Cref{lemA}.}.
			\item Suppose $\calF$ is a crystal in vector bundles over $X/\Sigma_{\inf}$.
			We equip the rings $\Bdr$ and $\Bdre$ with their $\xi$-adic filtrations.
			Then for each $e\geq r$, the natural maps below are filtered isomorphisms
			\begin{align*}
				(Ru_{X/\Sigma *} \calF)\otimes_{\Bdr}^L \Bdre & \rra Ru_{X/\Sigma_e *} (i_e^*\calF);\\
				Ru_{X/\Sigma *} \calF & \rra R\varprojlim_e \left( (Ru_{X/\Sigma *} \calF)\otimes_{\Bdr}^L \Bdre \right).
			\end{align*}
			In particular, when $X$ is quasi-compact quasi-separated, by applying the derived global section functor we obtain the following canonical filtered equivalences
			\begin{align*}
				R\Gamma(X/\Sigma_{\inf}, \calF)\otimes_{\Bdr}^L \Bdre & \simeq R\Gamma(X/\Sigma_{e \inf}, i_e^*\calF); \\
				R\Gamma(X/\Sigma_{\inf}, \calF) &\simeq R\varprojlim_e \left( R\Gamma(X/\Sigma_{\inf}, \calF)\otimes_{\Bdr} \Bdre \right).
			\end{align*}
			
		\end{enumerate}
	\end{theorem}
	Before we prove, we want to remark that the result for crystals over the big site $X/\Sigma_{\Inf}$ are true and the proof is identical to the small site case.
	\begin{proof}
		$~$
		\begin{enumerate}[(i)]
			\item This follows from applying $Ru_{X/\Sigma *}$ to the equivalences $\calF \rra R\varprojlim_{e\geq r} Ri_{e*}i_e^*\calF$ in Lemma \ref{inf-lev, tool}.
			Here we use the identity of maps of topoi in the last subsection 
			\[
			u_{X/\Sigma } \circ i_{e } = u_{X/\Sigma_e}.
			\]
			
			\item For each $e\geq r$, by Theorem \ref{glo-coh} there exists a natural filtered isomorphism of complexes of sheaves of $\Bdre$-modules 
			\[
			Ru_{X/\Sigma_e *} i_e^*\calF \rra \calF_{D_X(Y_e)}\otimes \Omega_{D_X(Y_e)}^\bullet.
			\]
			So the map of ringed sites $X/\Sigma_{e \inf} \ra X/\Sigma_{e+1 \inf}$ induced from the closed immersion of the bases $\Sigma_e\ra\Sigma_{e+1}$ together with (i) produces the inverse limits
			\begin{align*}
				Ru_{X/\Sigma *} \calF & \simeq R\varprojlim_{e\geq r} (\calF_{D_X(Y_e)}\otimes \Omega_{D_X(Y_e)}^\bullet) \\
				& \simeq \calF_D \otimes \Omega_D^\bullet,
			\end{align*}
			where we use the compatibility of the de Rham complexes $\calF_{D_X(Y_e)}\otimes \Omega_{D_X(Y_e)}^\bullet$ for different $e$, by our choices of the direct system of smooth rigid spaces $\{Y_e\}_e$.
			
			\item We first notice that the second half of the statement follows from its sheaf version, by the following isomorphism
			\[
			R\Gamma(U, (Ru_{X/\Sigma *} \calF)\otimes_{\Bdr}^L \Bdre) \simeq R\Gamma(U/\Sigma_{\inf}, \calF)\otimes_{\Bdr}\Bdre.
			\]
			Here the isomorphism follows by applying $R\Gamma(U, -)$ at the distinguished triangle resolving $\Bdre$ over $\Bdr$ as below
			\[
			\xymatrix{Ru_{X/\Sigma *} \calF \ar[r]^{\cdot \xi^{e}} & Ru_{X/\Sigma *} \calF \ar[r] & (Ru_{X/\Sigma *} \calF)\otimes_{\Bdr}^L \Bdre.}
			\]
			On the other hand, to check the sheaf level isomorphism, as the statement is rigid analytic local with respect to $X$, so it suffices to assume that $X$ admits a closed immersion into a direct system of smooth rigid spaces $\{Y_e\}_e$ over $\Sigma_e$, where the results follow from the explicit calculation of the completed de Rham complexes as in part (ii) and Theorem \ref{glo-coh}.
			So we are done.
			
		\end{enumerate}
	\end{proof}
	
	\begin{remark}\label{inf-lev, BMS}
		Recall that for a smooth affinoid rigid space $X=\Spa(R)$ over $K$, the crystalline cohomology of $X$ over $\Bdr$, introduced in \cite[Section 13]{BMS}, is defined as the inverse limit
		\[
		\varprojlim_{e\in \NN} \Omega_{D_X(Y_e)}^\bullet,
		\]
		where $X\ra Y_e=\Spa(\Bdre \langle T_i^{\pm1}\rangle)$ is a closed immersion.
		So Theorem \ref{inf-lev coh} implies that the infinitesimal cohomology $R\Gamma(X/\Sigma_{\inf}, \calO_{X/\Sigma})$ coincides with the crystalline cohomology of $X$ over $\Bdr$ in the sense of \cite{BMS}.
		
	\end{remark}
	
	With the help of Theorem \ref{inf-lev coh}, we can compare the infinitesimal cohomology of $X$ over $\Bdr$ with the derived de Rham complex.
	\begin{definition}\label{ddR over Bdr}
		Let $X$ be a rigid space over $\Sigma_r$.
		Then the \emph{analytic derived de Rham complex of $X$ over $\Bdr$}, denoted as  $\wh{\mathrm{dR}}^\an_{X/\Sigma}$, is defined to be the derived inverse limit of the filtered complexes
		\[
		\wh{\mathrm{dR}}^\an_{X/\Sigma}:=R \varprojlim_{e\geq r} \wh{\mathrm{dR}}^\an_{X/\Sigma_e}.
		\]
	\end{definition}
	Apply Theorem \ref{inf-lev coh}.(i) to the infinitesimal structure sheaf $\calO_{X/\Sigma}$ and the comparison in Theorem \ref{inf-ddR}, 
	we get the following.
	\begin{corollary}\label{inf-lev ddR}
		Let $X$ be a rigid space over $\Sigma_r$.
		There exists a natural filtered map between the analytic derived de Rham complex and the infinitesimal cohomology sheaves, inducing an isomorphism on the underlying complexes
		\[
		\wh{\mathrm{dR}}^\an_{X/\Sigma} \rra Ru_{X/\Sigma *} \calO_{X/\Sigma}.
		\]
		In particular, applying the derived global section on the underlying complexes, we get the following comparison of cohomology
		\[
		R\Gamma(X, \wh{\mathrm{dR}}^\an_{X/\Sigma}) \simeq R\Gamma(X/\Sigma_{\inf}, \calO_{X/\Sigma}).
		\]
	\end{corollary}

	The next result concerns the \'eh descent for cohomology of crystals over the big infinitesimal site $X/\Sigma_{\Inf}$, where $X$ is a rigid space over $K$.
	\begin{proposition}\label{inf-lev des}
		Let $X$ be a rigid space over $K$, and let $\calF$ be a crystal in vector bundles over the big infiniteismal site $X/\Sigma_{\Inf}$.
		Then the cohomology sheaf $Ru_{X/\Sigma*} \calF$ (without the filtration) satisfies the \'eh-hyperdescent.
		Namely for an \'eh-hypercovering $X_\bullet'\ra X'$ of $K$-rigid spaces over $X$, the following natural map is an isomorphism
		\[
		R\Gamma(X'/\Sigma_{\Inf},\calF) \rra R\lim_{[n]\in \Delta} \left( R\Gamma(X'_n/\Sigma_{\Inf},\calF)\right).
		\] 
	\end{proposition}
	\begin{proof}
		By Lemma \ref{inf-lev, tool}.(i), the pullback $i_e^*\calF$ over $X/\Sigma_{e \Inf}$ is a crystal in vector bundles.
		Thanks to Theorem \ref{inf-eh, Bdre}, we know the natural map $X_\bullet' \ra X'$ induces a natural isomorphism as below
		\[
		R\Gamma(X'/\Sigma_{\Inf},i_e^*\calF) \rra R\lim_{[n]\in \Delta} \left( R\Gamma(X'_n/\Sigma_{\Inf},i_e^*\calF)\right).
		\] 
		Thus the result we want follows from taking the derived limit over all $e$, by Theorem \ref{inf-lev coh}.(i).
	\end{proof}
	
	We want to mention that thanks to the Corollary \ref{coh, big small}, it is safe to replace the cohomology of $\calF$ over the big infinitesimal site by the cohomology $R\Gamma(X'/\Sigma_{\inf}, \iota^{-1}\calF)$ of the restriction $\iota^{-1}\calF$ over the small infinitesimal site $X/\Sigma_{\inf}$.
	In particular, by applying the above result to the infinitesimal structure sheaf $\calO_{X/\Sigma}$, we see the infinitesimal cohomology over $\Bdr$ satisfies the \'eh-hyperdescent.
	\begin{corollary}\label{inf-lev des 2}
		Let $X$ be a rigid space over $K$.
		Then the infinitesimal cohomology $R\Gamma(X/\Sigma_{\inf}, \calO_{X/\Sigma})$ satisfies the \'eh-hyperdescent.
	\end{corollary}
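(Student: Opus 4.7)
The plan is to deduce the statement essentially formally from Proposition \ref{inf-lev, des} by passing between the small and big infinitesimal sites, exploiting the fact that the structure sheaf is itself a crystal in vector bundles. First I would observe that $\calO_{X/\Sigma}$ is a crystal in vector bundles over $X/\Sigma_{\inf}$ in a trivial way: for any thickening $(U,T)$, the restriction $(\calO_{X/\Sigma})_T = \calO_T$ is locally free of rank one, and the crystal condition is the tautology $g^{*}\calO_{T_2} \cong \calO_{T_1}$ for any morphism $(i,g):(U_1,T_1)\to(U_2,T_2)$.

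Next I would lift $\calO_{X/\Sigma}$ to the big site $X/\Sigma_{\Inf}$. The big/small equivalence for crystals (Proposition \ref{cry, big and small}, together with its $\Bdr$ analogue obtained by taking the limit over $e$ via Theorem \ref{inf-lev, coh}) shows that $\mu^{*}\calO_{X/\Sigma}$ is a crystal in vector bundles on $X/\Sigma_{\Inf}$, with restriction to the small site canonically isomorphic to $\calO_{X/\Sigma}$. Likewise the big/small cohomology comparison (Corollary \ref{coh, big small}, extended to the $\Bdr$-setting by applying $R\varprojlim_e$ and Theorem \ref{inf-lev, coh}(i)) yields, for any rigid space $X'$ over $K$, a natural isomorphism
\[
R\Gamma(X'/\Sigma_{\inf}, \calO_{X/\Sigma}) \;\xrightarrow{\sim}\; R\Gamma(X'/\Sigma_{\Inf}, \mu^{*}\calO_{X/\Sigma}).
\]

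Now I would invoke Proposition \ref{inf-lev, des} applied to $\calF := \mu^{*}\calO_{X/\Sigma}$. For any \'eh-hypercovering $X'_\bullet \to X'$ of rigid spaces over $K$, this gives a canonical equivalence
\[
R\Gamma(X'/\Sigma_{\Inf}, \mu^{*}\calO_{X/\Sigma}) \;\xrightarrow{\sim}\; R\lim_{[n]\in\Delta^{\op}} R\Gamma(X'_n/\Sigma_{\Inf}, \mu^{*}\calO_{X/\Sigma}).
\]
Combining this with the big/small cohomology comparison term-by-term yields the corresponding hyperdescent equivalence on the small site, which is the assertion. Taking $X' = X$ completes the argument.

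There is no real obstacle here; the only subtlety worth verifying carefully is that the big/small compatibility results of Section \ref{sec cry} and Section \ref{sec coh}, stated over $\Bdre$, genuinely extend to the $\Bdr$-setting. This is routine given Theorem \ref{inf-lev, coh}: both Proposition \ref{cry, big and small} and Corollary \ref{coh, big small} are compatible with the transition maps $i_e^{*}$, so taking the derived limit $R\varprojlim_e$ along the tower $\{i_e^{*}\calF\}$ produces the $\Bdr$-analogues used above.
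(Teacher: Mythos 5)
Your proposal is correct and follows the same route as the paper: the paper likewise deduces the corollary by applying Proposition \ref{inf-lev, des} to the structure sheaf (a crystal in vector bundles on the big site) and invoking Corollary \ref{coh, big small} to pass between the big and small infinitesimal sites. Your write-up merely makes explicit the big/small bookkeeping that the paper leaves as a one-line remark.
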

	
	Another quick upshot is the finiteness of the infinitesimal cohomology for a proper rigid space $X$.
	\begin{proposition}\label{fin Bdr}
		Let $X$ be a proper rigid space of dimension $n$ over $K$,
		and let $\calF$ be a coherent crystal.
		The infinitesimal cohomology $R\Gamma(X/\Sigma_{\Inf},\calF)$ is then a perfect $\Bdr$-complex in cohomological degrees $[0,2n]$.
	\end{proposition}
	\begin{proof}
		Thanks to Theorem \ref{inf-lev coh}.(i), we can write $R\Gamma(X/\Sigma_{\Inf},\calF)$ as the derived limit of $R\Gamma(X/\Sigma_{e \Inf}, i_e^*\calF)$.
		Here each $R\Gamma(X/\Sigma_{e \Inf},i_e^*\calF)$ is a bounded complex in cohomological degree $[0,2n]$ such that each cohomology group is finite over $\Bdre$ (Proposition \ref{fin Bdre}).
		So the result then follows from the short exact sequence
		\[
		0 \rra R^1\varprojlim_e \rmH^{i-1}(X/\Sigma_{e \Inf}, i_e^*\calF) \rra \rmH^{i}(X/\Sigma_{\Inf}, \calF) \rra \varprojlim_e \rmH^{i}(X/\Sigma_{e \Inf}, i_e^*\calF) \rra 0.
		\]
		Here we note that the inverse system $\{\rmH^{2n}(X/\Sigma_{e \Inf}, i_e^*\calF)\}_e$ satisfies the Mittag-Leffler condition,
		by the finiteness of each $\rmH^{2n}(X/\Sigma_{e \Inf}, i_e^*\calF)$ over $\Bdre$.
	\end{proof}

	\subsection{Comparison with pro-\'etale cohomology}
	In this subsection, we compare the infinitesimal cohomology of $X/\Sigma_{\inf}$ with the pro-\'etale cohomology of the de Rham period sheaf $\BBdr$.
	As an application, we show the degeneracy of the Hodge--de Rham spectral sequence, together with a torsionfreeness of the infinitesimal cohomology $\rmH^i(X/\Sigma,\mathcal{O}_{X/\Sigma_{\inf}})$ over $\Bdr$.
	Throughout the section, we will assume the basics of the pro-\'etale topology defined in \cite{Sch13}.
	
	\noindent\textbf{Comparison theorem.}
	Let $X$ be a rigid space over $K$, and let $X_\pe$ be the pro-\'etale site of $X$.
	The pro-\'etale site admits a basis, which consists of affinoid adic spaces $U=\Spa(B,B^+)$ that are pro-\'etale over $X$ and are  \emph{affinoid perfectoid} (namely, the Huber pair $(B,B^+)$ is a perfectoid algebra over $K$). 
	Over the pro-\'etale site, we can associate the complete structure sheaf $\wh\calO_X$, whose section at an affinoid perfectoid space $U=\Spa(B,B^+)$ is the $K$-algebra $B$.
	Denote $\nu:X_\pe \ra X_{\rig}$ to be the canonical morphism from the pro-\'etale site to the rigid site of $X$.
	
	We recall from \cite{Sch13} that the \emph{de Rham period sheaf $\BBdrp$}, defined as a sheaf of $\Bdr$-algebras over $X_\pe$, sending an affinoid perfectoid space $U=\Spa(B,B^+)$ onto the ring
	\[
	\BBdrp(B,B^+):=\varprojlim_m  \left( W(\varprojlim_{x\mapsto x^p} B^+/p)[\frac{1}{p}]/\xi^m \right).
	\]
	The sheaf $\BBdrp$ admits a canonical surjection $\theta:\BBdrp \ra \wh\calO_X$ that is compatible with the surjection map $\theta:\Bdr \ra K$ for the period ring $\Bdr$.
	It can be shown that $\xi$ is a nonzero-divisor in $\BBdrp$ locally, and the ideal $\ker(\theta)\subset \BBdrp$ is generated by $\xi \in \Bdr$.
	So we could invert the element $\xi$ to get a sheaf of $\Bdrr=\Bdr[\frac{1}{\xi}]$-algebras over $X_\pe$, which we denote by $\BBdr$.
	The sheaf of rings $\BBdr$ then admits a natural descending filtration where the $i$-th filtration for $\forall i\in \mathbb{Z}$ is defined by $\Fil^i\BBdr:=\xi^i\BBdrp \subset \BBdr$.
	Each graded piece $\gr^i\BBdr$, which is locally equal to $\wh\calO_X\cdot \xi^i$, is canonically isomorphic to the pro-\'etale structure sheaf up to a twist.
	
	We first recall the comparison between the infinitesimal cohomology and the pro-\'etale cohomology of $\BBdr$ for smooth rigid spaces.
	\begin{theorem}[\cite{BMS}, Theorem 13.1]\label{pe-inf, sm}
		Let $X$ be a smooth rigid space over $K$.
		Then there exists a natural map of complexes of sheaves of $\Bdr$-modules over $X$
		\[
		Ru_{X/\Sigma *} \calO_{X/\Sigma} \rra R\nu_*\BBdrp.
		\]
		It is an isomorphism after inverting $\xi$.
	\end{theorem}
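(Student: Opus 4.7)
The plan is to prove this in two stages: first construct the natural map using the locally explicit description of infinitesimal cohomology provided by Theorem~\ref{inf-lev, coh}, and then identify the $\xi$-inverted target using Scholze's Poincar\'e lemma for the de Rham period sheaf.

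First I would construct the map. Since both sides are complexes of sheaves on $X_\rig$ satisfying rigid hyperdescent, I may work locally and assume $X = \Spa(R)$ is smooth affinoid with a compatible system $\{Y_e\}$ of smooth affinoid lifts over $\Sigma_e$ (obtained inductively via the smoothness of $X$). By Theorem~\ref{inf-lev, coh}~(ii), the left hand side is represented by $\varprojlim_e \Omega^\bullet_{Y_e/\Sigma_e}$. On the pro-\'etale side, for an affinoid perfectoid $U=\Spa(B,B^+)$ in $X_\pe$, the natural surjection $\BBdrp(B,B^+)/\xi^e \twoheadrightarrow B$ is a nilpotent thickening, and the smoothness of $Y_e$ over $\Sigma_e$ together with the composition $U\to X\to Y_e$ yields a compatible family of lifts $\Spa(\BBdrp(B,B^+)/\xi^e)\to Y_e$. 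These induce maps of de Rham complexes $\Omega^\bullet_{Y_e/\Sigma_e}\to \Omega^\bullet_{\BBdrp(B,B^+)/\xi^e/\Sigma_e}\to \BBdrp(B,B^+)/\xi^e$ (the last map being the augmentation of the Poincar\'e complex). Passing to the limit in $e$, sheafifying over $X_\pe$ and using that the construction is canonical up to homotopy (two lifts differ by a derivation valued in $\xi\cdot \BBdrp$, which is absorbed by the connection), I obtain the desired morphism $Ru_{X/\Sigma *}\calO_{X/\Sigma}\to R\nu_*\BBdrp$.

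Next I would verify the isomorphism after inverting $\xi$. Apply Scholze's Poincar\'e lemma \cite[Corollary~6.13]{Sch13}, which says the complex
\[
0 \rra \BBdr \rra \calO\BBdr \rra \calO\BBdr\otimes_{\calO_X}\Omega^1_{X/K} \rra \calO\BBdr\otimes_{\calO_X}\Omega^2_{X/K} \rra \cdots
\]
is exact on $X_\pe$. Hence $R\nu_*\BBdr \cong R\nu_*(\calO\BBdr\otimes_{\calO_X}\Omega^\bullet_{X/K})$. Combining the projection formula with Scholze's vanishing result for $R^i\nu_*\calO\BBdr$ in positive degrees together with the identification $\nu_*\calO\BBdr \cong \varprojlim_e \calO_{Y_e}[1/\xi]$ for the chosen smooth lift (\cite[Proposition~6.10, 6.16]{Sch13}), one reduces to the complex $\varprojlim_e \Omega^\bullet_{Y_e/\Sigma_e}[1/\xi]$, which matches the $\xi$-inverted left hand side by Theorem~\ref{inf-lev, coh}~(ii). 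A final check is that under these identifications the comparison map constructed above coincides with the identity.

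The main obstacle I anticipate is verifying that the map built by lifting $U\to Y_e$ along the nilpotent thickening $\BBdrp(U)/\xi^e\twoheadrightarrow B$ agrees, after applying the Poincar\'e lemma, with the natural identification of $\nu_*\calO\BBdr[1/\xi]$ with the de Rham complex of the smooth lift. This is essentially the statement that the two constructions furnish the same ``universal pro-\'etale thickening with connection'' of $X$; it follows from the crystal property in Section~\ref{sec cry} applied to both systems of lifts, but tracking compatibility of connections, Hodge filtrations, and differentials through the unfolding procedure is the technical heart of the argument.
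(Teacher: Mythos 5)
Your proposal is correct in outline but takes a genuinely different route from the paper. The paper follows \cite[Theorem 13.1]{BMS} directly: it covers $X$ by \emph{very small} affinoids $U$ admitting an \'etale map to a torus extending to a closed immersion $U\ra \TT^n$, computes $R\Gamma(U_\pe,\BBdrp)$ by the Koszul complex $K_{\BBdrp(R_\infty)}(\gamma_{u_i}-1)$ attached to the toric perfectoid tower, writes down an explicit map of actual complexes $\Omega_D^\bullet \ra K_{\BBdrp(R_\infty)}$ functorial in the triple $(U\ra\TT^d\ra\TT^n)$, checks it is an isomorphism after inverting $\xi$, and glues over the filtered set of such triples. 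You instead construct the map by formally lifting $\calO(Y_e)\ra \BBdrp(B,B^+)/\xi^e$ against the nilpotent thickening $\theta$, and prove the isomorphism via the Poincar\'e lemma for $\calO\BBdr$ and the computation of $R\nu_*\calO\BBdr$, i.e.\ the route of \cite[Theorem 8.8]{Sch13}. Your route is conceptually cleaner and avoids choosing toric charts on the pro-\'etale side; the paper's route has the advantage of producing an explicit map of honest complexes and of only invoking results literally proved in \cite{BMS}.

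There are two gaps you should address. First, and most seriously, the $\calO\BBdr$ machinery you cite (\cite[Corollary 6.13, Propositions 6.10, 6.16]{Sch13}) is set up for smooth adic spaces over a \emph{discretely valued} base field with perfect residue field; here $X$ is an arbitrary smooth rigid space over the algebraically closed $K$ and need not descend to such a subfield. Over $K$ one must first define $\calO\BBdrp$ relative to a chosen compatible system of lifts $\{Y_e\}$ (or via derived de Rham theory as in \cite{GL20}) and then reprove both the Poincar\'e lemma and the identification $\nu_*\calO\BBdr\cong \varprojlim_e\calO_{Y_e}[1/\xi]$ together with the vanishing of higher direct images in that setting; the standard proof of these facts is precisely the Koszul-complex computation on toric towers that the paper's argument uses, so your reduction is not shorter than the paper's proof, it contains it. Second, your construction of the comparison map relies on local lifts $\Spa(\BBdrp(B,B^+)/\xi^e)\ra Y_e$ that are only canonical up to chain homotopy (via the derivation measuring the difference of two lifts); to obtain an actual morphism $Ru_{X/\Sigma *}\calO_{X/\Sigma}\ra R\nu_*\BBdrp$ of sheaves you need these homotopies to be coherent, which requires a rigidification such as the \v{C}ech--Alexander/envelope construction rather than a one-step homotopy argument. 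The paper sidesteps this by making the map strictly functorial in the auxiliary triple before passing to the (filtered) colimit.
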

	\begin{proof}
		This is essentially proved in the \cite{BMS}, Theorem 13.1, and we explain here the relation of their result with our statement.
		
		Let $X$ be a smooth rigid space over $K$ of dimension $d$.
		Assume $U=\Spa(R)$ is a \emph{very small} affinoid open subset in $X$; namely it admits an \'etale morphism onto a torus $\TT^d_K$, where the map can be extended to a closed immersion into a larger torus $\TT^n=\Spa(K \langle  T_i^{\pm1}\rangle)$.
		For any such closed immersion, we could associate the torus $\TT^n$ an affinoid perfectoid space $\TT^{n, \infty}=\Spa( K\langle  T_i^{\pm \frac{1}{p^{\infty}}} \rangle)$.
		The canonical map $\TT^{n,\infty} \ra \TT^n$ is pro-\'etale, and its pullback along $U\ra \TT^n$ produces a pro-\'etale morphism from an affinoid perfectoid space $\Spa(R_\infty, R^+_\infty)$ over $U=\Spa(R)$.
		
		We denote by $D$  the envelope of $U$ inside of the direct system $\{\TT^n_{\Bdre}\}_e$ of tori over $\{\Bdre\}_e$. 
		Then for any such choice of morphisms $(U \ra \TT^d \ra \TT^n)$, we could construct two $\Bdr$-linear complexes
		\begin{itemize}
			\item The de Rham complex $\Omega_D^\bullet$ of $U$ in $\{\TT^n_{\Bdre}\}_e$, that computes the infinitesimal cohomology $R\Gamma(U/\Sigma_{\inf}, \calO_{X/\Sigma})$ by Theorem \ref{inf-lev coh}.
			\item The Koszul complex $K_{\BBdrp(R_\infty)}=K_{\BBdrp(R_\infty)}(\gamma_{u_i}-1)$, that computes the pro-\'etale cohomology $R\Gamma(U_\pe, \BBdrp)$.
		\end{itemize}
		As in the proof of the \cite[Theorem 13.1]{BMS}, for any choice of $(U \ra \TT^d \ra \TT^n)$, there exists a natural map of actual complexes
		\[
		\Omega_D^\bullet \rra K_{\BBdrp(R_\infty)},
		\]
		which is functorial with respect to the choices of triples, such that it becomes an isomorphism after inverting $\xi$.
		Notice that the set of triples for a fixed $U$ is filtered, and the transition map of both complexes are isomorphisms.
		In this way, the induced isomorphism 
		\[
		R\Gamma(U/\Sigma_{\inf}, \calO_{X/\Sigma})[\frac{1}{\xi}] \ra R\Gamma(U_\pe, \BBdr)
		\] is independent of the triples $(U\ra \TT^d \ra \TT^n)$.
		Since the collection of very small open subsets of $X$ form a basis in the rigid topology, we could then get a natural isomorphism as below
		\[
		Ru_{X/\Sigma *} \calO_{X/\Sigma}[\frac{1}{\xi}] \rra R\nu_*\BBdr.
		\]
	\end{proof}
	
	Using the \'eh-hyperdescent, we could improve the above result into the non-smooth situations.
	\begin{theorem}\label{inf-proet}
		Let $X$ be a rigid space over $K$.
		Then there exists a natural map of complexes of sheaves of $\Bdr$-modules over $X$ as below
		\[
		Ru_{X/\Sigma *} \calO_{X/\Sigma} \rra R\nu_*\BBdrp.
		\]
		It is an isomorphism after inverting $\xi$.
	\end{theorem}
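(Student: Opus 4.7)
The plan is to construct the natural map first, and then to check the isomorphism after inverting $\xi$ by reducing to the smooth case of Theorem \ref{pe-inf, sm} via \'eh-hyperdescent.

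First I would construct the morphism $Ru_{X/\Sigma *} \calO_{X/\Sigma} \to R\nu_* \BBdrp$. For each affinoid perfectoid $V = \Spa(B, B^+)$ pro-\'etale over an open $U \subset X$ and each $e \geq 1$, the quotient $\BBdrp(B, B^+)/\xi^e$ is a nil-thickening of $B$ over $\Bdre$; these assemble (as $e$ varies) into a pro-system of objects in the big infinitesimal site $X/\Sigma_{\Inf}$ indexed by such $V$, via the structure map $V \to X$. This construction provides a morphism of ringed topoi from (a hypercover-basis inside) $X_\pe$ to $X/\Sigma_{\Inf}$ that fits together with the smooth-case construction of Theorem \ref{pe-inf, sm}. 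Passing to derived pushforward and using the equivalence of small and big crystals (Proposition \ref{cry, big and small}, Corollary \ref{coh, big small}), we get the natural map. Compatibility with the already-constructed map in the smooth case is verified by unfolding the Koszul-complex description of \cite[Theorem 13.1]{BMS} used in the proof of Theorem \ref{pe-inf, sm}.

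Next I would establish \'eh-hyperdescent for both sides after inverting $\xi$. On the left, Proposition \ref{inf-lev, des} applied to $\calF = \calO_{X/\Sigma}$ gives that $Ru_{X/\Sigma *} \calO_{X/\Sigma}$ (equivalently, via Corollary \ref{coh, big small}, the big-site version) satisfies \'eh-hyperdescent, hence so does it after inverting $\xi$. On the right, Scholze's primitive comparison theorem identifies $R\Gamma(X'_\pe, \BBdr)$ with $R\Gamma_\et(X', \Q_p) \otimes^L_{\Q_p} \Bdrr$ for any quasi-compact quasi-separated rigid space $X'/K$; since \'etale $\Q_p$-cohomology is known to satisfy \'eh-hyperdescent (cf. \cite[Theorem 1.1.4]{Guo19} together with the \'eh-hyperdescent on the de Rham side), the base change along $\Q_p \to \Bdrr$ also does, and sheafifying in the rigid topology yields the hyperdescent for $R\nu_* \BBdr$.

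Finally, I would reduce to the smooth case. By Temkin's resolution of singularities and the \v{C}ech nerve of the resulting blowup tower (together with the closed-immersion collapse onto the reduced subspace), one obtains a smooth \'eh-hypercover $X_\bullet \to X$ with each $X_n$ smooth over $K$. The previous step shows that both sides of the desired isomorphism satisfy \'eh-hyperdescent (after inverting $\xi$), and Theorem \ref{pe-inf, sm} provides the comparison termwise, yielding the global isomorphism $Ru_{X/\Sigma *} \calO_{X/\Sigma}[1/\xi] \rra R\nu_* \BBdr$.

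The main obstacle will be Step 1, the clean globalization of the natural map: the infinitesimal site is built from rigid (locally topologically finite type) spaces while the pro-\'etale site's natural basis consists of perfectoid spaces, so one must take care in passing between these two worlds (e.g.\ by working with pro-systems $\{\BBdrp/\xi^e\}_e$ in the big infinitesimal site, or by first restricting to smooth opens and then gluing). Once the map is produced, Steps 2--3 are relatively formal given the machinery of earlier sections.
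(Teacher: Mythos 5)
Your overall skeleton (reduce to the smooth case of Theorem \ref{pe-inf, sm} by \'eh-hyperdescent of both sides) is the same as the paper's, but two of your supporting steps have genuine problems. First, your Step 1: the quotients $\BBdrp(B,B^+)/\xi^e$ are \emph{not} objects of the big infinitesimal site $X/\Sigma_{e\,\Inf}$ as defined in this paper, because they are not topologically of finite type over $\Bdre$ (they are perfectoid-type rings, not rigid spaces), so you cannot assemble them into a pro-object of $X/\Sigma_{\Inf}$ and read off a morphism of ringed topoi. The paper explicitly flags this in the remark following the theorem: making your direct construction work requires first \emph{enlarging} the infinitesimal site to admit all complete Huber rings, as in \cite[Construction 5.11]{Yao19}. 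The paper avoids this by not constructing the map directly at all: since smooth rigid spaces $X'\ra X$ form a basis of $X_\eh$ and both sides satisfy \'eh-hyperdescent, the map of Theorem \ref{pe-inf, sm} on smooth objects \emph{glues} to the desired map on $X$. Your acknowledged ``obstacle'' is therefore not a technicality to be smoothed over but a real gap in the argument as written.

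Second, your Step 2 for the right-hand side fails: the primitive comparison theorem $R\Gamma(X'_\pe,\BBdrp)\cong R\Gamma_\et(X',\QQ_p)\otimes_{\QQ_p}\Bdr$ requires $X'$ to be \emph{proper}, not merely quasi-compact quasi-separated, whereas Theorem \ref{inf-proet} is a statement about sheaves over an arbitrary rigid space $X$ and is checked locally on quasi-compact (non-proper) opens. So you cannot deduce \'eh-hyperdescent of $R\nu_*\BBdr$ this way. The paper instead proves \'eh-hyperdescent of $R\nu_*\BBdrp$ \emph{before} inverting $\xi$, using the $\xi$-adic filtration whose graded pieces are $\wh\calO_X\cdot\xi^i$, the identification of $R\nu_{X*}\wh\calO_X$ with $R\pi_{X*}C$ for a bounded-below complex $C$ of \'eh-sheaves coming from the $v$-site (\cite[Section 4]{Guo19}), and an induction on $e$ followed by $R\varprojlim_e$. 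This argument is local and needs no properness. Your Step 3 (Temkin resolution plus the termwise smooth comparison) is fine once Steps 1 and 2 are repaired along these lines.
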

	\begin{proof}
		In the proof, we use $\nu_{X}:X_\pe \ra X_\rig$ to denote the natural map of sites associated with the rigid space $X$.
		
		We first notice that the pro-\'etale cohomology sheaf $R\nu_{X *}\BBdrp$ and $R\nu_{X *}\BBdr$ satisfy the \'eh-hyperdescent.
		To see this, we recall from \cite[Section 4]{Guo19} that the derived push-forward $R\nu_{X *}\wh\calO_X$ is naturally isomorphic to $R\pi_{X *} C$, where $C=R\alpha_*\wh\calO_v\in D^{\geq 0}(X_\eh)$ is the derived push-forward of the completed $v$-structure sheaf (see \cite[Section 3.2]{Guo19}), and $\pi_X:X_\eh \ra X_\rig$ is the natural map of sites.
		As an upshot, since $C$ is a bounded below complex of \'eh-sheaves, its direct image $R\pi_{X *} C$ in the rigid site naturally satisfies the \'eh-hyperdescent; namely for an \'eh-hypercovering $\rho:X_\bullet\ra X$ over $K$, the induced map below is an isomorphism
		\[
		R\pi_{X *}C \rra R\rho_* R\pi_{X_\bullet *} C.
		\]
		We could then replace the above by the derived push-forward of the pro-\'etale structure sheaf to get a natural isomorphism 
		\[
		R\nu_{X *}\wh\calO_X \rra R\rho_* R\nu_{X_\bullet *} \wh\calO_{X_\bullet}.
		\]
		On the other hand, notice the de Rham period sheaf $\BBdrp$ is completed under the $\xi$-adic topology such that the $i$-th graded piece is equal to the complete structure sheaf $\wh\calO_X\cdot \xi^i$ up to a twist.
		In this way, by the hyperdescent for graded pieces and the induction on $e$, we get
		\begin{align*}
			R\nu_{X *} \BBdrp & = R\varprojlim_{e\in \NN} R\nu_{X *} \BBdrp/\xi^e \\
			& \simeq R\varprojlim_{e\in \NN} R\rho_* R\nu_{X_\bullet *} \BBdrp/\xi^e \\
			& \simeq R\rho_* R\varprojlim_{e\in \NN} R\nu_{X_\bullet *} \BBdrp/\xi^e \\
			& = R\rho_* R\nu_{X_\bullet *} \BBdrp.
		\end{align*}
		Namely the pro-\'etale cohomology of $\BBdrp$ hence $\BBdr=\BBdrp[\frac{1}{\xi}]$ satisfies the \'eh-hyperdescent.
		
		At last, notice that the collection of maps $f:X'\ra X$ for smooth rigid spaces $X'$ form a basis of the \'eh-site $X_\eh$.
		In this way, the natural comparison map $Ru_{X'/\Sigma *} \calO_{X'/\Sigma} \ra R\nu_{X'*}\BBdrp$ for smooth $X'$ extends to a map for $X$ via the \'eh-hyperdescent (for the infinitesimal cohomology sheaf, this is Theorem \ref{inf-lev des}), and by inverting $\xi$ we get the isomorphism 
		\[
		Ru_{X/\Sigma *} \calO_{X/\Sigma}[\frac{1}{\xi}] \rra R\nu_*\BBdr.
		\]

	\end{proof}
	
	\begin{remark}
		The morphism between the infinitesimal cohomology and the pro-\'etale cohomology is constructed in an indirect way.
		In fact, by enlarging the infinitesimal site $X/\Sigma_{\inf}$ to a bigger site that allows all (adic spectra of) complete Huber rings as in \cite[Construction 5.11]{Yao19}, 
		the de Rham period ring $\BBdrp(R_\infty)$ for a perfectoid algebra $R_\infty$ can then be regarded as a pro-thickening in this enlarged category.
		In this way, the arrow from the associated ind-object to the final object in the enlarged infinitesimal topos will induce a map on their cohomology, and it can be checked via computations in smooth case and the \'eh-hyperdescent that this coincides with our morphism.
	\end{remark}
	
	Below we consider a special case when $X$ comes from a small subfield.
	Precisely, let $K_0$ be a discretely valued subfield of $K$ such that the residue field of $K_0$ is perfect.
	Assume $Y$ is a proper rigid space over $K_0$, and $X=Y\times_{K_0} K$ is the base field extension of $X_0$.
	We recall from \cite[Theorem 8.2.2]{Guo19} that there exists a $\Gal(K/K_0)$-equivariant filtered comparison between the pro-\'etale cohomology $R\Gamma(X_\pe, \BBdr)$ and the tensor product
	\[
	R\Gamma(Y_{\eh}, \Omega_{\eh,/K_0}^\bullet)\otimes_{K_0} \mathrm{B}_{\mathrm{dR}}.
	\]
	Here $\Omega_{\eh,/K_0}^i$ is the \'eh-differential for rigid spaces over $K_0$, and the filtration is defined by the product filtration, where the \'eh de Rham cohomology $R\Gamma(Y_{\eh},\Omega_{\eh,/K_0}^\bullet)$ is equipped with a natural descending filtration by $\Fil^i=R\Gamma(Y_{\eh},\Omega_{\eh,/K_0}^{\geq i})$.
	Moreover, by taking the zero-th graded pieces, we get
	\[
	R\Gamma(X_\pe, \wh\calO_X) \simeq \moplus_i R\Gamma(Y_{\eh}, \Omega_{\eh,/K_0}^i) \otimes_{K_0} K(-i).
	\]
	From this, we get the following:
	\begin{corollary}\label{over K0}
		Let $Y$ be a proper rigid space over the discretely valued subfield $K_0$ of $K$ as above, and let $X$ be its base extension to $K$.
		Then we have a canonical isomorphism
		\[
		R\Gamma(X/\Sigma_{\inf},\calO_{X/\Sigma})[\frac{1}{\xi}] \simeq R\Gamma(Y_{\eh}, \Omega_{\eh,/K_0}^\bullet)\otimes_{K_0} \mathrm{B}_{\mathrm{dR}}.
		\]
		In particular, the infinitesimal cohomology of $Y\times_{K_0} K$ over $\mathrm{B}_{\mathrm{dR}}$ admits a $\Gal(K/K_0)$-equivariant filtration such that the zero-th graded factor is equal to
		\[
		\moplus_i R\Gamma(Y_{\eh}, \Omega_{\eh,/K_0}^i) \otimes_{K_0} K(-i).
		\]
	\end{corollary}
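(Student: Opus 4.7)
The plan is essentially to concatenate two comparison isomorphisms that have already been established. First I would invoke Theorem \ref{inf-proet} to convert infinitesimal cohomology into pro-\'etale cohomology of the de Rham period sheaf: since $X$ is quasi-compact quasi-separated (as the base change of a proper $K_0$-rigid space), applying the derived global section functor to the map $Ru_{X/\Sigma*}\calO_{X/\Sigma}\to R\nu_*\BBdrp$ and inverting $\xi$ yields a natural identification
\[
R\Gamma(X/\Sigma_{\inf},\calO_{X/\Sigma})[\tfrac{1}{\xi}]\;\cong\;R\Gamma(X_\pe,\BBdr).
\]
Then I would plug in the primitive comparison of \cite[Theorem 1.1.4 / Theorem 8.2.2]{Guo19}, which supplies a canonical $\Gal(K/K_0)$-equivariant filtered isomorphism
\[
R\Gamma(X_\pe,\BBdr)\;\cong\;R\Gamma(Y_\eh,\Omega^{\bullet}_{\eh,/K_0})\otimes_{K_0}\mathrm{B}_{\mathrm{dR}}.
\]
Composing the two gives the displayed isomorphism.

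For the statement on the filtration, I would transport the filtration on the right-hand side through the above isomorphism. The right-hand side carries the product of the Hodge--\'eh filtration on $R\Gamma(Y_\eh,\Omega^{\bullet}_{\eh,/K_0})$ and the $\xi$-adic filtration on $\mathrm{B}_{\mathrm{dR}}$, and this product filtration is automatically $\Gal(K/K_0)$-equivariant since both factors are. The zeroth graded piece then splits as a direct sum $\bigoplus_{i+j=0}\gr^i\bigl(R\Gamma(Y_\eh,\Omega^{\bullet}_{\eh,/K_0})\bigr)\otimes_{K_0}\gr^j \mathrm{B}_{\mathrm{dR}}$; using that $\gr^{-i}\mathrm{B}_{\mathrm{dR}}=K(-i)$ and that the $i$-th graded piece of the Hodge--\'eh filtration on $R\Gamma(Y_\eh,\Omega^{\bullet}_{\eh,/K_0})$ is $R\Gamma(Y_\eh,\Omega^{i}_{\eh,/K_0})[-i]$, the identification specializes to
\[
\bigoplus_i R\Gamma(Y_\eh,\Omega^{i}_{\eh,/K_0})\otimes_{K_0} K(-i),
\]
which is precisely the Hodge--Tate decomposition recorded after Corollary \ref{main3}.

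There is essentially no serious obstacle here since both input isomorphisms are already in hand; the only thing to be careful about is the compatibility of the Galois action across Theorem \ref{inf-proet} (which is visibly functorial in $X$ under the base change $X\to X$ by an automorphism of $K/K_0$) and the matching of filtrations, i.e.\ that the filtration on $R\Gamma(X_\pe,\BBdr)$ coming from $\Fil^\bullet\BBdr$ is exactly the one paired with the Hodge--\'eh filtration on the \cite{Guo19} side. Both points are built into the statement of \cite[Theorem 8.2.2]{Guo19}, so the argument reduces to citing these two inputs and unwinding the $\gr^0$.
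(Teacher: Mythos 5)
Your proposal is correct and follows exactly the route the paper takes: the paper also obtains the corollary by composing the global-sections form of Theorem \ref{inf-proet} (identifying $R\Gamma(X/\Sigma_{\inf},\calO_{X/\Sigma})[\frac{1}{\xi}]$ with $R\Gamma(X_\pe,\BBdr)$) with the $\Gal(K/K_0)$-equivariant filtered comparison of \cite[Theorem 8.2.2]{Guo19}, and then reads off the zeroth graded piece of the product filtration. No gaps.
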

	
	\noindent\textbf{Torsion freeness and Hodge--de Rham degeneracy.}
	For the rest of the subsection, we prove the infinitesimal cohomology $\rmH^n(X/\Sigma_{\inf},\calO_{X/\Sigma})$ is torsionfree over $\Bdr$ (namely \Cref{main2}.(vi)), and show the degeneracy for the \'eh Hodge--de Rham  spectral sequence.

	\begin{theorem}\label{torsionfree and HdR}
		Let $X$ be a proper rigid space over $K$.
		Then we have the following
		\begin{enumerate}[(i)]
			\item the infinitesimal cohomology $\rmH^n(X/\Sigma_{\inf},\calO_{X/\Sigma})$ is torsionfree over $\Bdr$, for each $n\in \NN$.
			\item the \'eh Hodge--de Rham spectral sequence over $K$ below degenerates at its first page:
			\[
			E_1^{i,j}=\rmH^j(X_\eh,\Omega_{\eh,X/K}^i) \Longrightarrow \rmH^{i+j}(X_\eh,\Omega_{\eh,X/K}^{\bullet}).
			\]
		\end{enumerate}
	\end{theorem}
	\begin{remark}
		Note that the part (ii) generalizes the degeneracy result in \cite[Proposition 8.0.8]{Guo19}, where the latter needs the assumption for $X$ to be defined over a discretely valued subfield.
	\end{remark}
	\begin{proof}
		Let $n$ be any integer.
		We first note that the pro-\'etale cohomology $\rmH^n(X_\pe, \BBdrp)$ is finite free over $\Bdr$:
		To see this, we recall the Primitive Comparison Theorem over $\Bdr$ as below (\cite[Thm.\ 3.17]{Sch13b})
		\[
		\rmH^n(X_\et, \mathbb{Q}_p)\otimes_{\mathbb{Q}_p} \Bdr \simeq \rmH^n(X_\pe, \BBdrp).
		\]
		As the \'etale cohomology $\rmH^n(X_\eh ,\mathbb{Q}_p)$ is a finite dimensional vector space over $\mathbb{Q}_p$, the above implies the finite freeness of $\rmH^n(X_\pe, \BBdrp)$ over $\Bdr$.
		In particular, by Theorem \ref{inf-proet} and the finiteness in Proposition \ref{fin Bdr}, we get the following relations
		\begin{align*}
			\dim_{\mathbb{Q}_p} \rmH^n(X_\et, \mathbb{Q}_p) &=\rank_\Bdr \rmH^n(X_\pe, \BBdrp) \\
			&= \dim_{\mathrm{B}_\mathrm{dR}} \rmH^n(X_\pe,\BBdr) \\
			&=\dim_{\mathrm{B}_\mathrm{dR}} \rmH^n(X/\Sigma_{\inf}, \calO_{X/\Sigma})[\frac{1}{\xi}] \\
			&=\rank_\Bdr \rmH^n(X/\Sigma_{\inf}, \calO_{X/\Sigma})/torsion\\
			&\leq \dim_K \rmH^n(X/\Sigma_{\inf}, \calO_{X/\Sigma})/\xi,
		\end{align*}
		where the last equality follows from the structure theorem of finite generated modules over the principal ideal domain $\Bdr$.
		
		On the other hand, by the base change formula in Theorem \ref{inf-lev coh}.(iii), for each $n\in \mathbb{Z}$, we get the following short exact sequence of $K$-vector spaces
		\[
		0\rra \rmH^n(X/\Sigma_{\inf}, \calO_{X/\Sigma})/\xi  \rra \rmH^n(X/K_{\inf}, \calO_{X/K}) \rra  \rmH^{n+1}(X/\Sigma_{\inf}, \calO_{X/\Sigma})[\xi] \rra 0,
		\]
		which implies the inequalities
		\[
		\dim_K \rmH^n(X/\Sigma_{\inf}, \calO_{X/\Sigma})/\xi \leq \dim_K \rmH^n(X/K_{\inf}, \calO_{X/K}),~\forall n\in \mathbb{Z}.
		\]
		In addition, notice that the cohomology complex of a sheaf of abelian groups always lives in the non-negative cohomological degrees.
		So both $ \rmH^n(X/\Sigma_{\inf}, \calO_{X/\Sigma})$ and $\rmH^n(X/K_{\inf}, \calO_{X/K})$ vanish for $n\leq -1$, and the short exact sequence above for $n=-1$ implies the vanishing of $\mathrm{H}^0(X/\Sigma_{\inf},\mathcal{O}_{X/\Sigma})[\xi]$.
		In particular, we see the $\Bdr$-module $\mathrm{H}^0(X/\Sigma_{\inf},\mathcal{O}_{X/\Sigma})$ is torsionfree.
		
		Now using the comparison between the infinitesimal cohomology and the \'eh de Rham cohomology for the trivial crystal $\calF=\calO_{X/K}$ in Theorem \ref{inf-eh}, we have
		\[
		\dim_K \rmH^n(X/K_{\inf}, \calO_{X/K}) = \dim_K \rmH^n(X_\eh,\Omega_\eh^\bullet).
		\]
		Note that the natural Hodge filtration on the \'eh de Rham complex $\Omega_\eh^\bullet$ induces the $E_1$ spectral sequence
		\[
		E_1^{i,j}=\rmH^j(X_\eh,\Omega_\eh^i) \Longrightarrow \rmH^{i+j}(X_\eh,\Omega_\eh^{\bullet}).
		\]
		As a consequence, we get
		\[
		\dim_K \rmH^n(X/K_{\inf}, \calO_{X/K}) \leq \sum_{i+j=n} \dim_K \rmH^j(X_\eh,\Omega_\eh^i).
		\]
		However, by Hodge--Tate decomposition in \cite[Theorem 1.1.3]{Guo19}, we have
		\[
		\dim_{\mathbb{Q}_p} \rmH^n(X_\et, \mathbb{Q}_p) = \sum_{i+j=n} \dim_K \rmH^j(X_\eh,\Omega_\eh^i).
		\]
		Hence combining all the relations of dimensions above, we see all the inequalities should be equalities.
		The latter implies that the $E_1$ spectral sequence degenerates at the first page, and for any $n\in \NN$ we have
		\begin{align*}
			&\rmH^{n+1}(X/\Sigma_{\inf}, \calO_{X/\Sigma})[\xi]=0.\\
			&\rmH^n(X/\Sigma_{\inf}, \mathcal{O}_{X/\Sigma})/\xi= \rmH^n(X/\Sigma_{\inf}, \mathcal{O}_{X/K}).
		\end{align*}
		These, together with the torsionfreeness of the $\Bdr$-module $\rmH^0(X/\Sigma_{\inf}, \mathcal{O}_{X/\Sigma})$, conclude the proof.
	\end{proof}

	%\addcontentsline{toc}{section}{\protect\numberline{}References}
	
\end{document}